\newcommand{\C}{\mathbb{C}}
\newcommand{\R}{\mathbb{R}}
\newcommand{\Q}{\mathbb{Q}}
\newcommand{\Z}{\mathbb{Z}}
\newcommand{\N}{{\mathbb {N}}}
\newcommand{\ra}{\rightarrow }
\newcommand{\z}{{\mathbf {z}}}
\newcommand{\x}{{\mathbf {x}}}
\newcommand{\y}{{\mathbf {y}}}
\newcommand{\la}{\lambda }
\newcommand{\La }{{\Lambda }}
\newcommand{\Ga}{{\Gamma}}
\newcommand{\oo}{{\mathfrak {o}}}
\newcommand{\resn}[2]{\left(\frac{#1}{#2}\right)_n}
\newcommand{\polyring}{\mathcal {A}}
\newcommand{\ffield}{\mathcal {K}}
\newcommand{\innprod}[2]{\langle #1, #2 \rangle}
\newcommand{\Dem}{{\mathcal {D}}}
\newcommand{\T}{{\mathcal {T}}}
\newcommand{\tDelta}{{\Delta _v}}
\newcommand{\ttDelta}{{\Delta _t}}
\newcommand{\Cr}{{\mathcal {C}}}
\newcommand{\IP}{{\mathfrak {T}}}
\newcommand{\G}{{\mathbf {G}}}
\newcommand{\Grm}{{\mathrm {G}}}
\newcommand{\SG}{{\mathrm {SG}}}
\newcommand{\GL}{{\mathrm {GL}}}
\newcommand{\gl}{{\mathfrak {gl}}}
\newcommand{\bG}{\mathbf{G}}
\newcommand{\mm}{{\mathbf {m}}}
\newcommand{\OO}{{\mathcal {O}}}
\newcommand{\wght}{{\mathrm {wt}}}
\newcommand{\W}{{\mathcal {W}}}
\newcommand{\tG}{\widetilde{G}}
\newcommand{\tT}{\widetilde{T}}
\newcommand{\tB}{\widetilde{B}}
\newcommand{\sgn}{{\mathrm {sgn}}}
\newcommand{\ifi}{{\mathrm{if}}\ }
\renewcommand{\tilde}{\widetilde} 
\newcommand{\Le}{{\mathfrak{L}}}
\newcommand{\Ri}{{\mathfrak{R}}}
\newcommand{\Ind}{{\mathrm {Ind}}}
\newcommand{\lv}{\lambda ^{\vee}}
\newcommand{\Lv}{\Lambda ^{\vee}}
\newtheorem{lemma}{Lemma}
\newtheorem{prop}[lemma]{Proposition}
\newtheorem{theorem}[lemma]{Theorem}
\newtheorem{corollary}[lemma]{Corollary}
\newtheorem{claim}[lemma]{Claim}
\theoremstyle{definition}
\newtheorem{remark}{Remark}
\newtheorem{example}{Example}
\newtheorem{definition}{Definition}
\numberwithin{equation}{section}
\author{Anna Pusk\'as}
\address{Department of Mathematical and Statistical Sciences,
University of Alberta,
Edmonton, AB T6G 2G1, Canada
}
\email{puskas@ualberta.ca}
\title{Whittaker functions on metaplectic covers of $GL(r)$}
\date\today
\begin{document}

\begin{abstract}
This paper establishes a combinatorial link between different approaches to constructing Whittaker functions on a metaplectic group over a non-archimedean local field. We prove a metaplectic analogue of Tokuyama's Theorem and give a crystal description of polynomials related to Iwahori-Whittaker functions. The proof relies on formulas of metaplectic Demazure and Demazure-Lusztig operators, proved previously in joint work with Gautam Chinta and Paul E. Gunnells \cite{cgp}.
\end{abstract}

\subjclass[2010]{Primary 22E50; Secondary 11F68, 05E10}
\keywords{Whittaker functions, metaplectic groups, Demazure-Lusztig operators, Tokuyama's Theorem, highest weight crystals, Iwahori-Whittaker functions, Weyl group multiple Dirichlet series}

\maketitle

%The notes - random texts that I wrote out-of place are in the file Blah.tex. Some of these texts may be useful in compiling some of this. 

\setcounter{tocdepth}{1}
\tableofcontents 

\section{Introduction}

%\red{Still missing: work of Holley Friedlander on relating p-arts of Weyl group Multiple Dirichlet Series. Maybe sg on Demazure crystals?}

\subsection{Motivation}\label{subsection:motivation}

The study of metaplectic groups was initiated by Matsumoto \cite{matsumoto1969sous}. Analytic number theory, in particular questions about the mean values of $L$-functions led to research on multiple Dirichlet series, which in turn motivated interest in Whittaker coefficients of metaplectic Eisenstein series. Whittaker functions are higher dimension generalizations of Bessel functions and are associated to principal series representations of a reductive group over a local field. Kubota \cite{kubota1971some} was the first to closely examine Eisenstein series on higher covers of $\GL_2$, and the theory of associated Whittaker functions was further developed by Kazhdan and Patterson \cite{MR743816}. In recent years, this development gained further impetus from unexpected connections to other areas, such as combinatorial representation theory, the geometry of Schubert varieties, and solvable lattice models. While the theory of metaplectic Whittaker functions is familiar in the case of double covers of reductive groups, it is less well understood in the case of higher covers. 

\subsubsection{The Casselman-Shalika formula} \label{subsubsection:Casselman-Shalika}

The Casselman-Shalika formula is an explicit formula for values of a spherical Whittaker function over a local $p$-adic field in terms of a character of a reductive group. It is a central result in understanding the local and global theory of automorphic forms and their $L$-functions. A metaplectic analogue, describing Whittaker functions on $n$-fold covers of a reductive group, has similar significance in the study of Dirichlet series of several variables. %For example, Whittaker coefficients of $\GL_3$ have a surprising connection, through the theory of double Dirichlet series, to a classical arithmetic problem: the representation of integers as quadratic forms. (See section \ref{subsubsection:LR_coeffs}.) 
Different approaches to generalize the Casselman-Shalika formula to the metaplectic setting have recently emerged. 

\subsubsection{Metaplectic analogues}\label{subsubsection:Metaplectic_CS}

Chinta-Offen \cite{co-cs} and McNamara \cite{mcnamara2} generalize the Casselman-Shalika formula by replacing the character with a metaplectic analogue: a sum over the Weyl group involving a modified action of the Weyl group that depends on the metaplectic cover. Brubaker-Bump-Friedberg \cite{bbf-annals} and McNamara \cite{mcnamara} express a type $A$ Whittaker function as a sum over a crystal base. Both constructions produce the Whittaker function as a polynomial determined by combinatorial data: the root datum of the group, a dominant weight, and the degree $n$ of the metaplectic cover. The first one handles all types of root datum, while the second one makes it possible to compute the coefficients of the polynomial individually. The fact that the descriptions are purely combinatorial in nature, and rely heavily on Weyl group combinatorics and on the structure of the crystal graph, indicates that deeper properties of these constructions can be understood using methods of combinatorial representation theory.

\subsubsection{Combinatorial link}\label{subsubsection:comblink}

In the present paper we develop a combinatorial understanding of the relationship of the two approaches described in section \ref{subsubsection:Metaplectic_CS}. This is one aspect of our main result (Theorem \ref{THM:MAIN}); an other is giving a crystal description of polynomials related to Iwahori-Whittaker functions. Both of these aspects will be made explicit in section \ref{section:Whittaker}. 

Furthermore, both approaches to constructing Whittaker functions, i.e. summing over the Weyl group, or, respectively, over a crystal graph, also make sense in the nonmetaplectic setting. In this special case, a theorem of Tokuyama provides a combinatorial link between them \cite{tokuyama-generating}. In the metaplectic setting, the constructions of section \ref{subsubsection:Metaplectic_CS} naturally extend the meaning of respective sides of Tokuyama's identity. Thus explicitly relating the two constructions is in essence proving a metaplectic analogue of Tokuyama's formula. Viewed purely as an identity about the combinatorial data, the special case of Theorem \ref{THM:MAIN} stated as Theorem \ref{thm:metaplectic_Tokuyama} is this metaplectic analogue.

\subsection{Methods and tools}

The connection between Tokuyama's theorem and the constructions of metaplectic Whittaker functions also gives a hint as to where the difficulty in this project lies. The classical proof of Tokuyama's formula is by induction on the rank using Pieri-rules: in the metaplectic setting, these have no convenient analogue. We sidestep this obstacle by refining the metaplectic statement to allow for a finer induction. Theorem \ref{thm:metaplectic_Tokuyama} (the metaplectic version of Tokuyama's theorem) is as a statement about the long element in the Weyl group, while the more general Theorem \ref{THM:MAIN} is the same statement for any ``beginning section'' of a particular long word. 
 
Phrasing a statement that lends itself to finer induction requires us to exploit interesting properties of the respective constructions. On the one hand, our joint work with Gautam Chinta and Paul E. Gunnells \cite{cgp} interprets the formulas of Chinta-Offen and McNamara \cite{co-cs, mcnamara2} in terms of metaplectic Demazure-Lusztig operators. On the other hand, one may exploit the branching structure of highest weight crystals of Dynkin type $A$ to relate the formulas of Brubaker-Bump-Friedberg and McNamara \cite{bbf-annals, mcnamara} to smaller, similar expressions on Demazure crystals. The connection to Demazure-Lusztig operators is well motivated, and yields several avenues of possible applications; we mention these below. 

\subsubsection{Demazure operators}\label{subsubsection:intro_dem} The relevance of classical Demazure and Demazure-Lusztig operators to the study of Whittaker functions was first indicated by work of Littelmann and Kashiwara \cite{kashiwara1992crystal} giving character formulas on a crystal, and of Brubaker, Bump and Licata \cite{bbl-demazure} relating them to Iwahori-fixed Whittaker functions. They have since been used by Patnaik \cite{patnaik2014unramified} to give a generalization of the Casselman-Shalika formula to Whittaker functions on the $p$-adic points of an affine Kac-Moody group. 

The metaplectic versions of these operators were introduced by G. Chinta, P. E. Gunnells and the present author \cite{cgp}; the definitions involve the Chinta-Gunnells action of the Weyl group on rational functions over the weight lattice. This action was first used in \cite{cg-jams} to construct $p$-parts of multiple Dirichlet series, and have since proved instrumental in metaplectic constructions, for example the ones mentioned above in \ref{subsubsection:Metaplectic_CS}.

\subsection{Applications}\label{subsection:applications}

We mention connections to the literature and avenues of further research utilizing the methods and results of this paper. 

\subsubsection{Iwahori-Whittaker functions}\label{subsubsection:BBLextend}

In \cite{bbl-demazure}, the authors use Demazure and Demazure-Lusztig operators to compute values of Iwahori-Whittaker functions in terms of Hecke algebras, the geometry of Bott-Samelson varieties and the combinatorics of Macdonald polynomials. The analogies between these topics are intertwined with the combinatorics of the Bruhat order on the Weyl group, and identities satisfied by the Demazure and Demazure-Lusztig operators. Furthermore, the non-metaplectic version of the operator in Theorem \ref{THM:MAIN} is related in \cite{bbl-demazure} to Iwahori-Whittaker functions. Recent work by  Lee, Lenart, and Liu \cite{lee2016whittaker} applies these results to compute coefficients of the transition matrix between natural bases of Iwahori-Whittaker functions. On the other hand, the work of Patnaik \cite{patnaik2014unramified} generalizing the Casselman-Shalika formula to the affine Kac-Moody setting also involves computing Iwahori-Whittaker functions, and their recursion in terms of Demazure-Lusztig operators. (The results of Brubaker-Bump-Licata and Patnaik are recalled in detail in section \ref{section:Whittaker}.)

The metaplectic Demazure and Demazure-Lusztig operators satisfy analogous identities to the classical, nonmetaplectic ones. Thus some of the above results will generalize to the metaplectic setting. Recent joint work with Manish Patnaik \cite{patnaik2015casselman} shows that the connection between Iwahori-Whittaker functions and Demazure-Lusztig operators extends to the metaplectic setting (see section \ref{subsubect:MMP_PA}). It is natural to ask if the explicit crystal description of Iwahori-Whittaker functions given by Theorem \ref{THM:MAIN} leads to a better understanding of all these results. It is especially interesting to consider how the connection with Iwahori-Whittaker functions, perhaps together with a more type-independent combinatorial description as mentioned in section \ref{subsubsection:alcovepath} below, would elucidate the situation in the affine setting (see \ref{subsubsection:affine_wmds}).s

\subsubsection{The Alcove Path Model}\label{subsubsection:alcovepath}
The construction of Whittaker functions as a sum over the Weyl group \cite{co-cs,mcnamara2} has the key feature that the Weyl group functional equations satisfied by the Whittaker function become very apparent. These functional equations play a key role in the analytic  construction of global multiple Dirichlet series constructed from the Whittaker functions. Moreover, they have proven useful in studying certain affine analogues. %(See section \ref{subsubsection:affine_wmds} below.)

The functional equations are less explicit in the description by crystal graphs. However, the crystal construction gives explicit formulas for individual coefficients of the Whittaker function. Reasons for trying to understand these coefficients are mentioned in section \ref{subsubsection:affine_wmds}. %sections \ref{subsubsection:LR_coeffs} and \ref{subsubsection:affine_wmds}. 

Various authors have worked on generalizing the crystal approach to root systems of other types: Chinta and Gunnells for type $D$ \cite{chinta2012littelmann}, Beineke \cite{beineke2012crystal}, Brubaker, Bump, Chinta, Gunnells \cite{brubaker2012metaplectic}, Frechette, Friedberg and Zhang \cite{friedberg2015eisenstein} for type $B$ and $C$, McNamara \cite{mcnamara} working, less explicitly, with crystal bases in general. The resulting formulas are all significantly more intricate than the type $A$ construction in \cite{bbf-wmdbook}. 

A possible applications of this paper is to understand how the crystal approach extends to other types. Preliminary work by Beazley and Brubaker suggests that perhaps the alcove path model is better suited for creating a construction that generalizes the type $A$ crystal approach. Demazure and Demazure-Lusztig operators promise to give a metaplectic Casselman-Shalika formula in terms of the alcove path model; we are currently investigating this avenue in joint work with Gautam Chinta, Cristian Lenart and Dan Orr. The resulting construction might better reflect the Weyl group symmetry of the individual coefficients of Whittaker functions. 

%\subsubsection{The Littlewood-Richardson Rule}\label{subsubsection:LR_coeffs}
%
%The Littlewood-Richardson rule gives an explicit combinatorial description of the coefficients that appear when decomposing a tensor product of representations of $\GL_r$ into a direct sum of irreducibles. (Equivalently, in the decomposition of the product of two Schur functions into a linear combination of Schur functions.) Having a combinatorial understanding of the metaplectic analogue of the Schur function induces one to look for a metaplectic analogue of the Littlewood-Richardson rule. Such an analogue would lead to a deeper understanding of more classical arithmetic questions for the following reason. As referred to in section \ref{subsubsection:Casselman-Shalika}, Whittaker functions of $\GL_3$ appear as coefficients of Dirichlet series of two variables, and as such, are related to arithmetic questions of a classical flavour. The coefficients are solutions of problems of counting integral points on flag varieties associated to quadratic forms \cite{chinta2012orthogonal}. For example, the main step in the 
%proof of the result in \cite{chinta2012orthogonal}, the computation of the orthogonal period of a $\GL_3(\Z )$ Eisenstein series boils down to Gauss' three squares theorem.

\subsubsection{Affine Weyl group multiple Dirichlet Series and metaplectic Whittaker functions}\label{subsubsection:affine_wmds}

Recent work of Bucur-Diaconu \cite{bucur2010moments}, Lee-Zhang \cite{lee2012weyl} and Whitehead \cite{whitehead2014affine} attempts to extend the theory of multiple Dirichlet series to the affine setting. There the theory of Eisenstein series is not (yet) available. These authors construct multiple Dirichlet series that satisfy functional equations corresponding to an affine Weyl group. The coefficients of these power series can be explicitly related to character sums and coefficients of $L$-functions \cite{whitehead2014affine}. Some of our methods may lead to a combinatorial understanding of these coefficients. Furthermore, it would be especially interesting to understand the connection between affine Weyl group multiple Dirichlet series and Whittaker functions on $p$-adic points of affine Kac-Moody groups introduced by Patnaik \cite{patnaik2014unramified}. The possibility of extending the affine construction to the metaplectic setting is currently investigated by Patnaik and the author of this paper.

%\subsection{Acknowledgements}
%The results of this paper form the second half of my doctoral thesis at Columbia University. I am greatly indebted to my adviser Gautam Chinta for proposing the problem, and for his support and invaluable advice over the years. I owe many thanks to our collaborator Paul Gunnells, to my executive adviser Dorian Goldfeld, to Michael Thaddeus and Wei Zhang. A significant portion of this research was completed during the wonderful ICERM semester ``Automorphic Forms, Combinatorial Representation Theory and Multiple Dirichlet Series'' during the spring of 2013. I thank the organizers and participants of that semester as well as the Department of Mathematics at Columbia University, for providing me with a nurturing and inspiring environment. During the writing of this paper, I was supported from Manish Patnaik's Subbarao Professorship in Analytic Number Theory, an NSERC discovery grant, and an University of Alberta startup grant. I thank Manish Patnaik for this support and for his guidance and advice. I thank my husband Dinakar Muthiah for his insights, both mathematical and practical. I am very grateful to the following people for helpful conversations about the results or writing of this paper: Ben Brubaker, Corrin Clarkson, Solomon Friedberg, Amy Feaver, Holley Friedlander, Christian Lenart, Peter McNamara, Ben Salisbury and Ian Whitehead.

\subsection{Acknowledgements}
The results of this paper form part of my doctoral thesis at Columbia University. I am greatly indebted to my adviser Gautam Chinta for his support and invaluable advice over the years. While writing the paper I was supported from Manish Patnaik's Subbarao Professorship in Analytic Number Theory, an NSERC discovery grant, and an University of Alberta startup grant. I thank Manish Patnaik for this support and for his guidance. I thank the organizers and participants of the 2013 ICERM semester on ``Automorphic Forms, Combinatorial Representation Theory and Multiple Dirichlet Series,'' the Department of Mathematics at Columbia University and the University of Alberta for an inspiring environment. I am very grateful to the following people for helpful conversations and insights: my executive adviser Dorian Goldfeld, Ben Brubaker, Corrin Clarkson, Solomon Friedberg, Amy Feaver, Holley Friedlander, Paul E. Gunnells, Christian Lenart, Peter McNamara, Dinakar Muthiah, Ben Salisbury, Michael Thaddeus, Ian Whitehead and Wei Zhang.

\section{Preliminaries and Statement of the Main Theorem}\label{sect:preliminaries}

This section is dedicated to the statement of the main result of the paper (Theorem \ref{THM:MAIN}), and a brief outline of the methods and structure of the proof. 

\subsection{Background}
We start by introducing some notation and background. We restrict ourselves to what is necessary to state the main theorem, Theorem \ref{THM:MAIN}, and to give an outline of the methods of the paper. Much of the background will be covered in more detail in later sections.

\subsubsection{Notation}\label{subsubs:notation_in_prelim} Let $\Lambda $ be the weight lattice corresponding to a root system $\Phi $ of type $A_r.$ We identify $\C (\Lambda )$ with a ring of rational functions $\C(\x ),$ where $\x=(x_1,\ldots ,x_{r+1})$ and $\x^{\alpha _1}=x_1/x_2.$ The Weyl group $W$ is generated by $\sigma _i$ simple reflections. Let $w_0\in W$ be the long element. We favour a particular reduced decomposition for $w_0$ (see \eqref{eq:def_of_favourite_w0} for this ``favourite long word''); Theorem \ref{THM:MAIN} is stated for elements $w\in W$ whose reduced decomposition is a ``beginning segment'' of this favourite long word (Definition \ref{def:begsectlw}).
The integer $n$ denotes the degree of the metaplectic cover of a split reductive algebraic group corresponding to $\Phi $. We also introduce the indeterminate $t,$ and $v=t^n;$ in applications, we set $v=q^{-1},$ where $q$ is the order of the residue field of a nonarchimedean local field. 

\subsubsection{Crystals and Gelfand-Tsetlin coefficients} The highest weight crystal $\Cr _{\la +\rho }$ and its parameterizations will be introduced in Section \ref{section:HWtCrystalsandGTpatterns}. For now, it suffices to say that it is a graph whose vertices are in bijection with a basis of the irreducible representation of highest weight $\la +\rho ,$ where $\lambda \in \Lambda$ is dominant and $\rho $ is the Weyl vector. Vertices of a crystal can be parameterized by arrays of integers in various ways (using Gelfand-Tsetlin patterns, $\Gamma$-arrays, or Berenstein-Zelevinsky-Littelmann paths). To state Theorem \ref{THM:MAIN} we need two functions on the vertices of a crystal $\Cr _{\la +\rho }:$ the weight function $v\mapsto \wght(v)\in \Z^{r+1}$ and the Gelfand-Tsetlin coefficient $v\mapsto G^{(n,\la+\rho )}(v)\in \C[t].$ This is the usual Gelfand-Tsetlin coefficient, described for nonmetaplectic and metaplectic cases in \cite{bbf-wmdbook}. It depends on the degree $n$ of the metaplectic cover via Gauss-sums. Furthermore, for every $w$ beginning segment of the long word, we shall define $\Cr_{\la +\rho }^{(w)},$ the Demazure crystal corresponding to $w.$ This is a subgraph of $\Cr _{\la +\rho }$ spanned by certain vertices depending on $w$ (see Definition \ref{def:Demazure_subcrystal}).

\subsubsection{Demazure operators} Demazure operators $\Dem _{w}$ and Demazure-Lusztig operators $\T_w$ correspond to elements of the Weyl group, and act on $\C (\Lambda )$. The definitions of the nonmetaplectic operators involve the natural action of the Weyl group $W$ on $\C (\Lambda ),$ inherited from the action of $W$ on the weight lattice. In the metaplectic setting, this normal permutation action can be replaced by the Chinta-Gunnells action, and one may define metaplectic Demazure and Demazure-Lusztig operators, whose meaning depends on $n.$ The definitions and properties are recalled - in the notation specific to type $A_r$ - in section \ref{sect:Demazure}; these play a key role in the proof of Theorem \ref{THM:MAIN}. 

\subsubsection{Tokuyama's theorem}\label{subsubsection:Tokuyama}
Strictly speaking, this section is not necessary to understand the statement of Theorem \ref{THM:MAIN}; however, it provides motivation, and crucial guidance to the shape of the statement. %Both approaches to constructing Whittaker functions described in section \ref{subsubsection:Metaplectic_CS}, i.e. summing over the Weyl group, or, respectively, over a crystal graph, also make sense in the nonmetaplectic setting. In this special case, a theorem of Tokuyama provides a combinatorial link between them \cite{tokuyama-generating}. In the metaplectic setting, the constructions of section \ref{subsubsection:Metaplectic_CS} naturally extend the meaning of respective sides of Tokuyama's identity. Thus explicitly relating the two constructions is in essence proving a metaplectic analogue of Tokuyama's formula
As mentioned in section \ref{subsubsection:comblink} generalizing Tokuyama's theorem to the metaplectic setting and linking the constructions of metaplectic Whittaker functions is closely related; in fact the constructions give rise to the statement of a metaplectic version. We explain this briefly here; the theorem will be discussed in detail in Section \ref{section:Tokuyama}. 
Tokuyama's theorem is a deformation of the Weyl character formula in type $A:$
\begin{equation}\label{eq:Tokuyama}
 \x ^{\rho }\cdot \prod _{\alpha \in \Phi ^+} (1-v\cdot \x^{\alpha }) \cdot s_{\la }(\x)=\sum _{b\in \Cr _{\la +\rho }} G(b)\cdot \x^{\wght(b)},
\end{equation}
where $s_{\la }$ is the Schur function. The left hand side essentially agrees with the Casselman-Shalika formula for Whittaker functions (with the deforming parameter $v$ specialized to $q^{-1}$). The Schur function can be expressed by the Weyl character formula as 
\begin{equation}\label{eq:Schur}
  s_{\la }(\x)=\frac{1}{\x^{\rho }\cdot \prod _{\alpha \in \Phi ^+}(1-\x^{\alpha })}\cdot \sum _{w\in S_{r}} (-1)^{\ell(w)}\cdot \x ^{w(\la +\rho )}.
\end{equation}
Chinta-Offen \cite{co-cs} show what a correct metaplectic analogue of the right hand side in \eqref{eq:Schur} is, replacing the action of the Weyl group $W$ on $\C(\Lambda )$ by the Chinta-Gunnells metaplectic action. One may use the results of \cite{cgp} to reformulate the ``left-hand side'' in terms of Demazure-Lusztig operators, i.e. as
\begin{equation}
\sum _{u\in W} \T _u
\end{equation}
acting on a monomial. The necessary background will be covered in detail in Section \ref{sect:Demazure}.

On the right hand side of \eqref{eq:Tokuyama}, the Gelfand-Tsetlin coefficients $G(b)=G^{(1,\la+\rho )}(b)$ appear. This reproduces the construction of the same Whittaker function as a sum over a crystal base (Brubaker-Bump-Friedberg \cite{bbf-annals}) in both the nonmetaplectic case (for $n=1$) and the metaplectic setting (for higher $n$). 

\subsection{Statement of Main Theorem}
The main result of the paper is a crystal description of sums of Demazure-Lusztig operators in type $A.$ More precisely, we prove the following. 

\begin{theorem}\label{THM:MAIN}
 Let $\lambda =(\la _1,\ldots ,\la_r ,\la _{r+1})$ be any dominant, effective weight, $\rho =(r,r-1,\ldots, 1, 0),$ $w$ a beginning section of the long word. Then 
\begin{equation}\label{eq:thm_main_statement}
 \left(\sum _{u\leq w} \T _u\right) \x ^{w_0(\la )}=\x^{-w_0(\rho )}\cdot \sum _{v\in \Cr_{\la +\rho }^{(w)}} G^{(n,\la+\rho )}(v)\cdot \x^{\wght(v)}.
\end{equation}
Here $\leq $ is the Bruhat order, $G^{(n,\la+\rho )}(v)$ is the Gelfand-Tsetlin coefficient corresponding to $v$ (by Definition \ref{def:GTcoeff}), and $\Cr_{\la +\rho }^{(w)}$ is the Demazure-crystal corresponding to $w$ (as is Definition \ref{def:Demazure_subcrystal}). 
\end{theorem}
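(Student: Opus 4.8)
The plan is to prove \eqref{eq:thm_main_statement} by induction on $\ell(w)$, with $w$ ranging over the beginning sections of the favourite long word $\sigma_{j_1}\cdots\sigma_{j_N}$ of \eqref{eq:def_of_favourite_w0}; this is the ``finer induction'' promised in the introduction, and the terminal case $w=w_0$ recovers the metaplectic Tokuyama identity (Theorem \ref{thm:metaplectic_Tokuyama}). Abbreviate the two sides by $L_w:=\bigl(\sum_{u\le w}\T_u\bigr)\x^{w_0(\la)}$ and $R_w:=\x^{-w_0(\rho)}\sum_{v\in\Cr_{\la+\rho}^{(w)}}G^{(n,\la+\rho)}(v)\,\x^{\wght(v)}$, so that the claim reads $L_w=R_w$. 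For the base case $w=e$ one has $\T_e=\mathrm{id}$, hence $L_e=\x^{w_0(\la)}$; on the other side $\Cr_{\la+\rho}^{(e)}$ is a single extremal vertex, of weight $w_0(\la+\rho)$ and Gelfand--Tsetlin coefficient $1$ (Definitions \ref{def:GTcoeff} and \ref{def:Demazure_subcrystal}), so $R_e=\x^{-w_0(\rho)}\x^{w_0(\la+\rho)}=\x^{w_0(\la)}=L_e$. Dominance and effectivity of $\la$ guarantee here that $\Cr_{\la+\rho}$ is the genuine highest weight crystal.

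For the inductive step write $w=\sigma_{j_1}\cdots\sigma_{j_k}$, set $i:=j_{k+1}$ and $w'=w\sigma_i$, so that $\ell(w')=\ell(w)+1$ and $w'$ is again a beginning section. I would establish two parallel recursions, both mediated by the metaplectic Demazure operator $\Dem_i$ of \cite{cgp}, in the normalization that incorporates the $\rho$-shift visible in the prefactor $\x^{-w_0(\rho)}$. The first is a \emph{left-hand recursion} $L_{w'}=\Dem_i L_w$. To prove it one combines the explicit formulas of \cite{cgp} for $\T_i$ and $\Dem_i$ --- in particular the quadratic relation for $\T_i$ that governs the Chinta--Gunnells divided difference --- with the subword description of the Bruhat interval, namely that $\{u\le w'\}$ consists of $\{u\le w\}$ together with the new elements obtained by appending $\sigma_i$. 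The delicate point is that, purely as operators, $\sum_{u\le w'}\T_u$ differs from $\bigl(\sum_{u\le w}\T_u\bigr)(\mathrm{id}+\T_i)$ by ``non-reduced subword'' corrections arising from the failure of $\T_i$ to be idempotent; one has to check that these corrections annihilate the extremal monomial $\x^{w_0(\la)}$, so that the clean recursion is valid at the level of functions. This is itself a combinatorial lemma about Demazure--Lusztig operators in type $A_r$.

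The second is a \emph{right-hand (crystal) recursion} $R_{w'}=\Dem_i R_w$. By Definition \ref{def:Demazure_subcrystal}, $\Cr_{\la+\rho}^{(w')}$ is obtained from $\Cr_{\la+\rho}^{(w)}$ by closing under the $i$-th root-string operators, and I would show that on Gelfand--Tsetlin-weighted generating functions this closure is implemented by the \emph{same} metaplectic operator $\Dem_i$ --- a metaplectic enhancement of the classical fact that $\Dem_i$ computes the Demazure character. The work is to track how the coefficient $G^{(n,\la+\rho)}(v)$, a product of Gauss sums read off a Gelfand--Tsetlin pattern, varies as $v$ traverses an $i$-string, and to recognise the resulting telescoping identity as the $\x^{\alpha_i}$-divided difference underlying $\Dem_i$; the branching of type $A_r$ highest weight crystals along a $\sigma_i$-parabolic reduces this to a string-by-string computation. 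This is exactly where fixing the \emph{favourite} long word, adapted to the Gelfand--Tsetlin parameterization, is indispensable: the metaplectic $\Dem_i$ satisfy neither braid nor idempotency relations, so $\Cr^{(w)}_{\la+\rho}$ genuinely depends on the chosen reduced word, and the induction must proceed strictly along it. Chaining $L_{w'}=\Dem_i L_w$ and $R_{w'}=\Dem_i R_w$ from $w=e$ up to $w=w_0$ then propagates $L_w=R_w$ through every beginning section, which is the theorem.

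The principal obstacle I anticipate is the right-hand recursion: the Gelfand--Tsetlin coefficient is not local along a single root string --- one Kashiwara step can alter several Gauss-sum factors at once --- so proving the exact identity with the Chinta--Gunnells operator $\Dem_i$ demands a careful analysis of how pattern entries move, with genuine care devoted to the degenerate configurations in which pattern entries coincide; these are precisely the configurations responsible for the non-idempotency and the failure of braid relations for the metaplectic operators, and making the Gauss-sum combinatorics close up there is the heart of the matter. A secondary difficulty is the Bruhat-order bookkeeping in the left-hand recursion, where one must identify exactly which elements acquire $u\le w'$ and verify that the discrepancy terms act as zero on $\x^{w_0(\la)}$, so that $L_{w'}=\Dem_i L_w$ holds as an identity of functions.
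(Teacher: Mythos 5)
There is a genuine gap, and it sits in the step you call a ``secondary difficulty'': the left-hand recursion fails, and the correction terms you hope vanish on $\x^{w_0(\la)}$ do not. Take $A_2$ with the favourite word $\sigma_1\sigma_2\sigma_1$ and use the quadratic relation $\T_1^2=(v-1)\T_1+v$. Then
\begin{equation*}
(1+\T_1)(1+\T_2)(1+\T_1)\;=\;\sum_{u\in W}\T_u\;+\;v\,(1+\T_1),
\end{equation*}
and $v(1+\T_1)\x^{w_0(\la)}=v(1-v\x^{n\alpha_1})\Dem_1\x^{w_0(\la)}$ is the (nonzero) rank-one Tokuyama polynomial. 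So already at the third letter the discrepancy between $\sum_{u\le w'}\T_u$ and an iterated product of $(1+\T_i)$'s acts nontrivially on the extremal monomial, and no one-step recursion of the form $L_{w\sigma_i}=X_i\,L_w$ for a single local operator $X_i$ can hold. There are two further mismatches in the form you propose: the correct one-letter operator would be $1+\T_i=(1-v\x^{n\alpha_i})\Dem_i$, not $\Dem_i$ (already $L_{\sigma_1}=(1+\T_1)\x^{w_0\la}\ne\Dem_1\x^{w_0\la}$); and since the word grows on the right, $\T_{w\sigma_i}=\T_w\T_i$ applies $\T_i$ \emph{first}, so post-composing an operator to the function $L_w$ produces $\T_i\T_w$-type terms (e.g.\ $\T_2\T_1$ where $\T_1\T_2$ is required). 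The same objections apply to the claimed crystal recursion $R_{w'}=\Dem_i R_w$, which would be an exact identity strictly stronger than what is true.

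What the paper does instead is not a letter-by-letter recursion but a parabolic factorization plus an induction on the rank. The Bruhat interval below $w=w_0^{(r-1)}\sigma_r\cdots\sigma_{r-k}$ factors as
\begin{equation*}
\sum_{u\le w}\T_u=\Bigl(\sum_{u\le w_0^{(r-1)}}\T_u\Bigr)\bigl(1+\T_r+\T_r\T_{r-1}+\cdots+\T_r\cdots\T_{r-k}\bigr)
\end{equation*}
(Lemma \ref{lem:Bruhat_shorterwords}); note the second factor is a \emph{sum over minimal coset representatives}, not a product of $(1+\T_i)$'s, which is exactly how the $v(1+\T_1)$-type corrections are avoided. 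Correspondingly, $\Cr_{\la+\rho}^{(w)}$ is decomposed into full $A_{r-1}$ highest weight crystals (Proposition \ref{prop:Demazure_branching}), the $A_{r-1}$ part is handled by the inductive hypothesis $Tok_{r-1}$, and the new operators $\T_r\cdots\T_{r-k}$ are compared to the first row of the $\Gamma$-array via the statements $M_{r,k}$ and $N_{r,k}$. Crucially these intermediate identities hold only modulo the kernel of $\Dem_{w_0^{(r-1)}}$, and the residual terms are killed by applying $\sum_{u\le w_0^{(r-1)}}\T_u=\Delta_t^{(r-1)}\Dem_{w_0^{(r-1)}}$; your proposal demands exact equalities at every step, which is more than is true. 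Your base case and your diagnosis that the Gauss-sum combinatorics along strings is the analytic heart are both sound, but the inductive mechanism needs to be replaced by this coset-representative factorization and the ``equality modulo annihilation'' bookkeeping before the argument can go through.
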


The statement \eqref{eq:thm_main_statement} provides the combinatorial link between the approaches to constructing metaplectic Whittaker functions described in section \ref{subsubsection:Metaplectic_CS}. The special case of this statement for $w=w_0$ and $n=1$ is exactly Tokuyama's theorem (See section \ref{section:Tokuyama}). The statement is formally stronger than Tokuyama's theorem even in the nonmetaplectic setting, and provides a metaplectic analogue for higher $n.$ We state this analogue on its own. 

\begin{theorem}\label{thm:metaplectic_Tokuyama}
(Tokuyama's Theorem, Metaplectic Version.) Let $\lambda =(\la _1,\ldots ,\la _{r+1})$ be any dominant, effective weight and $\rho =(r,\ldots, 1, 0).$ Then 
\begin{equation}\label{eq:thm_metaplectic_tokuyama_statement}
 \left(\sum _{u\in W} \T _u\right)(\x^{w_0(\la )})=\x^{-w_0\rho }\cdot \sum_{v\in \Cr _{\la +\rho }} G^{(n,\la +\rho )}(v)\cdot \x ^{\wght(v)}.
\end{equation}
\end{theorem}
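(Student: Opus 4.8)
The plan is to derive Theorem \ref{thm:metaplectic_Tokuyama} as the special case $w=w_0$ of Theorem \ref{THM:MAIN}, rather than proving it independently. Since $w_0$ is itself a beginning section of the favourite long word (it is the whole word), and since the Bruhat interval $\{u \le w_0\}$ is all of $W$, the left-hand side of \eqref{eq:thm_main_statement} becomes $\left(\sum_{u \in W} \T_u\right)\x^{w_0(\la)}$, which matches the left-hand side of \eqref{eq:thm_metaplectic_tokuyama_statement}. On the right-hand side, one needs the fact that the Demazure subcrystal $\Cr_{\la+\rho}^{(w_0)}$ is the full crystal $\Cr_{\la+\rho}$; this should follow directly from Definition \ref{def:Demazure_subcrystal} together with the standard property that the Demazure module associated to $w_0$ is the whole irreducible representation. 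Granting these two identifications, $\x^{-w_0(\rho)} \cdot \sum_{v \in \Cr_{\la+\rho}} G^{(n,\la+\rho)}(v) \x^{\wght(v)}$ is exactly the right-hand side of \eqref{eq:thm_metaplectic_tokuyama_statement}, completing the proof.

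Accordingly, the real work is all in Theorem \ref{THM:MAIN}, and I would organize its proof as an induction on the length of the beginning section $w$. The base case $w = \mathrm{id}$ should be a direct computation: $\sum_{u \le \mathrm{id}} \T_u = \T_{\mathrm{id}} = 1$, the left-hand side is just the monomial $\x^{w_0(\la)}$, and the Demazure crystal $\Cr_{\la+\rho}^{(\mathrm{id})}$ is the single highest-weight vertex, so the right-hand side reduces to $\x^{-w_0(\rho)} \cdot \x^{\wght(v_{\mathrm{high}})}$; matching these is a bookkeeping check about weights and the normalization of $G^{(n,\la+\rho)}$ on the highest-weight vertex. For the inductive step, write $w = \sigma_i w'$ where $w'$ is a shorter beginning section and $\ell(w) = \ell(w') + 1$. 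The key algebraic input is that $\sum_{u \le w} \T_u = (1 + \T_{\sigma_i} \text{ or } \T_{\sigma_i}\text{-type factor}) \sum_{u \le w'} \T_u$ — more precisely, the Bruhat interval below $w$ decomposes, and the sum of Demazure-Lusztig operators over it factors through application of the single operator $\T_{\sigma_i}$; this is a property of the $\T$'s that I would take from the identities recalled in Section \ref{sect:Demazure} (coming from \cite{cgp}). On the crystal side, the analogous input is that $\Cr_{\la+\rho}^{(w)}$ is obtained from $\Cr_{\la+\rho}^{(w')}$ by applying the $i$-th root operator (the Demazure crystal recursion, Definition \ref{def:Demazure_subcrystal}), and that the generating function $\sum_{v \in \Cr^{(\cdot)}} G^{(n,\la+\rho)}(v)\x^{\wght(v)}$ transforms under this operation precisely by the metaplectic Demazure-Lusztig operator $\T_{\sigma_i}$ acting via the Chinta-Gunnells action — up to the fixed twist by $\x^{-w_0(\rho)}$ (and possibly $\x^{w_0(\rho)}$ on the other side), which must be checked to commute appropriately or to be absorbed consistently.

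The main obstacle I anticipate is precisely this last compatibility: showing that one step of the Demazure-crystal recursion on the $G$-weighted generating function matches one application of the metaplectic $\T_{\sigma_i}$ operator in the Chinta-Gunnells action. This is where the metaplectic subtleties live — the Gelfand-Tsetlin coefficients $G^{(n,\la+\rho)}(v)$ involve Gauss sums depending on $n$ and on residues of the relevant entries modulo $n$, and the Chinta-Gunnells action is not the naive permutation action, so the root-operator step must reproduce the correct Gauss-sum combinatorics term by term. I expect this to require a careful analysis of a single $\sigma_i$-string in the crystal: grouping the vertices of $\Cr_{\la+\rho}^{(w)}$ into $i$-strings, computing how $G^{(n,\la+\rho)}$ and $\wght$ vary along a string, and checking that the resulting local generating function is exactly the image of the corresponding piece of $\Cr_{\la+\rho}^{(w')}$ under $\T_{\sigma_i}$. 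The non-metaplectic case of this ($n=1$) is essentially the content of the crystal-theoretic proofs of Tokuyama's theorem, so the novelty — and the difficulty — is entirely in tracking the $n$-dependence through the string analysis. A secondary technical point is verifying that the choice of favourite long word \eqref{eq:def_of_favourite_w0} makes every beginning section's Demazure crystal behave well under this recursion (i.e., that the chosen reduced word is compatible with the branching structure of type $A_r$ crystals), but this should be a structural fact about the long word rather than a genuine obstacle.
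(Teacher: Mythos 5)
Your first paragraph is exactly the paper's own deduction of Theorem \ref{thm:metaplectic_Tokuyama}: it is the case $k=r-1$ of Theorem \ref{THM:MAIN} (the paper writes $Tok_r^{(n)}=IW_{r,r-1}^{(n)}$), with $\{u\le w_0\}=W$ and $\Cr_{\la+\rho}^{(w_0)}=\Cr_{\la+\rho}$ because the condition $b_i(v)=0$ for $i>\ell(w_0)$ in Definition \ref{def:Demazure_subcrystal} is vacuous. The problem is with your proposed proof of Theorem \ref{THM:MAIN}, and it occurs at both of the ``key inputs'' of your inductive step.

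First, the operator identity $\sum_{u\le w}\T_u=(1+\T_{\sigma_i})\sum_{u\le w'}\T_u$ for $w=\sigma_iw'$ with $\ell(w)=\ell(w')+1$ is false in general. The Bruhat interval below $w$ is the \emph{non-disjoint} union of $\{u\le w'\}$ and $\sigma_i\{u\le w'\}$, and the operators satisfy a Hecke-type quadratic relation, so $1+\T_i$ is only quasi-idempotent; already in $S_3$ with $w_0=\sigma_1\cdot(\sigma_2\sigma_1)$ one gets $(1+\T_1)\sum_{u\le\sigma_2\sigma_1}\T_u=(1+v)+(1+v)\T_1+\T_2+\T_1\T_2+\T_2\T_1+\T_1\T_2\T_1\neq\sum_{u\in W}\T_u$. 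Nothing in Section \ref{sect:Demazure} supplies a one-reflection-at-a-time factorization. The factorization the paper actually uses (Lemma \ref{lem:Bruhat_shorterwords}) has a different shape: it peels off the whole parabolic sum $\sum_{u\le w_0^{(r-1)}}\T_u$ on the left and the chain of minimal coset representatives $1+\T_r+\T_r\T_{r-1}+\cdots+\T_r\cdots\T_{r-k}$ on the right, exploiting $\ell(u'v)=\ell(u')+\ell(v)$ for the parabolic decomposition. This is why the induction is organized by rank $r$, matching the branching of $\Cr_{\la+\rho}$ into type $A_{r-1}$ components, rather than by $\ell(w)$.

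Second, and more fundamentally, the exact compatibility you rely on --- that one step of the Demazure-crystal recursion transforms the $G$-weighted generating function precisely by $\T_{\sigma_i}$ --- does not hold on the nose, and no amount of single-$i$-string bookkeeping will make it hold. In the paper the corresponding statements $M_{r,k}$ and $N_{r,k}$ (Propositions \ref{prop:Mrk} and \ref{prop:Nrk}) are only congruences modulo the kernel of $\Dem_{w_0^{(r-1)}}$: when one computes $\T_r$ applied to the lower-rank expression and compares with the target sum, a genuine leftover polynomial $F_{\mu,a}$ appears, and the real content (Proposition \ref{PROP:F_ANNIH}, proved by a separate induction plus the rank-one computation of Appendix \ref{app:rankonecomp}) is that this error term is \emph{annihilated} by the full lower-rank Demazure operator, not that it vanishes termwise. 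Your plan has no mechanism for producing or absorbing this discrepancy, so the exact string-by-string match you propose to verify will fail for $n>1$. To salvage a length-style induction you would have to weaken each step to an identity modulo $\ker\Dem_{w_0^{(r-1)}}$ and then explain why the remaining operators kill the difference --- which is essentially the reduction $IW_{r,k}\Rightarrow M_{r,k}\Rightarrow N_{r,k}\Rightarrow\mathbf{F_r}$ that the paper carries out.
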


This special case of the identity is present when work of Brubaker-Bump-Friedberg-Hoffstein \cite{bbf-annals}, Chinta-Gunnells-Offen \cite{cg-jams,co-cs}, and McNamara \cite{mcnamara,mcnamara2} are combined, but Theorem \ref{THM:MAIN} provides a much more direct connection. In addition, as mentioned in \ref{subsubsection:BBLextend}, the operators 
\begin{equation}\label{eq:LHSops}
\sum _{u\leq w} \T _u
\end{equation}
are related to the construction of Iwahori-Whittaker functions; in this sense Theorem \ref{THM:MAIN} may be interpreted as a crystal description of Iwahori-Whittaker functions. We shall make the connection between Theorem \ref{THM:MAIN} and Whittaker and Iwahori-Whittaker functions more explicit in section \ref{section:Whittaker}.

\subsection{Methods and Outline}

We give an overview of the methods and structure of the proof of Theorem \ref{THM:MAIN}. %This section will also serve as an outline of the rest of the paper. 

Tokuyama's proof of the identity \eqref{eq:Tokuyama} uses Pieri rules, i.e. is by induction on the rank $r$ of the type $A_r$ of the root system $\Phi .$ Pieri rules are not available in the metaplectic setting, so instead we ``refine'' the induction. Theorem \ref{THM:MAIN} interprets Tokuyama's formula in type $A_r$ as a statement about the (favourite) long word $w_0^{(r)}.$ This is the following reduced decomposition of the long element.
\begin{equation*}
  w_0=w_0^{(r)}=\sigma _1\sigma _2\sigma _1\cdots \sigma _{r-1}\cdots \sigma _1\sigma _r\cdots \sigma _1.
 \end{equation*}
This word has the property that it begins with $w_0^{(r-1)},$ the favourite long word in type $A_{r-1}.$  Since Theorem \ref{THM:MAIN} formulates an identity for every beginning section of the word $w_0^{(r)},$ if we want to prove this statement by induction, the step from $A_{r-1}$ to $A_r$ is now not one step, but $r$ ``smaller'' steps. 

This is the main idea - the proof is in fact by induction; the main tools are branching properties of type $A$ highest weight crystals, and identities of Demazure(-Lusztig) operators. 

\subsubsection{Structure of the induction} 
The edges of a highest weight crystal $\Cr_{\la +\rho }$ of type $A_r$ are labelled by indices $1,2,\ldots , r.$ Removing the edges labelled with $r$ breaks the crystal into the disjoint union of highest weight crystals of type $A_{r-1}.$ The same is true for a Demazure crystal $\Cr_{\la +\rho }^{(w)}$ as long as $w$ is a beginning section of the favourite long word $w_0^{(r)}.$ This fact is crucial to the proof. Let $w$ be a beginning section of long word $w_0^{(r)}$ that is {\em{not}} a beginning section of $w_0^{(r-1)},$ i.e. has the form 
\begin{equation*}
 w=w_0^{(r-1)}\sigma _r\cdots \sigma _{r-k},
\end{equation*}
where $k=\ell(w)-\ell(w_0^{(r-1)})-1.$ Call the statement of Theorem \ref{THM:MAIN} for this particular $w$ and fixed $n$ (but for any $\la $) $IW_{r,k}^{(n)}.$ For $k=r-1,$ i.e. $w=w_0^{(r)},$ Theorem \ref{THM:MAIN} specializes to Theorem \ref{thm:metaplectic_Tokuyama}, and thus we use the notation $Tok_{r}^{(n)}=IW_{r,r-1}^{(n)}.$

It follows from the branching property of Demazure crystals mentioned above, as well as from a simple fact about the Bruhat order (Lemma \ref{lem:Bruhat_shorterwords}) that $IW_{r,k}^{(n)}$ can be reduced to $Tok_{r-1}^{(n)},$ and statements describing the action of simpler operators on a monomial. The full reduction argument will be explained later; for now we only say that in addition to $IW_{r,k}^{(n)}$ and $Tok_{r}^{(n)},$ some auxiliary statements will be phrased: $M_{r,k}^{(n)}$ and $N_{r,k}^{(n)},$ and (the special case) $N_{r,r-1}^{(n)}.$ The statement $N_{r,k}^{(n)},$ for example, concerns the action of the operator $T_rT_{r-1}\ldots T_{r-k}$ on a monomial. We show in Section \ref{chap:equivalent_reformulations} that to prove $IW_{r,k}^{(n)}$ for any $r$ and any $0\leq k<r$ it suffices to prove the statement $N_{r,r-1}^{(n)}$ for any $r.$ This reduction of $IW_{r,k}^{(n)}$ to $N_{r,r-1}^{(n)}$ essentially follows from the branching of Demazure crystals and Gelfand-Tsetlin coefficients, and some properties of the metaplectic Demazure-Lusztig operators.

By the end of Section \ref{chap:equivalent_reformulations}, the only thing remaining from the proof of Theorem \ref{THM:MAIN} is to prove $N_{r,r-1}^{(n)},$ a statement about the action of the operator $T_rT_{r-1}\ldots T_1$ on a monomial. This statement is proved by a (somewhat technical) induction in Section \ref{chap:technical_induction}, with a rank one auxiliary computation included in Appendix \ref{app:rankonecomp}. 

\subsubsection{Outline}

The necessary background is summarized in three sections. Section \ref{sect:Demazure} explains the Chinta-Gunnells action and metaplectic Demazure operators; Section \ref{section:HWtCrystalsandGTpatterns} describes parameterizations and branching of type $A$ highest weight crystals and contains the definition of Gelfand-Tsetlin coefficients; Section \ref{section:Tokuyama} contains the re-phrasal of Tokuyama's result into the language of Demazure-Lusztig operators and crystals. 

The proof of Theorem \ref{THM:MAIN} spans three sections. Section \ref{section:DemCrystals} is preparation: it defines Demazure crystals and examines the Gelfand-Tsetlin coefficients on these in terms of the branching properties discussed in Section \ref{section:HWtCrystalsandGTpatterns}. Some helpful conventions, designed to make the notation of the proof lighter, are also introduced here. Section \ref{chap:equivalent_reformulations} contains the proof of Theorem \ref{THM:MAIN} through reduction to a sequence of simpler statements (from $IW_{r,k}^{(n)}$ to $N_{r,r-1}^{(n)}$), as explained above. The final statement of the sequence, $N_{r,r-1}^{(n)}$ is then proved in Section \ref{chap:technical_induction} (and Appendix \ref{app:rankonecomp}). 

Section \ref{section:Whittaker} relates Theorem \ref{THM:MAIN} to metaplectic Whitaker functions and Iwahori-Whittaker functions. The constructions mentioned in the Introduction are recalled in a little more detail to demonstrate how the formulas line up with the expressions in Theorem \ref{THM:MAIN}.

\section{Metaplectic Demazure and Demazure-Lusztig operators}\label{sect:Demazure}

Theorem \ref{THM:MAIN} describes the action of metaplectic Demazure-Lusztig operators on a monomial. As mentioned in \ref{subsubsection:intro_dem} the metaplectic analogues of the classical Demazure and Demazure-Lusztig operators were introduced in \cite{cgp}. In this section, we briefly review the results of that paper, specializing to type $A$ root systems. The definition, elementary properties, and the identities Theorem \ref{THM:LONG_WORD} and Theorem \ref{THM:T_SUM} will be necessary for the proof. The metaplectic operators are built on the Chinta-Gunnells action; we recall the definition in Section \ref{subsect:CGaction}. We restrict our attention to type $A,$ hence some of the machinery that is necessary in \cite{cgp} can be spared. 

%%%%%%%%%%%%%%
%%%NOTATION%%%
%%%%%%%%%%%%%%

\subsection{Notation}\label{subs:CGDemNotation}

The following is standard notation for root systems and the Weyl group. The reader may refer to \cite{humph} as a source. 
%Here we define notation and discuss ingredients of definitions below, in particular Definitions \ref{def:action}, \ref{def:dem} and \ref{def:dl}. We use  for facts about  

Let $\Phi$ be an irreducible reduced root system of type $A_r$ with Weyl
group $W$. We may view $\Phi $ as embedded into $\R^{r+1}.$ Let $e_1,\ldots ,e_{r+1}$ denote the standard basis of $\R^{r+1}$, and take
$$\Phi =\{e_i-e_j\in \R^{r+1}\mid 1\leq i\neq j\leq r+1\}.$$
Let $\Phi = \Phi^+ \cup \Phi^{-}$ be the decomposition into positive and negative roots ($e_i-e_j\in \Phi^+$ if $i<j$). Let
$\{\alpha_1, \alpha_2, \ldots, \alpha_r\}$ be the set of simple roots; $\alpha _i=e_i-e_{i+1}$ ($1\leq i\leq r$).
%$$\Delta =\{\alpha _i=e_i-e_{i+1}\mid 1\leq i\leq r\},$$
and let $\sigma_i$ be the Weyl group element corresponding to the
reflection through the hyperplane perpendicular to $\alpha_i$. 
Set
\begin{equation}\label{def:Phi_w} 
\Phi(w)=\{\alpha\in\Phi^+: w(\alpha)\in\Phi^-\}.
\end{equation}
Consider the weight lattice
$$\La =\{\la =(\la _1,\la _2,\ldots ,\la _r,\la _{r+1})\in \Z ^{r+1}\};$$
then $\Lambda \subset \R^{r+1}$ contains $\Phi$ as a subset.
Let $\polyring=\C[\Lambda]$ be the ring of Laurent polynomials on
$\Lambda.$ Let $\ffield$ be the field of
fractions of $\polyring.$ The action of $W$ on the lattice $\Lambda$
induces an action of $W$ on $\ffield$: we put
\begin{equation}
  \label{eq:W_on_K}
  (w, x^\lambda)\longmapsto x^{w\lambda} =:w.x^{\lambda },
\end{equation}
and then extend linearly and multiplicatively to all of $\ffield$.
We denote this action using the lower dot $(w,f)\longmapsto w.f$ (to distinguish it from the metaplectic $W$-action on $\ffield$ constructed below in \eqref{eq:metaWnotation}) and refer to this as the ``nonmetaplectic'' group action. 

Let $\x=(x_1,\ldots ,x_r,x_{r+1}).$ We may identify $\ffield $ with $\C(x_1,\ldots ,x_{r+1})=\C(\x)$ by writing $x_i=\x^{e_i}.$
In general, for $\la =\sum _i \lambda _i e_i \in \La $ as above, we write $\x ^{\la }=x_1^{\la _1}\cdot x_2^{\la _2}\cdots x_{r+1}^{\la _{r+1}}.$
Note that the Weyl group $W\cong S_{r+1},$ and the nonmetaplectic action \eqref{eq:W_on_K} of $\sigma _i$ on $\C(\x)$ is by swapping $x_i$ and $x_{i+1}.$

The definition of the Chinta-Gunnells action in \cite{cgp} requires a $W$-invariant $\Z$-valued quadratic form $Q$ defined on $\Lambda$, which defines a bilinear form $B (\alpha ,\beta)=Q (\alpha +\beta) - Q (\alpha)-Q(\beta)$. Fix a positive integer $n$; $n$ determines a
collection of integers $\{m (\alpha) : \alpha \in \Phi\}$ by
\begin{equation}\label{eqn:defofm}
  m(\alpha)=n/\gcd(n, Q (\alpha )),
\end{equation}
and a sublattice $\Lambda_0\subset \Lambda$ by 
\begin{equation}\label{eqn:defoflambda0}
\Lambda_{0} = \{\lambda \in \Lambda : \text{$B (\alpha ,\lambda) \equiv 0
\bmod n$ for all simple roots $\alpha$}\}.
\end{equation}

In type $A,$ we may give an explicit example of a $W\cong S_{r+1}$-invariant, integer-valued quadratic form. For $\la \in \La ,$ (and $c$ arbitrary), let 
\begin{equation}\label{eq:Qdef}
Q(\la)= -\sum _{h<j} \la _h\la _j -c\cdot \left(\sum _{h=1}^{r+1} \la _h\right)^2 =  -(1+2c)\cdot \sum _{h<j} \la_h\la _j -c\cdot \sum _{h=1}^{r+1} \la _h^2.
\end{equation}

%\begin{equation}\label{eq:Qdef}
%\begin{split}
%Q(\la)= & -\sum _{h<j} \la _h\la _j -c\cdot \left(\sum _{h=1}^{r+1} \la _h\right)^2\\
% = & -(1+2c)\cdot \sum _{h<j} \la_h\la _j -c\cdot \sum _{h=1}^{r+1} \la _h^2.
%\end{split}
%\end{equation}
Then certainly $Q$ (and thus $B$) are integer valued on $\La $ and $n\La \subseteq \La _0.$ Furthermore, it is easy to check that $Q(\alpha _i)=1$ and $B(\alpha _i,\la )=\la _{i}-\la _{i+1}.$ This implies that the sublattice $\La _0$ is 
\begin{equation}\label{eqn:lambda0_intypeA}
\Lambda_{0} = \{\lambda \in \Z^{r+1} : \text{$\la_i \equiv \la _j\ \bmod n$ for all $1\leq i,j\leq r+1$}\}.
\end{equation}

Since all roots are of the same length, $m(\alpha )=n/\gcd(n,Q(\alpha ))$ is the same for every root. In particular, with the choice of $Q$ above, $Q(\alpha )=1$ and hence $m(\alpha )=n.$ 

\begin{remark}\label{LEMMA:SUBLATTICE}
  For any simple root $\alpha$, we have $m(\alpha)\alpha=n\alpha\in  \Lambda_0$. (This is a special case of \cite[Lemma 1]{cgp}.)
\end{remark}

\subsection{The Chinta-Gunnells action}\label{subsect:CGaction}

The Chinta-Gunnells action is a ``metaplectic'' action of a Weyl group on a ring of rational functions; the action depends on the metaplectic degree. We use the same definition as in \cite{cgp}, which in turn is the same as the one defined in Chinta-Gunnells \cite{cg-jams} and specializes to the type $A$ action in Chinta-Offen \cite{co-cs}. 

%We begin by recalling the definition in \cite[Section 2]{cgp}, then reformulate it to reflect the choices made above that are specific to type $A.$ This makes the explicit computations in Section \ref{chap:equivalent_reformulations} and Section \ref{chap:singleTstring} more transparent. 

Following \cite[Section 2]{cgp}, let $\lambda \mapsto \bar \lambda $ be the projection $\Lambda
\rightarrow \Lambda /\Lambda_{0}$ and $(\Lambda/\Lambda_0)^*$ be the
group of characters of the quotient lattice.
Any $\xi\in (\Lambda/\Lambda_0)^*$ induces a field automorphism of
$\ffield/\C$ by setting $\xi(x^\lambda)=\xi(\bar\lambda)\cdot x^\lambda$
for $\lambda\in\Lambda.$ This leads to the direct sum decomposition
\begin{equation}
  \label{eq:direct_sum}
  \ffield=\bigoplus_{\bar\lambda\in \Lambda/\Lambda_0}
  \ffield_{\bar\lambda}
\end{equation}
where $\ffield_{\bar\lambda}=\{f\in\ffield: \xi(f)=\xi(\bar\lambda)\cdot f
\mbox{ for all $\xi\in(\Lambda/\Lambda_0)^* $}\}.$

The next ingredient %of the Chinta-Gunnells action %Definition \ref{def:action} 
is a set of complex parameters $v,g_0, \ldots, g_{n-1}$ 
satisfying
\begin{equation}
  \label{eq:qt}
  g_0=-1 \mbox{  and  } g_ig_{n-i}=v^{-1}\mbox{  for  } i=1,\ldots, n-1.
\end{equation}
For all other $j$ we define $g_{j}:=g_{r_{n} (j)}$, where $0\leq r_{n} (j)<n-1$ denotes the remainder upon dividing $j$ by $n$. The parameters $g_0, \ldots, g_{n-1}$ will be chosen in section \ref{subsubsect:Gauss_sums} to be Gauss sums. 

We may now recall the definition of the metaplectic action of the Weyl group $W$ on $\ffield.$

\begin{definition}\label{def:action}
\cite[Section 2, (7)]{cgp} For $f\in \ffield_{\bar\lambda}$ and the
generator $\sigma_\alpha\in W$ corresponding to a simple root $\alpha$, define
 \begin{equation}\label{eq:def:action}
  \begin{split}
\sigma _i(f)=\frac{\sigma _i. f}{1-vx^{m(\alpha _i)\alpha _i}}&\cdot 
\left[x^{-r_{m(\alpha _i)}\left(-\frac{B(\la,\alpha _i)}{Q(\alpha
        _i)}\right)\cdot \alpha _i}\cdot (1-v)\right.\\
&\left. {}-v\cdot g_{Q(\alpha _i)-B(\la ,\alpha _i)}\cdot 
x^{(1-m(\alpha _i))\alpha _i}\cdot (1-x^{m(\alpha _i)\alpha _i})\right] 
  \end{split}
 \end{equation}
where $\lambda$ is any lift of $\bar \lambda$ to $\Lambda$.
Here the quantity in brackets depends only on $\bar
\lambda$.  We extend the definition of $\sigma_\alpha$ to $\ffield$ by
additivity.  Then (\ref{def:action}) extends to an action of the full Weyl
group $W$ on $\ffield$, which we denote 
\begin{equation}\label{eq:metaWnotation}
(w,f) \longmapsto w (f).
\end{equation}
\end{definition}

Using the notation specific to type $A,$ Definition \ref{def:action} can be rewritten for $f=\x^{\la }$ as follows.
\begin{equation}\label{eq:def:typeAaction}
  \begin{split}
\sigma _i(f)=\frac{\sigma _i. f}{1-v\cdot \left(\frac{x_i}{x_{i+1}}\right)^{n}}&\cdot 
\left[\left(\frac{x_i}{x_{i+1}}\right)^{-r_{n}\left(\la _{i+1}-\la _i\right)}\cdot (1-v)\right.\\
&\left. {}-v\cdot g_{1+\la _{i+1}-\la _i}\cdot 
\left(\frac{x_i}{x_{i+1}}\right)^{1-n}\cdot \left(1-\left(\frac{x_i}{x_{i+1}}\right)^{n}\right)\right]    .
  \end{split}
 \end{equation}

%\blue{Should I include the following?: (For the explicit translation between our notation and that of Chinta-Gunnells \cite{cg-jams}, see Section \ref{subs:CGaction_all_actions_equal}.)}

The following Lemma is crucial in computations; it is used repeatedly, if implicitly, in the proof of Theorem \ref{THM:MAIN}. It relies on the fact that the quantity in brackets in \eqref{def:action} depends only on $\bar\lambda$ and not $\lambda .$

\begin{lemma}
  \label{lemma:h_exchange}
 \cite[Lemma 2]{cgp} Let $f\in \ffield$ and $h\in \ffield_0$.  Then for any $w\in W$,
$$w(hf)=(w.h)\cdot w(f).$$
Here $w.h$ means the non-metaplectic action, while $\cdot$ denotes multiplication in $\ffield$.\hfill \qed
\end{lemma}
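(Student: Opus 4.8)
The plan is to reduce the identity to the case where $w=\sigma_i$ is a simple reflection, and then read it off directly from the defining formula \eqref{eq:def:action}, using in an essential way that the bracketed factor there depends only on $\bar\lambda$. Two preliminary facts are needed. First, the non-metaplectic action \eqref{eq:W_on_K} respects the decomposition \eqref{eq:direct_sum}: for every $w\in W$ one has $w.\ffield_{\bar\lambda}=\ffield_{\overline{w\lambda}}$, and in particular $w.\ffield_0=\ffield_0$. This holds because the sublattice $\Lambda_0$ of \eqref{eqn:defoflambda0} is $W$-stable — if $B(\alpha_i,\mu)\equiv 0\bmod n$ for all simple roots $\alpha_i$, then $B(\alpha_i,w\mu)=B(w^{-1}\alpha_i,\mu)\equiv 0\bmod n$, since $w^{-1}\alpha_i$ lies in the root lattice and $B$ is bi-additive and $W$-invariant — so each character $\xi\in(\Lambda/\Lambda_0)^*$ intertwines $w$ with a character $\xi^w$ obtained by precomposing with the automorphism of $\Lambda/\Lambda_0$ that $w$ induces. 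Second, since each $\xi$ is a field automorphism of $\ffield$, the decomposition \eqref{eq:direct_sum} is multiplicative: $\ffield_{\bar\lambda}\cdot\ffield_{\bar\mu}\subseteq\ffield_{\overline{\lambda+\mu}}$, and in particular $\ffield_0\cdot\ffield_{\bar\lambda}\subseteq\ffield_{\bar\lambda}$. Thus multiplying a homogeneous $f\in\ffield_{\bar\lambda}$ by $h\in\ffield_0$ leaves it in the same graded component.

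Granting these, I would argue as follows. Since the metaplectic action of $\sigma_i$ is defined by additivity over \eqref{eq:direct_sum} and multiplication is $\C$-linear, it suffices to treat $f\in\ffield_{\bar\lambda}$ homogeneous. By the second fact $hf\in\ffield_{\bar\lambda}$, so $\sigma_i(hf)$ is computed from the single instance of \eqref{eq:def:action} attached to $\bar\lambda$, and the factor in brackets occurring there is literally the same one that occurs in $\sigma_i(f)$, because it depends only on $\bar\lambda$; the denominator $1-vx^{m(\alpha_i)\alpha_i}$ is likewise unchanged. Because the non-metaplectic action is multiplicative, $\sigma_i.(hf)=(\sigma_i.h)\cdot(\sigma_i.f)$, and factoring $\sigma_i.h$ out of the formula gives $\sigma_i(hf)=(\sigma_i.h)\cdot\sigma_i(f)$, which is the assertion for a generator.

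For general $w$, write $w=\sigma_{i_1}\cdots\sigma_{i_\ell}$ as any word in the generators and induct on $\ell$. If $w=\sigma_{i_1}w'$, then $w'.h\in\ffield_0$ by the first preliminary fact, so applying the inductive hypothesis and then the generator case,
\[
w(hf)=\sigma_{i_1}\bigl((w'.h)\cdot w'(f)\bigr)=\bigl(\sigma_{i_1}.(w'.h)\bigr)\cdot\sigma_{i_1}\bigl(w'(f)\bigr)=(w.h)\cdot w(f).
\]
The only point that requires care — and it is the step I would expect to need the most attention — is the bookkeeping of the grading: one must verify at each stage of the induction that the factor playing the role of $h$ stays inside $\ffield_0$, so that the generator step genuinely applies. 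This is precisely the content of the $W$-stability of $\Lambda_0$ established above; once that is in place, the rest is a direct manipulation of \eqref{eq:def:action} with no substantive obstacle.
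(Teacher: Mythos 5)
Your argument is correct. The paper itself offers no proof of this lemma — it is stated with a citation to \cite[Lemma 2]{cgp} — and your route (reduce to a simple reflection via additivity over the grading \eqref{eq:direct_sum}, use that the bracket in \eqref{eq:def:action} depends only on $\bar\lambda$ together with multiplicativity of the grading, then induct on a word for $w$ using the $W$-stability of $\Lambda_0$) is exactly the standard argument given in that reference, with the two preliminary facts you isolate being precisely the points that need checking.
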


The significance of Lemma \ref{lemma:h_exchange} is due to the fact that the action of $W$ on $\ffield$
defined by \eqref{def:action}, though $\C$-linear, is {\em{not}} by
endomorphisms of that ring, i.e.~it is not in general multiplicative.
The point of Lemma \ref{lemma:h_exchange} is that if we have a product
of two terms $hf$, the first of which satisfies $h\in \ffield_{0}$ (e.g. the exponents of $h$ are divisible by $n$), then we can apply $w$ to the product $hf$ by performing the usual permutation action on $h$ and then acting on $f$ 
by the metaplectic $W$-action.

The following lemma shows a symmetric monomial with respect to $\sigma _i.$ It will be of use in computations. We omit the (straightforward) proof.

\begin{lemma}\label{lem:invariant_monom}
 $\sigma _i (x_i^{a+1}x_{i+1}^{a+n})=x_i^{a+1}x_{i+1}^{a+n}$ for every $n.$
\end{lemma}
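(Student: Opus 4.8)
The plan is to verify the identity $\sigma_i(x_i^{a+1}x_{i+1}^{a+n}) = x_i^{a+1}x_{i+1}^{a+n}$ by a direct application of the type $A$ formula \eqref{eq:def:typeAaction}, together with the exchange Lemma \ref{lemma:h_exchange}. First I would write the monomial as $f = x_i^{a+1}x_{i+1}^{a+n}$ and note that this equals $h\cdot x_i x_{i+1}^n$ where $h = (x_ix_{i+1})^a$; but a cleaner route is simply to apply \eqref{eq:def:typeAaction} with $\lambda$ chosen so that $\lambda_i = a+1$ and $\lambda_{i+1} = a+n$, so that $\lambda_{i+1} - \lambda_i = n-1$.

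The key computation is then to evaluate the two arithmetic inputs in \eqref{eq:def:typeAaction}: the residue $r_n(\lambda_{i+1}-\lambda_i) = r_n(n-1) = n-1$, and the index $1 + \lambda_{i+1} - \lambda_i = n$, so that $g_{1+\lambda_{i+1}-\lambda_i} = g_n = g_{r_n(n)} = g_0 = -1$ by the convention following \eqref{eq:qt}. Writing $y = x_i/x_{i+1}$ for brevity, the ordinary permutation action gives $\sigma_i.f = x_i^{a+n}x_{i+1}^{a+1} = f\cdot y^{-(n-1)} = f\cdot y^{1-n}$. Substituting into \eqref{eq:def:typeAaction},
\begin{equation*}
\sigma_i(f) = \frac{f\cdot y^{1-n}}{1-v y^n}\left[y^{-(n-1)}(1-v) - v\cdot(-1)\cdot y^{1-n}(1-y^n)\right].
\end{equation*}
Both terms in the bracket carry a factor $y^{1-n} = y^{-(n-1)}$, which combines with the $y^{1-n}$ outside the bracket to give $y^{2(1-n)}$ times $f$. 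I would then expand the bracket: $y^{-(n-1)}[(1-v) + v(1-y^n)] = y^{-(n-1)}[(1-v) + v - v y^n] = y^{-(n-1)}[1 - v y^n]$. Hence
\begin{equation*}
\sigma_i(f) = \frac{f\cdot y^{1-n}\cdot y^{-(n-1)}(1 - v y^n)}{1 - v y^n} = f\cdot y^{2(1-n)}\cdot\frac{1-vy^n}{1-vy^n} = f\cdot y^{2-2n}.
\end{equation*}

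This is where I expect the one genuine subtlety: the naive substitution above appears to leave a stray factor $y^{2-2n}$, which would contradict the claim unless $n=1$. The resolution is that one must pick the lift $\lambda$ of $\bar\lambda$ correctly — the bracket depends only on $\bar\lambda$, but the prefactor $\sigma_i.f/(1-vy^n)$ uses the actual monomial $f$, and the $x^{(1-m(\alpha_i))\alpha_i}$ and $x^{-r_m(\cdots)\alpha_i}$ shifts are built to cancel against the natural action precisely on the correct residue. So the real work is to be careful that the exponent bookkeeping in \eqref{eq:def:typeAaction} is applied consistently: the term $(x_i/x_{i+1})^{1-n}$ in the formula is a fixed monomial shift, not something to be multiplied again by $\sigma_i.f$'s own imbalance. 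Concretely, I would recompute treating $\sigma_i.f = x_i^{a+n}x_{i+1}^{a+1}$ as the literal numerator and track each power of $x_i$ and $x_{i+1}$ separately rather than collapsing to $y$, checking that the total exponent of $x_i$ and of $x_{i+1}$ in $\sigma_i(f)$ come out to $a+1$ and $a+n$ respectively after the factor $(1-vy^n)$ cancels. The main obstacle is thus purely this careful exponent accounting; once the powers are tracked honestly the $(1-v)$ and $-v g_0(1-y^n) = v(1-y^n)$ contributions telescope to $(1-vy^n)$, cancelling the denominator and leaving exactly $f$. Alternatively, and perhaps most cleanly, one can invoke Lemma \ref{lemma:h_exchange} with $h = (x_ix_{i+1})^{a}\in\ffield_0$ when $n\mid$ nothing forces it — so instead factor $f = (x_ix_{i+1})^{a+1}\cdot x_{i+1}^{n-1}$ and observe $(x_ix_{i+1})^{a+1}$ is $\sigma_i$-invariant under the ordinary action while reducing the claim to the single case $a = 0$, $f = x_i x_{i+1}^n$, which is the cleanest instance of \eqref{eq:def:typeAaction} to check by hand.
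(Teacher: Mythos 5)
The paper omits the proof of this lemma as ``straightforward,'' and your chosen route --- plugging $f=x_i^{a+1}x_{i+1}^{a+n}$ directly into \eqref{eq:def:typeAaction} with $\la_{i+1}-\la_i=n-1$, $r_n(n-1)=n-1$, and $g_{1+\la_{i+1}-\la_i}=g_n=g_0=-1$ --- is exactly the intended one. Your simplification of the bracket to $y^{1-n}\bigl[(1-v)+v(1-y^n)\bigr]=y^{1-n}(1-vy^n)$ is also correct. The problem is a single sign error in the prefactor: since $f=x_i^{a+1}x_{i+1}^{a+n}$ and $\sigma_i.f=x_i^{a+n}x_{i+1}^{a+1}$, you have $\sigma_i.f=f\cdot y^{\,n-1}$, not $f\cdot y^{1-n}$ (the swap \emph{raises} the exponent of $x_i$ by $n-1$). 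With the correct exponent the computation closes immediately:
\begin{equation*}
\sigma_i(f)=\frac{f\cdot y^{n-1}}{1-vy^n}\cdot y^{1-n}(1-vy^n)=f,
\end{equation*}
with no stray factor and no $n=1$ caveat. Consequently, the entire paragraph about the ``one genuine subtlety'' is a misdiagnosis of your own arithmetic slip: when $f$ is the monomial $\x^{\la}$ there is no choice of lift to make ($\la$ \emph{is} the lift), and \eqref{eq:def:typeAaction} as written already produces the invariance. You do eventually assert that honest exponent tracking yields $f$, which is true, but as submitted the proof never actually carries out a correct computation.

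One further caution about your closing alternative: $(x_ix_{i+1})^{a+1}$ does \emph{not} in general lie in $\ffield_0$, since $\ffield_0$ requires $B(\alpha_j,\mu)\equiv 0 \bmod n$ for \emph{all} simple roots, and for $j=i\pm 1$ one gets $\pm(a+1)$, which need not vanish mod $n$. So Lemma \ref{lemma:h_exchange} does not literally apply to that factorization. The reduction to $a=0$ is still salvageable because the $\sigma_i$-action \eqref{eq:def:typeAaction} depends on $\la$ only through $\la_i-\la_{i+1}\bmod n$ and through $\sigma_i.\x^{\la}$, so multiplying by any monomial with equal $x_i$- and $x_{i+1}$-exponents does commute with $\sigma_i$ in the required sense --- but that is an observation about the formula for the single generator $\sigma_i$, not an instance of Lemma \ref{lemma:h_exchange}. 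Given how short the direct computation is once the exponent is fixed, the reduction buys you nothing.
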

%\begin{proof}
%This can be proved by a simple computation. By \eqref{eq:def:typeAaction} we have 
%\begin{equation*}
%\begin{split}
%\sigma _i(x_i^{a+1}x_{i+1}^{a+n})=\frac{x_i^{a+n}\cdot x_{i+1}^{a+1}}{1-t^n\cdot \left(\frac{x_i}{x_{i+1}}\right)^{n}}&\cdot 
%\left[\left(\frac{x_i}{x_{i+1}}\right)^{-r_{n}(n-1)}\cdot (1-t^n)\right.\\
%&\left. {}-t^n\cdot g_{n}\cdot 
%\left(\frac{x_i}{x_{i+1}}\right)^{1-n}\cdot \left(1-\left(\frac{x_i}{x_{i+1}}\right)^{n}\right)\right]    
%  \end{split}
% \end{equation*}
%Here $-r_{n}(n-1)=1-n$ and $g_n=-1$ for any $n,$ hence 
%\begin{equation*}
%\begin{split}
%\sigma _i(f) & =\frac{x_i^{a+n}\cdot x_{i+1}^{a+1}}{1-t^n\cdot \left(\frac{x_i}{x_{i+1}}\right)^{n}}\cdot 
%\left[\left(\frac{x_i}{x_{i+1}}\right)^{1-n}\cdot (1-t^n)+t^n\cdot 
%\left(\frac{x_i}{x_{i+1}}\right)^{1-n}\cdot \left(1-\left(\frac{x_i}{x_{i+1}}\right)^{n}\right)\right]    \\
% & =\frac{x_i^{a+n}\cdot x_{i+1}^{a+1}}{1-t^n\cdot \left(\frac{x_i}{x_{i+1}}\right)^{n}}\cdot 
%\left(\frac{x_i}{x_{i+1}}\right)^{1-n}\cdot \left[(1-t^n)+t^n \cdot \left(1-\left(\frac{x_i}{x_{i+1}}\right)^{n}\right)\right] \\
% & =\frac{x_i^{a+n}\cdot x_{i+1}^{a+1}}{1-t^n\cdot \left(\frac{x_i}{x_{i+1}}\right)^{n}}\cdot 
%\left(\frac{x_i}{x_{i+1}}\right)^{1-n}\cdot \left[1-t^n\cdot \left(\frac{x_i}{x_{i+1}}\right)^{n}\right] \\
% & = x_i^{a+n}\cdot x_{i+1}^{a+1}\cdot \left(\frac{x_i}{x_{i+1}}\right)^{1-n} \\
% & = x_i^{a+1}x_{i+1}^{a+n}.
%\end{split}
%\end{equation*}
%This completes the proof.
%\end{proof}

%In the next section, we make the choice of the parameters $v,g_0, \ldots, g_{n-1}$ explicit. 

\subsection{Gauss sums}\label{subsubsect:Gauss_sums}
The complex parameters $v, g_0, \ldots, g_{n-1}$ of the Chinta-Gunnells action are chosen to be Gauss sums in applications. Similar Gauss sums ($g^{\flat }$ and $h^{\flat }$) are used in \cite{bbf-wmdbook} to define Gelfand-Tsetlin coefficients on a crystal graph (see section \ref{section:HWtCrystalsandGTpatterns}).

We make the choice of parameters explicit here. We start by describing the functions $g^{\flat }$ and $h^{\flat },$ following \cite[Chapter 1]{bbf-wmdbook} for notation and definitions. We will then choose the parameters $v, g_0, \ldots, g_{n-1}$ to satisfy the conditions of \eqref{eq:qt}. For facts about the power residue symbol we refer the reader to \cite{bbf-wmd2}.

\subsubsection{Notation}
Let $F$ be an algebraic number field containing the group $\mu _{2n}$ of $2n$-th roots of unity. Let $S$ be a finite set of places of $F$,  large enough that it contains all the places that are Archimedean or ramified over $\Q,$ and the ring of $S$-integers $\oo _S=\{x\in F\mid |x|_v\leq 1\ \text{for}\ v\notin S \}$ is a principal ideal domain. Let $\psi $ be a character on $F_S$ of conductor $\oo_S.$ For any $m,c\in \oo_S,$ $c\neq 0,$ consider the $n$-th power residue symbol $\resn{m}{c}.$ Recall that $\resn{m}{c}$ is zero unless $m$ is prime to $c.$ It is multiplicative, i. e. $\resn{m}{c}\cdot \resn{m}{b}=\resn{m}{bc}.$
%$$\resn{m}{c}\cdot \resn{m}{b}=\resn{m}{bc}.$$
If $p$ is a prime and $m$ is coprime to $p,$ then $\resn{m}{p}$ is the element of $\mu _n$ satisfying $\resn{m}{p}\equiv m^{\frac{\N p-1}{n}} \ \bmod p .$
%$$\resn{m}{p}\equiv m^{\frac{\N p-1}{n}} \ \bmod p .$$

With the notation above, define the Gauss sum 
\begin{equation}\label{eq:def_of_g(m,c)}
 g(m,c)=\sum _{a\ \bmod \ c}\resn{a}{c}\psi \left(\frac{am}{c}\right).
\end{equation}

Fix a $p$ prime in $\oo _S,$ and let $q$ be the cardinality of the residue field $\oo_S/p\oo_S.$ We assume $q\equiv 1$ modulo $2n.$ Define $g(a)=g(p^{a-1},p^{a})$ and $h(a)=g(p^{a},p^{a})$ for any $a>0.$ In this case we have 
$$g(a)=\sum _{b\ \bmod \ p^a}\resn{b}{p^a}\psi \left(\frac{b}{p}\right)=q^{a-1}\cdot \sum _{b\ \bmod \ p}\resn{b}{p}^a\psi \left(\frac{b}{p}\right)$$
and 
$$h(a)=\sum _{b\ \bmod \ p^a}\resn{b}{p^a}\psi \left(b\right)=q^{a-1}\cdot \sum _{b\ \bmod \ p}\resn{b}{p}^a\cdot 1=\left\lbrace \begin{array}{ll}
0 & n\!\!\!\not| a;\\
(q-1)\cdot q^{a-1} & n|a. 
\end{array}
\right.$$

\subsubsection{Choice of parameters}\label{subsubsect:gflathflatdef}
We are ready to define the functions $g^{\flat }$ and $h^{\flat }.$ These appear in Section \ref{section:HWtCrystalsandGTpatterns} in the definition of Gelfand-Tsetlin coefficients, and the proof of Theorem \ref{THM:MAIN} depends on computations that use $g^{\flat }$ and $h^{\flat }.$ 
Let 
\begin{equation}\label{eq:def_of_g_flath_flat}
 g^{\flat }(a)=q^{-a}\cdot g(a)\hskip .5 cm \text{and} \hskip .5 cm h^{\flat }(a)=q^{-a}\cdot h(a).
\end{equation}
The following identities imply that the value of both $g^{\flat }(a)$ and $h^{\flat }(a)$ only depend on the residue of $a$ modulo $n.$ 
\begin{equation}\label{eq:h_and_g_flatdef}
h^{\flat }(a)=\left\lbrace \begin{array}{ll}
0 & n\!\!\!\not| a;\\
1-\frac{1}{q} & n|a. 
\end{array}
\right. \hskip.5 cm \text{ and }\hskip.5 cm g^{\flat }(a)=q^{-1}\cdot \sum _{b\ \bmod \ p}\resn{b}{p}^a\psi \left(\frac{b}{p}\right).
\end{equation}
%and 
%\begin{equation}\label{eq:gflatdef}
%g^{\flat }(a)=q^{-1}\cdot \sum _{b\ \bmod \ p}\resn{b}{p}^a\psi \left(\frac{b}{p}\right).
%\end{equation}

If $a$ is divisible by $n$ then 
\begin{equation}\label{eq:gflat0}
 g^{\flat }(a)=-q^{-1},
\end{equation}
and if $0<a<n$ then 
\begin{equation}\label{eq:gflat_identity}
 g^{\flat }(a)\cdot g^{\flat }(n-a)=q^{-1}.
\end{equation}
%\red{NOTE TO SELF: PROVE THIS HERE MAYBE}

Recall the conditions \eqref{eq:qt} imposed on the parameters $v,g_0,\ldots ,g_{n-1}$. The parameters must satisfy $g_0=-1$ and $g_ig_{n-i}=v^{-1}$ for $1\leq i\leq n-1.$ We can choose these parameters by modifying the functions $g^{\flat}$ and $h^{\flat }.$ Take $v=q^{-1}$ and 
\begin{equation}\label{eq:parameterdef}
  g_i=v^{-1}\cdot g^{\flat }(i)=q\cdot g^{\flat }(i)=\sum _{b\ \bmod \ p}\resn{b}{p}^i\psi \left(\frac{b}{p}\right)\mbox{  for  } i=1,\ldots, n-1.
\end{equation}
Then \eqref{eq:gflat0} implies $g_0=q\cdot (-q^{-1})=-1$ and\eqref{eq:gflat_identity} implies 
$$g_ig_{n-i}=v^{-2}\cdot g^{\flat }(i)\cdot g^{\flat }(n-i)=v^{-2}\cdot v=v^{-1}.$$

%We pause to compare this choice with the literature. 
%
%The definition of ${\mathfrak{g}}^{\psi }(i)$ on page $425$ of Chinta-Offen \cite{co-cs} is almost the same as that of $g_i$ above; the only difference is that in our notation, ${\mathfrak{g}}^{\psi }(i)$ corresponds to the character $\psi ^{-1}.$ This implies ${\mathfrak{g}}^{\psi }(i)=g_{-i}.$
%
%\blue{Possibly modify the below to omit JAms reference, and the Claim below it as well. Deending on what remains of comparing actions.}
%
%In Chinta-Gunnells \cite{cg-jams}, the parameters $\gamma (i)$ (indexed by integers modulo $n$) have a slightly different condition imposed on them:
%\begin{equation}\label{eq:gamma_conditions}
% \gamma (0)=-1  \hskip .5 cm \text{and} \hskip .5 cm   \gamma (i)\cdot \gamma (n-i)=q^{-1}.
%\end{equation}
%Notice that by \eqref{eq:gflat0} and \eqref{eq:gflat_identity} these conditions are satisfied if $\gamma (i)=g^{\flat }(i).$

We summarize the choices of parameters in the following claim. The notation $t^n=v=q^{-1}$ is introduced for later convenience.

\begin{claim}\label{claim:GaussSumRelationship}
If $n\nmid a,$ then $h^{\flat}(a)=0,$ and 
$$v\cdot g_a=q^{-1}\cdot {\mathfrak{g}}^{\psi }(-a)=\gamma (a)=g^{\flat }(a)=q^{-1}\cdot \sum _{b\ \bmod \ p}\resn{b}{p}^a\psi \left(\frac{b}{p}\right).$$
 However, if $n|a,$ then $h^{\flat }(a)=1-v,$  $\gamma (a)=g_a=g_0=-1,$ and $g^{\flat }(a)=-q^{-1}=-v=-t^n.$
\end{claim}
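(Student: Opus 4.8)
The statement in question, Claim \ref{claim:GaussSumRelationship}, is a bookkeeping collation of facts that have all essentially been established in the preceding two subsections; the task is to assemble them into the single clean dictionary that will be used throughout the proof of Theorem \ref{THM:MAIN}. Accordingly, the plan is to prove it by a short case analysis on whether $n\mid a$ or $n\nmid a$, invoking \eqref{eq:gflat0}, \eqref{eq:gflat_identity}, \eqref{eq:h_and_g_flatdef}, and \eqref{eq:parameterdef}, together with the choice $v=q^{-1}=t^n$.

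First I would treat the case $n\nmid a$. By \eqref{eq:h_and_g_flatdef} (equivalently the displayed formula for $h(a)$ together with \eqref{eq:def_of_g_flath_flat}) we get $h^{\flat}(a)=0$ immediately. For the chain of equalities, the definition \eqref{eq:parameterdef} gives $g_a = v^{-1}g^{\flat}(a)$, hence $v\cdot g_a = g^{\flat}(a)$; and the last expression in \eqref{eq:h_and_g_flatdef} rewrites $g^{\flat}(a)=q^{-1}\sum_{b\bmod p}\res{b}{p}_n^{a}\psi(b/p)$, which is the right-hand end of the displayed line. The only genuinely new piece of notation is $\gamma(a)$ and ${\mathfrak g}^{\psi}(-a)$; these are defined so that $\gamma(a)=g^{\flat}(a)$ and $q^{-1}{\mathfrak g}^{\psi}(-a)=g^{\flat}(a)$ by design (matching the normalization of the Gauss sum $g(m,c)$ in \eqref{eq:def_of_g(m,c)} evaluated at the relevant argument, where the sign $-a$ reflects that $\res{\cdot}{p}_n$ is raised to the power recorded by the subscript), so these are definitional identifications rather than computations. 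I would state this explicitly but briefly.

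Next the case $n\mid a$. Here \eqref{eq:h_and_g_flatdef} gives $h^{\flat}(a)=1-1/q=1-v$, using $v=q^{-1}$. For the Gauss-sum value, \eqref{eq:gflat0} gives $g^{\flat}(a)=-q^{-1}=-v=-t^n$, the last equality being the definition $t^n=v=q^{-1}$. Then $g_a = v^{-1}g^{\flat}(a) = v^{-1}\cdot(-v) = -1$, and since $n\mid a$ means $a\equiv 0\bmod n$, the reduction convention $g_j := g_{r_n(j)}$ gives $g_a=g_0=-1$, consistent with the condition $g_0=-1$ from \eqref{eq:qt}; likewise $\gamma(a)=g^{\flat}(a)\cdot$(the appropriate normalization) collapses to $-1$ under the same identification used in the first case. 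This closes the claim.

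There is no real obstacle here — the content has already been done in Sections \ref{subsubsect:Gauss_sums}. The only point requiring a little care is making sure the three notations $v\cdot g_a$, $q^{-1}{\mathfrak g}^{\psi}(-a)$, and $\gamma(a)$ are all pinned to $g^{\flat}(a)$ by unwinding their definitions and the normalizations in \eqref{eq:def_of_g(m,c)}, \eqref{eq:def_of_g_flath_flat}, and \eqref{eq:parameterdef}; once that is spelled out, the periodicity modulo $n$ (already noted after \eqref{eq:def_of_g_flath_flat}) guarantees that every quantity in the claim depends only on $a\bmod n$, so the two-case split is exhaustive and the statement follows.
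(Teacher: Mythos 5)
Your proposal is correct and matches the paper's (implicit) argument: the paper offers no separate proof of this claim, since it is explicitly presented as a summary of the parameter choices already established via \eqref{eq:h_and_g_flatdef}, \eqref{eq:gflat0}, \eqref{eq:gflat_identity}, and \eqref{eq:parameterdef}, which is exactly the collation you carry out. Your observation that $\gamma(a)$ and ${\mathfrak{g}}^{\psi}(-a)$ are pinned down only as notational aliases for $g^{\flat}(a)$ (they are never independently defined in the paper) is accurate and worth the sentence you give it.
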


\subsection{Metaplectic Demazure and Demazure-Lusztig operators}
The definitions below follow \cite{cgp}, making use of the identification of $\ffield$ and $\C(\x)$ and the Chinta-Gunnells action introduced in section \ref{subsect:CGaction}. Both the Demazure operators and the Demazure-Lusztig operators are divided difference operators on $\ffield .$ 

Let  $1\leq i\leq r$ and $f\in \C(\x).$ We define the \emph{Demazure operators} by
\begin{equation}\label{def:dem}
  \Dem_i(f)=\Dem_{\sigma _i}(f)=
  \frac{f-\x^{n\alpha _i}\cdot \sigma _i(f)}
  {1-\x^{n\alpha _i}},
\end{equation}
and the \emph{Demazure-Lusztig operators} by
\begin{equation}\label{def:dl}
  \begin{split}
    \T _i(f)=\T _{\sigma _i}(f)&=
    \left(1-v\cdot \x^{n\alpha _i}\right)\cdot \Dem _i(f) -f\\
    &=\left(1-v\cdot \x^{n\alpha _i}\right)\cdot
    \frac{f-\x^{n\alpha _i}\cdot \sigma _i(f)}
    {1-\x^{n\alpha _i}}-f.
  \end{split}
\end{equation}
Recall that here $\x^{n\alpha _i}$ is shorthand for $\dfrac{x_i^n}{x_{i+1}^n}.$ When there is no danger of confusion, we write more simply
\begin{equation*}
  \Dem_i=
  \frac{1-\x^{n\alpha _i}\sigma _i}
  {1-\x^{n\alpha _i}}\hskip .5 cm \mbox{  and  }\hskip .5 cm 
  \T_i=\left(1-v\cdot \x^{n\alpha _i}\right)\cdot \Dem_i - 1,
\end{equation*}
that is, a rational function $h$ in the above equations is interpreted
to mean the ``multiplication by $h$'' operator. The rational functions
here are in $\ffield _0$ (see Remark \ref{LEMMA:SUBLATTICE}).  

The operators $\Dem _i$ and $\T_i$ satisfy the same braid relations as the $\sigma _i$ \cite[Proposition 7.]{cgp}.
Consequently, one may define $\Dem_w$ and $\T_w$ for any $w\in W$: let $w=\sigma_{i_1}\cdots \sigma _{i_l}$ be a reduced
expression for $w$ in terms of simple reflections. Then 
$$\Dem _w:=\Dem _{i_1}\cdots \Dem _{i_l}
\mbox{\ \ \ \ and\ \ \ \ } \T _w:=\T _{i_1}\cdots \T _{i_l}.$$

%The Demazure and Demazure-Lusztig operators have the following quadratic properties. 
%
%\begin{prop}\label{prop:quadratic_properties}
%  \cite[Proposition 5.]{cgp} The operators $\Dem _i$ and $\T_i$ ($1\leq i\leq r$) satisfy the
%  following quadratic relations:
%  \begin{enumerate}[(i)]
%  \item $\Dem _i ^2=\Dem _i;$
%  \item $\T _i ^2=(v-1)\T _i +v.$
%  \end{enumerate}
%In addition, we have \cite[Lemma 6.]{cgp}
%\begin{equation}\label{eq:lemma:quadratic}
% \Dem _i \x^{n\alpha _i}\Dem _i =-\Dem _i.
%\end{equation}
%\end{prop}

%\blue{ARE all of these necessary? They might easily come up in the computations in the later chapters. But do they?}

We also introduce a metaplectic analogue of the Weyl denominator. Let 
\begin{equation}\label{eq:deformeddenom_metaplectic}
  \tDelta =\tDelta ^{(n)}=\prod _{\alpha \in \Phi ^{+}} 
  \bigl(1-v\cdot \x^{n\alpha }\bigr).
\end{equation}
If $v=1$ we write simply $\Delta_v=\Delta .$ Now we are ready to state the metaplectic Demazure formula and Demazure-Lusztig formula. (As before, the notation is specific to type $A$)

\begin{theorem}\label{THM:LONG_WORD}
\cite[Theorem 3.]{cgp} For the long element $w_0$ of the Weyl group $W$ we have
$$\Dem _{w_0} =
\frac{1}{\Delta } \cdot \sum _{w\in W} \sgn (w)\cdot \prod _{\alpha
  \in \Phi(w^{-1})} \x^{n\alpha }\cdot w .$$
\end{theorem}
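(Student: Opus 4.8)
The plan is to prove this by induction on the length $\ell(w_0)$, mimicking the standard "operator identity" proof of the classical Demazure character formula, but carefully tracking how the Chinta-Gunnells action interferes with multiplicativity. First I would establish the rank-one statement: for a single simple reflection $\sigma_i$, one computes directly from \eqref{def:dem} that
\[
\Dem_i = \frac{1}{1-v\x^{n\alpha_i}}\bigl((1-v\x^{n\alpha_i})\Dem_i\bigr),
\]
and more to the point that $\Dem_i = \tfrac{1}{1 - \x^{n\alpha_i}}(1 - \x^{n\alpha_i}\sigma_i)$ can be rewritten, after clearing denominators and using Lemma \ref{lemma:h_exchange} (to move the factor $1-v\x^{n\alpha_i}\in\ffield_0$ through the metaplectic action), in the shape $\tfrac{1}{\Delta_i}\sum_{u\in\{e,\sigma_i\}}\sgn(u)\prod_{\alpha\in\Phi(u^{-1})}\x^{n\alpha}\cdot u$, where $\Delta_i = 1-\x^{n\alpha_i}$. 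This is the $r=1$ case and anchors the induction.

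For the inductive step I would pick a reduced word $w_0 = \sigma_i w'$ with $\ell(w') = \ell(w_0)-1$ and $\alpha_i$ the corresponding simple root, so that $\Dem_{w_0} = \Dem_i\,\Dem_{w'}$ where $\Dem_{w'}$ is the long-word Demazure operator for the parabolic generated by all simple reflections except... — actually the cleaner route is $w_0 = w'\sigma_i$ with $\Phi(w_0) = \Phi(w') \sqcup \{w'(\alpha_i)\}$ or similar, applying the induction to $\Dem_{w'}$ expressed via the sum over the appropriate Weyl subgroup and then composing with one more $\Dem_i$. The key computational input is the commutation/exchange behavior: when $\Dem_i$ is applied to a term $\x^{n\beta}\cdot w$ with $\beta$ in the relevant root-lattice cone (so $\x^{n\beta}\in\ffield_0$), Lemma \ref{lemma:h_exchange} lets me write $\Dem_i(\x^{n\beta} w (f)) = \tfrac{1}{1-\x^{n\alpha_i}}\bigl(\x^{n\beta}w(f) - \x^{n\alpha_i}\sigma_i(\x^{n\beta}) \cdot \sigma_i w(f)\bigr)$, and since $\sigma_i(\x^{n\beta}) = \x^{n\sigma_i\beta}$ under the plain action, the bookkeeping of exponents reduces to the classical Weyl-group combinatorics of $\Phi(w^{-1})$ and the product over positive roots made negative. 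The factor $\tfrac{1}{\Delta}$ emerges because each step contributes exactly one new factor $1-\x^{n\alpha}$ in the denominator, indexed by the root newly sent to a negative root.

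I expect the main obstacle to be precisely the failure of multiplicativity of the metaplectic action: unlike the classical case, one cannot freely say $\sigma_i(\x^{n\beta}f) = \sigma_i(\x^{n\beta})\sigma_i(f)$ for arbitrary $\beta$, only when $n\beta\in\Lambda_0$ (equivalently, all the sliding factors $\x^{n\alpha}$ really do lie in $\ffield_0$). So the crux is to verify that \emph{every} intermediate monomial factor appearing in the induction has exponents in $n\Lambda$ (or at least in $\Lambda_0$), which is guaranteed here because all the $\x^{n\alpha}$ with $\alpha\in\Phi$ lie in $\ffield_0$ by Remark \ref{LEMMA:SUBLATTICE} — but this needs to be checked to hold \emph{stably} under composition, i.e. that the product $\prod_{\alpha\in\Phi(w^{-1})}\x^{n\alpha}$ stays in $\ffield_0$ at each stage. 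Granting that, the rest is a faithful transcription of the classical argument, with $\sigma_i$ (metaplectic) playing the role of $\sigma_i$ (classical) everywhere it is applied to a "bare" function $f$, and the plain action $\sigma_i.{-}$ handling the monomial prefactors. Since the paper cites this as \cite[Theorem 3]{cgp}, I would in practice defer the full verification to that reference and only sketch this outline.
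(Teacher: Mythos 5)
First, a structural point: the paper never proves Theorem \ref{THM:LONG_WORD} --- it is imported verbatim from \cite[Theorem 3]{cgp} --- so there is no internal proof to measure your write-up against. Judged on its own, your central observation is the right one and is the real content of the result: every monomial prefactor that can arise, namely $\x^{n\alpha}$ for $\alpha\in\Phi$ and products of such, lies in $\ffield_0$ (in type $A$, $\Lambda_0$ contains $n\alpha$ for every root, not just the simple ones, and is closed under addition, and $\ffield_0$ is a subfield), so Lemma \ref{lemma:h_exchange} gives the composition rule $(h_1u_1)(h_2u_2)=h_1\,(u_1.h_2)\,u_1u_2$ for $h_i\in\ffield_0$ and $u_i$ acting metaplectically. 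The coefficients are therefore manipulated only through the ordinary permutation action, the whole computation lives in a copy of the classical twisted group algebra with $\x^{\alpha}$ replaced by $\x^{n\alpha}$, and the identity reduces to the classical Demazure formula. Your worry about ``stability under composition'' is resolved exactly as you suspect.

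The genuine gap is in how you organize the induction. The displayed formula is special to the long element: for $w'<w_0$ the operator $\Dem_{w'}$ is \emph{not} of the form $\frac{1}{\Delta'}\sum_{u\leq w'}\sgn(u)\prod_{\alpha\in\Phi(u^{-1})}\x^{n\alpha}\cdot u$ for any natural denominator; the coefficients $a_{u,w'}$ in $\Dem_{w'}=\sum_{u}a_{u,w'}\cdot u$ are genuinely more complicated rational functions. So ``applying the induction to $\Dem_{w'}$'' with $\ell(w')=\ell(w_0)-1$ has no content as stated, and your first attempt (reading $w'$ as the long element of a parabolic) is not available for a general reduced word either. Standard repairs: (i) strengthen the inductive statement to arbitrary $w$ with the correct coefficients and verify the one-step recursion relating $a_{u,w\sigma_i}$ to $a_{u,w}$ and $a_{u\sigma_i,w}$; (ii) avoid induction on the word: expand $\Dem_{w_0}=\sum_u a_u\cdot u$, compute the extreme coefficient $a_{w_0}=\sgn(w_0)\prod_{\alpha\in\Phi^+}\x^{n\alpha}/\Delta$ directly from a reduced expression, and propagate it to all $a_u$ using $\Dem_i\Dem_{w_0}=\Dem_{w_0}$ (a consequence of the idempotence $\Dem_i^2=\Dem_i$ together with the braid relations, both established in \cite{cgp}); or (iii) simply transport the classical identity through the twisted-group-algebra isomorphism described above. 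Any of these closes the argument; as literally written, yours does not.
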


\begin{theorem}\label{THM:T_SUM}
  \cite[Theorem 4.]{cgp} We have
$$\tDelta \cdot \Dem _{w_0} =\sum _{w\in W} \T _w.$$
\end{theorem}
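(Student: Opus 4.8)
\textbf{Proof proposal for Theorem \ref{THM:T_SUM}.}

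The plan is to deduce the identity $\tDelta \cdot \Dem_{w_0} = \sum_{w\in W} \T_w$ from the Demazure formula of Theorem \ref{THM:LONG_WORD} by expanding the right-hand side directly. First I would establish a single-step ``product formula'': for a reduced word $w = \sigma_{i_1}\cdots \sigma_{i_l}$, I claim that $\T_w$ can be written as $\sum_{u \le w} c_{u,w}\, u$ where the sum runs over $u$ in the Bruhat interval below $w$ and each $c_{u,w}$ is a rational function in $\ffield_0$ (so that, by Lemma \ref{lemma:h_exchange}, multiplying by it commutes appropriately with the metaplectic action). This is proved by induction on $l$: writing $\T_i = (1 - v\x^{n\alpha_i})\dfrac{1 - \x^{n\alpha_i}\sigma_i}{1-\x^{n\alpha_i}} - 1$, one composes with the inductive expression for $\T_{\sigma_{i_1}\cdots\sigma_{i_{l-1}}}$ and uses that lengths add in a reduced word to control which Weyl group elements appear. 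The main point to check is that no cancellation collapses the ``top'' term: the coefficient of $w$ itself in $\T_w$ is exactly $\prod_{\alpha \in \Phi(w^{-1})} \bigl(-v\x^{n\alpha}\bigr)$ up to the denominator bookkeeping, matching what Theorem \ref{THM:LONG_WORD} predicts after multiplication by $\tDelta$.

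Next, I would sum over all $w \in W$ and collect the coefficient of each fixed $u \in W$ in $\sum_{w\in W}\T_w$. Since $\Dem_{w_0} = \dfrac{1}{\Delta}\sum_{w\in W}\sgn(w)\prod_{\alpha\in\Phi(w^{-1})}\x^{n\alpha}\cdot w$ by Theorem \ref{THM:LONG_WORD}, the target $\tDelta \cdot \Dem_{w_0}$ has, as coefficient of $u$, the rational function $\dfrac{\tDelta}{\Delta}\cdot \sgn(u)\cdot \prod_{\alpha\in\Phi(u^{-1})}\x^{n\alpha}$. So the theorem reduces to the purely combinatorial identity, for each fixed $u$,
\begin{equation*}
\sum_{w \ge u} c_{u,w} \;=\; \frac{\tDelta}{\Delta}\cdot \sgn(u)\cdot \prod_{\alpha\in\Phi(u^{-1})}\x^{n\alpha},
\end{equation*}
where $c_{u,w}$ is the coefficient extracted in the previous step. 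This is the kind of telescoping/inclusion-exclusion identity that, in the classical ($v$-independent) case, is a standard consequence of the Hecke-algebra relation $\T_i^2 = (\text{something})\T_i + (\text{something})$; the cleanest route is probably to verify the quadratic relation $(\T_i + 1)(\T_i + v\cdot\text{(stuff)}) = 0$ for the metaplectic $\T_i$ directly from \eqref{def:dl} and \eqref{def:dem}, then argue that $\sum_{w\in W}\T_w$ factors through $\Dem_{w_0}$ because each $\T_i$ satisfies $\T_i \Dem_{w_0} = -\Dem_{w_0}$ or an analogous absorption law, so $\left(\sum_{w\in W}\T_w\right)$ and $\tDelta\,\Dem_{w_0}$ agree after checking on a single generating element.

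An alternative and perhaps slicker approach: show that $\Dem_{w_0}$ is, up to the scalar $1/\tDelta$, a ``projector'' in the sense that $\Dem_i \cdot \tDelta\,\Dem_{w_0} = \tDelta\,\Dem_{w_0}$ for all $i$ (using the braid relations and $\Dem_i^2 = \Dem_i$), and that $\sum_{w\in W}\T_w$ satisfies the same left-absorption property $\T_i\bigl(\sum_w \T_w\bigr) = \sum_w \T_w$ after rescaling; combined with matching the two sides on one explicit test function (e.g. a monomial $\x^\lambda$ with $\lambda$ deep in the dominant cone, where both sides can be computed), this forces equality. The absorption identities would follow from the $\Dem_i$/$\T_i$ relations recorded around \eqref{def:dem}–\eqref{def:dl} together with \cite[Proposition 7]{cgp}.

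\textbf{Expected main obstacle.} The delicate point is that the metaplectic operators are \emph{not} ring endomorphisms, so one cannot freely move rational-function coefficients past them; every manipulation of the form ``pull $h$ out of $\T_i(hf)$'' must be justified via Lemma \ref{lemma:h_exchange}, which only applies when $h \in \ffield_0$. Keeping track of which intermediate coefficients actually lie in $\ffield_0$ throughout the induction — and verifying that the denominators $1 - \x^{n\alpha_i}$ and $1 - v\x^{n\alpha_i}$ combine to give exactly $\Delta$ and $\tDelta$ with no spurious factors — is where the real work lies. The braid relations guarantee $\Dem_w$ and $\T_w$ are well defined, but the bookkeeping of signs $\sgn(w)$ and the root-subsets $\Phi(w^{-1})$ against the recursive structure of $\T_w$ is the step I expect to be the most technically involved.
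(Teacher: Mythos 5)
First, a point of comparison: the paper does not prove this statement at all --- Theorem \ref{THM:T_SUM} is imported verbatim from \cite[Theorem 4]{cgp}, so there is no in-paper proof to measure your attempt against. Judged on its own, your first strategy is the right skeleton and is close to how the cited result is actually obtained: expand $\T_w=\sum_{u\le w}c_{u,w}\,u$ with $c_{u,w}\in\ffield_0$ (legitimate because $n\alpha\in\Lambda_0$ by Remark \ref{LEMMA:SUBLATTICE}, so Lemma \ref{lemma:h_exchange} lets you pull the coefficients through the metaplectic action, and $\sigma_i(u(f))=(\sigma_i u)(f)$ since \eqref{eq:metaWnotation} is a genuine group action), then compare coefficients with the expansion of $\tDelta\cdot\Dem_{w_0}$ supplied by Theorem \ref{THM:LONG_WORD}. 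The payoff of this setup, which you do not quite articulate, is that the recursion for the $c_{u,w}$ is identical to the nonmetaplectic one with $\x^{\alpha}$ replaced by $\x^{n\alpha}$, so the required coefficient identity reduces to the known $n=1$ identity of Demazure--Lusztig type.

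The genuine gap is that you never prove that identity: the display $\sum_{w\ge u}c_{u,w}=\frac{\tDelta}{\Delta}\cdot\sgn(u)\cdot\prod_{\alpha\in\Phi(u^{-1})}\x^{n\alpha}$ is exactly the content of the theorem, and ``this is the kind of telescoping/inclusion--exclusion identity that is a standard consequence of the Hecke-algebra relation'' is not an argument --- it is the step where all the work lives, whether by induction downward in the Bruhat order or by reduction to the classical case. Your fallback ``absorption'' route also contains an error: granting $\Dem_i^2=\Dem_i$ one gets $\T_i\circ\Dem_{w_0}=\bigl((1-v\x^{n\alpha_i})\Dem_i-1\bigr)\Dem_{w_0}=-v\x^{n\alpha_i}\cdot\Dem_{w_0}$, not $-\Dem_{w_0}$; moreover inserting the multiplication operator $\tDelta$ between $\T_i$ and $\Dem_{w_0}$ changes this eigenvalue again (since $\sigma_i.\tDelta\neq\tDelta$), while the eigen-equation for $\T_i\bigl(\sum_w\T_w\bigr)$ obtained from the quadratic relation produces a constant eigenvalue --- so the two sides do not visibly satisfy ``the same'' absorption law, and checking agreement on one test monomial would additionally require a linear-independence statement for the metaplectic operators $\{w\}_{w\in W}$ that you have not supplied. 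As it stands the proposal identifies the correct reduction but leaves the decisive identity unproved and rests its alternative on a false intermediate claim.
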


%\blue{Perhaps insert remark on the (hidden) presence of $n$ here in the second Theorem?}

The following technical lemmas about polynomials annihilated by Demazure operators are of use in the proof of Theorem \ref{THM:MAIN}.

\begin{lemma}\label{lemma:Demazure_annihilation}
We have the following. % two facts about Demazure operators annihilating polynomials. 
\begin{enumerate}[(i)]
 \item A polynomial $f$ is annihilated by $\Dem _i=\Dem _{\sigma _i}$ if and only if $\sigma _i(x_{i+1}^n\cdot f)=x_{i+1}^n\cdot f.$
 \item If $\Dem _w (g)=0$ for some $w$ in the Weyl group $W$, and $w_0$ is the long element of $W$, then $\Dem _{w_0}g=0.$
\end{enumerate}
\end{lemma}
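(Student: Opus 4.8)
\textbf{Proof proposal for Lemma \ref{lemma:Demazure_annihilation}.}

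For part (i), I would work directly from the definition \eqref{def:dem}. Since $\x^{n\alpha_i}$ is a unit in $\ffield$, the equation $\Dem_i(f)=0$ is equivalent to $f=\x^{n\alpha_i}\cdot\sigma_i(f)$, i.e. $f = \dfrac{x_i^n}{x_{i+1}^n}\cdot\sigma_i(f)$. The plan is to rewrite this by ``moving the denominator inside'' the metaplectic action using Lemma \ref{lemma:h_exchange}: because $x_{i+1}^{n}$ has all exponents divisible by $n$, it lies in $\ffield_0$ (Remark \ref{LEMMA:SUBLATTICE}), and moreover $\sigma_i.(x_{i+1}^n) = x_i^n$ under the nonmetaplectic action (recall $\sigma_i$ swaps $x_i$ and $x_{i+1}$). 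Hence $x_{i+1}^n\cdot\sigma_i(f) = (\sigma_i.(x_{i+1}^n))\cdot\sigma_i(f)\cdot\dfrac{x_{i+1}^n}{x_i^n}$... more cleanly: multiply the equation $f=\dfrac{x_i^n}{x_{i+1}^n}\sigma_i(f)$ through by $x_{i+1}^n$ to get $x_{i+1}^n\cdot f = x_i^n\cdot\sigma_i(f) = (\sigma_i.(x_{i+1}^n))\cdot\sigma_i(f)$, and then Lemma \ref{lemma:h_exchange} with $h=x_{i+1}^n\in\ffield_0$ gives $(\sigma_i.(x_{i+1}^n))\cdot\sigma_i(f) = \sigma_i(x_{i+1}^n\cdot f)$. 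Therefore $\Dem_i(f)=0 \iff x_{i+1}^n\cdot f = \sigma_i(x_{i+1}^n\cdot f)$, which is exactly the claim. The only thing to check carefully is the application of Lemma \ref{lemma:h_exchange}, namely that $x_{i+1}^n$ indeed has exponents in $\Lambda_0$; this follows from \eqref{eqn:lambda0_intypeA} since $n e_{i+1}$ has all coordinates $\equiv 0 \bmod n$.

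For part (ii), fix a reduced expression $w = \sigma_{i_1}\cdots\sigma_{i_\ell}$ with $\Dem_w(g)=0$, so $\Dem_{i_1}\cdots\Dem_{i_\ell}(g)=0$. The idea is that $w_0$ has a reduced expression beginning with $w$: since $w\le w_0$ in the Bruhat order, one can extend the reduced word for $w$ to a reduced word for $w_0$, say $w_0 = \sigma_{i_1}\cdots\sigma_{i_\ell}\sigma_{j_1}\cdots\sigma_{j_m}$. Because the $\Dem_i$ satisfy the braid relations \cite[Proposition 7.]{cgp}, $\Dem_{w_0}$ is well-defined independent of the reduced word, and in particular $\Dem_{w_0} = \Dem_{j_m}\cdots\Dem_{j_1}\,\Dem_{\sigma_{i_\ell}}\cdots\Dem_{\sigma_{i_1}}$ — wait, one must be careful about the order. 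Using the reduced word above, $\Dem_{w_0} = \Dem_{i_1}\cdots\Dem_{i_\ell}\Dem_{j_1}\cdots\Dem_{j_m} = \Dem_w\cdot(\Dem_{j_1}\cdots\Dem_{j_m})$ as operators composed left-to-right in the convention of the paper. But $\Dem_w(g)=0$ means $g$ is killed when we apply $\Dem_{i_1}$ first, then $\Dem_{i_2}$, etc.; to conclude $\Dem_{w_0}(g)=0$ I instead want the factor $\Dem_w$ to act last. So the cleaner route: every element $w$ has a reduced expression and $w_0 = w'\cdot w$ for some $w'$ with $\ell(w_0)=\ell(w')+\ell(w)$ (take $w' = w_0 w^{-1}$, which has length $\ell(w_0)-\ell(w)$ because $w\le w_0$). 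Then $\Dem_{w_0} = \Dem_{w'}\circ\Dem_w$ by the braid-relation-compatibility of the definition, so $\Dem_{w_0}(g) = \Dem_{w'}(\Dem_w(g)) = \Dem_{w'}(0) = 0$.

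The main obstacle I anticipate is purely a bookkeeping one: making sure the factorization $\Dem_{w_0}=\Dem_{w'}\circ\Dem_w$ is valid, i.e. that concatenating a reduced word for $w'$ with a reduced word for $w$ yields a reduced word for $w_0$ whenever $\ell(w_0)=\ell(w')+\ell(w)$. This is standard Coxeter-group combinatorics (a reduced word for a product of two elements with additive lengths is obtained by concatenation, by the deletion condition), combined with the fact from \cite[Proposition 7.]{cgp} that $\Dem_w$ depends only on $w$ and not on the chosen reduced word. Part (i) has essentially no obstacle beyond correctly invoking Lemma \ref{lemma:h_exchange}; the slightly delicate point there is simply tracking that $\sigma_i.(x_{i+1}^n)=x_i^n$ and that $x_{i+1}^n\in\ffield_0$ so the lemma applies.
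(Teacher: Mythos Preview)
Your proposal is correct and follows essentially the same approach as the paper: part (i) is exactly the intended argument via the definition and Lemma~\ref{lemma:h_exchange}, and for part (ii) your final route (taking $w'=w_0 w^{-1}$ so that $\ell(w_0)=\ell(w')+\ell(w)$ and hence $\Dem_{w_0}=\Dem_{w'}\circ\Dem_w$) is precisely what the paper does. The only remark is that your initial detour on the ordering in (ii) is unnecessary; the clean factorization you settle on is all that is needed.
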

\begin{proof}
The proof of (i) is obvious from the definition of $\Dem _i$ and Lemma \ref{lemma:h_exchange}. 
% $$\Dem _{i}(f)=\frac{f-\frac{x_{i}^n}{x_{i+1}^n}\cdot \sigma _{i}(f)}{1-\frac{x_{i}^n}{x_{i+1}^n}}=0 \hskip .5 cm \Longleftrightarrow \hskip .5 cmf=\frac{x_{i}^n}{x_{i+1}^n}\cdot \sigma _{i}(f)\hskip .5 cm \Longleftrightarrow \hskip .5 cm x_{i+1}^n\cdot f=\sigma _{i}(x_{i+1}^n\cdot f).$$
For (ii), let $u=w_0w^{-1},$ so that $w_0=u\cdot w.$ Since $w_0$ is the longest element, we have $\ell(w_0)=\ell(u)+\ell(w),$ and as a consequence $\Dem _{w_0}=\Dem _u\circ \Dem _w.$ Thus $\Dem _{w_0}g=\Dem _u(\Dem _wg)=\Dem _u(0)=0.$
\end{proof}

The following is a trivial corollary of Lemmas \ref{lemma:Demazure_annihilation} and \ref{lem:invariant_monom}, as the action of $\sigma _i$ only involves the exponents of $x_i$ and $x_{i+1}.$

\begin{corollary}\label{cor:annihilation}
 If $\beta =(\beta _1,\ldots ,\beta _{r+1})$ and $\beta _{i}=\beta _{i+1}+1,$ then $\Dem _i(\x^{\beta })=0.$
\end{corollary}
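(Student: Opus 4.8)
The plan is to apply the characterization in Lemma~\ref{lemma:Demazure_annihilation}(i) and reduce the verification to the invariance statement of Lemma~\ref{lem:invariant_monom}. By Lemma~\ref{lemma:Demazure_annihilation}(i) it suffices to show that $\sigma_i(x_{i+1}^n\cdot \x^\beta)=x_{i+1}^n\cdot \x^\beta$. Writing $\lambda=\beta+n e_{i+1}$, we have $x_{i+1}^n\cdot \x^\beta=\x^\lambda$ with $\lambda_i=\beta_i$, $\lambda_{i+1}=\beta_{i+1}+n$, and $\lambda_j=\beta_j$ for $j\neq i,i+1$.

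Next I would inspect the type $A$ formula \eqref{eq:def:typeAaction} for $\sigma_i(\x^\lambda)$: the nonmetaplectic part $\sigma_i.\x^\lambda$ only interchanges the exponents of $x_i$ and $x_{i+1}$, and the bracketed factor involves only $x_i/x_{i+1}$ and depends only on $\lambda_{i+1}-\lambda_i$. Consequently the monomial $m=\prod_{j\neq i,i+1}x_j^{\lambda_j}$ in the remaining variables factors out, and $\sigma_i(\x^\lambda)=m\cdot \sigma_i(x_i^{\lambda_i}x_{i+1}^{\lambda_{i+1}})$, where the right-hand factor is computed by the same formula in the two variables $x_i,x_{i+1}$ alone.

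Finally, using the hypothesis $\beta_i=\beta_{i+1}+1$, set $a=\beta_{i+1}$; then $\lambda_i=a+1$ and $\lambda_{i+1}=a+n$, so Lemma~\ref{lem:invariant_monom} gives $\sigma_i(x_i^{a+1}x_{i+1}^{a+n})=x_i^{a+1}x_{i+1}^{a+n}$. Therefore $\sigma_i(\x^\lambda)=m\cdot x_i^{a+1}x_{i+1}^{a+n}=\x^\lambda=x_{i+1}^n\cdot \x^\beta$, and Lemma~\ref{lemma:Demazure_annihilation}(i) yields $\Dem_i(\x^\beta)=0$. The only step requiring any care — and the closest thing to an obstacle — is the observation that the metaplectic action of $\sigma_i$ genuinely leaves the variables other than $x_i,x_{i+1}$ untouched, which is immediate from \eqref{eq:def:typeAaction}; the rest is a direct substitution into Lemma~\ref{lem:invariant_monom}.
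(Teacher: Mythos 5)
Your proof is correct and follows exactly the route the paper intends: the paper derives this corollary directly from Lemma~\ref{lemma:Demazure_annihilation}(i) and Lemma~\ref{lem:invariant_monom}, noting (as you do) that the action of $\sigma_i$ only involves the exponents of $x_i$ and $x_{i+1}$. Your identification $\lambda_i=a+1$, $\lambda_{i+1}=a+n$ with $a=\beta_{i+1}$ is precisely the intended substitution.
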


\section{Highest weight crystals and Gelfand-Tsetlin patterns}\label{section:HWtCrystalsandGTpatterns}

%\blue{This is where I introduce all the notation and definitions relevant to GT-patterns, BZL-arrays and Gamma-arrays. (Try to shorten very much, and reference the Weyl group Multiple Dirichlet Series book instead. But how do I do that? Should I recall definitions?) Metaplectic GT-coefficients are defined, and weights. Maybe this is the place to state (and prove?) the Proposition about the branching properties, and introduce the notation necessary to state smaller statements.}

We turn our attention to the ``crystal side'' of Theorem \ref{THM:MAIN}: a sum whose terms involve Gelfand-Tsetlin coefficients, and are summed over a crystal. In this section, we present a primer on objects in this picture. Crystals can be parametrized in more than one way, we shall see that moving back and forth between parameterizations is not particularly difficult, hence one may choose the language that is most convenient in any given context. Gelfand-Tsetlin coefficients are defined in terms of these parameterizations.

We describe, in turn, highest weight crystals (\ref{section:Crystals}), Gelfand-Tsetlin patterns with ``$\Gamma $-arrays'' (\ref{section:GTpatterns}), and Berenstein-Zelevinsky-Littelmann paths (\ref{section:BZLpaths}). Gelfand-Tsetlin patterns are arrays of integers; the ones with a fixed top row are in bijection with vertices of a highest weight crystal. The bijection is via Berenstein-Zelevinsky-Littelmann paths, and the $\Gamma $-array corresponding to a pattern. The precise statement of this bijection is the content of Proposition \ref{prop:crystal_parametrization}. Gelfand-Tsetlin coefficients are defined in section \ref{section:GTcoeffs}. Finally, in section \ref{subs:crystal_branching}, we recall the branching property of type $A$ highest-weight crystals. This will be revisited for Demazure crystals in Section \ref{section:DemCrystals}, and is a crucial ingredient in the proof of Theorem \ref{THM:MAIN}. 

Throughout the section, we follow the presentation of Chapter $2$ of Brubaker-Bump-Friedberg \cite{bbf-wmdbook}, in less detail. We (implicitly) rely on other sources as well. In particular, for the combinatorial definition of a crystal graph, we use Hong-Kang \cite{hong2002introduction} and Kashiwara \cite{kashiwara1995crystal}. For the correspondence between Gelfand-Tsetlin patterns and highest weight crystals, Berenstein-Zelevinsky \cite{berenstein1993string, berenstein1996canonical}, Littelmann \cite{littelmann1998cones}, or Lusztig \cite{lusztig1990canonical} are further references. 

%The precise statement of Theorem \ref{THM:MAIN} involves Demazure crystals: these are introduced in Section \inref{DemCrystalsSection}. The way the branching property is reflected in the structure of Demazure-crystals, and Gelfand-tsetlin coefficients on them, is discussed in Section \red{Edit the last line above if this has been covered in Preliminaries. Rephrase.}

\subsection{Highest weight crystals}\label{section:Crystals}

The general definition of a crystal can be found in Kashiwara \cite{kashiwara1995crystal}. Here we only consider type $A$ highest weight crystals. %; and very little of the rich combinatorial structure of crystals is necessary. 
%\blue{Perhaps say a few more words here.}

Recall the notation introduced in section \ref{subs:CGDemNotation} for root systems of type $A_r$. In particular, recall that the weight lattice $\La $ is identified with $\Z^{r+1};$ $\alpha _i$ are the simple roots for $1\leq i\leq r.$ Let $h_i={\mathbf{e}}_i^{\ast }-{\mathbf{e}}_{i+1}^{\ast }\in \La ^{\ast }$ where ${\mathbf{e}}_1^{\ast },{\mathbf{e}}_2^{\ast },\ldots ,{\mathbf{e}}_{r+1}^{\ast }$ denotes the standard dual basis of $\R^{r+1}$. (We use ${\mathbf{e}}_i^{\ast }$ here to distinguish the basis vectors ${\mathbf{e}}_i$ from the Kashiwara operators $e_i$ below.) We have $(\cdot ,\cdot ):\La \times \La \ra \Q$ a bilinear symmetric form, and let $\langle \cdot ,\cdot \rangle :\La^{\ast }\times \La \ra \Z$ denote the canonical pairing.
Note that $(\alpha _i,\alpha _i)\in 2\Z_{>0},$ $\langle h_i ,\la \rangle =\frac{2(\alpha _i,\la )}{(\alpha _i,\alpha _i)}$ for $i\in I$ and $\la \in P,$ and $(\alpha _i,\alpha _j)\leq 0$ for $i,j\in I,$ $i\neq j.$ 
\newcommand{\Bc}{{\mathtt{B}}}

A type $A_r$ crystal $\Cr$ is a set $\Bc$ endowed with a weight function $\wght:\Bc\ra \La ,$ functions $\varepsilon _i :\Bc\ra \Z \sqcup \{-\infty \},$ $\varphi _i :\Bc\ra \Z \sqcup \{-\infty \}$ and Kashiwara operators $e _i :\Bc\ra \Bc\sqcup \{0\},$ $f _i :\Bc\ra \Bc \sqcup \{0\}$ for every $1\leq i\leq r.$ Elements of $\Bc$ are called elements or vertices of the crystal. A crystal satisfies the following axioms. (Let $-\infty +n=-\infty $ for every $n\in \Z .$)
\begin{enumerate}[(i)]
\item $\varphi _i(b)=\varepsilon _i(b)+\langle h_i,\wght (b)\rangle $ for every $1\leq i\leq r$
\item If $e_i(b)\neq 0,$ then 
$$\varepsilon _i(e_i (b))=\varepsilon _i(b)-1,$$
$$\varphi _i(e_i (b))=\varphi _i(b)+1,$$
$$\wght (e_i (b))=\wght (b)+\alpha _i.$$
\item If $f_i(b)\neq 0,$ then 
$$\varepsilon _i(e_i (b))=\varepsilon _i(b)+1,$$
$$\varphi _i(e_i (b))=\varphi _i(b)-1,$$
$$\wght (e_i (b))=\wght (b)-\alpha _i.$$
\item For $b_1,b_2\in \Bc,$ we have $b_2=f_i(b_1)$ if and only if $b_1=e_i(b_2).$
\item If $\varphi _i(b)=-\infty ,$ then $e_i(b)=f_i(b)=0.$
\end{enumerate}

Recall that the weight $\la =(\la _1,\la _2,\ldots ,\la _r,\la _{r+1})$ is called dominant if $\la _1\geq \la _2 \geq \cdots \geq \la _{r+1};$ strongly dominant if $\la _1>\la _2 >\cdots >\la _{r+1};$ $\la $ is effective if $\la _{r+1}\geq 0.$ There is a partial ordering on $\Z^{r+1}$ where $\mu \preccurlyeq \la $ if and only if $\la -\mu $ lies in the cone generated by simple roots. For every dominant weight $\la $ there is a corresponding crystal graph $\Cr _{\la}$ with highest weight $\la .$ The function $\wght$ maps the vertices of $\Cr_{\la }$ to weights of the representation $V_{\la }$ of $\gl _{r+1}(\C)$ of highest weight $\la .$ The Kashiwara operators determine a directed graph structure on $\Cr _{\la}:$ there is an edge $v\xrightarrow{i}w$ if and only if $f_i(v)=w\neq 0.$ We say this edge is labeled with $i.$ The number of vertices in $\Cr_{\la }$ with weight $\mu $ is equal to the multiplicity of the weight $\mu $ in the representation $V_{\la }$. In particular, $\Cr _{\la }$ has exactly one element $v_{highest}$ with weight $\la $. If $w_0$ denotes the longest element of the type Weyl group $W\cong S_{r+1},$ then $w_0\la =(\la _{r+1},\la _r,\ldots ,\la _2,\la _1)$, and $\Cr _{\la }$ has exactly one element $v_{lowest}$ with weight $w_0\la $ (this is the ``lowest'' element).

The edges labelled with the same index $i$ (for $1\leq i\leq r$) determine disjoint ``$i$-strings''  in the crystal. These are themselves isomorphic to type $A_1$ highest weight crystals. The functions $\varepsilon _i$ and $\varphi _i$ determine where a vertex is within an $i$-string:
$$\varepsilon _i(b)=\max \{n\geq 0| \ e_i^nb\neq 0\}, \hskip .5 cm \varphi _i(b)=\max \{n\geq 0| \ f_i^nb\neq 0\}.$$

%When all the edges labelled by $r$ are removed from a highest weight crystal of type $A_r,$ one is left with a disjoint union of highest-weight crystals of type $A_{r-1}.$ This crucial fact is examined in section \ref{subs:crystal_branching} below. 

%In section  \ref{section:BZLpaths} we will explain how the vertices of a crystal $\Cr _{\la }$ can be parametrized by the set of Gelfand-Tsetlin patterns with top row $\la ,$ and how some of the edge structure can be recovered from the so-called $\Gamma$-array of these patterns, via Berenstein-Zelevinsky-Littelmann paths. 

We conclude this section by an example. 

\begin{example}\label{example:310crystal}
Figure \ref{fig:310crystal} shows a crystal of type $A_2$ corresponding to highest weight $(3,1,0).$ The red edges correspond to the label $1,$ the green edges to the label $2.$ Figure \ref{fig:310weights} shows the image of the same crystal under the weight map.

\begin{figure}[h!]
\centering
\begin{minipage}{0.45\textwidth}
\centering
\includegraphics{emptycrystal310-ps.mps} 
\caption{The crystal $\Cr_{(3,1,0)}.$}
\label{fig:310crystal}
\end{minipage}\hfill
\begin{minipage}{0.45\textwidth}
\centering
\includegraphics{310weights-ps.mps} 
\caption{The weights of the $\gl _{3}(\C)$ representation of highest weight $(3,1,0).$}
   \label{fig:310weights}\end{minipage}
\end{figure}

%\begin{figure}[h!]
%    \centering
%    \includegraphics[width=0.55\textwidth]{Pictures/emptycrystal310-ps.mps}
%\includegraphics{Pictures/emptycrystal310-ps.mps} 
%\caption{The crystal $\Cr_{(3,1,0)}.$}
%\label{fig:310crystal}
%\end{figure}
%
%\begin{figure}[h!]
%  \centering
%    \includegraphics[width=0.55\textwidth]{Pictures/310weights-ps.mps}
%    \caption{The weights of the $\gl _{3}(\C)$ representation of highest weight $(3,1,0).$}
%   \label{fig:310weights}
%\end{figure}

\end{example}

\subsection{Gelfand-Tsetlin patterns}\label{section:GTpatterns}

We recall the definition of Gelfand-Tsetlin patterns, the $\Gamma$-array and the weight associated to a pattern from \cite[Chapter 2]{bbf-wmdbook}.

\begin{definition}\label{defn:GTpatternsAndGammaArrays}
 A {\em{Gelfand-Tsetlin pattern}} of rank $r$ and top row $\la $ is an array of nonnegative integers 
 \begin{equation}\label{eq:GTpatternform}
\IP=\left(\begin{array}{cccccccccc}
             a_{00} & & a_{01} &  & a_{02} & \cdots  & a_{0,r-1} &  & a_{0r} \\
              & a_{11} &  & a_{12} &  & \cdots & & a_{1r} &  \\
              &  & \ddots & &  &   & \iddots  & &  \\
              &  &  & & a_{rr} &    &  &  &  
            \end{array}
\right)
 \end{equation}
where the top row is $\la=(a_{00},a_{01},\ldots ,a_{0,r-1},a_{0,r})=(\la _1,\la_2,\ldots ,\la_r,\la_{r+1}),$
and rows are non-increasing and interleave: $a_{i-1,j-1}\geq a_{ij}\geq a_{i-1,j}.$

For every $1\leq i\leq r,$ let
\begin{equation}\label{eq:defn:Gamma_entries}
\Gamma _{ij}=\Ga _{ij}(\IP)=\sum _{k=j}^r (a_{i,k}-a_{i-1,k}).
\end{equation}
This gives the {\em{$\Ga$-array}} of $\IP$
\begin{equation}\label{eq:defn:Gamma_array}
\Gamma (\IP)=\left[\begin{array}{cccc}
                      \Ga _{11} & \Ga _{12} & \cdots & \Ga _{1r}\\
                       & \Ga_{22}& \ldots & \Ga_{2r}\\
                         &  & \ddots & \vdots\\
                         &&& \Ga _{1r} 
                     \end{array}
\right].
\end{equation}
\end{definition}

%The following remark will be of use in establishing the bijection between elements of a highest-weight crystal and Gelfand-Tsetlin patterns with a fixed top row.
\begin{remark}\label{rmk:Gamma_determines_pattern}
 Note that given the top row, the entries of the Gelfand-Tsetlin pattern $\IP $ can be recovered from the entries of $\Gamma (\IP ).$ That is, given $a_{0,i}$ and $\Gamma _{i,j}$ for $1\leq i\leq j\leq r,$ one can compute each $a_{i,j}.$ 
\end{remark}
%\begin{proof}
%This is easy to see by induction on $i$ and (for fixed $i$) by $j-i,$ since 
%$$a_{i,r}=\Gamma _{i,r}+a_{i-1,r}\hskip .5 cm \text{and}\hskip .5 cm a_{i,j}=\Ga _{i,j}-\Ga _{i,{j+1}}+a_{i-1,j}.$$
%\end{proof}

Since the entries of the Gelfand-Tsetlin pattern $\IP$ satisfy $0\leq a_{i,k}-a_{i-1,k}\leq a_{i-1,k-1}-a_{i-1,k},$ we have
\begin{equation}\label{eq:GammaArrayIneq}
0\leq \Gamma _{ir}\leq a_{i-1,r-1}-a_{i-1,r};\hskip .5 cm \forall i\leq l\leq r-1\ \Gamma_{i,l+1}\leq \Gamma_{i,l}\leq \Gamma _{i,l+1}+a_{i-1,l-1}-a_{i-1,l}; 
\end{equation}
so the rows in $\Gamma (\IP)$ are nonnegative, non-increasing and there is an upper bound on the difference of consecutive entries in a row. The Gelfand-Tsetlin coefficient assigned to $\IP $ depends on the {\em{decoration}} of $\IP,$ i.e. whether these inequalities are strict or not. We recall the relevant terminology here. 

\begin{definition}\label{def:array_decorations}
({\em{Decorations}} of the entries of $\Gamma (\IP )$ and $\IP .$) An entry of $\Gamma (\IP )$ may be {\em{undecorated}}, {\em{circled}}, {\em{boxed}}, or {\em{both}}. The table below shows the (``right-leaning'') rules for decorating $\Gamma (\IP).$ (If $j=r,$ take $\Gamma_{i,r+1}=0$.)
\begin{equation}\label{eq:decoration_rules_Gamma}
\begin{array}{l|l}
                             \Gamma_{i,j+1}=\Gamma _{ij}<\Gamma_{i,j+1}+a_{i-1,j-1}-a_{i-1,j} & \Gamma _{ij}\ {\mathrm{is\ circled}}\\ \hline
                             \Gamma_{i,j+1}<\Gamma _{ij}<\Gamma_{i,j+1}+a_{i-1,j-1}-a_{i-1,j}& \Gamma _{ij}\ {\mathrm{is\ undecorated}}\\  \hline
                             \Gamma_{i,j+1}<\Gamma _{ij}=\Gamma_{i,j+1}+a_{i-1,j-1}-a_{i-1,j}&  \Gamma _{ij}\ {\mathrm{is\ boxed}}\\ \hline
                             \Gamma_{i,j+1}=\Gamma _{ij}=\Gamma_{i,j+1}+a_{i-1,j-1}-a_{i-1,j}&  \Gamma _{ij}\ {\mathrm{is\ circled\ and\ boxed}}
                                                                                        \end{array}
\end{equation}
We may phrase this as decorating the entries (below the top row) of the Gelfand-Tsetlin pattern $\IP$ itself. The decoration of $a_{i,j}$ is the same as that of $\Ga _{i,j}.$ 
\begin{equation}\label{eq:decoration_rules_pattern}
\begin{array}{l|l}
                              a_{i-1,j}=a_{ij}<a_{i-1,j-1}& a_{ij}\  {\mathrm{is\ circled}}\\ \hline
                              a_{i-1,j}<a_{ij}<a_{i-1,j-1}& a_{ij}\ {\mathrm{is\ undecorated}}\\  \hline
                              a_{i-1,j}<a_{ij}=a_{i-1,j-1}& a_{ij}\ {\mathrm{is\ boxed}}\\ \hline
                              a_{i-1,j}=a_{ij}=a_{i-1,j-1}& a_{ij}\ {\mathrm{is\ circled\ and\ boxed}}
                                                                                        \end{array}
\end{equation}
\end{definition}

Let $d_i$ denote the sum of the entries in the $i$-th row of $\IP ,$ that is, %$d_i=d_i(\IP)=\sum _{j=i}^r a_{ij}.$
\begin{equation}\label{eq:def_di}
 d_i=d_i(\IP)=\sum _{j=i}^r a_{ij}.
\end{equation}
Then we may define the {\em{weight}} of a Gelfand-Tsetlin pattern $\IP .$
\begin{equation}\label{eq:def_pattern_weight}
\wght(\IP ):=(d_r,d_{r-1}-d_r,\ldots ,d_0-d_1)
\end{equation}

%In Section \ref{section:BZLpaths}, we shall see that the bijection between vertices of $\Cr_{\la  }$ and Gelfand-Tsetlin patterns with top row $\la $ respects the weight function on both sets (Proposition \ref{prop:crystal_parametrization}). 

We conclude by an example. 
\begin{example}\label{example:patternofweight220}
 Consider Gelfand-Tsetlin patterns of top row $(3,1,0).$ One example of these is 
$$\IP =\left(\begin{array}{ccccc}
              3 & & 1 & & 0 \\
               & 3 &  & 1 & \\
               & & 2 & &  
             \end{array}
\right).$$
The corresponding $\Gamma $-array is 
\begin{equation}\label{eq:Gamma_example}
\Gamma (\IP )=\left[\begin{array}{cc}
                       3 & 1 \\
                        & 1 \\
                      \end{array}
\right]. 
\end{equation}
The sums of elements in the rows of the pattern $\IP $ are $d_0=4,$ $d_1=4$ and $d_2=2,$ hence 
$$\wght (\IP )=(d_2,d_1-d_2,d_0-d_1)=(2,2,0).$$
\end{example}
\subsection{Berenstein-Zelevinsky-Littelmann paths}\label{section:BZLpaths}

To a vertex $v$ in the crystal $\Cr_{\la  }$ and a choice of reduced decomposition for the long element $w_0\in W$ corresponds a Berenstein-Zelevinsky-Littelmann path. This is a path in the graph theoretic sense. It starts from $v,$ steps along the directed edges of the crystal, and ends in the lowest element, $v_{lowest}$. The steps correspond to applying successive Kashiwara operators $f_i$ to $v.$ The direction of steps is dictated by the choice of a long word $w_0.$ The notation follows \cite{bbf-wmdbook}; an explicit type $A_2$ example is included after the definition.

\subsubsection{Choice of the long word}
Let 
\begin{equation}\label{eq:def_of_favourite_w0}
 w_0=\sigma _1\sigma _2\sigma _1\cdots \sigma _{r-1}\cdots \sigma _1\sigma _r\cdots \sigma _1.
\end{equation}
This is our reduced expression of choice for the longest element in $S_{r+1}$ (our ``favourite long word''). Let $1\leq \Omega _i\leq r$ ($1\leq i\leq N=\ell(w_0)$) be the indices so that
\begin{equation}\label{eq:def_of_Omega_i}
 w_0=\sigma_{\Omega _1}\sigma_{\Omega _2}\cdots \sigma _{\Omega _N},
\end{equation}
is the same reduced expression as in \eqref{eq:def_of_favourite_w0}, i.e. $\Omega _1=1,\ \Omega _2=2,\ \Omega _3=1,\ \ldots ,\ \Omega _N=1.$ 

\subsubsection{Building the path}
Let $v$ be any element of the highest weight crystal $\Cr _{\lambda }.$ Recall that for any vertex $w\in \Cr _{\la  }$ and any $1\leq i\leq r,$ we may have either $f_i(w)\in \Cr_{\la  },$ in which case $\wght(f_i(w))=\wght(w)-\alpha _i,$ or $f_i(w)=0.$ Let $b_1:=\varphi _1(v),$ i.e. let $b_1$ be a largest integer such that $f^{b_1}_{\Omega _1}v\neq 0.$ Let $v_1=f^{b_1}_{\Omega _1}v,$ and similarly for $i=2,\ldots ,N$ let $b_i$ be the largest integer such that $(v_i:=)f^{b_i}_{\Omega _i}v_{i-1}\neq 0$ (i.e. $b_i:=\varphi _{\Omega _i}(v_{i-1})$).
We may write these integers into an array. 
\begin{equation}\label{eq:def_BZL(v)}
 BZL(v)=BZL_{\Omega }(v)=\left[\begin{array}{ccccc}
         b_{\binom{r}{2}+1}& b_{\binom{r}{2}+2} & \cdots & b_{\binom{r+1}{2}} \\
          & b_{\binom{r-1}{2}+1} & \cdots &b_{\binom{r}{2}}\\
          & \ddots &  & \\
          & & b_2 & b_3 \\
          & & & b_1 
        \end{array}
\right]
\end{equation}

\begin{example}\label{example:BZLofelement220}
 Let $r=2,$ and $\la =(3,1,0).$ We have $w_0=\sigma _1\sigma _2\sigma _1.$ Let $v=v_{(2,2,0)}$ be the single vertex of $\Cr _{(3,1,0)}$ with $\wght(v)=(2,2,0)$ (see Figure \ref{fig:310crystal} and Figure \ref{fig:310weights}). Then $b_1=1,$ $b_2=3$ and $b_3=1,$ and the $BZL$ array of $v$ is 
\begin{equation}\label{eq:BZL(example)}
 BZL(v)=\left[\begin{array}{cc}
                       3 & 1 \\
                        & 1 \\
                      \end{array}
\right]. 
\end{equation}
Notice that this is the same as the $\Gamma $-array in \eqref{eq:Gamma_example}. The $BZL$ path corresponding to $v$ is as shown on Figure \ref{fig:BZLpath}. 

%\red{FIGURE MISSING!}
\begin{figure}[h!]
   \centering
    \includegraphics[width=0.4\textwidth]{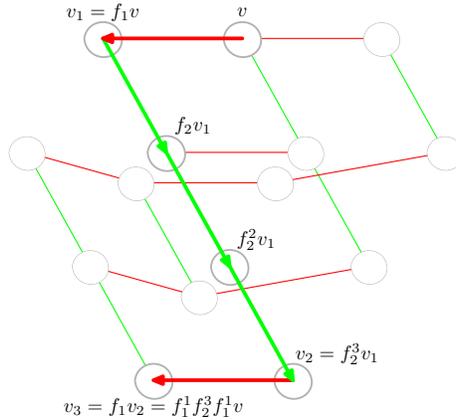}
\caption{The Berenstein-Zelevinsky-Littelmann path of $v_{(2,2,0)}\in \Cr_{(3,1,0)}.$}
\label{fig:BZLpath}
\end{figure}
\end{example}

\subsubsection{Correspondence of crystals and patterns}

%The following proposition makes the correspondence between elements of a crystal and Gelfand-Tsetlin patterns of a fixed top row explicit

\begin{prop}\label{prop:crystal_parametrization}
 Let $\la =(\la _1,\la _2,\ldots ,\la _r,\la _{r+1})$ be a dominant weight, $\Cr _{\la }$ the crystal with highest weight $\la .$ 
\begin{enumerate}[(i)]
 \item For any $v\in \Cr _{\la }$ the $BZL$-path of $v$ ``ends'' in the lowest element $v_{lowest}\in \Cr _{\la },$ i.e. $v_{\binom{r+1}{2}}=v_{lowest}.$
 \item A vertex $v$ can be recovered from $BZL(v).$
 \item For any $v\in \Cr _{\la}$ and $BZL(v)=(b_i)_{1\leq i\leq \binom{r+1}{2}}$ as above, we have 
\begin{equation}\label{eq:weight_of_v_from_BZL}
 \wght(v)-\wght(v_{lowest})=\sum _{i=1}^{\binom{r+1}{2}}b_i\cdot \alpha _{\Omega _i}.
\end{equation}
 \item Elements of the crystal $\Cr _{\la }$ are in bijection with Gelfand-Tsetlin patterns with top row $\la .$ The correspondence is given by assigning $\IP (v)$ to $v$ if and only if $BZL(v)=\Ga (\IP ).$
%\begin{equation}\label{eq:crystal_parametrization}
% BZL(v)=\Ga (\IP ).
%\end{equation}
\item With the correspondence as above, we have $\wght(v)=\wght(\IP (v)).$
%\begin{equation}\label{eq:def_wt_v} 
% \wght(v)=\wght(\IP (v)).
%\end{equation}
\end{enumerate}
\end{prop}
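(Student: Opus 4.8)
The plan is to treat this as a collection of essentially standard facts about the string/BZL parametrization of highest weight crystals, adapted to the favourite long word, and to organize the proof so that the combinatorial heart — the bijection of part (iv) — is reduced to a bookkeeping identity between the $\Gamma$-array construction and the successive-Kashiwara-lowering construction. First I would record the general features of the BZL construction for an arbitrary reduced word. For a vertex $v$, the definition of $b_i$ as $\varphi_{\Omega_i}(v_{i-1})$ means each $v_i$ is ``at the bottom of its $\Omega_i$-string''; iterating this along a reduced word for $w_0$ produces the lowest vertex, which gives (i). This is the standard fact that applying $f_i$-strings greedily along a reduced word for $w_0$ lands at $v_{lowest}$; I would cite it from \cite{littelmann1998cones} or \cite{berenstein1996canonical} or prove it by induction on $\ell(w_0)$, using that $v_{lowest}$ is the unique vertex killed by every $e_i$ and that the string lengths $b_i$ are exactly the coordinates of $\wght(v)-\wght(v_{lowest})$ in the PBW-type decomposition.

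Part (iii) is then immediate: each step $v_{i-1}\mapsto v_i=f_{\Omega_i}^{b_i}v_{i-1}$ changes the weight by $-b_i\alpha_{\Omega_i}$, so telescoping from $v$ to $v_{lowest}$ gives \eqref{eq:weight_of_v_from_BZL}. For part (ii), invertibility of $v\mapsto BZL(v)$: reading the array backwards, $v_{i-1}=e_{\Omega_i}^{b_i}v_i$ by crystal axiom (iv), starting from $v_{\binom{r+1}{2}}=v_{lowest}$, which is determined by $\la$; so the sequence $(b_i)$ together with $\la$ reconstructs $v_0=v$. (One should note the $b_i$ are ``tight'', i.e. $e_{\Omega_i}^{b_i+1}v_i=0$ would fail, but this is not needed for mere injectivity — only for surjectivity onto the image, which is what the inequalities \eqref{eq:GammaArrayIneq} cut out.)

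The substantive step is part (iv): that $v\mapsto \IP(v)$ defined by $BZL(v)=\Gamma(\IP)$ is a well-defined bijection onto Gelfand-Tsetlin patterns with top row $\la$. Here I would argue that the image of $BZL$ is exactly the set of arrays satisfying the inequalities \eqref{eq:GammaArrayIneq} — on the crystal side these inequalities encode precisely the constraints $0\le b_i\le \varphi_{\Omega_i}(v_{i-1})$ together with the interaction between consecutive reflections in the favourite word — and by Remark \ref{rmk:Gamma_determines_pattern} this set is in bijection with Gelfand-Tsetlin patterns with top row $\la$. Since $BZL$ is injective by (ii), and both sides have the same cardinality (both count a basis of $V_\la$, equivalently both satisfy the same inequality system), the map is a bijection. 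The cleanest route is to invoke the known equivalence (Littelmann \cite{littelmann1998cones}, Berenstein–Zelevinsky \cite{berenstein1993string}) that for this particular reduced word the string parametrization coincides with the Gelfand–Tsetlin parametrization, the dictionary being exactly $b_{\binom{r}{2}+j}=\Gamma_{?,?}$ read off row by row; the explicit triangular shape of $w_0^{(r)}$ in \eqref{eq:def_of_favourite_w0} is what makes the rows of $BZL(v)$ align with the rows of $\Gamma(\IP)$. Finally, part (v), $\wght(v)=\wght(\IP(v))$, follows by comparing \eqref{eq:weight_of_v_from_BZL} with the definition \eqref{eq:def_pattern_weight}: one checks $\sum_i b_i\alpha_{\Omega_i}$, expressed in the $\mathbf{e}_i^\ast$ basis, matches $(d_r,d_{r-1}-d_r,\ldots)-(\la_{r+1},\ldots,\la_1)$, which is a direct computation using \eqref{eq:defn:Gamma_entries} and \eqref{eq:def_di}.

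I expect the main obstacle to be part (iv): verifying that the greedy string lengths along the favourite word satisfy \emph{exactly} the interleaving-derived inequalities \eqref{eq:GammaArrayIneq}, with the decorations matching, rather than some superficially different but equivalent constraint system. This is where the specific combinatorics of $w_0^{(r)}$ enters, and where one must be careful that the ``right-leaning'' decoration rules of Definition \ref{def:array_decorations} are the ones compatible with this word (a different reduced word would give ``left-leaning'' rules). I would handle this either by citing the precise correspondence from \cite{bbf-wmdbook, littelmann1998cones} or, if a self-contained argument is wanted, by induction on $r$ using the branching to be discussed in Section \ref{subs:crystal_branching}: removing the last $r$ entries of the word (the block $\sigma_r\cdots\sigma_1$) corresponds to removing the bottom row of the pattern and passing to an $A_{r-1}$ crystal, so the inductive hypothesis handles the upper rows and only the bottom row / last block needs a direct check.
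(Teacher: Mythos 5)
Your proposal is correct and matches the paper's treatment: the paper likewise disposes of (i)--(iii) and (v) as standard facts about the string/BZL parametrization (citing \cite[Lemma 2.1, Proposition 2.3]{bbf-wmdbook}) and defers the substantive bijection (iv) to the literature (\cite{bbf-wmdbook}, Berenstein--Zelevinsky, Littelmann), exactly as you propose. You correctly isolate (iv) as the only nontrivial point, and your telescoping argument for (iii), reverse-reading argument for (ii), and weight comparison for (v) are the standard arguments the cited sources use.
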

\begin{proof}
 Parts of this proposition are proved throughout Chapter $2$ of \cite{bbf-wmdbook}. In particular, \cite[Lemma 2.1]{bbf-wmdbook} proves (i) and (ii); \cite[Proposition 2.3]{bbf-wmdbook} proves (iii) and (v). The correspondence in (iv) is proved using Young-tableaux. %, but (2.4), (2.6), (2.11) and Proposition 2.2 of \cite{bbf-wmdbook} are of particular interest. 
Some of the relevant proofs in \cite{bbf-wmdbook} use Berenstein and Zelevinsky \cite{berenstein1993string,berenstein1996canonical}, Kirillov and Berenstein \cite{kirillov1996groups}, Littelmann \cite{littelmann1998cones} and Lusztig \cite{lusztig1990canonical} as a reference.
\end{proof}

\subsection{Gelfand-Tsetlin coefficients}\label{section:GTcoeffs}

In this section, we define the coefficients appearing on the right-hand side of Theorem \ref{THM:MAIN}. The definitions depend on a positive integer $n$ (the degree of the metaplectic cover), the corresponding Gauss sums $g^{\flat }(a)$ and $h^{\flat }(a)$ defined in section \ref{subsubsect:Gauss_sums}, and the decorations of arrays introduced in Definition \ref{def:array_decorations}.

%The nonmetaplectic coefficients, corresponding to $n=1,$ appear in the statement of Tokuyama's Theorem \ref{thm:Tokuyama} and will be used in Section \ref{section:Tokuyama}.  Their branching properties will be revisited in Section \ref{section:DemCrystals}. 

%Recall that Definition \ref{def:array_decorations} gives decorations of an array $\Gamma (\IP) $ of a Gelfand-Tsetlin pattern $\IP .$ We now make use of these decorations. 

By remark \ref{rmk:Gamma_determines_pattern}, a pattern $\IP $ can be recovered from $\Gamma (\IP )$ and the top row $\la  .$ Since many computations in the sequel involve a fixed $n$ and $\la$, we often suppress these from the notation. We write $G^{(n,\la )}(\IP)=G^{(n)}(\IP)=G(\IP )$ when $\IP$ is understood to be a pattern with top row $\la  .$ We write  $G(\IP)=G^{(n,\la )}(\Ga )=G^{(\la )}(\Ga )=G(\Ga )$
when $\Ga =\Ga (\IP),$ and $G^{(n,\la )}(v)=G^{(\la )}(v)=G(v)$
when $v\in \Cr _{\la  }$ corresponds to $\IP $ by Proposition \ref{prop:crystal_parametrization}. 

\begin{definition}\label{def:GTcoeff}
 Let $\IP$ be a Gelfand-Tsetlin pattern with top row $\la  ,$ $\Ga (\IP )=(\Ga _{ij})_{1\leq i\leq j\leq r}$ its $\Ga $-array as in \eqref{eq:defn:Gamma_array}. Then the degree $n$ Gelfand-Tsetlin coefficient corresponding to $\IP $ is
\begin{equation}\label{eq:GTcoeffdef}
 G^{(n)}(\IP)=\prod _{1\leq i\leq j\leq r} g_{ij}^{n}(\IP),
\end{equation}
where $g_{ij}(\IP)=g_{ij}^{(n)}(\IP)$ is given below. 
\begin{equation}\label{eq:GTfactorforGammadef_n}
g_{ij}(\IP) =
 \left\lbrace\begin{array}{lll}
                            1 &  \Gamma_{i,j+1}=\Gamma _{ij}<\Gamma_{i,j+1}+a_{i-1,j-1}-a_{i-1,j},&  {\text{i.e. $a_{ij}$ is undecorated}}\\
                            h^{\flat }(\Ga _{ij}) &  \Gamma_{i,j+1}<\Gamma _{ij}<\Gamma_{i,j+1}+a_{i-1,j-1}-a_{i-1,j},&  {\text{i.e. $a_{ij}$ is circled}}\\
                            g^{\flat }(\Ga _{ij}) & \Gamma_{i,j+1}<\Gamma _{ij}=\Gamma_{i,j+1}+a_{i-1,j-1}-a_{i-1,j},&  {\text{i.e. $a_{ij}$ is boxed}}\\
                            0  & \Gamma_{i,j+1}=\Gamma _{ij}=\Gamma_{i,j+1}+a_{i-1,j-1}-a_{i-1,j}, &  {\text{i.e. $a_{ij}$ is circled and boxed}}
                                                                                        \end{array}\right.
 \end{equation}
\end{definition}

The coefficient depends strongly on $n.$ To elucidate this, we give the examples of the nonmeatplectic case ($n=1$) and the simplest metaplectic case ($n=2$) explicitly below. Recall from section \ref{subsubsect:Gauss_sums} that $t^n=v=q^{-1},$ where $q$ is the cardinality of a residue field $\oo _S/p\oo _S.$  

\begin{example}\label{example:n=1GTdef}
 When ${{n=1}}$, the factors ${{g_{ij}^{(n)}(\IP)}}$ of the Gelfand-Tsetlin coefficient $G^{(n)}(\IP )$ are as follows. 
% \begin{equation}\label{eq:GTfactordef_1}
% g_{ij}^{(1)}(\IP) =\left\lbrace\begin{array}{lll}
%                            1 & a_{i-1,j}=a_{ij}& a_{ij}\ {\mathrm{is\ circled}}\\
%                            1-t & a_{i-1,j}<a_{ij}<a_{i-1,j-1}& a_{ij}\ {\mathrm{is\ undecorated}}\\
%                            -t & a_{i-1,j}<a_{ij}=a_{i-1,j-1}&  a_{ij}\ {\mathrm{is\ boxed.}}\\
%                            0 & a_{i-1,j}=a_{ij}=a_{i-1,j-1}&  a_{ij}\ {\mathrm{is\ circled\ and\ boxed.}}
%                                                                                        \end{array}
%\right.
%\end{equation}
%We can of course also phrase this in terms of $\Gamma (\IP ).$
\begin{equation}\label{eq:GTfactorforGammadef_1}
 g_{ij}^{(1)}(\Ga) =\left\lbrace\begin{array}{ll}
                            1 & \Gamma_{i,j+1}=\Gamma _{ij}<\Gamma_{i,j+1}+a_{i-1,j-1}-a_{i-1,j}\\
                            1-t & \Gamma_{i,j+1}<\Gamma _{ij}<\Gamma_{i,j+1}+a_{i-1,j-1}-a_{i-1,j}\\
                            -t & \Gamma_{i,j+1}<\Gamma _{ij}=\Gamma_{i,j+1}+a_{i-1,j-1}-a_{i-1,j}\\
                            0 & \Gamma_{i,j+1}=\Gamma _{ij}=\Gamma_{i,j+1}+a_{i-1,j-1}-a_{i-1,j}
                                                                                        \end{array}\right.
\end{equation}
Let us compute the Gelfand-Tsetlin coefficient of the pattern in Example \ref{example:patternofweight220}. Recall that this pattern corresponds to the single element of $\Cr _{(3,1,0)}$ of weight $(2,2,0).$
$$\IP (v_{(2,2,0)}) =\left(\begin{array}{ccccc}
              3 & & 1 & & 0 \\
               & 3 &  & 1 & \\
               & & 2 & &  
             \end{array}
\right), \hskip .5 cm \text{and}\hskip .5 cm \Gamma (\IP (v_{(2,2,0)}))=\left[\begin{array}{cc}
                       3 & 1 \\
                        & 1 \\
                      \end{array}
\right].$$
Here $a_{11}$ and $a_{12}$ (or $\Ga _{11}$ and $\Ga _{12}$) are boxed, while $a_{22}$ (or $\Ga _{22}$) is undecorated. Thus we have $G^{(1)}(\IP(v_{(2,2,0)}) )=(-t)^2\cdot (1-t).$
\end{example}

\begin{example}\label{example:n=2GTdef}
Let $n=2.$ Then the factors ${{g_{ij}^{(n)}(\IP)}}$ of the Gelfand-Tsetlin coefficient $G^{(n)}(\IP )=G^{(n)}(\Ga )$ are 
% \begin{equation}\label{eq:GTfactordef_2}
%g_{ij}^{(2)}(\Gamma )=\left\lbrace\begin{array}{lll}
%                            1 & a_{i-1,j}=a_{ij}& a_{ij}\ {\mathrm{is\ circled}}\\
%                            1-t^2 & a_{i-1,j}<a_{ij}<a_{i-1,j-1}; 2\mid \Gamma _{i,j}& a_{ij}\ {\mathrm{is\ undecorated}}\\ 
%                            0 & a_{i-1,j}<a_{ij}<a_{i-1,j-1}; 2\nmid \Gamma _{i,j}&  a_{ij}\ {\mathrm{is\ undecorated}}\\
%                            -t^2 & a_{i-1,j}<a_{ij}=a_{i-1,j-1}; 2\mid \Gamma _{i,j}& a_{ij}\ {\mathrm{is\ boxed.}}\\
%                            t & a_{i-1,j}<a_{ij}=a_{i-1,j-1}; 2\nmid \Gamma _{i,j}& a_{ij}\ {\mathrm{is\ boxed.}}\\
%                            0 & a_{i-1,j}=a_{ij}=a_{i-1,j-1}& a_{ij}\ {\mathrm{is\ circled\ and\ boxed.}}
%                                                                                        \end{array}
%\right. 
%\end{equation}
%Again, we can express the coefficients in terms of how the inequalities of the $\Gamma _{i,j}$ are satisfied: 
\begin{equation}\label{eq:GTfactorforGammadef_2}
 g_{ij}^{(2)}(\Ga) =\left\lbrace\begin{array}{lll}
                                        1 & \Gamma _{i,j+1}=\Gamma _{i,j}<\Gamma _{i,j+1}+a_{i-1,j-1}-a_{i-1,j}& \Ga_{ij}\ {\mathrm{is\ circled}}\\
                                        1-t^2 & \Gamma _{i,j+1}<\Gamma _{i,j}<\Gamma _{i,j+1}+a_{i-1,j-1}-a_{i-1,j};\ 2\mid \Gamma_{i,j}& \Ga_{ij}\ {\mathrm{is\ undecorated}}\\
                                        0 & \Gamma _{i,j+1}<\Gamma _{i,j}<\Gamma _{i,j+1}+a_{i-1,j-1}-a_{i-1,j};\ 2\nmid \Gamma_{i,j}& \Ga_{ij}\ {\mathrm{is\ undecorated}}\\
                                        -t^2 & \Gamma _{i,j+1}<\Gamma _{i,j}=\Gamma _{i,j+1}+a_{i-1,j-1}-a_{i-1,j};\ 2\mid \Gamma_{i,j}& \Ga_{ij}\ {\mathrm{is\ boxed}}\\
                                        t & \Gamma _{i,j+1}<\Gamma _{i,j}=\Gamma _{i,j+1}+\la_{j}-\la_{j+1}+1;\ 2\nmid \Gamma_{i,j}& \Ga_{ij}\ {\mathrm{is\ boxed}}\\
                                        0 & \Gamma _{i,j+1}=\Gamma _{i,j}=\Gamma _{i,j+1}+\la_{j}-\la_{j+1}+1;& \Ga_{ij}\ {\mathrm{is\ circled,\ boxed}}
                           \end{array}\right.
\end{equation}
Notice that the factors depend on the residue of $\Gamma _{ij}$ modulo $n=2.$ Returning to the example of $v_{(2,2,0)}\in \Cr_{(3,1,0)},$ we see that since $\Ga _{22}=1$ is undecorated and odd, 
$G^{(2)}(\IP(v_{(2,2,0)}) )=t^2\cdot 0=0.$
\end{example}

\subsection{Branching properties}\label{subs:crystal_branching}

The following branching rule of type $A_r$ highest-weight crystals is well known. (See, for example, \cite[(2.4)]{bbf-wmdbook}.) We shall adapt it to the metaplectic setting, and Demazure crystals in section \ref{section:DemCrystals}; these adapted branching rules play a key role in the proof of Theorem \ref{THM:MAIN}. 

\begin{prop}\label{prop:crystal_branching}
 When all the edges of a highest weight crystal $\Cr _{\la+\rho }$ labelled by $r$ are removed, the connected components of the result are all isomorphic to highest weight crystals $\Cr _{\mu }$ of type $A_{r-1}.$ Omitting the last component of $\wght :\Cr _{\la+\rho } \ra \Z^{r+1},$ and restricting it to a connected component gives the weight function on that component:
\begin{equation}\label{eq:def_weight_on_component}
 \wght _{\mu } :\Cr_{\mu }\ra \Z ^{r}.
\end{equation}
The highest weights $\mu $ that appear in this decomposition are dominant and interleave with $\la +\rho .$ We identify the highest weight crystal $\Cr _{\mu }$ with the appropriate subcrystal of $\Cr _{\la+\rho }.$
That is, we have 
\begin{equation}\label{eq:crystal_branching}
 \Cr _{\la +\rho }=\bigcup _{\mu } \Cr _{\mu }.
\end{equation}
and the (disjoint) union is over all $\mu =(\mu _1,\mu _2\ldots ,\mu _r)$ such that 
\begin{equation}\label{eq:interleave_condition}
 \la _1+r \geq \mu _1\geq \la _2+r-1\geq \cdots \geq \la _r+1\geq \mu _r\geq \la _{r+1}.
\end{equation}
An element $v\in \Cr _{\la +\rho }$ belongs to $\Cr _{\mu }$ in the disjoint union \eqref{eq:crystal_branching} if the second row of the pattern $\IP (v)$ is $(a_{11},a_{12},\ldots ,a_{1r})=\mu .$ (Here $\IP (v)$ is the Gelfand-Tsetlin pattern with top row $\la +\rho $ corresponding to $v$ as in Proposition \ref{prop:crystal_parametrization}.)
\end{prop}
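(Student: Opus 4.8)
\textbf{Proof plan for Proposition \ref{prop:crystal_branching}.}

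The plan is to reduce the statement to the combinatorial bijection already established in Proposition \ref{prop:crystal_parametrization}(iv), which identifies the vertices of $\Cr_{\la+\rho}$ with Gelfand-Tsetlin patterns of top row $\la+\rho$. First I would recall that the Kashiwara operators $e_i, f_i$ for $1\le i\le r-1$, when translated through this bijection, act only on the entries of the pattern in rows $1,2,\ldots,r$ (i.e., the sub-pattern below the top row), and leave the top row $a_{0,\bullet}=\la+\rho$ fixed; meanwhile $e_r,f_r$ are precisely the operators that modify the second row $(a_{11},\ldots,a_{1r})$. This is a standard fact about the $\GL_{r+1}$ branching to $\GL_r$ and can be extracted from the references cited after Proposition \ref{prop:crystal_parametrization} (Berenstein--Zelevinsky, Littelmann); in the interleaving language it says that deleting $r$-labelled edges exactly corresponds to fixing the pair (top row $\la+\rho$, second row $\mu$) and letting the remaining triangular array $(a_{ij})_{i\ge 1}$ vary over all Gelfand-Tsetlin patterns with top row $\mu$.

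Next I would set up the identification of each connected component with a rank $r-1$ crystal. Fix a second row $\mu=(\mu_1,\ldots,\mu_r)$ interleaving with $\la+\rho$, i.e.\ satisfying \eqref{eq:interleave_condition}; this is exactly the condition $a_{0,j-1}\ge a_{1j}\ge a_{0j}$ from Definition \ref{defn:GTpatternsAndGammaArrays}, so the admissible $\mu$ are precisely those listed. The set of vertices $v\in\Cr_{\la+\rho}$ whose pattern $\IP(v)$ has second row $\mu$ is then in bijection with Gelfand-Tsetlin patterns of rank $r-1$ and top row $\mu$ (delete the top row of $\IP(v)$), hence with $\Cr_\mu$ by Proposition \ref{prop:crystal_parametrization}(iv) applied in rank $r-1$. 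One then checks that this bijection intertwines the operators $e_i,f_i$ ($1\le i\le r-1$) on the subset with the Kashiwara operators of $\Cr_\mu$, and that these subsets are exactly the connected components after removing $r$-edges: no $1\le i\le r-1$ edge can change the second row, and any two patterns with the same second row are connected by such edges since $\Cr_\mu$ is connected. This also shows the components are disjoint (the second row is a well-defined invariant) and that their union is all of $\Cr_{\la+\rho}$ (every vertex has some second row), giving \eqref{eq:crystal_branching}.

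Finally, for the weight statement \eqref{eq:def_weight_on_component}, I would use the explicit weight formula \eqref{eq:def_pattern_weight}: $\wght(\IP)=(d_r,d_{r-1}-d_r,\ldots,d_0-d_1)$ where $d_i$ is the sum of row $i$. Omitting the last coordinate $d_0-d_1$ of $\wght(v)$ leaves $(d_r,d_{r-1}-d_r,\ldots,d_1-d_2)$, which is exactly the rank $r-1$ weight of the pattern obtained by deleting the top row (whose row sums are $d_1,d_2,\ldots,d_r$), so restriction of the truncated weight map agrees with $\wght_\mu$ on $\Cr_\mu$; compatibility with Proposition \ref{prop:crystal_parametrization}(v) makes this automatic once the pattern bijection is in place. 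The main obstacle is the first step: carefully justifying that the $\GL_r$-Kashiwara operators act on patterns by fixing the top row and that the $r$-edges are exactly the moves on the second row. Rather than reprove this, I would cite the relevant statements in \cite{bbf-wmdbook} and \cite{littelmann1998cones, berenstein1993string}, since the proposition is stated as ``well known''; the remaining bookkeeping (interleaving $\Leftrightarrow$ admissible $\mu$, disjointness, weight truncation) is then routine.
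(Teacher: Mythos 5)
The paper gives no proof of this proposition at all --- it is stated as well known, with only a pointer to \cite[(2.4)]{bbf-wmdbook} --- so your sketch, which reduces everything to the standard fact that under the Gelfand-Tsetlin parametrization each Kashiwara operator $f_i$ decrements a single entry of row $r+1-i$ (hence $f_i$ with $i\le r-1$ fixes the second row, while $f_r$ alone changes it), is correct and in fact more detailed than what the paper records; deferring that one combinatorial fact to Berenstein--Zelevinsky/Littelmann is exactly what the paper does. The only point to tighten is your opening claim that $e_i,f_i$ for $i\le r-1$ ``act only on rows $1,\ldots,r$'': what you need, and do use later, is that they fix row $1$ as well (so they touch only rows $2,\ldots,r$), and since the weight formula \eqref{eq:def_pattern_weight} only controls the row sums $d_k$ rather than the individual entries, this entrywise statement genuinely has to come from the combinatorial description of the operators in the cited references rather than from a weight count.
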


\begin{example}\label{example:crystal_branching}
 If $\la +\rho =(3,1,0),$ then the weights $\mu $ are $(3,1),$ $(3,0),$ $(2,1),$ $(2,0),$ $(1,1)$ and $(1,0).$ Figure \ref{fig:Crystal_310_branching} shows the corresponding components of $\Cr_{(3,1,0)}.$ These are of Cartan type $A_1.$ The highest element of each string is labeled with the corresponding weight $\mu .$ %\red{FIGURE MISSING}
\begin{figure}[h!]
    \centering
    \includegraphics[width=0.4\textwidth]{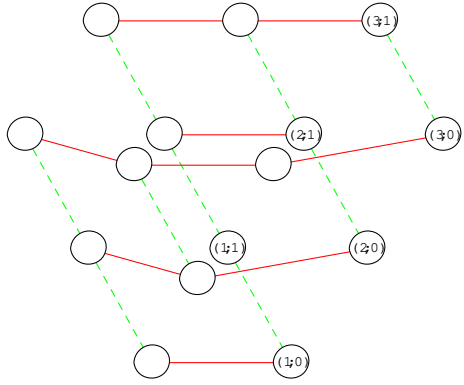}
\caption{The $A_1$ components ($1$-strings) of $\Cr_{(3,1,0)}.$}
\label{fig:Crystal_310_branching}
\end{figure}
\end{example}

The remainder of this section is dedicated to describing the weights and Gelfand-Tseltlin coefficients on components in \eqref{eq:crystal_branching} explicitly. The results are summarized in Proposition \ref{prop:banchcomp_explicit} below. As before, we use the notation $\x=(x_1,\ldots ,x_r,x_{r+1}),$ write $\y=(x_1,\ldots ,x_r)$ and let $d(\la)$ (or $d(\mu )$) denote the sum of the components of the weight $\la $ (respectively, $\mu $).

\begin{prop}\label{prop:banchcomp_explicit}
 Let $\Cr _{\mu }$ be one of the components in in the decomposition \eqref{eq:crystal_branching} of $\Cr_{\la +\rho},$ i.e. suppose $\mu $ and $\la +\rho $ interleave. Let $v$ be any element of $\Cr _\mu .$ Then we have the following. 
\begin{enumerate}[(a)]
\item If $\wght_{\mu }:\ \Cr_{\mu }\ra \Z^{r}$ denotes the weight function on $\Cr _{\mu },$ then
\begin{equation}\label{eq:wt_conversion_goal}
 \x^{\wght(v)}=\y^{\wght _{\mu }(v)}\cdot x_{r+1}^{d(\la +\rho )-d(\mu )}.
\end{equation}
\item Let $v_{\ast }$ denote the lowest element of $\Cr _{\mu}$ (as a type $A_{r-1}$ crystal).
\begin{equation}\label{eq:GT_conversion_goal_cr}
 G^{(n,\la +\rho )}(v)=G^{(n,\mu )}(v)\cdot G^{(n,\la +\rho )}(v_{\ast }).
\end{equation}
\end{enumerate}
\end{prop}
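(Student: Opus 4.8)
The plan is to prove both statements by passing through the Gelfand-Tsetlin pattern parametrization of Proposition \ref{prop:crystal_parametrization} and the description of the branching in Proposition \ref{prop:crystal_branching}. Fix $\mu$ interleaving with $\la+\rho$, and let $v\in\Cr_\mu$ correspond to the pattern $\IP=\IP(v)$ with top row $\la+\rho$. By Proposition \ref{prop:crystal_branching}, the second row of $\IP$ is exactly $\mu=(a_{11},\ldots,a_{1r})$. The key observation is that the ``sub-pattern'' $\IP'$ of $\IP$ obtained by deleting the top row is itself a Gelfand-Tsetlin pattern of rank $r-1$ with top row $\mu$, and it corresponds under Proposition \ref{prop:crystal_parametrization} (applied to type $A_{r-1}$) to the element $v\in\Cr_\mu$ viewed as a vertex of the $A_{r-1}$-crystal. (One should double-check that the BZL/$\Gamma$-array correspondence for the favourite long word is compatible with this truncation; this is part of why the favourite long word $w_0^{(r)}$ begins with $w_0^{(r-1)}$, and should follow from Proposition \ref{prop:crystal_parametrization} together with the shape of \eqref{eq:def_of_favourite_w0}.)

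For part (a): by \eqref{eq:def_di} and \eqref{eq:def_pattern_weight}, the weight of $v$ in $\Cr_{\la+\rho}$ is $\wght(\IP)=(d_r,d_{r-1}-d_r,\ldots,d_0-d_1)$ where $d_i=\sum_{j=i}^r a_{ij}$. The first row contributes only to $d_0$, and $d_0=d(\la+\rho)$. The weight of $v$ in $\Cr_\mu$ is $\wght_\mu(\IP')=(d_r,d_{r-1}-d_r,\ldots,d_1-d_2)$ — the same array with the last entry $d_0-d_1$ dropped — and here $d_1=d(\mu)$. Comparing component by component, $\x^{\wght(v)}$ and $\y^{\wght_\mu(v)}$ agree in the exponents of $x_1,\ldots,x_r$ (these record $d_r,d_{r-1}-d_r,\ldots,d_2-d_3$ in both, noting a harmless re-indexing), while the exponent of $x_{r+1}$ in $\x^{\wght(v)}$ is $d_0-d_1=d(\la+\rho)-d(\mu)$. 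This is exactly \eqref{eq:wt_conversion_goal}; alternatively one can derive it directly from \eqref{eq:weight_of_v_from_BZL} by noting that the $\Omega_i$'s coming from the ``new'' reflections $\sigma_r\sigma_{r-1}\cdots\sigma_1$ in $w_0^{(r)}$ are precisely the ones that move weight into the last coordinate.

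For part (b): by Definition \ref{def:GTcoeff}, $G^{(n,\la+\rho)}(\IP)=\prod_{1\le i\le j\le r} g_{ij}^{(n)}(\IP)$. I would split this product into the $i=1$ factors and the $i\ge 2$ factors. The decoration rules \eqref{eq:decoration_rules_pattern} for $g_{ij}$ depend only on the triple $(a_{i-1,j},a_{ij},a_{i-1,j-1})$, i.e. only on rows $i-1$ and $i$ of $\IP$. Hence for $i\ge 2$ the factor $g_{ij}^{(n)}(\IP)$ depends only on the sub-pattern $\IP'$, and the relevant $\Gamma$-entries $\Gamma_{ij}$ are unchanged (the sum in \eqref{eq:defn:Gamma_entries} runs over $k\ge j$ and involves only $a_{i,k}$ and $a_{i-1,k}$). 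Therefore $\prod_{2\le i\le j\le r} g_{ij}^{(n)}(\IP)=\prod_{1\le i'\le j'\le r-1} g_{i'j'}^{(n)}(\IP')=G^{(n,\mu)}(v)$ after the shift $i'=i-1$, $j'=j-1$. The remaining factor $\prod_{j=1}^r g_{1j}^{(n)}(\IP)$ depends only on the top two rows $\la+\rho$ and $\mu$ of $\IP$; I claim it equals $G^{(n,\la+\rho)}(v_\ast)$, where $v_\ast$ is the lowest vertex of $\Cr_\mu$. The point is that the lowest element $v_\ast$ of $\Cr_\mu$ has a pattern $\IP(v_\ast)$ whose top two rows are again $\la+\rho$ and $\mu$ (the top two rows are forced by membership in $\Cr_\mu$), and whose lower rows are the ``minimal'' ones, all equal to $\mu$ read off with the interleaving saturated on one side; crucially, all of $g_{ij}^{(n)}(\IP(v_\ast))$ for $i\ge 2$ equal $1$ (every such entry is forced to be undecorated, or one checks the minimal sub-pattern contributes trivially), so $G^{(n,\la+\rho)}(v_\ast)=\prod_{j=1}^r g_{1j}^{(n)}(\IP(v_\ast))=\prod_{j=1}^r g_{1j}^{(n)}(\IP(v))$ since the top two rows agree. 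Combining, $G^{(n,\la+\rho)}(v)=G^{(n,\mu)}(v)\cdot G^{(n,\la+\rho)}(v_\ast)$, which is \eqref{eq:GT_conversion_goal_cr}.

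The main obstacle I expect is the careful bookkeeping in part (b): one must verify that the sub-pattern $\IP'$ really is the pattern attached to $v\in\Cr_\mu$ under the $A_{r-1}$ version of Proposition \ref{prop:crystal_parametrization} — this uses that the favourite long word restricts correctly — and one must pin down exactly what the pattern of the lowest element $v_\ast$ looks like and confirm that its $i\ge 2$ Gelfand-Tsetlin factors are all $1$. Both of these are combinatorial and somewhat delicate (in particular the decoration at the boundary cases, and the interplay with Gauss-sum identities \eqref{eq:gflat0}–\eqref{eq:gflat_identity} if any nontrivial $g^\flat$/$h^\flat$ factors survive), so I would isolate them as separate lemmas. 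Part (a) is comparatively routine once the pattern truncation is set up. Everything else is just reindexing and applying the definitions \eqref{eq:GTcoeffdef}, \eqref{eq:GTfactorforGammadef_n}, \eqref{eq:def_di}, \eqref{eq:def_pattern_weight}.
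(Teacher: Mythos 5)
Your proposal follows essentially the same route as the paper: pass to Gelfand--Tsetlin patterns, observe that deleting the top row of $\IP(v)$ gives the $A_{r-1}$ pattern of $v\in\Cr_\mu$, read off part (a) from the $d_i$'s, and for part (b) split $\prod_{i\le j}g_{ij}$ into the $i=1$ block (depending only on the top two rows, hence constant on $\Cr_\mu$ and equal to the analogous block for $v_\ast$) and the $i\ge 2$ block (equal to $G^{(n,\mu)}(v)$). The paper justifies the truncation claim and the shape of $\IP(v_\ast)$ exactly as you anticipate, via the BZL description (the favourite long word $w_0^{(r)}$ beginning with $w_0^{(r-1)}$ forces $BZL(v_\ast)=\Ga(\IP(v_\ast))$ to vanish below the first row, so $a_{ij}=\mu_j$ for $i\ge 2$). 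Two small corrections to your part (b). First, those forced entries satisfy $a_{i-1,j}=a_{ij}<a_{i-1,j-1}$, so by Definition \ref{def:array_decorations} they are \emph{circled}, not undecorated; circled is what yields the factor $1$ in \eqref{eq:GTfactorforGammadef_n} (undecorated would give $h^\flat$, which need not be $1$). Second, the inequality $a_{ij}<a_{i-1,j-1}$, hence the claim that all $i\ge 2$ factors of $\IP(v_\ast)$ equal $1$, requires $\mu$ to be strongly dominant; if $\mu_{j-1}=\mu_j$ for some $j$ the entry is circled \emph{and} boxed and the factor is $0$, so your identity $G^{(n,\la+\rho)}(v_\ast)=\prod_j g_{1j}(\IP(v_\ast))$ fails. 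The paper disposes of this case first (Remark \ref{rmk:nonstrict_mu}): a non-strongly-dominant second row makes every $\IP(v)$, $v\in\Cr_\mu$, non-strict, so both sides of \eqref{eq:GT_conversion_goal_cr} vanish and the identity is trivial; the main argument then assumes $\mu$ strongly dominant. With that case split added, your argument is complete.
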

\begin{proof}
Let $\IP (v)=\IP _{\la+\rho } (v)$ be the Gelfand-Tsetlin pattern corresponding to $v\in \Cr _{\la +\rho }$ as in Proposition \ref{prop:crystal_parametrization}. By Proposition \ref{prop:crystal_branching}, as an element of $\Cr _{\mu },$ $v$ corresponds to the pattern $\IP _{\mu }(v),$ and $\IP _{\mu }(v)$ is the same as $\IP _{\la+\rho } (v)$ with its first row omitted. In particular, the second row of $\IP (v)$ is $\mu .$ Thus by \eqref{eq:def_di}, $d_0(\IP(v))=d(\la )$ and $d_1(\IP (v))=d(\mu).$ By \eqref{eq:def_pattern_weight}, the last coordinate of $\wght(\IP (v))$ is $d(\la )-d(\mu ).$ This implies (a). For (b), we restrict our attention to weights $\mu $ that are strongly dominant, i.e. we would like to assume $\mu _1>\mu _2>\cdots >\mu _r.$ We can do this because of the following remark. 

\begin{remark}\label{rmk:nonstrict_mu}
 The statement \eqref{eq:GT_conversion_goal_cr} is trivial if $\mu $ is not strongly dominant. By Remark \ref{rmk:strictpatterns}, the Gelfand-Tsetlin coefficient of a non-strict pattern is zero. The second row of $\IP (v)$ is $\mu ,$ hence if $\mu $ is not strongly dominant, then $\IP (v)$ is non-strict for any $v\in \Cr _{\mu },$ and $G^{(n,\la +\rho )}(v)=G^{(n,\la +\rho )}(v_{\ast })=0.$
\end{remark}

Assume that $\mu $ is strongly dominant, hence the first two rows of $\IP (v)$ are strict for every $v\in \Cr _{\la +\rho}.$ The next remark describes the $BZL$ path of elements in $\Cr_{\mu}.$ %in particular, of $v_\ast .$

\begin{remark}\label{rmk:bzltolowest}
Recall from section \ref{section:BZLpaths} that the $BZL$ path of $v$ is a path in $\Cr _{\la +\rho }$ from $v$ to $v_{lowest}\in \Cr_{\la +\rho }.$ The $j$th segment of the path is along an edge of $\Cr _{\la +\rho }$ labelled by $\Omega _j,$ where $\Omega _j$ is defined in \eqref{eq:def_of_favourite_w0}. The chosen long word $w_0^{(r)}$ starts with $w_0^{(r-1)};$ in particular the first $\binom{r}{2}$ out of the $\binom{r+1}{2}$ segments are along edges {\em{not}} labelled by $r.$ This implies that these segments are contained in $\Cr_{\mu },$ and in fact are the $BZL$ path corresponding to $v$ as an element of $\Cr_{\mu },$ a crystal of type $A_{r-1}$. Hence the end of the first $\binom{r}{2}$ segments is the lowest element of that crystal, $v_{\ast }.$ Consequently, the first $\binom{r}{2}$ segments of the $BZL$ path of $v_\ast $ are trivial. If 
$b_j(v)=b_j(\IP(v))$ denotes the length of the $j$th segment of the $BZL$ path, then 
\begin{equation}\label{eq:bofvandvast}
b_j(v_{\ast})=0\text{ for }j\leq \binom{r}{2},\text{ and }b_j(v)=b_j(v_{\ast})\text{ for every }v\in \Cr_{\mu }\text{ and } j>\binom{r}{2}.
\end{equation}
\end{remark}

Thus $BZL(v_\ast )=\Ga (\IP (v_\ast ))$ has zeros everywhere below the first row. By \eqref{eq:defn:Gamma_entries}, this means that for any $1<i\leq j\leq r+1,$ the entry $a_{i,j}$ of $\IP (v_{\ast })$ satisfies $a_{i,j}=a_{1,j}=\mu _j.$ It follows that we have $a_{i-1,j-1}>a_{i,j}=a_{i-1,j}.$ According to Definition \ref{def:array_decorations}, these entries are all circled, but not boxed. By Definition \ref{def:GTcoeff} this implies 
\begin{equation}\label{eq:GTcoeffofvast}
G^{(n,\la +\rho )}(\IP (v_{\ast }))=\prod _{1\leq i\leq j\leq r} g_{ij}^{n,\la +\rho }(\IP (v_{\ast }))=\prod _{1\leq j\leq r} g_{1j}^{n,\la +\rho }(\IP (v_{\ast }))\cdot \prod _{2\leq i\leq j\leq r} 1.
\end{equation}

The first two rows of $\IP (v)$ are $\la +\rho $ and $\mu $ for every $v\in \Cr _\mu .$ The coefficient $g_{1j}^{n,\la +\rho }(\IP (v))$ only depends on those two rows; hence its value is the same for any $v\in \Cr _\mu $ and $v_\ast .$ Hence we have 
\begin{equation}\label{eq:GT_component_element_1}
%\begin{split}
G^{(n,\la +\rho )}(\IP (v))  =  \prod _{1\leq i\leq j\leq r} g_{ij}^{n,\la +\rho }(\IP (v)) =  \prod _{1\leq j\leq r} g_{1j}^{n,\la +\rho }(\IP (v_{\ast }))\cdot \prod _{2\leq i\leq j\leq r} g_{ij}^{n,\la +\rho }(\IP (v)) %\\
 %& \stackrel{\eqref{eq:GTcoeffofvast}}{=}  G^{(n,\la +\rho )}(\IP (v_{\ast }))\cdot \prod _{2\leq i\leq j\leq r} g_{ij}^{n,\la +\rho }(\IP (v)).
%\end{split}
\end{equation}
The first product here is equal to $G^{(n,\la +\rho )}(\IP (v_{\ast }))$ by \eqref{eq:GTcoeffofvast}. The second factor, 
\begin{equation}\label{eq:lastrowsmuGT}
\prod _{2\leq i\leq j\leq r} g_{ij}^{n,\la +\rho }(\IP (v))=G^{(n,\mu )}(v),
\end{equation} 
because, as seen above, the Gelfand-Tsetlin pattern $\IP _{\mu }(v)$ corresponding to $v$ as an element of $\Cr _{\mu }$ is $\IP (v)=\IP _{\la +\rho }(v)$ minus its first row. Thus, substituting \eqref{eq:GTcoeffofvast} and \eqref{eq:lastrowsmuGT} into \eqref{eq:GT_component_element_1} gives \eqref{eq:GT_conversion_goal_cr}.
\end{proof}

\section{Tokuyama's Theorem}\label{section:Tokuyama}

Tokuyama's theorem, in its original form, relates a Schur function to a generating function of strict Gelfand patterns. This is easily rephrased to relate a sum over a Weyl group to a sum over a highest weight crystal. This second form is more convenient for the purposes of generalizing the theorem to the metaplectic setting.

In the previous section, we followed notation from Brubaker-Bump-Friedberg \cite{bbf-wmdbook}, because that is most convenient to use for metaplectic definitions of Gelfand-Tsetlin coefficients. The notation and approach in Tokuyama's paper \cite{tokuyama-generating} is slightly different. Here we phrase Tokuyama's theorem using both sets of notation, and explain why the two versions are equivalent. 

Let $\x=(x_1,\ldots ,x_{r+1}),$ $\z=(z_1,\ldots ,z_{r+1}),$ $\la =(\la _1,\ldots ,\la _{r+1})$ and let $\rho =(r,r-1,\ldots ,1,0)$
%\begin{equation}\label{eq:rhodef}
% \rho =(r,r-1,\ldots ,1,0)
%\end{equation}
be the Weyl vector. Let $s_{\la }(\x )$ (or $s_{\la }(\z )$) denote the Schur function associated to the highest-weight representation of $\GL _{r+1}$ with highest weight $\la .$ Recall that a Gelfand-Tsetlin pattern is an array of the form \eqref{eq:GTpatternform}, where rows are non-increasing and interleave.

As in Tokuyama \cite{tokuyama-generating}, we say a pattern $\IP $ is strict if $a_{i-1,j-1}>a_{i-1,j}$ holds for every $1\leq i\leq j\leq r.$ Following notation there, let $\Grm(\la )$ denote the set of Gelfand-Tsetlin patterns with top row $\la ,$ and let $\SG (\la )$ be the set of strict Gelfand-Tsetlin patterns with top row $\la .$

\begin{remark}\label{rmk:strictpatterns}
 Note that by Definition \ref{def:array_decorations}, a Gelfand-Tsetlin pattern $\IP $ is strict if and only it has no entries that are both circled and boxed. In every version of Gelfand-Tsetlin coefficients, such an entry corresponds to a factor of zero. Hence as long as each term of the sum involves the Gelfand-Tsetlin coefficients, summing over $\Grm (\la )$ is the same as summing over $\SG (\la )$.
\end{remark}

Recall that if $d_i$ is the sum of elements in the $i$-th row \eqref{eq:def_di}, then by \eqref{eq:def_pattern_weight} the weight of a pattern $\IP$ is  $\wght(\IP )=(d_r,d_{r-1}-d_r,\ldots ,d_0-d_1).$
%\begin{equation}\label{eq:wght(IP)recall}
% \wght(\IP )=(d_r,d_{r-1}-d_r,\ldots ,d_0-d_1).
%\end{equation}
In Tokuyama \cite{tokuyama-generating}, we have %$ M(\IP )=(d_0-d_1,d_1-d_2,\ldots ,d_{r-1}-d_r,d_r).$
\begin{equation}\label{eq:M(IP)def}
 M(\IP )=(d_0-d_1,d_1-d_2,\ldots ,d_{r-1}-d_r,d_r).
\end{equation}
For a weight $\mu =(\mu _1,\mu _2,\ldots ,\mu _{r+1})$ write $\x^{\mu }=x_1^{\mu _1}\cdot x_2^{\mu _2}\cdots x_{r+1}^{\mu _{r+1}}.$
Recall the definition of the (nonmetaplectic) Gelfand-Tsetlin coefficient $G(\IP )=G^{(1)}(\IP )$ as a product of $g_{ij}(\IP )$ from \eqref{eq:GTcoeffdef} and \eqref{eq:GTfactorforGammadef_1}. Let us treat $t$ as an indeterminate for the time being. Then the factor $g_{ij}(\IP)$ corresponding to an entry $a_{ij}$ is as follows.
$$ g_{ij}(\IP) =\left\lbrace\begin{array}{lll}
                            1 & a_{i-1,j}=a_{ij}& a_{ij}\ {\mathrm{is\ circled}}\\
                            1-t & a_{i-1,j}<a_{ij}<a_{i-1,j-1}& a_{ij}\ {\mathrm{is\ undecorated}}\\
                            -t & a_{i-1,j}<a_{ij}=a_{i-1,j-1}&  a_{ij}\ {\mathrm{is\ boxed.}}\\
                            0 & a_{i-1,j}=a_{ij}=a_{i-1,j-1}&  a_{ij}\ {\mathrm{is\ circled\ and\ boxed.}}
                                                                                        \end{array}
\right.$$
In Tokuyama \cite{tokuyama-generating}, the entry $a_{ij}$ is called ``special'' if $a_{i-1,j}<a_{ij}<a_{i-1,j-1}$ and ``lefty'' if $a_{i-1,j}=a_{ij}.$ By Defintion \ref{def:array_decorations} ``special'' entries are undecorated, and ``lefty'' entries are boxed. (For strict patterns, ``lefty'' entries are boxed and not circled by Remark \ref{rmk:strictpatterns}.)

We are now ready to state Tokuyama's theorem in both the notation of \cite{tokuyama-generating} and in ours. 

\begin{theorem}\label{thm:Tokuyama}
 (Tokuyama's theorem) Let $\la =(\la _1,\la _2,\ldots ,\la _r,\la _{r+1})\in \Z^{r+1}$ where $\la _1\geq \la _2\geq \cdots \geq \la _{r+1}\geq 0,$ and $\rho =(r,r-1,\ldots ,1,0),$ $\SG(\la +\rho ),$ and $M(\IP )$ as defined above. Let $s(\IP )$ be the number of special entries of $\IP $ and $l(\IP )$ the number of lefty entries. 
\begin{equation}\label{eq:Tokuyama_thm_original_notation}
 s_{\la }(\z )\cdot \prod _{1\leq i<j<r+1} (z_i-t\cdot z_j)=\sum _{\IP \in \SG(\la +\rho )} (1-t)^{s(\IP )}\cdot (-t)^{l(\IP )}\cdot \z^{M(\IP )}.
\end{equation}
In the notation introduced in previous sections of this chapter, this can be re-written as
\begin{equation}\label{eq:Tokuyama_thm_our_notation}
 s_{\la }(\x )\cdot \prod _{1\leq i<j<r+1} (x_j-t\cdot x_i)=\sum_{\IP\in \Grm(\la +\rho )} G(\IP )\cdot \x ^{\wght(\IP )}.
\end{equation}
\end{theorem}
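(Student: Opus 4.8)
The plan is to deduce \eqref{eq:Tokuyama_thm_our_notation} from \eqref{eq:Tokuyama_thm_original_notation}, treating \eqref{eq:Tokuyama_thm_original_notation} itself as the classical result of Tokuyama \cite{tokuyama-generating}. The two displays differ only by bookkeeping, so the entire proof is a translation between Tokuyama's conventions and those imported from \cite{bbf-wmdbook} in Section \ref{section:HWtCrystalsandGTpatterns}. (Alternatively one could invoke that \eqref{eq:Tokuyama_thm_our_notation} is the specialization $n=1$, $w=w_0$ of Theorem \ref{THM:MAIN}, proved below; but since this section exists to motivate that theorem, I would rather argue directly from Tokuyama's original identity.)

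First I would fix the order-reversing substitution $z_i=x_{r+2-i}$ for $1\le i\le r+1$. Since the Schur polynomial is symmetric in its variables, $s_\la(\z)=s_\la(\x)$ under this substitution. The involution $(i,j)\mapsto(r+2-j,\,r+2-i)$ of the index set of positive roots turns $\prod(z_i-t\,z_j)$ into $\prod(x_j-t\,x_i)$, which is the deforming factor on the right-hand side of \eqref{eq:Tokuyama_thm_our_notation}. For the monomials, a direct comparison of $M(\IP)=(d_0-d_1,\dots,d_{r-1}-d_r,d_r)$ with $\wght(\IP)=(d_r,d_{r-1}-d_r,\dots,d_0-d_1)$ shows $\wght(\IP)$ is $M(\IP)$ with its entries reversed, so $\z^{M(\IP)}=\x^{\wght(\IP)}$ after the substitution. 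Thus the two left-hand sides agree and the two sums agree term by term, provided the Tokuyama weight matches the Gelfand-Tsetlin coefficient.

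Next I would check that $G(\IP)=(1-t)^{s(\IP)}(-t)^{l(\IP)}$ for strict $\IP$. Unwinding $G(\IP)=\prod_{1\le i\le j\le r}g_{ij}(\IP)$ at $n=1$ against the $g_{ij}$ table recalled just above: each ``special'' (undecorated) entry contributes $1-t$, each ``lefty'' (boxed) entry contributes $-t$, the remaining (circled) entries contribute $1$, and a circled-and-boxed entry contributes $0$. By Remark \ref{rmk:strictpatterns} a strict pattern has no circled-and-boxed entries, giving the claimed product; the same remark gives $G(\IP)=0$ for non-strict $\IP$, so $\sum_{\IP\in\Grm(\la+\rho)}G(\IP)\,\x^{\wght(\IP)}=\sum_{\IP\in\SG(\la+\rho)}G(\IP)\,\x^{\wght(\IP)}$. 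Assembling these pieces recovers \eqref{eq:Tokuyama_thm_original_notation} in the notation of \eqref{eq:Tokuyama_thm_our_notation}, which is the theorem.

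I do not expect a genuine obstacle here: the only delicate point is that Tokuyama's ``left-leaning'' rule for special and lefty entries is the mirror image of the ``right-leaning'' decoration rules of \cite{bbf-wmdbook}, and that the monomial exponent on Tokuyama's side is indexed in the opposite order to $\wght$. The substitution $z_i=x_{r+2-i}$ is chosen so that both reversals are absorbed at once; the care is simply in confirming they are compatible rather than conflicting.
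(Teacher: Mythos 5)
Your proposal is correct and follows essentially the same route as the paper: take \eqref{eq:Tokuyama_thm_original_notation} as Tokuyama's Theorem 2.1 (with $-t$ for $t$), then translate via the substitution $x_i=z_{r+2-i}$, the reversal $\wght(\IP)=M(\IP)$ read backwards, and the identification $G(\IP)=(1-t)^{s(\IP)}(-t)^{l(\IP)}$ for strict patterns together with $G(\IP)=0$ for non-strict ones. Your explicit flagging of the left-leaning versus right-leaning decoration conventions is a welcome extra precaution, but the argument is the same as the paper's.
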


 The first form of the equation, \eqref{eq:Tokuyama_thm_original_notation} is Theorem 2.1 of \cite{tokuyama-generating}, substituting $-t$ for $t.$ We explain why \eqref{eq:Tokuyama_thm_our_notation} is equivalent. Note that (thinking of $t$ as an indeterminate) for any strict Gelfand-Tsetlin pattern we have $G(\IP )=(1-t)^{s(\IP )}\cdot (-t)^{l(\IP )}$
Furthermore, by Remark \ref{rmk:strictpatterns} we have $G(\IP )=0 $ if $\IP \in \Grm(\la +\rho )\setminus \SG(\la +\rho ).$
From \eqref{eq:def_pattern_weight} and \eqref{eq:M(IP)def} we see that the components of $\wght(\IP )$ are exactly the components of $M(\IP)$ in reverse order. So if we write $x_1=z_{r+1},$ $x_2=z_r,$ $\ldots ,$ $x_r=z_2,$ $x_{r+1}=z_1,$
%\begin{equation}\label{eq:x_is_z_reversed}
% x_1=z_{r+1},\ x_2=z_r,\ \ldots ,\ x_r=z_2,\ x_{r+1}=z_1
%\end{equation}
we have $\x^{\wght(\IP )}=\z ^{M(\IP )}.$
Hence the right hand sides of \eqref{eq:Tokuyama_thm_original_notation} and \eqref{eq:Tokuyama_thm_our_notation} agree. It remains to check that the left hand sides agree as well. Note that with the choice $x_i=z_{r+2-i}$, we have $s_{\la }(\x )=s_{\la }(\z )$  and $\prod _{1\leq i<j<r+1} (z_i-t\cdot z_j)=\prod _{1\leq i<j<r+1} (x_j-t\cdot x_i).$

%This is clear, for example, from the determinant for Schur functions. 
%\begin{equation}\label{eq:WCF_for_Schur}
% s_{\la }(\x )=\frac{\left|\begin{array}{cccc}
%                            x_1^{\la _1+r} & x_2^{\la _1+r} & \cdots & x_{r+1}^{\la _1+r} \\
%                            x_1^{\la _2+r-1} & x_2^{\la _2+r-1} & \cdots & x_{r+1}^{\la _2+r-1} \\
%                            \vdots  & \vdots  & \ddots & \vdots  \\
%                            x_1^{\la _{r+1}} & x_2^{\la _{r+1}} & \cdots & x_{r+1}^{\la _{r+1}} 
%                           \end{array}
%\right|}{\left|\begin{array}{cccc}
%                            x_1^{r} & x_2^{r} & \cdots & x_{r+1}^{r} \\
%                            x_1^{r-1} & x_2^{r-1} & \cdots & x_{r+1}^{r-1} \\
%                            \vdots  & \vdots  & \ddots & \vdots  \\
%                            x_1^{0} & x_2^{0} & \cdots & x_{r+1}^{0} 
%                           \end{array}
%\right|}.
%\end{equation}
%Finally, the choice $x_i=z_{r+2-i}$ implies 
%$\prod _{1\leq i<j<r+1} (z_i-t\cdot z_j)=\prod _{1\leq i<j<r+1} (x_j-t\cdot x_i).$

In the remainder of this section, we reformulate Theorem \ref{thm:Tokuyama} in terms of Demazure-Lusztig operators and a sum over a highest-weight crystal. This is done separately for the two sides. 

\subsection{The right hand side of Tokuyama's theorem as a sum over a crystal}\label{subsect:TokuyamaRHS}

The correspondence between elements of a crystal of highest weight $\la +\rho $ and Gelfand-Tsetlin patterns of top row $\la + \rho $ was established by Proposition \ref{prop:crystal_parametrization}. Following that parametrization and notation, we may write $G(v)=G^{(1)}(v):=G^{(1)}(\IP (v));$ we have $\wght (\IP (v))=\wght (v)$. Thus we may write the right hand side of \eqref{eq:Tokuyama_thm_our_notation} as
\begin{equation}\label{eq:RHS_Tokuyama_rewrite}
 \sum_{\IP\in \Grm(\la +\rho )} G(\IP )\cdot \x ^{\wght(\IP )}=\sum_{v\in \Cr _{\la +\rho }} G(v)\cdot \x ^{\wght(v)}.
\end{equation}

\subsection{The left hand side of Tokuyama's theorem in terms of Demazure-Lusztig operators}\label{subsect:TokuyamaLHS}

Recall notation for type $A$ root systems in Section \ref{subs:CGDemNotation}. This notation and the Weyl Character Formula for Schur functions allows us to rewrite the left hand side of \eqref{eq:Tokuyama_thm_our_notation} first as a sum over the Weyl group. Then we use Theorems \ref{THM:LONG_WORD} and \ref{THM:T_SUM} to write it in terms of Demazure-Lusztig operators. 

The Weyl group $W\cong S_{r+1}$ acts on $\La =\Z^{r+1}$ by permuting the coordinates. Thus for the long element $w_0$ we have 
\begin{equation}
\prod _{1\leq i<j<r+1} (x_j-t\cdot x_i) =\x^{w_0.\rho }\cdot \prod _{\alpha \in \Phi ^+} (1-t\cdot \x^{\alpha }).
\end{equation}
%\begin{align}\label{eq:tDelta_identity}
% \prod _{1\leq i<j<r+1} (x_j-t\cdot x_i)& =\prod _{1\leq i<j<r+1} x_j\cdot (1-t\cdot \frac{x_i}{x_j}) \\ 
%					 & =x_2\cdot x_3^2\cdots x_r^{r-1}\cdot x_{r+1}^r\cdot \prod _{1\leq i<j<r+1} (1-t\cdo t \frac{x_i}{x_j})\\
%					 & =\x^{w_0.\rho }\cdot \prod _{\alpha \in \Phi ^+} (1-t\cdot \x^{\alpha }).
%\end{align}
Now by the Weyl Character Formula we have 
$$s_{\la }(\x )=\frac{\sum _{w\in W} \sgn (w)\cdot w.(\x^{\la +\rho })}{\prod _{1\leq i<j<r+1} (x_i-x_j)}=\frac{\sum _{w\in W} \sgn (w)\cdot w.(\x^{\la +\rho })}{\x^{w_0\rho }\cdot \sgn(w_0)\cdot \prod _{\alpha \in \Phi ^+} (1-\x^{\alpha })}.$$
%where $\sgn(w)=(-1)^{\ell(w)},$ and $\ell (w)$ is the number of simple reflections in a reduced expression of $w.$ This can be rewritten as
%$$s_{\la }(\x )=\frac{\sum _{w\in W} \sgn (w)\cdot w.(\x^{\la +\rho })}{\x^{w_0\rho }\cdot \sgn(w_0)\cdot \prod _{\alpha \in \Phi ^+} (1-\x^{\alpha })}.$$
Thus the left hand side of \eqref{eq:Tokuyama_thm_our_notation} can be rewritten as 
 $$s_{\la }(\x )\cdot \prod _{1\leq i<j<r+1} (x_j-t\cdot x_i)=\frac{\prod _{\alpha \in \Phi ^+}(1-t\cdot \x^{\alpha })}{\prod _{\alpha \in \Phi ^+}(1-\x^{\alpha })}\cdot \sgn(w_0)\cdot \sum _{w\in W} \sgn (w)\cdot w.(\x^{\la +\rho }).$$
 Recall the notation $\ttDelta$ defined in \eqref{eq:deformeddenom_metaplectic} for a deformation the Weyl denominator;
%\begin{equation}\label{eq:Delta_def}
%  \ttDelta =\ttDelta ^{(1)}=\prod _{\alpha \in \Phi ^{+}} \bigl(1-t\cdot x^{\alpha }\bigr).
%\end{equation}
write $\Delta $ for $\ttDelta $ when $t=1.$ Then we have 
\begin{equation}\label{eq:LHS_Tokuyama_rewrite_la+rho}
 s_{\la }(\x )\cdot \prod _{1\leq i<j<r+1} (x_j-t\cdot x_i)=\frac{\ttDelta}{\Delta }\cdot \sgn(w_0)\cdot \sum _{w\in W} \sgn (w)\cdot w.(\x^{\la +\rho }).
\end{equation}

%This form in \eqref{eq:LHS_Tokuyama_rewrite_la+rho} produces the left-hand side of Tokuyama's theorem as the result of an operator, 
%$$\frac{\ttDelta}{\Delta }\cdot \sgn(w_0)\cdot \left(\sum _{w\in W} \sgn (w)\cdot w\right)$$
%acting on a monomial $\x^{\la +\rho }.$ We would like to rewrite this in terms of Demazure-Lusztig operators acting on the monomial $\x^{w_0\la}.$ To facilitate that, we modify \eqref{eq:LHS_Tokuyama_rewrite_la+rho} a little further. 
%Recall that as in Section \ref{sect:Demazure}, we are working with operators acting on a ring of rational functions. The operators themselves are linear combinations of elements of the Weyl group, or rational functions acting by multiplication. (For example, in \ref{eq:LHS_Tokuyama_rewrite_la+rho} the operator ``multiplication by $\x^{\rho }$'' is composed with a linear combination of Weyl group elements. All are acting on $\C(\x )$ in the usual nonmetaplectic way.)

Substituting $ww_0$ for $w$ we may write \eqref{eq:LHS_Tokuyama_rewrite_la+rho} as 
$$s_{\la }(\x )\cdot \prod _{1\leq i<j<r+1} (x_j-t\cdot x_i)=\frac{\ttDelta}{\Delta }\cdot \sum _{w\in W} \sgn (w)\cdot w.(\x^{w_0(\la +\rho )})$$
Now, to write this as a linear combination of Weyl group elements acting on $\x^{w_0\la },$ notice that as operators, 
$w\cdot \x^{w_0\rho }=(w.\x^{w_0\rho })\cdot w=\x^{ww_0\rho }\cdot w.$
%Recall that $\Phi(w)=\{\alpha\in\Phi^+: w(\alpha)\in\Phi^-\}$. 
With $\Phi(w)$ as in \eqref{def:Phi_w} we have $\Phi (w^{-1})=w(\Phi ^-)\cap \Phi ^+=ww_0(\Phi ^+)\cap \Phi ^+,$ and hence $ww_0\rho -w_0\rho =\sum _{\alpha \in \Phi(w^{-1})}\alpha ,$
%$$ww_0\rho -w_0\rho =\frac{1}{2}\cdot \sum _{\alpha \in ww_0(\Phi ^+)}\alpha -\frac{1}{2}\cdot \sum _{\alpha \in \Phi ^-}\alpha =\sum _{\alpha \in ww_0(\Phi ^+)\cap \Phi ^+}\alpha =\sum _{\alpha \in \Phi(w^{-1})}\alpha ,$$
whence as operators, $w\cdot \x^{w_0\rho }=\x^{w_0\rho }\cdot \prod _{\alpha \in \Phi (w^{-1})}\x^{\alpha }\cdot w.$
%$$w\cdot \x^{w_0\rho }=\x^{w_0\rho }\cdot \x^{ww_0\rho -w_0\rho }\cdot w=\x^{w_0\rho }\cdot \prod _{\alpha \in \Phi (w^{-1})}\x^{\alpha }\cdot w.$$
Thus we may rewrite \eqref{eq:LHS_Tokuyama_rewrite_la+rho} as 
$$s_{\la }(\x )\cdot \prod _{1\leq i<j<r+1} (x_j-t\cdot x_i)=\x^{w_0\rho }\cdot \frac{\ttDelta}{\Delta }\cdot \sum _{w\in W} \left(\sgn (w)\cdot \prod _{\alpha \in \Phi (w^{-1})}\x^{\alpha }\right)\cdot w.(\x^{w_0(\la )}),$$
%\begin{equation}\label{eq:LHS_Tokuyama_rewrite}
%s_{\la }(\x )\cdot \prod _{1\leq i<j<r+1} (x_j-t\cdot x_i)=\x^{w_0\rho }\cdot \frac{\ttDelta}{\Delta }\cdot \sum _{w\in W} \left(\sgn (w)\cdot \prod _{\alpha \in \Phi (w^{-1})}\x^{\alpha }\right)\cdot w.(\x^{w_0(\la )}).
%\end{equation}
and, combining the special cases of Theorem \ref{THM:LONG_WORD} and \ref{THM:T_SUM} for $n=1,$ as 
\begin{equation}\label{eq:LHS_Tokuyama_rewrite_Demazure}
s_{\la }(\x )\cdot \prod _{1\leq i<j<r+1} (x_j-t\cdot x_i)=\x^{w_0\rho }\cdot \ttDelta\cdot \Dem _{w_0}(\x^{w_0(\la )})=\x^{w_0\rho }\cdot \sum _{w\in W} \T _w(\x^{w_0(\la )}).
\end{equation}

\begin{remark}\label{rmk:rho_illdefined}
 In this section, we used ``the Weyl vector'' as $\rho =(r,r-1,\ldots ,1,0).$ As a result, we have 
$$\x^{\rho }=(x_1\cdot x_2\cdots x_{r+1})^r\cdot \prod _{\alpha \in \Phi ^{+}} \x^{\frac{1}{2}\alpha }.$$
In the computations above, the factor $(x_1\cdot x_2\cdots x_{r+1})^r$ was never written out explicitly, but all of the equations hold as written. The factor $x_1\cdot x_2\cdots x_{r+1}$ is symmetric under $W,$ so as an operator, it commutes with any element of the Weyl group.
\end{remark}

\subsection{The crystal version of Tokuyama's theorem}\label{subsect:TokuyamaNewVersion}

The following ``crystal version'' of Tokuyama's theorem is a direct consequence of \eqref{eq:Tokuyama_thm_our_notation}, \eqref{eq:LHS_Tokuyama_rewrite_Demazure} and \eqref{eq:RHS_Tokuyama_rewrite}. 

\begin{lemma}\label{lem:Tokuyama_thm_crystal_Demazure}
 Let $\la $ be a dominant, effective weight. Then we have 
\begin{equation}\label{eq:Tokuyama_thm_crystal_version}
\sum _{w\in W} \T _w(\x^{w_0(\la )})=\x^{-w_0\rho }\cdot \sum_{v\in \Cr _{\la +\rho }} G(v)\cdot \x ^{\wght(v)}.
\end{equation}
\end{lemma}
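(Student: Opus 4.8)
The plan is to simply assemble Lemma~\ref{lem:Tokuyama_thm_crystal_Demazure} from the three ingredients that have just been set up, since all the real work was done in the preceding subsections. Concretely, Theorem~\ref{thm:Tokuyama} (in the form \eqref{eq:Tokuyama_thm_our_notation}) gives the equality
\[
 s_{\la }(\x )\cdot \prod _{1\leq i<j<r+1} (x_j-t\cdot x_i)=\sum_{\IP\in \Grm(\la +\rho )} G(\IP )\cdot \x ^{\wght(\IP )}.
\]
First I would rewrite the right-hand side using \eqref{eq:RHS_Tokuyama_rewrite}, i.e.\ use the bijection of Proposition~\ref{prop:crystal_parametrization} between patterns $\IP\in\Grm(\la+\rho)$ and vertices $v\in\Cr_{\la+\rho}$ (with $G(\IP(v))=G(v)$ and $\wght(\IP(v))=\wght(v)$), to get $\sum_{v\in\Cr_{\la+\rho}}G(v)\cdot\x^{\wght(v)}$. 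Then I would rewrite the left-hand side using the computation culminating in \eqref{eq:LHS_Tokuyama_rewrite_Demazure}, which expresses it as $\x^{w_0\rho}\cdot\sum_{w\in W}\T_w(\x^{w_0(\la)})$ (via the Weyl Character Formula, the substitution $w\mapsto ww_0$, the operator identity $w\cdot\x^{w_0\rho}=\x^{w_0\rho}\cdot\prod_{\alpha\in\Phi(w^{-1})}\x^\alpha\cdot w$, and finally Theorems~\ref{THM:LONG_WORD} and~\ref{THM:T_SUM} specialized to $n=1$).

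Equating the two rewritten expressions yields
\[
 \x^{w_0\rho}\cdot\sum_{w\in W}\T_w(\x^{w_0(\la)})=\sum_{v\in\Cr_{\la+\rho}}G(v)\cdot\x^{\wght(v)},
\]
and the last step is simply to multiply both sides by $\x^{-w_0\rho}$, which is a unit in $\ffield$ and commutes with everything in sight (it is multiplication by a fixed monomial), to obtain \eqref{eq:Tokuyama_thm_crystal_version}. One small point I would be careful to check is that $\la$ being dominant and effective is exactly the hypothesis of Theorem~\ref{thm:Tokuyama} (so $\la+\rho$ is strongly dominant and the crystal $\Cr_{\la+\rho}$ makes sense), and that the ``$\rho$ is ill-defined up to the symmetric factor $(x_1\cdots x_{r+1})^r$'' subtlety raised in Remark~\ref{rmk:rho_illdefined} does not affect the identity — as observed there, that factor is $W$-invariant, hence commutes with all operators, and cancels consistently on both sides.

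There is essentially no obstacle here: the statement is a corollary, and the proof is a two-line bookkeeping exercise chaining \eqref{eq:Tokuyama_thm_our_notation}, \eqref{eq:LHS_Tokuyama_rewrite_Demazure}, and \eqref{eq:RHS_Tokuyama_rewrite}. If anything is mildly delicate, it is only making sure the $\x^{w_0\rho}$ factors (as operators, i.e.\ as ``multiplication by a monomial'') are tracked correctly through the commutation with Weyl group elements in the left-hand-side rewriting — but that has already been carried out in Section~\ref{subsect:TokuyamaLHS}, so here I would just cite it. Thus the proof reads: ``Combine \eqref{eq:Tokuyama_thm_our_notation} with \eqref{eq:RHS_Tokuyama_rewrite} and \eqref{eq:LHS_Tokuyama_rewrite_Demazure}, then divide by $\x^{w_0\rho}$.''
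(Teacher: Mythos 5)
Your proposal is correct and coincides with the paper's own argument: the lemma is stated there as a direct consequence of \eqref{eq:Tokuyama_thm_our_notation}, \eqref{eq:LHS_Tokuyama_rewrite_Demazure} and \eqref{eq:RHS_Tokuyama_rewrite}, exactly the three ingredients you chain together before dividing by $\x^{w_0\rho}$. Your extra remarks on the hypotheses and on Remark \ref{rmk:rho_illdefined} are consistent with the paper and add nothing that needs changing.
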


Lemma \ref{lem:Tokuyama_thm_crystal_Demazure} is the form of Tokuyama's theorem that is convenient to generalize, as we will see in Section \ref{chap:equivalent_reformulations}. %We illustrate the statement by returning to the example of $\Cr _{(3,1,0)}.$

\section{Demazure crystals and branching properties}\label{section:DemCrystals}

The statement of Theorem \ref{THM:MAIN} involves, on one side, a sum over a Demazure crystal. In this section we give the definition of Demazure subcrystals $\Cr _{\la +\rho }^{(w)}$ within a type $A$ highest weight crystal $\Cr _{\la +\rho },$ for certain elements $w$ of the Weyl group. In preparation for the proof of Theorem \ref{THM:MAIN}, we discuss how the branching properties of section \ref{subs:crystal_branching} restrict to Demazure crystals (Proposition \ref{prop:Demazure_branching} and Propositon \ref{prop:GTsumbreakupDem}). Finally, we introduce some terminology that will allow for lighter notation in the proof of Theorem \ref{THM:MAIN} in Section \ref{chap:equivalent_reformulations} and Section \ref{chap:technical_induction}.

%We continue to use the notation introduced in Section \ref{section:HWtCrystalsandGTpatterns}. 
We start by specifying the set of Weyl group elements $w$ that appear in the statement of Theorem \ref{THM:MAIN}. 

\begin{definition}\label{def:begsectlw}
Recall that in \eqref{eq:def_of_favourite_w0} we fixed the following long word:
 \begin{equation*}
  w_0=w_0^{(r)}=\sigma _1\sigma _2\sigma _1\cdots \sigma _{r-1}\cdots \sigma _1\sigma _r\cdots \sigma _1=\sigma_{\Omega _1}\sigma_{\Omega _2}\cdots \sigma _{\Omega _{\binom{r+1}{2}}}
 \end{equation*}
 We say that the element $w$ is a {\em{beginning section of the long word $w_0^{(r)}$}} if 
 \begin{equation}\label{eq:begsect}
w=\sigma_{\Omega _1}\sigma_{\Omega _2}\cdots \sigma _{\Omega _{l}} \text{ for some }l(=\ell (w))\leq \binom{r+1}{2}.
 \end{equation}
\end{definition}

Sometimes it is convenient to assume that $w$ is a beginning section of $w_0^{(r)},$ but not of $w_0^{(r-1)}.$ In this case $w$ is of the form 
\begin{equation}\label{eq:w_form}
 w=w_0^{(r-1)}\sigma _r\cdots \sigma _{r-k}.
\end{equation}
For such elements $w\in W,$ we write $k:=\ell(w)-\ell(w_0^{(r-1)})-1.$  

%\red{Check that k is in fact used in this section, if not (I think yes), move this last sentence. Also, check phrasing for everything after Preliminaries and Introduction are written.}

Now we are ready to define Demazure crystals corresponding to a beginning section \eqref{eq:begsect} of our favourite long word. %Recall that the Berenstein-Zelevinsky-Littelmann array of a crystal element $v$ in a crystal of rank $r$  was defined in \eqref{eq:def_BZL(v)} as an array with entries $b_1,\ldots ,b_{\binom{r+1}{2}},$ where $b_i$ is the length of the $i$-th segment of the $BZL$-path of $v.$

\begin{definition}\label{def:Demazure_subcrystal}
 Let $\Cr _{\la +\rho }$ be a crystal of highest weight $\la +\rho ,$ and let $w$ be a beginning section of the long word $w_0^{(r)}$, as in \eqref{eq:begsect}. Then the {\em{Demazure crystal corresponding to $w$}} is the crystal $\Cr_{\la +\rho}^{(w)}$ with vertices
\begin{equation}\label{eq:def_of_Demazure_subcrystal}
 \Cr_{\la +\rho}^{(w)}=\left\lbrace v\in \Cr _{\la +\rho } \mid b_i(v)=0 \text{ for all } i>\ell(w) \right\rbrace.
\end{equation}
Here $b_i (v)$ ($1\leq i\leq \binom{r+1}{2}$) denotes the $i$-th entry of the Berenstein-Zelevinsky-Littelmann array $BZL(v)$ of an element $v\in \Cr _{\la +\rho }.$

To define a crystal structure $\Cr_{\la +\rho}^{(w)},$ we contend that as a directed graph it is a full subgraph of $\Cr_{\la +\rho}.$ That is, the edges of $\Cr_{\la +\rho}^{(w)}$  are exactly the edges of $\Cr_{\la +\rho}$ with both ends in $\Cr_{\la +\rho}^{(w)}.$ 
\end{definition}

\begin{remark}\label{rmk:Demcryst_w_BZL}
Definition \ref{def:Demazure_subcrystal} means that an element $v\in \Cr _{\la +\rho }$ belongs to $\Cr_{\la +\rho}^{(w)}$ if and only if the $BZL$-path of $v$ already reaches  the lowest element of the crystal $\Cr _{\la +\rho }$ after the first $\ell(w)$ steps. With $w$ a beginning segment of $w_0^{(r)}$ but not of $w_0^{(r-1)}$ as in \eqref{eq:w_form}, we have $\ell(w)=\binom{r}{2}+k+1,$ so $v\in \Cr _{\la +\rho }^{(w)}$ if and only if 
\begin{equation}\label{eq:BZL_of_Dem_subcrystal}
 BZL(v)=BZL_{\Omega }(v)=\left[\begin{array}{ccccccc}
         b_{\binom{r}{2}+1} & b_{\binom{r}{2}+2} & \cdots & b_{\binom{r}{2}+k+1} & 0 & \cdots &  0 \\
          & b_{\binom{r-1}{2}+1} & & \cdots  & & & b_{\binom{r}{2}}\\
          &  & \ddots & & & &  \\
          & & & & & b_2 & b_3 \\
          & & & & & & b_1 
        \end{array}
\right]
\end{equation}
i.e. $v\in \Cr _{\la +\rho }^{(w)}$ if and only if the last $\ell (w_0^{(r)})-\ell (w)=r-k-1$ segments of the $BZL$ path of $v$ are trivial. 
\end{remark}

The definition is illustrated by the following example, in type $A_2.$

\begin{example}\label{example:Demazure_crystal}
 Recall the crystal $\Cr _{3,1,0}$ of highest weight $(3,1,0)$ from Example \ref{example:310crystal}. The Demazure subcrystal corresponding to $w=\sigma _1\sigma _2$ is the highlighted part of the crystal in Figure \ref{fig:310crystal_Dem12sub}.
\begin{figure}[h!]
    \centering
    \includegraphics[width=0.4\textwidth]{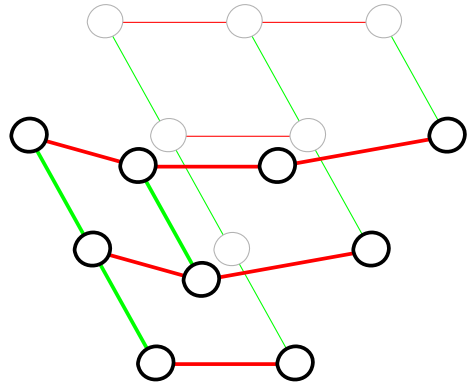}
\caption{The Demazure crystal $\Cr_{(3,1,0)}^{(\sigma_1\sigma _2)}$ within $\Cr_{(3,1,0)}.$}
\label{fig:310crystal_Dem12sub}
\end{figure}
\end{example}

For the remainder of this section, we assume that $w$ is a beginning section of $w_0^{(r)},$ but not of $w_0^{(r-1)}$ i.e. it is as in \eqref{eq:w_form}. Then the Demazure crystal $\Cr _{\la+\rho }^{(w)}$ inherits some of the branching properties discussed in section \ref{subs:crystal_branching}, and in particular, Proposition \ref{prop:banchcomp_explicit}. In particular, the type $A_{r-1}$ subcrystals $\Cr_{\mu }$ from the decomposition \eqref{eq:crystal_branching} are either disjoint from, or contained in $\Cr _{\la+\rho }^{(w)}.$ The set of $\mu $ such that $\Cr _{\mu }$ is contained in $\Cr _{\la+\rho }^{(w)}$ is easy to characterize from $\la $ and $w.$ We make this precise in the proposition below. 

\begin{prop}\label{prop:Demazure_branching}
Let $w=w_0^{(r-1)}\cdots \sigma _r\cdots \sigma _{r-k}$ and $\Cr _{\la+\rho }^{(w)}$ the corresponding Demazure crystal. Let $\Cr _{\mu }\subset \Cr _{\la+\rho }$ be a subcrystal of type $A_{r-1}$ from the decomposition \eqref{eq:crystal_branching}. Then $\Cr _{\mu }$ has either no vertices in $\Cr _{\la+\rho }^{(w)},$ or it is contained in it. Furthermore, for a $\mu $ that interleaves with $\la +\rho ,$ we have $\Cr _{\mu }\subseteq \Cr _{\la+\rho }^{(w)}$ if and only if $\mu _j=\la _{j+1}+r-j$ for $j>k+1 .$ Taking the union over $\mu $ like this, we have 
 \begin{equation}\label{eq:Dem_crystal_branching}
   \Cr _{\la +\rho }^{(w)}=\bigcup _{\mu} \Cr _{\mu }.
 \end{equation}
 \end{prop}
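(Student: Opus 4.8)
\textbf{Proof plan for Proposition \ref{prop:Demazure_branching}.}

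The plan is to work entirely through the $BZL$-path characterization of Demazure crystals (Remark \ref{rmk:Demcryst_w_BZL}) and the branching-by-pattern-rows description of Proposition \ref{prop:crystal_branching}(iv) and Proposition \ref{prop:crystal_branching}. Recall that $v \in \Cr_{\la+\rho}^{(w)}$ if and only if $b_i(v) = 0$ for all $i > \ell(w)$, where $\ell(w) = \binom{r}{2} + k + 1$; and that $v$ lies in the subcrystal $\Cr_\mu$ precisely when the second row of $\IP(v)$ is $\mu$. The key observation to establish first is that, for the favourite long word, the entries $b_{\binom{r}{2}+1},\ldots,b_{\binom{r+1}{2}}$ of $BZL(v)$ — equivalently the top row $\Gamma_{1,1},\ldots,\Gamma_{1,r}$ of the $\Gamma$-array $\Gamma(\IP(v))$ — depend only on the top two rows $\la+\rho$ and $\mu$ of $\IP(v)$. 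This is immediate from \eqref{eq:defn:Gamma_entries}: $\Gamma_{1j} = \sum_{k=j}^r(a_{1k} - a_{0k}) = \sum_{k=j}^r(\mu_k - (\la_k + r - k + 1))$ (indices adjusted to the paper's conventions). Consequently the condition ``$b_i(v) = 0$ for $i > \ell(w)$'' restricted to the first row of the $\Gamma$-array is a condition on $\mu$ alone, not on $v$; this yields the dichotomy that $\Cr_\mu$ is either entirely inside $\Cr_{\la+\rho}^{(w)}$ or disjoint from it.

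Next I would translate the vanishing condition into the stated inequality on $\mu$. By Remark \ref{rmk:Demcryst_w_BZL}, for $w = w_0^{(r-1)}\sigma_r\cdots\sigma_{r-k}$ the constraint $b_i(v) = 0$ for $i > \binom{r}{2} + k + 1$ forces the last $r - k - 1$ entries of the first row of $BZL(v)$ to vanish, i.e. $\Gamma_{1,j} = 0$ for $j > k+1$ (in the paper's top-row-of-the-$\Gamma$-array indexing). Since the $\Gamma_{1,j}$ are non-increasing and non-negative (by \eqref{eq:GammaArrayIneq}), $\Gamma_{1,j} = 0$ for all $j > k+1$ is equivalent to $\Gamma_{1,k+2} = 0$, which by \eqref{eq:defn:Gamma_entries} and the interleaving $\la_{j+1}+r-j \le \mu_j \le \la_j + r - j + 1$ unwinds to $\mu_j = \la_{j+1} + r - j$ for $j > k+1$ — i.e. $\mu_j$ takes its minimal allowed value in that range. (I will double-check the exact off-by-one bookkeeping against Definition \ref{def:begsectlw} and \eqref{eq:interleave_condition}, but this is the content.) Here I also use that the entries $b_i(v)$ for $i \le \binom{r}{2}$ — the lower rows of $BZL(v)$ — are unconstrained by membership in $\Cr_{\la+\rho}^{(w)}$, which is why the whole fibre $\Cr_\mu$ comes along.

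Finally, \eqref{eq:Dem_crystal_branching} follows by combining the dichotomy with the ordinary branching \eqref{eq:crystal_branching}: $\Cr_{\la+\rho}^{(w)} = \bigcup_\mu (\Cr_\mu \cap \Cr_{\la+\rho}^{(w)})$, and each term is either empty or all of $\Cr_\mu$, so the union is over exactly those $\mu$ satisfying the displayed condition (and interleaving with $\la+\rho$), and it is disjoint because \eqref{eq:crystal_branching} is. The main obstacle I anticipate is purely notational: keeping the two indexing conventions straight — the $BZL$-array indexing $b_1,\ldots,b_{\binom{r+1}{2}}$ read off the long word $w_0^{(r)} = \sigma_{\Omega_1}\cdots$ versus the matrix indexing $\Gamma_{ij}$ and $a_{ij}$ of the pattern — and making sure the ``first row of $BZL$'' genuinely corresponds to the segments of the path labelled by the last block $\sigma_r\sigma_{r-1}\cdots\sigma_1$ of the long word, so that truncating $w$ after $\sigma_{r-k}$ kills exactly $\Gamma_{1,k+2},\ldots,\Gamma_{1,r}$. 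Once that correspondence is pinned down (it is forced by the definition of $\Omega_i$ in \eqref{eq:def_of_Omega_i} and the shape of $BZL(v)$ in \eqref{eq:def_BZL(v)}), the argument is essentially a reading-off.
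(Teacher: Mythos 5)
Your proposal is correct and follows essentially the same route as the paper: use Remark \ref{rmk:Demcryst_w_BZL} to reduce membership in $\Cr_{\la+\rho}^{(w)}$ to vanishing of the top row of $BZL(v)=\Gamma(\IP(v))$ beyond position $k+1$, observe that this top row depends only on the top two rows $\la+\rho$ and $\mu$ of the pattern (giving the all-or-nothing dichotomy), and unwind $\Gamma_{1j}=0$ for $j>k+1$ into $\mu_j=\la_{j+1}+r-j$. The only caveat is the off-by-one in your displayed summand (it should be $\mu_k-(\la_{k+1}+r-k)$, since $a_{0k}=\la_{k+1}+r-k$ for top row $\la+\rho$), which you already flagged and which does not affect the argument.
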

\begin{proof}
Most of the work for the proof has been done in Section \ref{section:HWtCrystalsandGTpatterns}. Note that by Proposition \ref{prop:crystal_branching}, $\Cr _{\mu }$ is a component of $\Cr _{\la+\rho }$ in the decomposition \eqref{eq:crystal_branching} if and only if $\mu $ and $\la +\rho $ interleave. Further, it was noted in Remark \ref{rmk:bzltolowest} that the last $r$ segments of the $BZL$ path agree for every element $v\in \Cr _{\mu }.$ Thus, by Remark \ref{rmk:Demcryst_w_BZL}, either every vertex of $\Cr _{\mu }$ is contained in $\Cr _{\la +\rho }^{(w)},$ or none of them are. To characterize the weights $\mu $ such that $\Cr _{\mu }\subseteq \Cr _{\la +\rho }^{(w)},$ recall that by Proposition \ref{prop:crystal_branching}, $v\in \Cr _{\la +\rho }$ belongs to $\Cr _{\mu }$ if and only if the top two rows of $\IP (v)$ are $\la +\rho $ and $\mu .$ Further, by Proposition \ref{prop:crystal_parametrization}, $\Ga (\IP (v))=BZL (v),$ so the top row of $BZL(v)$ is given by 
$$b_{\binom{r}{2}+j}(v)=\la _{j+1}+r-j-\mu _{j}\text{ for every }1\leq j\leq r.$$
In particular, we have $b_{i}(v)=0$ for every $i>\ell(w)=\binom{r}{2}+k+1$ if $\la _{j+1}+r-j=\mu _{j}$ holds for every $j>k+1.$
\end{proof}

The following proposition relates the right-hand side of Theorem \ref{THM:MAIN} to similar sums over complete highest-weight crystals of a lower rank. It will be key in the proof of that theorem, but is at this point a straightforward consequence of parts (b) and (c) of Proposition \ref{prop:crystal_branching}, and Proposition \ref{prop:Demazure_branching}. 

\begin{prop}\label{prop:GTsumbreakupDem}
Using the notation of Proposition \ref{prop:crystal_branching} and Proposition \ref{prop:Demazure_branching}, let $v_{\ast }^{(\mu )}$ denote the lowest element of the crystal $\Cr _{\mu }.$ Write $\rho _{r-1}=(r-1,\ldots ,1,0).$ Then we have 
\begin{equation}\label{eq:IW_RHS_rewriting_goal_dem}
\begin{split}
  \x^{-w_0(\rho )}\cdot \sum _{v\in \Cr_{\la +\rho }^{(w)}} G^{(n,\la+\rho )}(v)\cdot \x^{\wght(v)}= & \sum _{\mu } G^{(n,\la +\rho )}(v_{\ast })\cdot x_{r+1}^{d(\la +\rho )-d(\mu )-r}\cdot \\
 & \cdot \left(\y^{-w_0^{(r-1)}(\rho _{r-1})}\cdot \sum _{v\in \Cr_{\mu }} G^{(n,\mu )}(v)\cdot \y^{\wght _{\mu }(v)}\right).
\end{split}
\end{equation}
Here the sum is over all $\mu =(\mu _{1},\mu _2,\ldots ,\mu _r)$ that interleave with $\la +\rho $ and $\mu _j=\la _{j+1}+r-j$ for $j>k+1 .$ 
\end{prop}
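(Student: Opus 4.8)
The plan is to read off the identity directly from the two branching results already established: Proposition~\ref{prop:Demazure_branching}, which decomposes the Demazure crystal into type $A_{r-1}$ pieces $\Cr_\mu$, and Proposition~\ref{prop:banchcomp_explicit}, which computes weights and Gelfand--Tsetlin coefficients on each such piece. The argument is essentially bookkeeping; the only point needing a moment of care is the explicit rewriting of the monomial prefactor $\x^{-w_0(\rho)}$ in terms of the variables $\y=(x_1,\ldots,x_r)$.

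First I would split the sum over the Demazure crystal along its components. By \eqref{eq:Dem_crystal_branching} we have $\Cr_{\la+\rho}^{(w)}=\bigcup_\mu \Cr_\mu$, the union running over those $\mu$ that interleave with $\la+\rho$ and satisfy $\mu_j=\la_{j+1}+r-j$ for $j>k+1$. This union is disjoint, being a sub-union of the disjoint union \eqref{eq:crystal_branching} (the component containing a vertex $v$ is determined by the second row of $\IP(v)$), so
$$\sum_{v\in\Cr_{\la+\rho}^{(w)}} G^{(n,\la+\rho)}(v)\cdot\x^{\wght(v)}=\sum_\mu\ \sum_{v\in\Cr_\mu} G^{(n,\la+\rho)}(v)\cdot\x^{\wght(v)}.$$
For a fixed $\mu$ and $v\in\Cr_\mu$, Proposition~\ref{prop:banchcomp_explicit}(a) gives $\x^{\wght(v)}=\y^{\wght_\mu(v)}\cdot x_{r+1}^{d(\la+\rho)-d(\mu)}$, and Proposition~\ref{prop:banchcomp_explicit}(b) gives $G^{(n,\la+\rho)}(v)=G^{(n,\mu)}(v)\cdot G^{(n,\la+\rho)}(v_{\ast}^{(\mu)})$, where $v_{\ast}^{(\mu)}$ is the lowest element of $\Cr_\mu$. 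The factors $G^{(n,\la+\rho)}(v_{\ast}^{(\mu)})$ and $x_{r+1}^{d(\la+\rho)-d(\mu)}$ do not depend on $v$, so they factor out of the inner sum, leaving
$$\sum_{v\in\Cr_\mu} G^{(n,\la+\rho)}(v)\cdot\x^{\wght(v)}=G^{(n,\la+\rho)}(v_{\ast}^{(\mu)})\cdot x_{r+1}^{d(\la+\rho)-d(\mu)}\cdot\sum_{v\in\Cr_\mu} G^{(n,\mu)}(v)\cdot\y^{\wght_\mu(v)}.$$

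It then remains to rewrite $\x^{-w_0(\rho)}$. Since $\rho=(r,r-1,\ldots,1,0)$ and $w_0$ acts on $\Z^{r+1}$ by reversing coordinates, $w_0(\rho)=(0,1,\ldots,r-1,r)$; similarly $w_0^{(r-1)}(\rho_{r-1})=(0,1,\ldots,r-1)$ with $\rho_{r-1}=(r-1,\ldots,1,0)$. Comparing exponents coordinate by coordinate gives $\x^{-w_0(\rho)}=\y^{-w_0^{(r-1)}(\rho_{r-1})}\cdot x_{r+1}^{-r}$. Multiplying the expression above by this prefactor and distributing over the sum over $\mu$ produces exactly the right-hand side of \eqref{eq:IW_RHS_rewriting_goal_dem}, since $x_{r+1}^{-r}\cdot x_{r+1}^{d(\la+\rho)-d(\mu)}=x_{r+1}^{d(\la+\rho)-d(\mu)-r}$.

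I do not expect a genuine obstacle here: the substance is contained in Propositions~\ref{prop:Demazure_branching} and~\ref{prop:banchcomp_explicit}, and the present statement is their formal consequence. The only places to be careful are (i) keeping visible that the component decomposition is a disjoint union, so that interchanging the outer sum over $\mu$ with the inner sums over $\Cr_\mu$ is legitimate, and (ii) the elementary but easy-to-botch computation $\x^{-w_0(\rho)}=\y^{-w_0^{(r-1)}(\rho_{r-1})}\cdot x_{r+1}^{-r}$. One should also note, as in Remark~\ref{rmk:rho_illdefined}, that the ambiguity in $\rho$ coming from the overall $W$-symmetric factor $x_1\cdots x_{r+1}$ is harmless, as each identity invoked holds literally as written.
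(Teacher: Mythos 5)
Your proof is correct and is exactly the argument the paper intends: the paper states this proposition without a written proof, calling it a straightforward consequence of the branching results (Propositions \ref{prop:Demazure_branching} and \ref{prop:banchcomp_explicit}), which is precisely the decomposition, factorization, and prefactor bookkeeping you carry out. The computation $\x^{-w_0(\rho)}=\y^{-w_0^{(r-1)}(\rho_{r-1})}\cdot x_{r+1}^{-r}$ is right, and your attention to the disjointness of the component decomposition is appropriate.
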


Since all the notation necessary for our statements is a lot to keep track of, we present an example. 

\subsection{An example}\label{subsect:Dem_example}

Take $r=2,$ $\la +\rho =(3,1,0)$ and $w=\sigma _1\sigma _2$ (so $k=0$). Then $\Cr _{\la+\rho }^{(w)}=\Cr_{(3,1,1)}^{(\sigma _1\sigma _2)}$ is as in Figure \ref{fig:310crystal_Dem12sub}. Under the branching \eqref{eq:crystal_branching} it is the union of three crystals of type $A_1,$ of highest weight $(3,0),$ $(2,0)$ and $(1,0),$ respectively. 

Determining the highest weights $(3,0),$ $(2,0)$ and $(1,0)$ is easy from Proposition \ref{prop:crystal_parametrization} and \eqref{eq:BZL_of_Dem_subcrystal}. Recall that a vertex belongs to the Demazure crystal $\Cr _{\la+\rho }^{(w)}$ if $BZL(v)=\Gamma (\IP (v))$ has zeros in the last $l(w_0)-l_{w}=k+1$ places in the first row. We have $\la +\rho =(a_{00},a_{01},a_{02})=(3,1,0).$ This means that the component $\Cr _{\mu }$ of \eqref{eq:crystal_branching} belongs to $\Cr_{(3,1,1)}^{(\sigma _1\sigma _2)}$ if $\mu =(\mu _1,\mu _2)$ satisfies $\Gamma _2=\mu _2-a_{0,2}=0,$ hence $\mu _2=0.$ Figure \ref{fig:310D12_branching} shows the three components of $\Cr_{(3,1,0)}^{(\sigma _1\sigma _2)}.$ %\red{FIGURE MISSING}
Let $n=1.$ The Gelfand-Tsetlin coefficients assigned to the vertices of $\Cr_{(3,1,1)}^{(\sigma _1\sigma _2)}$ can be read off of Figure \ref{fig:310D12IW}. %\red{FIGURE MISSING}
%\begin{figure}[h!]
%    \centering
%    \includegraphics[width=0.4\textwidth]{Pictures/Crystal_310_Dem12_branching-ps.mps}
%\caption{Branching within $\Cr_{(3,1,0)}^{(\sigma _1\sigma _2)}.$}
%\label{fig:310D12_branching}
%\end{figure}
%\begin{figure}[h!]
%    \centering
%    \includegraphics[width=0.4\textwidth]{Pictures/310D12IW-ps.mps}
%\caption{Gelfand-Tsetlin coefficients on $\Cr_{(3,1,0)}^{(\sigma _1\sigma _2)}.$}
%\label{fig:310D12IW}
%\end{figure}

\begin{figure}[h!]
\centering
\begin{minipage}{0.42\textwidth}
\centering
    \includegraphics{Crystal_310_Dem12_branching-ps.mps}
\caption{Branching}
\label{fig:310D12_branching}
\end{minipage}\hfill
\begin{minipage}{0.48\textwidth}
\centering
\includegraphics{310D12IW-ps.mps}
\caption{Coefficients}
\label{fig:310D12IW}

\end{minipage}
\end{figure}

Let us restrict our attention to the top $A_1$ string, $\Cr _{(3,0)}\subseteq \Cr_{(3,1,0)}^{(\sigma _1\sigma _2)}.$ 
For the vertices $v\in \Cr _{(3,0)}\subseteq \Cr_{(3,1,0)}^{(\sigma _1\sigma _2)}$ we have 
$$\IP (v)=\left(\begin{array}{ccccc}
              3 & & 1 & & 0 \\
               & 3 &  & 0 & \\
               & & a_{22} & &  
             \end{array} \right),\hskip .5 cm \text{and}\hskip .5 cm \Gamma(\IP (v))=BZL(v)=\left[\begin{array}{cc}
                       3 & 0 \\
                        &  \Ga _{22} \\
                      \end{array} \right],$$
where $\Ga _{22}=a_{22}-a_{12}=a_{22}.$

The table below show the vertices of this string, and the corresponding Gelfand-Tsetlin coefficients. 
$$\begin{array}{c||c|c|c}
   v & \wght(v) & G^{(1,\la +\rho )}(v) & G^{(1,\mu )}(v) \\ \hline \hline 
   v_{\ast } & (0,3,1) & -t & 1\\ \hline 
   v_1 & (1,2,1) &  (-t)(1-t) & 1-t\\ \hline 
   v_2 & (2,1,1) & (-t)(1-t) & 1-t\\ \hline 
   v_3 & (3,0,1) & (-t)(-t) & -t
  \end{array}$$

Figure \ref{fig:310D12IW_onestring} shows the vertices labeled within $\Cr _{(3,0)}\subseteq \Cr_{(3,1,0)}^{(\sigma _1\sigma _2)}\subseteq \Cr_{(3,1,0)}.$ %\red{FIGURE MISSING}
\begin{figure}[h!]
    \centering
    \includegraphics[width=0.4\textwidth]{310D12IW_onestring-ps.mps}
\caption{$\Cr _{(3,0)}$ within $\Cr_{(3,1,0)}^{(\sigma _1\sigma _2)}.$}
\label{fig:310D12IW_onestring}
\end{figure}

We see that if $\mu =(3,0)$ we have $d(\mu )=3$ and $d(\la +\rho )=4.$ Further, $G^{(n,\la +\rho )}(v)=G^{(n,\mu )}(v)\cdot G^{(n,\la +\rho )}(v_{\ast })$ holds for $v\in \Cr_{(3,0)}.$

\subsection{Notation for branching of the Demazure crystal}\label{subs:new_parametrization}

Proposition \ref{prop:Demazure_branching} and Proposition \ref{prop:GTsumbreakupDem} implies that when dealing with sums overthe Demazure crystal $\Cr_{\la +\rho }^{(w)},$ one can treat the components $\Cr _{\mu }$ as units. Computations for the proof of Theorem \ref{THM:MAIN} will often only involve $\la +\rho $ and $\mu ,$ the top two rows of the Gelfand-Tsetlin patterns parameterizing $\Cr _{\mu }\subseteq \Cr_{\la +\rho }^{(w)}.$ The following notation and terminology serves to facilitate these computations.

Recall that the first row of the $\Gamma $-array, $(\Gamma_{11},\ldots ,\Gamma _{1r})$ is the same for every element of a component $\Cr _{\mu }\subseteq \Cr_{\la +\rho }^{(w)}.$ With $\la $ fixed, we phrase our notation in terms of this $r$-tuple. Lemma \ref{lemma:reparametr_meaningul} justifies the choices made in the following definition. 

\begin{definition}\label{def:r-1_lowest_modifications}
 Let $\lambda =(\la _1,\ldots ,\la _r,\la _{r+1}),$ $\mu =(\mu _1,\ldots ,\mu _r),$ and $\Gamma =(\Gamma_{11},\ldots ,\Gamma _{1r}) \in \Z^{r}.$ (Set $\Gamma _{1,r+1}:=0.$)
We call $\Gamma $ $\lambda ${\em{-admissible}} if
\begin{equation}\label{eq:def_admissible}
 \Gamma _{1,j+1}\leq \Gamma _{1,j}\leq \Gamma _{1,j+1}+\la_{j}-\la_{j+1}+1 \text{ for every } 1\leq j\leq r.
\end{equation}
We call $\Gamma $ $(\la ,k)${\em{-admissible}} if $\Gamma $ is $\la $-admissible and $\Gamma _{i,j}=0$ for $k+1<j.$
%\begin{equation}\label{eq:def_k_admissible}
% \Gamma \text{ is }\la \text{-admissible and }\Gamma _{i,j}=0 \text{ for } k+1<j.
%\end{equation}
We call $\Gamma $ {\em{non-strict}} if
\begin{equation}\label{eq:nonstrict_condition}
 \Gamma _{1,j-1}=\Gamma _{1,j}=\Gamma _{1,j+1}+\la _j-\la_{j+1}+1 \text{ for at least one } 1< j\leq r,
\end{equation}
and {\em{strict}} if it is not non-strict. 

We define a weight function and Gelfand-Tsetlin coefficients for a $\Gamma $ $r$-tuple that is $\la $-admissible:
\begin{equation}\label{eq:def_stringweight}
 \wght ^{(\la )}(\Gamma )=(\la_{r+1}+\Gamma _{1,r}, \la _r+\Gamma_{1,r-1}-\Gamma _{1,r},\ldots ,\la _2+\Gamma _{11}-\Gamma _{1,2},\la _{1}-\Ga_{11});
\end{equation}
\begin{equation}\label{eq:def_string_GTcoeff}
 \begin{split}
G^{(\lambda )}_1(\Gamma )=G^{(n,\lambda )}_1(\Gamma )&=\prod _{j=1}^r g_{1j}^{(\la )}(\Gamma )=\prod _{j=1}^r g_{1j}^{(n,\la )}(\Gamma ),\\  
g_{1j}^{(n,\la )}(\Gamma )&=\left\lbrace\begin{array}{ll}
                                        1 & \Gamma _{1,j+1}=\Gamma _{1,j}<\Gamma _{1,j+1}+\la_{j}-\la_{j+1}+1\\
                                        h^{\flat}(\Gamma _{1,j}) & \Gamma _{1,j+1}<\Gamma _{1,j}<\Gamma _{1,j+1}+\la_{j}-\la_{j+1}+1\\
                                        g^{\flat}(\Gamma _{1,j}) & \Gamma _{1,j+1}<\Gamma _{1,j}=\Gamma _{1,j+1}+\la_{j}-\la_{j+1}+1\\
                                        0 & {\mathrm{otherwise}}
                                       \end{array}
\right.
 \end{split}
\end{equation}
For convenience, we say $\Ga $ is associated to $\la $ and $\mu $ and write $\Ga =\Ga (\la,\mu )$ if 
\begin{equation}\label{eq:Gamma_mu_condition}
 \Gamma _{1,j}-\Gamma _{1,j+1}=\mu _{j}-(\la _{j+1}+r-j) .
\end{equation}
is satisfied. This is the case if $\Ga $ is the first row of an array $\Ga (\IP )$ of a pattern $\IP $ with top two rows $\la +\rho _r$ and $\mu .$
\end{definition}

Parts $(i)-(v)$ of the following lemma justify the choices in Definition \ref{def:r-1_lowest_modifications}. Part $(vi)$ will be convenient in later computations. 

\begin{lemma}\label{lemma:reparametr_meaningul}
Let $\la =(\la _1,\ldots ,\la _r,\la _r+1),$ $\mu =(\mu _1,\ldots ,\mu _r)$ and $\Ga =(\Gamma_{11},\ldots ,\Gamma _{1r})=\Ga (\la,\mu )$ associated to $\la $ and $\mu $ by \eqref{eq:Gamma_mu_condition}. Then the following statements hold. 
 \begin{enumerate}[(i)]
  \item The tuple $\Ga $ is $\la $-admissible if and only if the weights $\la +\rho _r$ and $\mu $ interleave.
  \item Let $w$ be as in \eqref{eq:w_form}. Then $\Ga $ is $(\la ,k)$-admissible if and only if $\Cr _{\mu }\subseteq \Cr _{\la +\rho }^{(w)}.$ 
  \item The tuple $\Ga $ is strict if and only if $\mu $ is strongly dominant. 
  \item Let $v_{\ast }$ be the lowest element of a component $\Cr _{\mu }\subseteq \Cr _{\la +\rho }^{(w)}$. Then
\begin{equation}\label{eq:repar_wght_good}
\wght (v_{\ast })-w_0^{(r)}(\rho _r)=\wght ^{(\la )}(\Ga ).
\end{equation}
  \item Let $v_{\ast }$ be the lowest element of a component $\Cr _{\mu }\subseteq \Cr _{\la +\rho }^{(w)}.$ Then
\begin{equation}\label{eq:repar_GT_good}
G^{(n,\la +\rho )}(v_{\ast })=\left\lbrace\begin{array}{ll}
G^{(n,\lambda )}_1(\Gamma ) & \text{ if $\mu $ is strongly dominant;}\\
0 & \text{ otherwise .} 
\end{array}\right.
\end{equation}
 \item With the notation as above,we have 
\begin{equation}\label{eq:repar_goalwght}
 \x ^{\wght ^{(\la )}(\Ga )} =\y^{w_0^{(r-1)}(\mu -\rho _{r-1})}\cdot x_{r+1}^{d(\la +\rho )-d(\mu )-r}.
\end{equation}
 \end{enumerate}
\end{lemma}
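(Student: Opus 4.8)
My plan is to reduce all six parts to two ingredients: the ``dictionary'' \eqref{eq:Gamma_mu_condition} relating $\Gamma$ and $\mu$, and the explicit description of the lowest vertex $v_{\ast}$ of a component $\Cr_\mu$ recorded in Remark \ref{rmk:bzltolowest} (namely $\IP(v_{\ast})$ has top row $\la+\rho_r$, second row $\mu$, and $a_{ij}=\mu_j$ for all $i\ge 1$). First I would record the telescoped form of \eqref{eq:Gamma_mu_condition}: since $\Gamma_{1,r+1}=0$,
\begin{equation*}
\Gamma_{1,j}=\sum_{l=j}^{r}\bigl(\mu_l-(\la_{l+1}+r-l)\bigr),\qquad \Gamma_{1,j}-\Gamma_{1,j+1}=\mu_j-(\la_{j+1}+r-j),
\end{equation*}
so in particular $\la_{j+1}+\Gamma_{1,j}-\Gamma_{1,j+1}=\mu_j-(r-j)$ for $1\le j\le r$; I will also use that the consecutive difference of the top row is $a_{0,j-1}-a_{0,j}=\la_j-\la_{j+1}+1$.

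Parts (i)--(iii) I would then settle by direct translation. For (i): substituting the consecutive difference into \eqref{eq:def_admissible} turns $\la$-admissibility into $\la_{j+1}+r-j\le\mu_j\le\la_j+r-j+1$ for all $j$, which is exactly the interleaving chain \eqref{eq:interleave_condition}. For (ii): $(\la,k)$-admissibility adds ``$\Gamma_{1,j}=0$ for $j>k+1$'', and by the telescoped formula (read downwards from $\Gamma_{1,r+1}=0$) this is equivalent to $\mu_j=\la_{j+1}+r-j$ for $j>k+1$; combined with (i) this is precisely the criterion of Proposition \ref{prop:Demazure_branching} for $\Cr_\mu\subseteq\Cr_{\la+\rho}^{(w)}$, and \eqref{eq:Dem_crystal_branching} is then \eqref{eq:crystal_branching} restricted to these $\mu$. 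For (iii): using the dictionary, $\Gamma_{1,j}=\Gamma_{1,j+1}+\la_j-\la_{j+1}+1$ is equivalent to $\mu_j=\la_j+r-j+1$ and $\Gamma_{1,j-1}=\Gamma_{1,j}$ to $\mu_{j-1}=\la_j+r-j+1$, so \eqref{eq:nonstrict_condition} asserts $\mu_{j-1}=\mu_j=\la_j+r-j+1$ for some $1<j\le r$; when $\Gamma$ is $\la$-admissible, (i) forces $\mu_{j-1}\ge\la_j+r-j+1\ge\mu_j$, so this amounts to $\mu_{j-1}=\mu_j$ for some $j$, i.e. to $\mu$ not being strongly dominant.

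For (iv)--(vi) I would compute with $\IP(v_{\ast})$. From $a_{ij}=\mu_j$ for $i\ge1$ the row-sums are $d_0=d(\la+\rho_r)$ and $d_i=\sum_{l=i}^{r}\mu_l$ for $i\ge1$, so \eqref{eq:def_pattern_weight} gives $\wght(v_{\ast})=(\mu_r,\mu_{r-1},\ldots,\mu_1,\,d(\la+\rho_r)-d(\mu))$; subtracting $w_0^{(r)}(\rho_r)=(0,1,\ldots,r-1,r)$ yields $(\mu_r,\mu_{r-1}-1,\ldots,\mu_1-(r-1),\,d(\la+\rho_r)-d(\mu)-r)$. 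I would then check this matches \eqref{eq:def_stringweight} coordinatewise: the first $r$ coordinates agree because $\la_{j+1}+\Gamma_{1,j}-\Gamma_{1,j+1}=\mu_j-(r-j)$, and the last agrees because summing the telescoped relation over $1\le j\le r$ gives $\Gamma_{1,1}=d(\mu)-d(\la+\rho_r)+\la_1+r$, whence $\la_1-\Gamma_{1,1}=d(\la+\rho_r)-d(\mu)-r$; this is (iv). Part (vi) is then immediate from (iv): the first $r$ coordinates of $\wght^{(\la)}(\Gamma)$ are exactly $w_0^{(r-1)}(\mu-\rho_{r-1})=(\mu_r,\mu_{r-1}-1,\ldots,\mu_1-(r-1))$ and the last is $d(\la+\rho)-d(\mu)-r$, so $\x^{\wght^{(\la)}(\Gamma)}=\y^{w_0^{(r-1)}(\mu-\rho_{r-1})}\,x_{r+1}^{d(\la+\rho)-d(\mu)-r}$. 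For (v), when $\mu$ is strongly dominant the pattern $\IP(v_{\ast})$ is strict (its top row and its constant lower part are both strictly decreasing), so as in \eqref{eq:GTcoeffofvast} every factor $g_{ij}^{(n,\la+\rho)}(\IP(v_{\ast}))$ with $i\ge2$ equals $1$; it remains to see $g_{1j}^{(n,\la+\rho)}(\IP(v_{\ast}))=g_{1j}^{(n,\la)}(\Gamma)$, and this holds because $g_{1j}$ depends only on $\Gamma_{1,j},\Gamma_{1,j+1}$ and on $a_{0,j-1}-a_{0,j}=\la_j-\la_{j+1}+1$, so the four cases of \eqref{eq:GTfactorforGammadef_n} become verbatim the four cases of \eqref{eq:def_string_GTcoeff}; the product over $j$ gives $G^{(n,\la)}_1(\Gamma)$. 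When $\mu$ is merely dominant, I would pick $j$ with $\mu_{j-1}=\mu_j$: then $a_{1,j-1}=a_{2,j}=a_{1,j}$ in $\IP(v_{\ast})$, so $a_{2,j}$ is simultaneously circled and boxed and $G^{(n,\la+\rho)}(v_{\ast})=0$ (equivalently, $\IP(v_{\ast})$ is not strict, cf.\ Remark \ref{rmk:strictpatterns}).

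I do not expect a conceptual obstacle; the work is in the bookkeeping. The step I would carry out first and most carefully is the coordinatewise comparison in (iv), since three conventions must be tracked at once ($\Gamma$-array versus pattern entries, $\rho_r$ versus $\rho_{r-1}$, and $w_0^{(r)}$ versus $w_0^{(r-1)}$) and both (v) and (vi) rest on it. The one genuine subtlety is the case split in (v): $G^{(n,\la+\rho)}(v_{\ast})$ vanishes for \emph{every} $\mu$ that fails to be strongly dominant, because of a circled-and-boxed entry below the top row, whereas $G^{(n,\la)}_1(\Gamma)$ is built only from the first row of the $\Gamma$-array and need not vanish --- which is precisely why the two cases cannot be merged.
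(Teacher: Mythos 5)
Your proposal is correct and follows essentially the same route as the paper: translate everything through the dictionary \eqref{eq:Gamma_mu_condition} and the explicit form of $\IP(v_{\ast})$, citing Propositions \ref{prop:crystal_branching}, \ref{prop:Demazure_branching} and Remarks \ref{rmk:bzltolowest}, \ref{rmk:nonstrict_mu} exactly where the paper does. The only cosmetic differences are that you compute (iv) from the row sums of $\IP(v_{\ast})$ via \eqref{eq:def_pattern_weight} where the paper uses the $BZL$ weight formula \eqref{eq:weight_of_v_from_BZL}, and you obtain (vi) by coordinatewise comparison rather than by invoking part (a) of Proposition \ref{prop:crystal_branching}; both variants are equally routine and correct.
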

\begin{proof}
 Note first that the condition \eqref{eq:Gamma_mu_condition} is satisfied exactly if $\Ga =(\Gamma_{11},\ldots ,\Gamma _{1r})$ is the first row of the array $\Gamma (\IP ),$ When $\IP$ is a pattern with top rows $\la +\rho $ and $\mu .$ With this observation, the proof is straightforward from Propositions \ref{prop:banchcomp_explicit}, \ref{prop:Demazure_branching} and \ref{prop:GTsumbreakupDem}. For (i), we have that by \eqref{eq:Gamma_mu_condition},
$$\la _{j}+r-j+1 \geq \mu _j\geq \la _{j+1}+r-j \Longleftrightarrow \la_{j}-\la_{j+1}+1 \geq \Gamma _{1,j}-\Gamma _{1,j+1}\geq 0.$$
Again by \eqref{eq:Gamma_mu_condition} we have that $\Ga $ is $(\la ,k)$-admissible if and only if for any $k+1<j$ we have 
$$\Gamma _{1,j}=\mu _{j}-(\la _{j+1}+r-j)+\Gamma _{1,j+1}=\Gamma _{1,j+1}=0 \Longleftrightarrow \mu _j=\la _{j+1}+r-j.$$
By Proposition \ref{prop:Demazure_branching}, this is equivalent to $\Cr _{\mu }\subseteq \Cr _{\la +\rho }^{(w)}.$ This proves (ii). Part (iii) is true because 
$$\mu _{j-1}=\mu _{j} \Longleftrightarrow \mu _{j-1}=\la _j+r-j+1=\mu _j $$
and by \eqref{eq:Gamma_mu_condition}
$$\Gamma _{1,j-1}=\Gamma _{1,j} \Longleftrightarrow \Gamma _{1,j-1}-\Gamma _{1,j}=\mu _{j-1}-(\la _{j}+r-j+1)=0;$$
$$\Gamma _{1,j}=\Gamma _{1,j+1}+\la _j-\la_{j+1}+1\Longleftrightarrow \Gamma _{1,j}-\Gamma _{1,j+1}=\mu _{j}-(\la _{j+1}+r-j)=\la _j-\la_{j+1}+1 .$$

For part (iv), recall that by part (c) of Proposition \ref{prop:crystal_parametrization}, if $v_{lowest}$ is the lowest element of $\Cr _{\la+\rho },$ then $\wght(v)-\wght(v_{lowest})$ can be expressed from the entries $b_i(v)$ of $BZL(v).$ We have $\wght(v_{lowest})=w_0^{(r)}(\la +\rho ).$ Furthermore, by Remark \ref{rmk:bzltolowest}, for the lowest element $v_\ast $ of a component $\Cr _{\mu },$ $b_i(v_{\ast })=0$ if $i\leq \binom{r}{2}.$ Further, since $\Ga $ is the top row of $BZL(v_\ast )=\Ga (\IP (v_\ast )),$ we also have $b_i(v_{\ast })=\Ga _{1,i-\binom{r}{2}}.$ Now \eqref{eq:weight_of_v_from_BZL} implies that
$$\wght(v_\ast )-w_0^{(r)}(\rho ) =w_0^{(r)}(\la)+\sum _{j=1}^{r}\Ga _{1j}\cdot \alpha _{r+1-j}=\wght ^{(\la )}(\Ga ).$$

To prove (v), recall that by Remark \ref{rmk:nonstrict_mu} if $\mu $ is not strongly dominant (i.e. $\Ga $ is nonstrict), then $G^{(n,\la +\rho )}(v_{\ast })=0.$ Furthermore, if $\mu $ is strongly dominant, then by \eqref{eq:GTcoeffofvast}, the Gelfand-Tsetlin coefficient corresponding to $v_\ast $ only depends on the first row of the $BZL$-array: 
$$G^{(n,\la +\rho )}(\IP (v_{\ast }))=\prod _{1\leq j\leq r} g_{1j}^{n,\la +\rho }(\IP (v_{\ast })).$$
Now since the first two rows of $\IP (v_{\ast })$ are $\la+\rho $ and $\mu ,$ and $\Ga $ is the first row of $\Ga (\IP (v_\ast )),$ the statement follows immediately from comparing \eqref{eq:def_string_GTcoeff} and \eqref{eq:GTfactorforGammadef_n}. 

Finally, recall that we write $\x=(x_1,\ldots ,x_r,x_{r+1}),$ $\y=(x_1,\ldots ,x_r)$ and $d(\la)$ (or $d(\mu )$) for the sum of the components of the weight $\la $ (respectively, $\mu $).
Note that $\wght _{\mu }(v_{\ast })=w_0^{(r-1)}(\mu )$ and $d(\la +\rho )-d(\mu )=\la_1+r-\Ga _{11}.$ Then from part (iv) above, and part (a) of Proposition \ref{prop:crystal_branching} we have 
$$
\begin{array}{rl}
   \x ^{\wght ^{(\la )}(\Ga )} = &  \x ^{\wght (v_{\ast })}\cdot \x ^{-w_0^{(r)}(\rho _r)}\\
= & \y^{\wght _{\mu }(v_{\ast })}\cdot x_{r+1}^{d(\la +\rho )-d(\mu )}\cdot \y^{w_0^{(r-1)}(\rho _{r-1})}\cdot x_{r+1}^{-r} \\
= & \y^{w_0^{(r-1)}(\mu -\rho _{r-1})}\cdot x_{r+1}^{d(\la +\rho )-d(\mu )-r}.
  \end{array}
$$

\end{proof}

\section{Reduction and proof of the Main Theorem}\label{chap:equivalent_reformulations}

We are ready to begin the proof of Theorem \ref{THM:MAIN}. The expression on the left-hand side involves a large sum of Demazure-Lusztig operators,
\begin{equation}\label{eq:operator_r}
 \left(\sum _{u\leq w_0^{(r)}} \T _u \right)=\Delta _t^{(r)}\cdot \Dem _{w_0^{(r)}}.
\end{equation}
The idea behind the proof is that one may replace this expression by progressively simpler ones, eventually reducing the statement of Theorem \ref{THM:MAIN} to the description of the polynomial 
\begin{equation}\label{eq:simpleDL_LHS}
(\T_r \T_{r-1}\cdots \T_1)(\x^{w_0(\la )}).
\end{equation}
The statement describing the polynomial \eqref{eq:simpleDL_LHS} is then proved by induction in Section \ref{chap:technical_induction}.

In the proof we restrict our attention to the case where the Weyl group element $w$ (a beginning section of $w_0^{(r)}$) has length at least $\binom{r}{2}+1.$ This leads to no loss of generality by the following remark. 

\begin{remark}\label{rmk:restriction_on_w}
The statement of Theorem \ref{THM:MAIN} in type $A_r$ but for $\ell(w)\leq \binom{r}{2}$ is equivalent to an instance of the theorem in type $A_{r-1}.$ Let $\la $ be as in the statement of the theorem, $\la ' =(\la _2,\la _3,\ldots ,\la _{r+1})$ and suppose $\ell(w)\leq \binom{r}{2}.$ Then in fact $w$ is a beginning section of $w_0^{(r-1)}.$ The statement \eqref{eq:thm_main_statement} for type $A_r,$ $\la $ and $w$ is the analogous statement for type $A_{r-1},$ $\la '$ and $w,$ except both sides are multiplied by $x_{r+1}^{\la _1}.$
On the left-hand side, $\T _1,\ldots ,\T _{r-1}$ all commute with multiplication by $x_{r+1}.$ As for the right-hand side, in the decomposition \eqref{eq:crystal_branching}, $\Cr _{\la +\rho }^{(w)}$  is contained in the component $\Cr _{\la '+\rho _{r-1}}$ of the lowest element, $v_{lowest}\in \Cr _{\la +\rho }.$ We have $G^{\la +\rho }(v_{lowest })=1.$ The statement now follows from Proposition \ref{prop:crystal_branching}.
\end{remark}

Hence from now on, we shall assume that $w$ is as in \eqref{eq:w_form}. Recall that $w$ is fixed by a choice of the pair $r,k$ where $0\leq k<r.$ Call the statement of Theorem \ref{THM:MAIN} for such a fixed $w$ and fixed $n$ (but for any dominant, effective weight $\la $) $IW_{r,k}^{(n)}.$ Proving $IW_{r,k}^{(n)}$ for any pair $0\leq k<r$ proves Theorem \ref{THM:MAIN}. 

Theorem \ref{thm:metaplectic_Tokuyama} is the special case of Theorem \ref{THM:MAIN} where $w=w_0^{(r)},$ i.e. $k=r-1.$ We will sometimes use the notation $Tok_{r}^{(n)}=IW_{r,r-1}^{(n)}.$

\begin{remark}\label{rmk:suppress_n}
 Much, but not all of the notation introduced above, in previous chapters, and in what follows, depends on the value of $n.$ In particular the meaning of $\Dem _i,$ $\T_i,$ (the action of) $\sigma _i,$ $G ^{(\la ,w)}(v),$ $Tok_{r}^{(n)}$ and $IW_{r,k}^{(n)}$ depend on $n,$ but $w_0,$ $W,$ $\Cr_{\la +\rho}^{(w)}$ and $\wght(v)$ do not. We will usually suppress $n$ from the notation. When reading the statements and proofs below, one should keep in mind that the meaning varies with $n.$ The entire argument of the proof is about a(n arbitrarily) fixed $n.$
\end{remark}

The reduction of $IW_{r,k}$ to the simpler statement is itself an induction by $r.$ We will phrase two more statements, $M_{r,k}$ (Proposition \ref{prop:Mrk}) and $N_{r,k}$ (Proposition \ref{prop:Nrk}). These involve smaller expressions of Demazure-Lusztig operators on the left hand side, and make use of the notation of Definition \ref{def:r-1_lowest_modifications}. $N_{r,r-1}$ describes the polynomial in \eqref{eq:simpleDL_LHS}.

The technical ingredients of the reduction are stated as lemmas or auxiliary propositions along the way. The proof, using these auxiliary statements, is in section \ref{subs:proof_summary}. The last ingredient is the proof of Proposition \ref{prop:N_r_r-1}, which is a rather technical induction, and forms the contents of Section \ref{chap:technical_induction}. 

\subsection{Auxiliary statements}

First we rewrite both sides of $IW_{r,k}$ in terms of the operator appearing in $Tok_{r-1},$ of the form \eqref{eq:operator_r} For the left-hand side, this is accomplished by the following lemma. It is really just a statement about the Bruhat-order; we omit the proof. 

\begin{lemma}\label{lem:Bruhat_shorterwords}
 Let $w=w_0^{(r-1)}\cdot \sigma _r\cdots \sigma _{r-k}.$ Then 
 \begin{equation}\label{eq:Bruhat_shorterwords}
\sum _{u\leq w} \T _u =\left(\sum _{u\leq w_0^{(r-1)}} \T _u \right)\cdot (1+\T_r+\T_r\T_{r-1}+\cdots +\T_r\cdots \T_{r-k}).
 \end{equation}
\end{lemma}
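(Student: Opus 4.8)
\textbf{Proof proposal for Lemma \ref{lem:Bruhat_shorterwords}.}

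The plan is to reduce the claimed operator identity to the purely combinatorial statement that, as a set, $\{u\in W : u\leq w\}$ decomposes as a disjoint union indexed by the "tails" of the second factor on the right. Concretely, I would first establish the following description of the Bruhat interval below $w=w_0^{(r-1)}\cdot\sigma_r\sigma_{r-1}\cdots\sigma_{r-k}$: writing $c_j=\sigma_r\sigma_{r-1}\cdots\sigma_{j}$ for $r-k\leq j\leq r$ and $c_{r+1}=e$, every $u\leq w$ can be written uniquely as $u=u'\cdot c_j$ for some $j$ with $r-k\leq j\leq r+1$ and some $u'\leq w_0^{(r-1)}$, where moreover $\ell(u)=\ell(u')+\ell(c_j)=\ell(u')+(r-j+1)$. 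The key point making this work is that $w_0^{(r-1)}\sigma_r\cdots\sigma_{r-k}$ is reduced (it is a beginning section of the favourite long word $w_0^{(r)}$, hence reduced by construction), so that $\ell(w)=\binom{r}{2}+k+1$; combined with the subword characterization of Bruhat order, an element $u\leq w$ is obtained by deleting letters from this fixed reduced word, and the letters deleted from the suffix $\sigma_r\cdots\sigma_{r-k}$ must, after the deletion, still spell a reduced word — and any subword of $\sigma_r\sigma_{r-1}\cdots\sigma_{r-k}$ that is reduced and whose value lies below $\sigma_r\cdots\sigma_{r-k}$ is in fact one of the $c_j$ (since the $\sigma_i$ in this suffix have distinct indices decreasing by one, the only reduced subwords realizing a Bruhat-comparable element are the "terminal segments" $\sigma_r\cdots\sigma_j$). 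This is exactly the standard fact that for a reduced word $w=w'\cdot s_{i_1}\cdots s_{i_m}$ whose suffix letters are "generic enough", the interval $[e,w]$ fibers over $[e,w']$ with fibers the initial segments of the suffix; I would cite or quickly reprove it via the lifting property / the $Z$-lemma for Coxeter groups (Björner–Brenti, or Humphreys \cite{humph}).

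Granting that set-theoretic decomposition, the operator identity follows formally. Since the $\T_i$ satisfy the braid relations (cited from \cite{cgp} earlier in the excerpt, Proposition 7 there), for any reduced expression $u=\sigma_{i_1}\cdots\sigma_{i_\ell}$ the operator $\T_u=\T_{i_1}\cdots\T_{i_\ell}$ is well-defined. For $u=u'c_j$ with $\ell(u)=\ell(u')+\ell(c_j)$, concatenating a reduced word for $u'$ with $\sigma_r\sigma_{r-1}\cdots\sigma_j$ gives a reduced word for $u$, hence $\T_u=\T_{u'}\cdot\T_r\T_{r-1}\cdots\T_j$. Therefore
\begin{equation*}
\sum_{u\leq w}\T_u=\sum_{j=r-k}^{r+1}\;\sum_{u'\leq w_0^{(r-1)}}\T_{u'}\cdot\T_r\T_{r-1}\cdots\T_j=\left(\sum_{u'\leq w_0^{(r-1)}}\T_{u'}\right)\cdot\sum_{j=r-k}^{r+1}\T_r\T_{r-1}\cdots\T_j,
\end{equation*}
and the inner sum over $j$ is precisely $1+\T_r+\T_r\T_{r-1}+\cdots+\T_r\cdots\T_{r-k}$ (the $j=r+1$ term being the empty product $1$). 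This is the asserted identity \eqref{eq:Bruhat_shorterwords}.

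The only genuine obstacle is the combinatorial lemma about the Bruhat interval: one must check not merely that each $u'c_j$ with the length-additivity condition lies below $w$ (which is immediate, being a reduced subword), but that these exhaust $[e,w]$ and do so without repetition. Uniqueness is easy — if $u'c_j=u''c_{j'}$ with both length-additive, comparing lengths forces $j=j'$ and then $u'=u''$. Exhaustiveness is where care is needed: given $u\leq w$, pick any reduced subword $\mathbf{s}$ of the fixed reduced word $\sigma_{\Omega_1}\cdots\sigma_{\Omega_{\binom{r}{2}}}\cdot\sigma_r\sigma_{r-1}\cdots\sigma_{r-k}$ representing $u$; let $\mathbf{s}''$ be its restriction to the last $k+1$ letters. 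One checks $\mathbf{s}''$ is reduced (a subword of a reduced word need not be reduced in general, but here the indices $r,r-1,\dots,r-k$ are consecutive and pairwise non-adjacent-via-braid in the relevant sense, so any subword of $\sigma_r\cdots\sigma_{r-k}$ is automatically reduced — indeed it is itself of the form $\sigma_{j_1}\cdots\sigma_{j_p}$ with $j_1>j_2>\cdots$ not necessarily consecutive, and such a word is reduced). Then one argues that the element spelled by $\mathbf{s}''$, in order to be $\leq\sigma_r\cdots\sigma_{r-k}$ — which is forced because deleting all the prefix letters shows it is a subword of $\sigma_r\cdots\sigma_{r-k}$ — must actually be a terminal segment $\sigma_r\sigma_{r-1}\cdots\sigma_j$; this last step is a small case analysis, or can be bypassed by choosing $\mathbf{s}$ greedily from the right (the "lex-first from the right" subword), which is the cleanest route. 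Since the lemma's proof is omitted in the paper ("we omit the proof"), I would in the write-up simply record the interval decomposition as the content and refer to standard Coxeter combinatorics, spending at most a sentence on why the suffix contributes only its terminal segments.
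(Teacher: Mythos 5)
The paper gives no proof of this lemma (it is explicitly omitted as ``a statement about the Bruhat order''), so there is nothing to compare against; your overall strategy --- factor the Bruhat interval $[e,w]$ as $\{u'c_j\}$ with length additivity, then use the braid relations to turn this into the operator identity --- is the natural one, and the operator-theoretic half of your argument is fine. However, the combinatorial half contains a false sub-claim. You assert that ``any subword of $\sigma_r\sigma_{r-1}\cdots\sigma_{r-k}$ that is reduced and whose value lies below $\sigma_r\cdots\sigma_{r-k}$ is in fact one of the $c_j$.'' This is not true: $\sigma_{r-1}$ is a reduced subword of $\sigma_r\sigma_{r-1}$ and satisfies $\sigma_{r-1}\leq\sigma_r\sigma_{r-1}$, but it is not a terminal segment $c_j$. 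The same problem defeats your proposed ``greedy from the right'' fix: for $u=\sigma_{r-1}\leq w_0^{(r-1)}\sigma_r\sigma_{r-1}$ the right-greedy subword selects only the final letter, so the suffix part of the selected subword is $\sigma_{r-1}$, again not a $c_j$. The element $u$ still has the desired form ($u=\sigma_{r-1}\cdot c_{r+1}$), but the decomposition is not the one read off from the positions of the chosen subword, so your exhaustiveness argument as written does not close.

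The clean repair is to abandon subword bookkeeping and invoke the parabolic coset decomposition for $W_J=\langle\sigma_1,\ldots,\sigma_{r-1}\rangle\cong S_r$ inside $W\cong S_{r+1}$: every $u\in W$ factors uniquely as $u=u_J\cdot{}^Ju$ with $u_J\in W_J$, ${}^Ju$ the minimal-length representative of $W_Ju$, and $\ell(u)=\ell(u_J)+\ell({}^Ju)$. Here ${}^JW=\{e,\sigma_r,\sigma_r\sigma_{r-1},\ldots,\sigma_r\cdots\sigma_1\}$ is a chain in Bruhat order, and ${}^Jw=\sigma_r\cdots\sigma_{r-k}$ while $w_J=w_0^{(r-1)}$ is the \emph{longest} element of $W_J$. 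Since the projection $W\to W_J\backslash W$ is order-preserving, $u\leq w$ forces ${}^Ju\leq\sigma_r\cdots\sigma_{r-k}$, i.e. ${}^Ju=c_j$ with $j\geq r-k$; conversely, because $u_J\leq w_0^{(r-1)}$ holds automatically, concatenating reduced subwords shows every such $u_J\cdot c_j$ lies below $w$. This gives exactly the bijection $[e,w]\leftrightarrow[e,w_0^{(r-1)}]\times\{c_{r+1},c_r,\ldots,c_{r-k}\}$ with length additivity, after which your computation with $\T_u=\T_{u_J}\T_r\cdots\T_j$ and distributivity finishes the proof. Note also that the crucial role of $w_J$ being the longest element of $W_J$ deserves explicit mention: for a general reduced product $w'\cdot c_j$ the interval $[e,w]$ does not factor in this way.
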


The first factor on the right-hand side of \eqref{eq:Bruhat_shorterwords} is 
\begin{equation}\label{eq:operator_r-1}
\sum _{u\leq w_0^{(r-1)}} \T _u ,
\end{equation}
the operator on the left-hand side of $Tok_{r-1}.$ By Theorem \ref{THM:LONG_WORD}, it is equal to $\Delta _t^{(r-1)}\cdot \Dem _{w_0^{(r-1)}}.$ The second factor will appear as the operator in $M_{r,k}$ (Proposition \ref{prop:Mrk}).

The following Proposition reproduces the right-hand side of $IW_{r,k}$ in terms of the operator \eqref{eq:operator_r-1}. It is a consequence of Proposition \ref{prop:GTsumbreakupDem} and Lemma \ref{lemma:reparametr_meaningul}. It is proved in section \ref{subs:proof_INDUCTION_ON_GTSIDE}.

\begin{prop}\label{PROP:INDUCTION_ON_GTSIDE}
 Assume $IW_{r-1,r-2}(=Tok_{r-1})$ holds. Then we have 
\begin{equation}\label{eq:A_r-1_induction_on_GTside}
\x^{-w_0(\rho )}\cdot \sum _{v\in \Cr_{\la +\rho }^{(w)}} G^{(n,\la+\rho )}(v)\cdot \x^{\wght(v)}=\left(\sum _{u\leq w_0^{(r-1)}} \T _u \right) \sum _{\substack{\Gamma =(\Gamma _{11},\ldots ,\Gamma _{1r})\\ \Gamma \ (\la ,k)-{\mathrm{admissible}}}} G^{\la }_1(\Gamma )\cdot \x^{\wght ^{(\la )}(\Gamma )}
\end{equation}
\end{prop}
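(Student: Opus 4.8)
\textbf{Proof proposal for Proposition \ref{PROP:INDUCTION_ON_GTSIDE}.}

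The plan is to start from the right-hand side of \eqref{eq:thm_main_statement} and repeatedly apply the branching machinery assembled in Sections \ref{section:HWtCrystalsandGTpatterns} and \ref{section:DemCrystals}, then recognize the resulting inner sum as an instance of the right-hand side of $IW_{r-1,r-2}=Tok_{r-1}$. Concretely, the first step is to invoke Proposition \ref{prop:GTsumbreakupDem}: since $w=w_0^{(r-1)}\sigma_r\cdots\sigma_{r-k}$ is a beginning section of $w_0^{(r)}$ but not of $w_0^{(r-1)}$, the Demazure crystal $\Cr_{\la+\rho}^{(w)}$ decomposes as the disjoint union $\bigcup_\mu \Cr_\mu$ over those $\mu$ that interleave with $\la+\rho$ and satisfy $\mu_j=\la_{j+1}+r-j$ for $j>k+1$, and \eqref{eq:IW_RHS_rewriting_goal_dem} rewrites the left side of \eqref{eq:A_r-1_induction_on_GTside} as
\begin{equation*}
\sum_\mu G^{(n,\la+\rho)}(v_\ast^{(\mu)})\cdot x_{r+1}^{d(\la+\rho)-d(\mu)-r}\cdot\left(\y^{-w_0^{(r-1)}(\rho_{r-1})}\cdot\sum_{v\in\Cr_\mu}G^{(n,\mu)}(v)\cdot\y^{\wght_\mu(v)}\right).
\end{equation*}

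Next I would translate the summation index from $\mu$ to the associated first-row tuple $\Gamma=\Gamma(\la,\mu)$ via \eqref{eq:Gamma_mu_condition}. By Lemma \ref{lemma:reparametr_meaningul}(i)--(ii), the tuples $\Gamma$ that arise are exactly the $(\la,k)$-admissible ones; by (v), $G^{(n,\la+\rho)}(v_\ast^{(\mu)})=G_1^{(n,\la)}(\Gamma)$ (with both sides zero when $\mu$ fails to be strongly dominant, so no terms are lost or gained); and by (vi), $x_{r+1}^{d(\la+\rho)-d(\mu)-r}\cdot\y^{w_0^{(r-1)}(\mu-\rho_{r-1})}=\x^{\wght^{(\la)}(\Gamma)}$, which is precisely the combination of the prefactor $x_{r+1}^{d(\la+\rho)-d(\mu)-r}$ and the $\y^{-w_0^{(r-1)}(\rho_{r-1})}$ twist once we account for the shift between $\wght_\mu(v)$ and $\mu$. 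So the displayed expression becomes
\begin{equation*}
\sum_{\substack{\Gamma\ (\la,k)\text{-admissible}}}G_1^{(\la)}(\Gamma)\cdot\x^{\wght^{(\la)}(\Gamma)-w_0^{(r-1)}(\mu-\rho_{r-1})}\cdot\left(\y^{-w_0^{(r-1)}(\rho_{r-1})}\sum_{v\in\Cr_\mu}G^{(n,\mu)}(v)\y^{\wght_\mu(v)}\right),
\end{equation*}
which, grouping carefully, is $\sum_\Gamma G_1^{(\la)}(\Gamma)\cdot\x^{\wght^{(\la)}(\Gamma)}$ times a renormalized inner crystal sum over $\Cr_\mu$; by Lemma \ref{lem:Tokuyama_thm_crystal_Demazure} (the crystal form of $Tok_{r-1}$, which we are assuming in the form $IW_{r-1,r-2}$) the inner sum equals $\left(\sum_{u\le w_0^{(r-1)}}\T_u\right)\y^{w_0^{(r-1)}(\mu-\rho_{r-1})}$, i.e. that operator applied to the monomial whose exponent is what remains after stripping $\x^{\wght^{(\la)}(\Gamma)}$.

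The final step is to pull the operator $\sum_{u\le w_0^{(r-1)}}\T_u$ out of the $\Gamma$-sum. This is where I expect the only real subtlety, and it is the main obstacle: the operators $\T_u$ for $u\le w_0^{(r-1)}$ involve only $\sigma_1,\dots,\sigma_{r-1}$, hence act only on the variables $x_1,\dots,x_r$, and one must check they commute with multiplication by the relevant monomial in $x_{r+1}$ and, more delicately, that the factor $\x^{\wght^{(\la)}(\Gamma)}$ split off from each term reassembles correctly — using Lemma \ref{lemma:h_exchange} to move the $x_{r+1}^{d(\la+\rho)-d(\mu)-r}$ factor (whose exponent one checks is divisible by $n$, or is handled by the $\ffield_0$ remark following \eqref{def:dl}) and any $n$-divisible part of $\Gamma$ past the metaplectic action. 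Once the bookkeeping confirms that the $\Gamma$-dependent prefactor $G_1^{(\la)}(\Gamma)\cdot\x^{\wght^{(\la)}(\Gamma)}$ sits outside the reach of the operator while the operator sees only $\y^{w_0^{(r-1)}(\mu-\rho_{r-1})}$, linearity of $\sum_{u\le w_0^{(r-1)}}\T_u$ gives exactly the right-hand side of \eqref{eq:A_r-1_induction_on_GTside}, completing the proof.
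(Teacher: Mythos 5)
Your overall route coincides with the paper's: decompose the left side via Proposition \ref{prop:GTsumbreakupDem}, reindex by $(\la ,k)$-admissible tuples $\Gamma$ through Lemma \ref{lemma:reparametr_meaningul}, identify each inner crystal sum with $\left(\sum_{u\leq w_0^{(r-1)}}\T_u\right)\y^{w_0^{(r-1)}(\mu -\rho _{r-1})}$ using the hypothesis $Tok_{r-1}$, and pull the operator out of the $\Gamma$-sum (which is indeed routine: the $\T_u$ with $u\leq w_0^{(r-1)}$ touch only $x_1,\ldots ,x_r$, the scalar $G_1^{(\la )}(\Gamma )$ comes out by linearity, and no appeal to Lemma \ref{lemma:h_exchange} is needed, so the step you single out as the main subtlety is not where the difficulty lies). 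The genuine gap is in your treatment of the components $\Cr _{\mu }$ with $\mu$ \emph{not} strongly dominant. Your parenthetical claim that ``both sides are zero'' there is false for $G_1^{(n,\la )}(\Gamma )$: Lemma \ref{lemma:reparametr_meaningul}(v) only asserts $G^{(n,\la +\rho )}(v_{\ast })=0$ in that case, whereas $G_1^{(n,\la )}(\Gamma )$ is the product \eqref{eq:def_string_GTcoeff} of factors $1$, $h^{\flat }$, $g^{\flat }$ attached to a single row, and for a non-strict $\Gamma$ (a circled entry adjacent to a boxed one) it is typically nonzero — already for $n=1$, $r=2$, $\Gamma_{11}=\Gamma_{12}=\la _2-\la _3+1$ gives $G_1^{(\la )}(\Gamma )=-t$. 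So the right-hand side of \eqref{eq:A_r-1_induction_on_GTside} contains nonzero summands that match nothing on the left, and moreover for such $\mu$ the weight $\mu -\rho _{r-1}$ is not dominant, so $Tok_{r-1}$ simply does not apply to the corresponding inner sum.

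What is needed, and what the paper supplies, is a separate verification that
\begin{equation*}
\left(\sum _{u\leq w_0^{(r-1)}}\T _u\right)\y^{w_0^{(r-1)}(\mu -\rho _{r-1})}=0
\end{equation*}
whenever $\mu _j=\mu _{j+1}$ for some $j$: in the monomial $\y^{w_0^{(r-1)}(\mu -\rho _{r-1})}$ the exponent of $x_{r-j}$ then exceeds that of $x_{r-j+1}$ by exactly one, so $\Dem _{r-j}$ annihilates it by Corollary \ref{cor:annihilation}, hence $\Dem _{w_0^{(r-1)}}$ does by Lemma \ref{lemma:Demazure_annihilation}(ii), and hence so does $\sum _{u\leq w_0^{(r-1)}}\T _u=\Delta _t^{(r-1)}\Dem _{w_0^{(r-1)}}$ by Theorem \ref{THM:T_SUM}. (The matching terms on the left of \eqref{eq:A_r-1_induction_on_GTside} vanish because $G^{(n,\mu )}(v)=0$ for every $v\in \Cr _{\mu }$ when $\mu$ is not strongly dominant.) With this annihilation statement inserted, the non-strict $\Gamma$ are harmless and the rest of your argument closes; without it the identity is not established.
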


Lemma \ref{lem:Bruhat_shorterwords} and Proposition \ref{PROP:INDUCTION_ON_GTSIDE} together produce both sides of $IW_{r,k}$ as the operator in \eqref{eq:operator_r-1} applied to a polynomial. The fact that the ``inputs'' are the same up to annihilation by this operator is the statement that we will call $M_{r,k}.$ The next proposition phrases the statement $M_{r,k}$ explicitly for any $0\leq k<r.$

\begin{prop}\label{prop:Mrk}
 Let $\lambda =(\la _1,\ldots ,\la_r ,\la _{r+1})$ be any dominant weight, $0\leq k<r$ integers. Then we have 
\begin{equation}\label{eq:Mrk}
 (1+\T_r+\T_r\T_{r-1}+\cdots +\T_r\cdots \T_{r-k})\x ^{w_0(\la )}\equiv \sum _{\substack{\Gamma =(\Gamma _{11},\ldots ,\Gamma _{1r})\\ \Gamma \ (\la ,k)-{\mathrm{admissible}}}} G^{\la }_1(\Gamma )\cdot \x^{\wght ^{(\la )}(\Gamma )}
\end{equation}
Here $\equiv $ means that the difference of the left  and right hand side is annihilated by $\Dem _{w_0^{(r-1)}}.$ Call this statement (that \eqref{eq:Mrk} holds for any $\la $ dominant weight) $M_{r,k}.$ 
\end{prop}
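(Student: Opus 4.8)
The plan is to prove $M_{r,k}$ by induction on $k$, with the base case $k=0$ and the inductive step passing from $M_{r,k-1}$ to $M_{r,k}$. Throughout, $\la$ is a fixed dominant weight, $n$ is fixed (suppressed from the notation as in Remark \ref{rmk:suppress_n}), and all congruences $\equiv$ are modulo polynomials annihilated by $\Dem_{w_0^{(r-1)}}$; note that $\Dem_{w_0^{(r-1)}} = \Dem_1 \Dem_2 \Dem_1 \cdots \Dem_{r-1}\cdots \Dem_1$ involves only $\Dem_1,\ldots,\Dem_{r-1}$, so by Lemma \ref{lemma:Demazure_annihilation}(ii) it suffices at each stage to exhibit a single $\Dem_i$ (with $i\leq r-1$) that kills the relevant difference; and by Corollary \ref{cor:annihilation} any monomial $\x^\beta$ with $\beta_i = \beta_{i+1}+1$ is such. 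The left-hand side of \eqref{eq:Mrk} can be written $\sum_{j=0}^{k} (\T_r\T_{r-1}\cdots \T_{r-j})\x^{w_0(\la)}$, with the $j=0$ term interpreted as $\x^{w_0(\la)}$ itself. I will track how each application of a $\T_i$ acts on monomials of the form $\x^{\wght^{(\la)}(\Gamma)}$ using the explicit type $A$ formulas \eqref{def:dl}, \eqref{eq:def:typeAaction}, Lemma \ref{lemma:h_exchange}, Lemma \ref{lem:invariant_monom}, and the Gauss-sum relations of Claim \ref{claim:GaussSumRelationship}.

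First I would handle the base case $k=0$: here the right-hand side of \eqref{eq:Mrk} is the sum over $\la$-admissible $\Gamma$ with $\Gamma_{1,j}=0$ for $j>1$, i.e. over $0\leq \Gamma_{11}\leq \la_1-\la_2+1$, and the left-hand side is $(1+\T_r)\x^{w_0(\la)}$. Since $w_0(\la) = (\la_{r+1},\ldots,\la_2,\la_1)$, the monomial $\x^{w_0(\la)}$ already equals $\x^{\wght^{(\la)}(\Gamma)}$ for $\Gamma_{11}=0$ (check against \eqref{eq:def_stringweight}), so what must be shown is $\T_r(\x^{w_0(\la)}) \equiv \sum_{\Gamma_{11}=1}^{\la_1-\la_2+1} G_1^{\la}(\Gamma)\cdot \x^{\wght^{(\la)}(\Gamma)}$; and this is precisely the rank-one statement $N_{r,0}$, whose proof is the rank-one auxiliary computation of Appendix \ref{app:rankonecomp} (expanding $\T_r$ via \eqref{def:dl} and \eqref{eq:def:typeAaction} and collecting a geometric-type series in $\x^{n\alpha_r}$ whose coefficients, by Claim \ref{claim:GaussSumRelationship}, are exactly the $g^{\flat}$ and $h^{\flat}$ values appearing in \eqref{eq:def_string_GTcoeff}). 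The point here is that $\T_r$ only involves $x_r$ and $x_{r+1}$, so this is genuinely a rank-one phenomenon; the congruence $\equiv$ is in fact equality in this case since only the last two coordinates move.

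For the inductive step, assume $M_{r,k-1}$. Then
\begin{equation*}
(1+\T_r+\cdots+\T_r\cdots\T_{r-k})\x^{w_0(\la)} = (1+\T_r+\cdots+\T_r\cdots\T_{r-k+1})\x^{w_0(\la)} + (\T_r\cdots\T_{r-k})\x^{w_0(\la)},
\end{equation*}
and the first summand is, by $M_{r,k-1}$, congruent to $\sum_{\Gamma\ (\la,k-1)\text{-adm.}} G_1^{\la}(\Gamma)\cdot \x^{\wght^{(\la)}(\Gamma)}$. So it remains to show
\begin{equation*}
(\T_r\T_{r-1}\cdots\T_{r-k})\x^{w_0(\la)} \equiv \sum_{\substack{\Gamma\ (\la,k)\text{-adm.}\\ \Gamma_{1,k}>0}} G_1^{\la}(\Gamma)\cdot \x^{\wght^{(\la)}(\Gamma)},
\end{equation*}
the sum on the right being over those $(\la,k)$-admissible $\Gamma$ that are \emph{not} $(\la,k-1)$-admissible, i.e. have $\Gamma_{1,k}\neq 0$ (equivalently $\Gamma_{1,k}>0$); these are exactly the new terms. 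This is the statement I would call $N_{r,k}$ (Proposition \ref{prop:Nrk}), and it is proved by a separate induction — the technical one of Section \ref{chap:technical_induction}, reducing ultimately to $N_{r,r-1}$ via the commutation/braid identities satisfied by the $\T_i$ \cite[Proposition 7.]{cgp} together with repeated use of Lemma \ref{lemma:h_exchange} and Lemma \ref{lem:invariant_monom} to move the inner factors past the outer operators, peeling off one $g^{\flat}/h^{\flat}$ factor at a time and matching the decoration rules \eqref{eq:decoration_rules_pattern} / \eqref{eq:def_string_GTcoeff}. At each peeling step one must verify that the monomials left over (the "boundary" and "already counted" pieces) carry an exponent pattern $\beta_i = \beta_{i+1}+1$ for some $i\leq r-1$, so that Corollary \ref{cor:annihilation} and Lemma \ref{lemma:Demazure_annihilation}(ii) certify the $\equiv$.

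The main obstacle is the bookkeeping in the inductive step for $N_{r,k}$: computing $(\T_r\cdots\T_{r-k})\x^{w_0(\la)}$ requires applying the metaplectic $\T_i$ — which are \emph{not} multiplicative — to increasingly complicated rational functions, and one must carefully separate the part lying in $\ffield_0$ (on which $\T_i$ acts through the ordinary permutation action, by Lemma \ref{lemma:h_exchange}) from the part genuinely sensitive to the Chinta-Gunnells twist. Keeping the residues of the $\Gamma_{1,j}$ modulo $n$ straight through all the Gauss-sum manipulations of Claim \ref{claim:GaussSumRelationship}, and confirming that every error term really is annihilated by some $\Dem_i$ with $i\leq r-1$ (so that Lemma \ref{lemma:Demazure_annihilation}(ii) applies), is where essentially all the work lies; the skeleton of the double induction (on $k$ here, and the finer one of Section \ref{chap:technical_induction}) is otherwise routine. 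I defer the full details of $N_{r,k}$ and its reduction to $N_{r,r-1}$ to Proposition \ref{prop:Nrk} and Section \ref{chap:technical_induction}, respectively, and the rank-one base to Appendix \ref{app:rankonecomp}.
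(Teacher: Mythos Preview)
Your approach is correct and essentially the same as the paper's: the paper deduces $M_{r,k}$ from the statements $N_{r,j}$ ($0\leq j\leq k$) via the ``straightforward'' Lemma \ref{LEMMA:MRK_AND_NRK_EQUIVALENT}, and your induction on $k$ is just the inductive phrasing of that same summation (the term ``$1$'' matches $\Gamma=0$, and each $(\T_r\cdots\T_{r-j})\x^{w_0(\la)}$ matches, via $N_{r,j}$, the $(\la,j)$-admissible $\Gamma$ with $\Gamma_{1,j+1}\neq 0$). One indexing slip: the new terms in passing from $(\la,k-1)$-admissible to $(\la,k)$-admissible are those with $\Gamma_{1,k+1}\neq 0$ (not $\Gamma_{1,k}\neq 0$), matching the condition in Proposition \ref{prop:Nrk}; also, the rank-one base you cite is handled at the start of Section \ref{chap:technical_induction} rather than in Appendix \ref{app:rankonecomp} (the latter proves Lemma \ref{LEMMA:FANNIHILATED}).
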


The statement $M_{r,k}$ lends itself to an obvious simplification. On the left hand side, there is a sum of $k+1$ strings of Demazure-Lusztig operators. The statement $N_{r,k}$ involves only one of them. 

\begin{prop}\label{prop:Nrk}
 Let $\lambda =(\la _1,\ldots ,\la_r ,\la _{r+1})$ be any dominant weight, $0\leq k<r$ integers. Then we have 
\begin{equation}\label{eq:Nrk}
 (\T_r\cdots \T_{r-k})\x ^{w_0(\la )}\equiv \sum _{\substack{\Gamma =(\Gamma _{11},\ldots ,\Gamma _{1r})\\ \Gamma \ (\la ,k)-{\mathrm{admissible}}\\ \Ga _{1,k+1}\neq 0}} G^{\la }_1(\Gamma )\cdot \x^{\wght ^{(\la )}(\Gamma )}
\end{equation}
Here $\equiv $ means that the difference of the left  and right hand side is annihilated by $\Dem _{w_0^{(r-1)}}.$ Call this statement (that \eqref{eq:Nrk} holds for any $\la $ dominant weight) $N_{r,k}.$
\end{prop}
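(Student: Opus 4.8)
The plan is to reduce the general statement $N_{r,k}$ to the single case $k=r-1$, in which the left‑hand side is $\T_r\T_{r-1}\cdots\T_1\,\x^{w_0(\la)}$. That case is recorded below as Proposition \ref{prop:N_r_r-1} and is proved by the induction of Section \ref{chap:technical_induction}, with the rank‑one input supplied in Appendix \ref{app:rankonecomp}; so I may assume $k<r-1$ and deduce $N_{r,k}$ from the case $k=r-1$ applied in type $A_{k+1}$, by a spectator‑variable argument of the kind already used in Remark \ref{rmk:restriction_on_w}.

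Here is the reduction. The operators $\T_r,\dots,\T_{r-k}$ involve only the variables $x_{r-k},\dots,x_{r+1}$, so they commute with multiplication by $\mm:=x_1^{\la_{r+1}}x_2^{\la_r}\cdots x_{r-k-1}^{\la_{k+3}}$, and $\x^{w_0(\la)}=\mm\cdot\bigl(x_{r-k}^{\la_{k+2}}x_{r-k+1}^{\la_{k+1}}\cdots x_{r+1}^{\la_1}\bigr)$. Relabelling $z_i:=x_{r-k+i-1}$ and $\sigma'_i:=\sigma_{r-k+i-1}$ for $1\le i\le k+1$ turns $\T_r\cdots\T_{r-k}$ into the type $A_{k+1}$ operator $\T'_{k+1}\cdots\T'_1$ acting on $\z^{w_0^{(k+1)}(\tilde\la)}$, where $\tilde\la:=(\la_1,\dots,\la_{k+2})$ is again dominant; this uses that the Chinta--Gunnells action of $\sigma_i$ on a monomial, formula \eqref{eq:def:typeAaction}, only reads off the exponents of $x_i$ and $x_{i+1}$. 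Applying $N_{k+1,k}$ (i.e.\ the $k=r-1$ case of the proposition, in rank $k+1$) gives
\[
\T_r\cdots\T_{r-k}\,\x^{w_0(\la)}=\mm\cdot\sum_{\Gamma'}G^{(\tilde\la)}_1(\Gamma')\,\z^{\wght^{(\tilde\la)}(\Gamma')}\;+\;\mm\cdot g,
\]
the sum being over $\tilde\la$‑admissible $(k+1)$‑tuples $\Gamma'$ with $\Gamma'_{1,k+1}\ne 0$, and $g$ a Laurent polynomial in $z_1,\dots,z_{k+2}$ with $\Dem_{w_0^{(k)}}g=0$, where $w_0^{(k)}$ here is the longest element of the parabolic $\langle\sigma_{r-k},\dots,\sigma_{r-1}\rangle$.

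It then remains to match this with the right‑hand side of \eqref{eq:Nrk} and to control the error term. A $(\la,k)$‑admissible $\Gamma$ with $\Gamma_{1,k+1}\ne 0$ has $\Gamma_{1,j}=0$ for $j>k+1$, so it is determined by $\Gamma':=(\Gamma_{11},\dots,\Gamma_{1,k+1})$; unwinding Definition \ref{def:r-1_lowest_modifications} shows that $\Gamma\mapsto\Gamma'$ is a bijection onto the index set just described, that the suppressed factors $g_{1j}^{(\la)}(\Gamma)$ for $j>k+1$ all equal $1$ (so $G^{(\la)}_1(\Gamma)=G^{(\tilde\la)}_1(\Gamma')$), and that $\x^{\wght^{(\la)}(\Gamma)}=\mm\cdot\z^{\wght^{(\tilde\la)}(\Gamma')}$. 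Thus the two sides of \eqref{eq:Nrk} differ by exactly $\mm\cdot g$, and since $\mm$ involves only $x_1,\dots,x_{r-k-1}$ it commutes with $\Dem_{w_0^{(k)}}$, giving $\Dem_{w_0^{(k)}}(\mm g)=0$; as $w_0^{(k)}\in\langle\sigma_1,\dots,\sigma_{r-1}\rangle$ and $w_0^{(r-1)}$ is the longest element of that group, Lemma \ref{lemma:Demazure_annihilation}(ii) upgrades this to $\Dem_{w_0^{(r-1)}}(\mm g)=0$, which is exactly $N_{r,k}$.

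The only genuine difficulty is the base case $k=r-1$: there the spectator trick is unavailable, and one must describe $\T_r\T_{r-1}\cdots\T_1\,\x^{w_0(\la)}$ modulo $\ker\Dem_{w_0^{(r-1)}}$ by applying $\T_1$ first and then tracking, through $\T_2,\dots,\T_r$, precisely which monomials acquire boxed versus circled status in the sense of Definition \ref{def:r-1_lowest_modifications} (hence which Gauss‑sum factors $g^{\flat},h^{\flat}$ appear). This is the technical induction of Section \ref{chap:technical_induction}, whose base is the rank‑one identity of Appendix \ref{app:rankonecomp}.
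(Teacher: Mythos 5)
Your proposal is correct and follows essentially the same route as the paper: the reduction of $N_{r,k}$ to $N_{k+1,k}$ by treating $x_1,\ldots,x_{r-k-1}$ as spectator variables is exactly the content of Lemma \ref{LEM:INDEX-SHIFT} (including the stronger annihilation by the Demazure operator of the parabolic $\langle\sigma_{r-k},\ldots,\sigma_{r-1}\rangle$), and the base case $N_{k+1,k}$ is Proposition \ref{prop:N_r_r-1}, proved by the technical induction of Section \ref{chap:technical_induction} with the rank-one computation of Appendix \ref{app:rankonecomp}. Your bookkeeping of the bijection $\Gamma\mapsto\Gamma'$, the trivial factors $g_{1j}^{(\la)}(\Gamma)=1$ for $j>k+1$, and the weight monomials is accurate and in fact spells out details the paper leaves as ``straightforward by renaming variables.''
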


\begin{remark}\label{rmk:effective_unnecessary}
 Note that in both $M_{r,k}$ and $N_{r,k},$ $\la $ is not required to be effective, i.e. it may have negative components. We may however always assume that it is effective, replacing $\la $ by $\kappa =(\la _1+K,\ldots ,\la_r+K ,\la _{r+1}+K).$ This can be done because as an operator, multiplication by $(x_1\cdot x_2\cdots x_{r+1})^K$ commutes with $\T _i$ and $\Dem _i$ for any $1\leq i\leq r,$ and 
\begin{equation*}
  \x^{w_0(\kappa )}=  \x^{w_0(\la )}\cdot (x_1\cdot x_2\cdots x_{r+1})^K, \hskip .5 cm \x^{\wght^{\kappa }(\Gamma )}=  \x^{\wght^{\lambda }(\Gamma )}\cdot (x_1\cdot x_2\cdots x_{r+1})^K .
\end{equation*}
\end{remark}

The following lemma is straightforward.

\begin{lemma}\label{LEMMA:MRK_AND_NRK_EQUIVALENT}
 Proposition \ref{prop:Nrk} implies Proposition \ref{prop:Mrk}. That is, we have 
\begin{equation}\label{eq:MrkequivvNrk}
 \forall r,k \hskip .3 cm N_{r,k}\hskip .5 cm \Longrightarrow \hskip .5 cm \forall r,k \hskip .3 cm M_{r,k}.
\end{equation}
\end{lemma}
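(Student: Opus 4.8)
\textbf{Proof proposal for Lemma \ref{LEMMA:MRK_AND_NRK_EQUIVALENT}.}

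The plan is to obtain $M_{r,k}$ from the collection of statements $N_{r,0}, N_{r,1}, \ldots, N_{r,k}$ by summing. First I would observe that the left-hand side of \eqref{eq:Mrk} telescopes as a sum of the left-hand sides of the various $N_{r,j}$: indeed
\begin{equation*}
  1+\T_r+\T_r\T_{r-1}+\cdots +\T_r\cdots \T_{r-k}
  = \sum_{j=-1}^{k} \T_r\T_{r-1}\cdots \T_{r-j},
\end{equation*}
where the $j=-1$ term is interpreted as the identity operator (the empty product). Applying this to $\x^{w_0(\la)}$ and using $N_{r,j}$ for each $0\le j\le k$ (together with the trivial base case $j=-1$, where $\x^{w_0(\la)}$ is already, up to $\Dem_{w_0^{(r-1)}}$-annihilation, the single term $\Gamma=(0,\ldots,0)$ which is exactly the $(\la,-1)$-admissible case — I would check this is the $\Ga_{1,j+1}\neq 0$-free piece), gives
\begin{equation*}
  (1+\T_r+\cdots +\T_r\cdots \T_{r-k})\,\x^{w_0(\la)}
  \equiv \sum_{j=0}^{k}\; \sum_{\substack{\Gamma\ (\la,j)\text{-admissible}\\ \Ga_{1,j+1}\neq 0}} G^{\la}_1(\Gamma)\cdot \x^{\wght^{(\la)}(\Gamma)}
  \;+\; \x^{w_0(\la)},
\end{equation*}
with $\equiv$ meaning equality modulo $\ker \Dem_{w_0^{(r-1)}}$. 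Here I am using that $\Dem_{w_0^{(r-1)}}$ is linear, so a sum of statements each asserting ``LHS${}-{}$RHS${}\in\ker\Dem_{w_0^{(r-1)}}$'' yields the same for the sum.

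The second step is purely combinatorial: I must check that the disjoint union over $j=0,\ldots,k$ of the index sets $\{\Gamma\ (\la,j)\text{-admissible},\ \Ga_{1,j+1}\neq 0\}$, together with the singleton $\Gamma = 0$, is exactly the index set $\{\Gamma\ (\la,k)\text{-admissible}\}$ appearing on the right-hand side of \eqref{eq:Mrk}. By Definition \ref{def:r-1_lowest_modifications}, a tuple $\Gamma$ is $(\la,k)$-admissible iff it is $\la$-admissible and $\Gamma_{1,i}=0$ for all $i>k+1$. Given such a nonzero $\Gamma$, let $j+1$ be the largest index with $\Ga_{1,j+1}\neq 0$; since $\la$-admissibility forces $\Gamma_{1,1}\ge \Gamma_{1,2}\ge\cdots$ to be non-increasing in the relevant sense (via \eqref{eq:def_admissible}, the entries are non-increasing and nonnegative once the later ones vanish), this $j$ satisfies $0\le j\le k$, we get $\Gamma_{1,i}=0$ for $i>j+1$, hence $\Gamma$ is $(\la,j)$-admissible with $\Ga_{1,j+1}\neq 0$; and $j$ is uniquely determined. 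Conversely every $(\la,j)$-admissible $\Gamma$ with $j\le k$ is automatically $(\la,k)$-admissible. The zero tuple accounts for the remaining term $\x^{w_0(\la)} = G^{\la}_1(0)\cdot \x^{\wght^{(\la)}(0)}$ (one checks $G^{\la}_1(0)=1$ since all factors are of the undecorated/$\Gamma_{1,j+1}=\Gamma_{1,j}$ type, and $\wght^{(\la)}(0) = w_0(\la)$ from \eqref{eq:def_stringweight}). Thus the two right-hand sides agree term by term, and $M_{r,k}$ follows.

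The main obstacle — really the only point requiring care rather than bookkeeping — is pinning down the $j=-1$ base term and verifying $G^{\la}_1(0)=1$, $\wght^{(\la)}(0)=w_0(\la)$, so that the ``extra'' identity-operator term on the left matches the $\Gamma=0$ summand on the right; and making sure the partition of the $(\la,k)$-admissible tuples by their last nonzero coordinate is genuinely a partition (no overlaps, nothing missed), which hinges on reading off the admissibility inequalities \eqref{eq:def_admissible} correctly. Everything else is linearity of $\Dem_{w_0^{(r-1)}}$ and the telescoping identity for the operators, which is immediate.
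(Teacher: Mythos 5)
Your argument is correct: the telescoping decomposition of the operator sum, linearity of $\Dem_{w_0^{(r-1)}}$, and the partition of the $(\la,k)$-admissible tuples by the position of the last nonzero entry (with the zero tuple matching the identity-operator term, since $G^{\la}_1(0)=1$ and $\wght^{(\la)}(0)=w_0(\la)$) is exactly the bookkeeping needed. The paper omits the proof entirely, declaring the lemma straightforward, and your write-up supplies precisely the intended argument.
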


As a last step in the sequence of replacing Theorem \ref{THM:MAIN} with simpler statements, we note that in the statement $N_{r,k},$ the parameter $k$ is the interesting one. This is the content of Lemma \ref{LEM:INDEX-SHIFT} below. The proof is straightforward by renaming variables, and keeping in mind that multiplication by $x_i$ commutes with $\T _j$ and $\Dem _j$ if $i\notin \{j,j+1\}.$

\begin{lemma}\label{LEM:INDEX-SHIFT}
 If $N_{k+1,k}$ is true, then $N_{r,k}$ is true for every $r>k.$ In fact, $N_{k+1,k}$ implies a slightly stronger statement than $N_{r,k}:$ the difference of the left-hand side and the right-hand side is annihilated not just by $\Dem _{w_0^{(r-1)}},$ but by the Demazure-operator corresponding to the long word in the group $\langle \sigma _{r-k},\sigma _{r-k+1},\ldots ,\sigma _{r-1}\rangle .$
\end{lemma}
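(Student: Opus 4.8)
The plan is to deduce $N_{r,k}$ for $r>k+1$ from $N_{k+1,k}$ by a change of variables, the point being that the operator $\T_r\cdots\T_{r-k}$ only touches the variables $x_{r-k},\dots,x_{r+1}$. The case $r=k+1$ is exactly the hypothesis, so assume $r\geq k+2$. Since $w_0^{(r)}(\la)=(\la_{r+1},\la_r,\dots,\la_1)$, I would first split
\[
\x^{w_0^{(r)}(\la)}=P\cdot m,\qquad P=x_1^{\la_{r+1}}x_2^{\la_r}\cdots x_{r-k-1}^{\la_{k+3}},\qquad m=x_{r-k}^{\la_{k+2}}x_{r-k+1}^{\la_{k+1}}\cdots x_{r+1}^{\la_1}.
\]
Each of $\T_{r-k},\dots,\T_r$ and each of $\Dem_{r-k},\dots,\Dem_{r-1}$ involves only $x_{r-k},\dots,x_{r+1}$, so multiplication by the monomial $P$ (in the disjoint variables $x_1,\dots,x_{r-k-1}$) commutes with all of them; hence the left side of $N_{r,k}$ equals $P\cdot(\T_r\cdots\T_{r-k})(m)$, and it suffices to analyze $(\T_r\cdots\T_{r-k})(m)$.

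Next I would make the relabelling explicit: identify $\C(x_{r-k},\dots,x_{r+1})$ with the function field of a type $A_{k+1}$ root system on variables $y_1,\dots,y_{k+2}$ via $y_s\leftrightarrow x_{r-k-1+s}$. Because the type $A$ Chinta--Gunnells action \eqref{eq:def:typeAaction} of $\sigma_j$ on a monomial only involves $x_j/x_{j+1}$ and the difference of the exponents of those two variables, and because $n$ and the parameters $g_0,\dots,g_{n-1}$ are fixed throughout, this identification carries $\sigma_{r-k-1+s},\T_{r-k-1+s},\Dem_{r-k-1+s}$ to the corresponding $A_{k+1}$ operators $\sigma_s,\T_s,\Dem_s$ for $1\leq s\leq k+1$; under it $m$ becomes $\y^{w_0^{(k+1)}(\kappa)}$ with $\kappa:=(\la_1,\dots,\la_{k+2})$ (dominant, being a truncation of $\la$), $\T_r\cdots\T_{r-k}$ becomes $\T_{k+1}\cdots\T_1$, and the longest element of $W_J:=\langle\sigma_{r-k},\dots,\sigma_{r-1}\rangle$ corresponds to $w_0^{(k)}$. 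Applying $N_{k+1,k}$ to $\kappa$ then gives that $(\T_{k+1}\cdots\T_1)\y^{w_0^{(k+1)}(\kappa)}$ equals $\sum_{\Gamma'}G^{(\kappa)}_1(\Gamma')\,\y^{\wght^{(\kappa)}(\Gamma')}$, the sum ranging over $\kappa$-admissible tuples $\Gamma'=(\Gamma'_{11},\dots,\Gamma'_{1,k+1})$ with $\Gamma'_{1,k+1}\neq 0$, up to a term annihilated by $\Dem_{w_J}$ (in its $A_{k+1}$ guise).

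It then remains to check that the three ingredients of $N_{r,k}$ for $\la$ transport correctly, which is where the bookkeeping lives. For the index set: since $\la$ is dominant, a tuple $\Gamma=(\Gamma_{11},\dots,\Gamma_{1r})$ is $(\la,k)$-admissible exactly when $\Gamma_{1j}=0$ for $j>k+1$ and $(\Gamma_{11},\dots,\Gamma_{1,k+1})$ satisfies $\Gamma_{1,j+1}\leq\Gamma_{1,j}\leq\Gamma_{1,j+1}+\la_j-\la_{j+1}+1$ for $1\leq j\leq k+1$ (with $\Gamma_{1,k+2}:=0$), which, as $\kappa_j=\la_j$, is precisely the $\kappa$-admissibility of $(\Gamma_{11},\dots,\Gamma_{1,k+1})$; so $\Gamma\mapsto\Gamma':=(\Gamma_{11},\dots,\Gamma_{1,k+1})$ is a bijection of the two index sets respecting $\Gamma_{1,k+1}\neq 0$. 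For the coefficient: for such a $\Gamma$ and any $j>k+1$ one has $\Gamma_{1,j+1}=\Gamma_{1j}=0<\la_j-\la_{j+1}+1$, so $g^{(\la)}_{1j}(\Gamma)=1$ by \eqref{eq:def_string_GTcoeff}; hence $G^{(\la)}_1(\Gamma)=\prod_{j=1}^{k+1}g^{(\la)}_{1j}(\Gamma)=G^{(\kappa)}_1(\Gamma')$, each surviving factor depending only on $\Gamma_{1j},\Gamma_{1,j+1}$ and $\la_j-\la_{j+1}$. For the monomial: unwinding \eqref{eq:def_stringweight} with $\Gamma_{1j}=0$ for $j>k+1$, the first $r-k-1$ coordinates of $\wght^{(\la)}(\Gamma)$ are $\la_{r+1},\la_r,\dots,\la_{k+3}$, i.e. the exponents of $P$, and the remaining $k+2$ coordinates are exactly $\wght^{(\kappa)}(\Gamma')$, so $\x^{\wght^{(\la)}(\Gamma)}=P\cdot\y^{\wght^{(\kappa)}(\Gamma')}$. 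Multiplying the identity of the previous paragraph by $P$, which commutes with $\Dem_{w_J}$, yields that the difference of the two sides of $N_{r,k}$ is annihilated by $\Dem_{w_J}$ — the stronger statement — and this implies $N_{r,k}$ because $W_J$ is a standard parabolic of $\langle\sigma_1,\dots,\sigma_{r-1}\rangle$, so $\ell(w_0^{(r-1)})=\ell(w_0^{(r-1)}w_J^{-1})+\ell(w_J)$ and thus $\Dem_{w_0^{(r-1)}}=\Dem_{w_0^{(r-1)}w_J^{-1}}\circ\Dem_{w_J}$, just as in the proof of Lemma \ref{lemma:Demazure_annihilation}(ii).

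I do not expect any serious difficulty, since the whole argument is a relabelling. The one place that wants honest (if routine) care is checking that the identification with the $A_{k+1}$-subsystem is legitimate on the nose — i.e. that the Chinta--Gunnells action and the choice of Gauss sums restrict cleanly, which holds precisely because the action of $\sigma_j$ is ``local'' in the two variables it permutes — together with the small observation that the extra Gelfand--Tsetlin factors $g^{(\la)}_{1j}(\Gamma)$ for $j>k+1$ are all $1$, so that $G^{(\la)}_1(\Gamma)=G^{(\kappa)}_1(\Gamma')$ holds with no correction term.
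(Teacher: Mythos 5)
Your proposal is correct and is exactly the argument the paper intends: the paper dismisses this lemma with one sentence ("straightforward by renaming variables, keeping in mind that multiplication by $x_i$ commutes with $\T_j$ and $\Dem_j$ if $i\notin\{j,j+1\}$"), and your write-up supplies precisely the missing bookkeeping — the factorization $\x^{w_0(\la)}=P\cdot m$, the locality of the Chinta--Gunnells action in $x_j/x_{j+1}$ justifying both the commutation and the relabelling, the matching of admissible tuples, coefficients and weights, and the parabolic factorization $\Dem_{w_0^{(r-1)}}=\Dem_{w_0^{(r-1)}w_J}\circ\Dem_{w_J}$ giving the stronger annihilation statement. No gaps.
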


The statement $N_{k+1,k}$ will be proved in Section \ref{chap:technical_induction} as Proposition \ref{prop:N_r_r-1}. We are now ready to give the proof of Theorem \ref{THM:MAIN}.

\subsection{The proof of Theorem \ref{THM:MAIN}}\label{subs:proof_summary}

By Proposition \ref{prop:N_r_r-1} (proved in Section \ref{chap:technical_induction}), we have that $N_{k+1,k}$ holds for any nonnegative $k.$ By Lemma \ref{LEM:INDEX-SHIFT}, this implies that $N_{r,k}$ holds for any pair of integers $0\leq k<r,$ i.e. Proposition \ref{prop:Nrk} is true. By Lemma \ref{LEMMA:MRK_AND_NRK_EQUIVALENT}, this proves Proposition \ref{prop:Mrk}, i. e. $M_{r,k}$ for any pair of integers $0\leq k<r.$ 

We prove $IW_{r,k}$ for any pair of integers $0\leq k<r$ by induction on $r.$ 

To start, notice that both $M_{1,0}$ and $IW_{1,0}$ state that if $\la _{1}\geq \la _{2},$ then 
\begin{equation}\label{eq:IW10orM10}
 \begin{split}
  (1+T_1)x_1^{\la _2}x_2^{\la _1}= & \sum _{\Gamma _{11}=0}^{\la_1-\la _2+1}G^{(\la )}_1(\Gamma _{11})\cdot x_1^{\la _2+\Gamma _{11}}x_2^{\la_2-\Gamma _{11}}\\
& =\frac{1}{x_2}\sum _{\IP } G(\IP )\cdot \x^{wt(\IP )},
 \end{split}
\end{equation}
where the sum is over all Gelfand-Tsetlin patterns $\IP $ of the form
$$\IP =\left(\begin{array}{ccc}
                                                              \la _1+1& & \la _2\\
                                                               & \la _2+\Gamma _{11}& 
                                                            \end{array}
\right).$$
Thus $IW_{1,0}$ is the same as $M_{1,0},$ and in particular, $IW_{r,k}$ is true if $r=1.$

Now let $r>1,$ $0\leq k<r$ and assume that $IW_{r-1,r-2}=Tok_{r-1}$ is true. We know $M_{r,k}$ holds, hence 
\begin{equation}\label{eq:Mrk_ininduction}
 (1+\T_r+\T_r\T_{r-1}+\cdots +\T_r\cdots \T_{r-k})\x ^{w_0(\la )}\equiv \sum _{\substack{\Gamma =(\Gamma _{11},\ldots ,\Gamma _{1r})\\ \Gamma \ (\la ,k)-{\mathrm{admissible}}}} G^{\la }_1(\Gamma )\cdot \x^{\wght ^{(\la )}(\Gamma )},
\end{equation}
i.e. the difference of the two sides of \eqref{eq:Mrk_ininduction} is annihilated by $\Dem _{w_0^{(r-1)}}.$ By Theorem \ref{THM:LONG_WORD}, the difference is then also annihilated by 
$$\Delta _t^{(r-1)}\cdot \Dem _{w_0^{(r-1)}} =\sum _{u\leq w_0^{(r-1)}}\T _u.$$
That is, we have 
\begin{equation}\label{eq:r-1op_applied}
\begin{split}
  \left(\sum _{u\leq w_0^{(r-1)}}\T _u\right)(1+\T_r+\T_r\T_{r-1}+\cdots & +\T_r\cdots \T_{r-k})\x ^{w_0(\la )} \\
& =\left(\sum _{u\leq w_0^{(r-1)}}\T _u\right)\sum _{\substack{\Gamma =(\Gamma _{11},\ldots ,\Gamma _{1r})\\ \Gamma \ (\la ,k)-{\mathrm{admissible}}}} G^{\la }_1(\Gamma )\cdot \x^{\wght ^{(\la )}(\Gamma )}.
\end{split}
\end{equation}
Rewriting the left hand side of \eqref{eq:r-1op_applied} by Lemma \ref{lem:Bruhat_shorterwords}, and the right hand side by Proposition \ref{PROP:INDUCTION_ON_GTSIDE} we arrive at 
\begin{equation}\label{eq:r-1op_applied_rewritten}
  \left(\sum _{u\leq w_0^{(r)}}\T _u\right)\x ^{w_0(\la )}  =\x^{-w_0(\rho )}\cdot \sum _{v\in \Cr_{\la +\rho }^{(w)}} G^{(n,\la+\rho )}(v)\cdot \x^{\wght(v)}.
\end{equation}
This is exactly the statement $IW_{r,k}.$

Thus $IW_{r,k}$ is true for any pair of integers $0\leq k<r.$ By Remark \ref{rmk:restriction_on_w}, this completes the proof of Theorem \ref{THM:MAIN}. \qed

\subsection{The proof of Proposition \ref{PROP:INDUCTION_ON_GTSIDE}}\label{subs:proof_INDUCTION_ON_GTSIDE}

We prove that if $Tok_{r-1}$ (equivalently, $IW_{r-1,r-2}$) holds, then 
\begin{equation}\label{eq:goal_induction:_on_gtside}
 \x^{-w_0(\rho )}\cdot \sum _{v\in \Cr_{\la +\rho }^{(w)}} G^{(n,\la+\rho )}(v)\cdot \x^{\wght(v)}=\left(\sum _{u\leq w_0^{(r-1)}} \T _u \right) \sum _{\substack{\Gamma =(\Gamma _{11},\ldots ,\Gamma _{1r})\\ \Gamma \ (\la ,k)-{\mathrm{admissible}}}} G^{\la }_1(\Gamma )\cdot \x^{\wght ^{(\la )}(\Gamma )}.
\end{equation}

By Proposition \ref{prop:GTsumbreakupDem}, we have 
\begin{equation}\label{eq:IW_rewrite_1}
\begin{split}
  \x^{-w_0(\rho )}\cdot \sum _{v\in \Cr_{\la +\rho }^{(w)}} G^{(n,\la+\rho )}(v)\cdot \x^{\wght(v)}= & \sum _{\mu } G^{(n,\la +\rho )}(v_{\ast })\cdot x_{r+1}^{d(\la +\rho )-d(\mu )-r}\cdot \\
 & \cdot \left(\y^{-w_0^{(r-1)}(\rho _{r-1})}\cdot \sum _{v\in \Cr_{\mu }} G^{(n,\mu )}(v)\cdot \y^{\wght _{\mu }(v)}\right).
\end{split}
\end{equation}
Here the sum is over all $\mu =(\mu _{1},\mu _2,\ldots ,\mu _r)$ that interleave with $\la +\rho $ and $\mu _j=\la _{j+1}+r-j$ for $j>k+1.$ We claim that 
\begin{equation}\label{eq:tok_r-1_loose}
 \y^{-w_0^{(r-1)}(\rho _{r-1})}\cdot \sum _{v\in \Cr_{\mu }} G^{(n,\mu )}(v)\cdot \y^{\wght _{\mu }(v)}=\left(\sum _{u\leq w_0^{(r-1)}} \T _u \right) \y^{w_0^{(r-1)}(\mu -\rho _{r-1})}.
\end{equation}

Since $\mu $ interleaves with $\la +\rho ,$ it is dominant and effective. We distinguish between two cases according to whether $\mu $ is strongly dominant or not. 

If $\mu$ is strongly dominant, then $\mu -\rho _{r-1}$ is dominant and effective. In this case \eqref{eq:tok_r-1_loose} is the statement $Tok_{r-1}$ ($IW_{r-1,r-2}$) for the weight $\mu -\rho _{r-1},$ hence it is true by the assumption that $Tok _{r-1}$ holds.

Suppose now that $\mu $ is not strongly dominant, i.e. we have $\mu _j=\mu _{j+1}$ for some $1\leq j\leq r-1.$ We show that then both sides of \eqref{eq:tok_r-1_loose} are zero. The left hand side is zero by Remark \ref{rmk:nonstrict_mu}. We show that the operator on the right hand side of \eqref{eq:tok_r-1_loose} annihilates the monomial $\y^{w_0^{(r-1)}(\mu -\rho _{r-1})}.$ By Theorem \ref{THM:LONG_WORD}, this operator is $\Delta _{t}^{(r-1)}\cdot \Dem _{w_0^{(r-1)}}.$ Since $w_0^{(r-1)}$ is the long element in the Weyl group generated by $\sigma _1,\ldots ,\sigma _{r-1},$ by Lemma \ref{lemma:Demazure_annihilation} it suffices to prove $\Dem _i\y^{w_0^{(r-1)}(\mu -\rho _{r-1})}=0$ for at least one index $1\leq i=r-j\leq r-1.$ Let $i=r+1-(j+1)=r-j,$ $i+1=r-j+1.$ Then in the monomial $\y^{w_0^{(r-1)}(\mu -\rho _{r-1})},$ $x_i$ appears with exponent $\mu _{j+1}-r+j+1=\mu _j-r+j+1,$ while $x_{i+1}$ appears with exponent $\mu _{j}-r+j.$ Thus, by Corollary \ref{cor:annihilation}, indeed $\Dem _i\y^{w_0^{(r-1)}(\mu -\rho _{r-1})}=0.$ 
Thus the right hand side of \eqref{eq:tok_r-1_loose} is indeed zero if $\mu $ is not strongly dominant.   

Having established \eqref{eq:tok_r-1_loose}, we have that 
\begin{equation}\label{eq:IW_rewrite_2}
\begin{split}
  \x^{-w_0(\rho )}\cdot \sum _{v\in \Cr_{\la +\rho }^{(w)}} G^{(n,\la+\rho )}(v)\cdot \x^{\wght(v)}= & \sum _{\mu } G^{(n,\la +\rho )}(v_{\ast })\cdot x_{r+1}^{d(\la +\rho )-d(\mu )-r}\cdot \\
 & \cdot \left(\sum _{u\leq w_0^{(r-1)}} \T _u \right) \y^{w_0^{(r-1)}(\mu -\rho _{r-1})}\\
 = & \left(\sum _{u\leq w_0^{(r-1)}} \T _u \right) \\
  & \sum _{\mu } G^{(n,\la +\rho )}(v_{\ast })\cdot x_{r+1}^{d(\la +\rho )-d(\mu )-r}\cdot \y^{w_0^{(r-1)}(\mu -\rho _{r-1})}.
\end{split}
\end{equation}
Here the second equation is true because multiplication by $x_{r+1}$ commutes with $\T _u$ for every $u\leq w_0^{(r-1)}.$ By Lemma \ref{lemma:reparametr_meaningul}, we have that if $\Ga =\Ga (\la ,\mu)=(\Ga _1,\ldots ,\Ga _r)$ as in Definition \ref{def:r-1_lowest_modifications}, then $\Ga $ is $(\la ,k)$ admissible exactly if $\mu$ appears in the summation in \eqref{eq:IW_rewrite_1}, furthermore
$$G^{(n,\la +\rho )}(v_{\ast })=\left\lbrace\begin{array}{ll}
G^{(n,\lambda )}_1(\Gamma ) & \text{ if $\mu $ is strongly dominant;}\\
0 & \text{ otherwise ;} 
\end{array}\right.$$
and 
$$\y^{w_0^{(r-1)}(\mu -\rho _{r-1})}\cdot x_{r+1}^{d(\la +\rho )-d(\mu )-r}=\x ^{\wght ^{(\la )}(\Ga )} .$$
Thus we may rewrite \eqref{eq:IW_rewrite_2} further as 
\begin{equation}\label{eq:IW_rewrite_4}
 \x^{-w_0(\rho )}\cdot \sum _{v\in \Cr_{\la +\rho }^{(w)}} G^{(n,\la+\rho )}(v)\cdot \x^{\wght(v)}=  \left(\sum _{u\leq w_0^{(r-1)}} \T _u \right) \sum _{\substack{\Gamma =(\Gamma _{11},\ldots ,\Gamma _{1r})\\ \Gamma \ (\la ,k)-{\mathrm{admissible}}}} G^{(n,\la )}_1(\Ga )\cdot \x ^{\wght ^{(\la )}(\Ga )}.
\end{equation}
This is exactly \eqref{eq:goal_induction:_on_gtside}; the proof of Proposition \ref{PROP:INDUCTION_ON_GTSIDE} is complete. 

\section{Proof of the statement $N_{r,r-1}$}\label{chap:technical_induction}
%\blue{This is Chapter 7 of the thesis. I will shorten that computation if I can. That would be great.}

In Section \ref{chap:equivalent_reformulations}, the proof of Theorem \ref{THM:MAIN} and Theorem \ref{thm:metaplectic_Tokuyama} was reduced to describing the action of the string of Demazure-Lusztig operators $\T_r\ldots \T_1$ on a monomial, i.e. the statement $N_{r,r-1}.$ This section consists of the proof of the statement $N_{r,r-1}.$ We recall the statement in Proposition \ref{prop:N_r_r-1} below.

\begin{prop}\label{prop:N_r_r-1}
Let $\lambda =(\la _1,\ldots ,\la_r ,\la _{r+1})$ be any dominant weight. Then we have 
\begin{equation}\label{eq:Nrr-1}
 (\T_r\cdots \T_1)\x ^{w_0(\la )}\equiv \sum _{\substack{\Gamma =(\Gamma _{11},\ldots ,\Gamma _{1r})\\ \Gamma \ \la -{\mathrm{admissible}}\\ \Ga _{1,r}\neq 0}} G^{\la }_1(\Gamma )\cdot \x^{\wght ^{(\la )}(\Gamma )}.
\end{equation}
Here $\equiv $ means that the difference of the left  and right hand side is annihilated by $\Dem _{w_0^{(r-1)}}.$
\end{prop}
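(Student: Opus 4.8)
The statement $N_{r,r-1}$ describes the action of the single string of operators $\T_r\cdots \T_1$ on the monomial $\x^{w_0(\la)}$, up to annihilation by $\Dem_{w_0^{(r-1)}}$. By Lemma \ref{LEM:INDEX-SHIFT} it suffices to treat the case $r=k+1$, i.e.\ to prove $N_{k+1,k}$; equivalently, I may rename variables so that the operator string is $\T_{k+1}\cdots \T_1$ acting on $k+2$ variables $x_1,\ldots,x_{k+2}$. The plan is induction on $r$ (the number of operators), peeling off one Demazure--Lusztig operator at a time. The base case $r=1$ is the rank-one computation deferred to Appendix \ref{app:rankonecomp}: it analyzes $\T_1(\x^{w_0(\la)})$ directly from the defining formula \eqref{def:dl} together with the explicit type $A$ Chinta--Gunnells action \eqref{eq:def:typeAaction}, splitting into residue classes of the relevant exponent modulo $n$ and matching each term against the single-variable $\Gamma$-sum using Claim \ref{claim:GaussSumRelationship}.

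\textbf{The inductive step.} Assume $N_{r-1,r-2}$ (in the index-shifted strong form guaranteed by Lemma \ref{LEM:INDEX-SHIFT}, with annihilation by a smaller long-word Demazure operator). I would write $\T_r\cdots\T_1 = \T_r\cdot(\T_{r-1}\cdots\T_1)$ and apply the inductive hypothesis to $(\T_{r-1}\cdots\T_1)\x^{w_0(\la)}$, which rewrites it (modulo the appropriate $\Dem$-annihilation) as a $\Gamma$-sum over tuples $(\Gamma_{22},\ldots,\Gamma_{2r})$ that are admissible with $\Gamma_{2,r}\neq 0$ — the ``second row'' of the eventual $\Gamma$-array. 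Then I must apply $\T_r$ to each monomial $\x^{\wght(\Gamma)}$ appearing and collect. Here the key tools are: Lemma \ref{lemma:h_exchange} (to pull out factors whose exponents are divisible by $n$ before applying the metaplectic action), Lemma \ref{lem:invariant_monom} and Corollary \ref{cor:annihilation} (to recognize terms annihilated by $\Dem_{r-1}$, which may therefore be discarded modulo $\equiv$), and again the explicit formula \eqref{eq:def:typeAaction} broken into residue classes mod $n$. The combinatorial bookkeeping is that applying $\T_r$ introduces a new summation index $\Gamma_{1,r}$ running over the admissible range $0<\Gamma_{1,r}\le \la_r-\la_{r+1}+1$ (the constraint $\Gamma_{1,r}\neq 0$ is exactly what the single operator $\T_r$ — as opposed to $1+\T_r$ — produces), and the Gauss-sum weights multiply correctly: the new factor $g_{1,r}^{(\la)}(\Gamma)$ is produced by the $(1-v)$ versus $-v\cdot g_\bullet$ dichotomy in \eqref{eq:def:typeAaction}, matching the undecorated/boxed cases of \eqref{eq:def_string_GTcoeff} via Claim \ref{claim:GaussSumRelationship}, while the boundary term $\Gamma_{1,r}=$ max value interacts with the interleaving constraint on $\Gamma_{1,r-1}$.

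\textbf{The main obstacle.} The hard part will be the careful matching of the decoration cases after applying $\T_r$: showing that the terms surviving modulo $\Dem_{w_0^{(r-1)}}$-annihilation reorganize precisely into the claimed $\Gamma$-sum with the strictness/admissibility conditions, rather than producing spurious extra terms or missing boundary contributions. Concretely, one must check that (i) the ``circled'' contributions (which would give coefficient $h^\flat$) either do not arise for the single operator string or are exactly absorbed, (ii) the exponent shifts $x^{(1-n)\alpha_r}$ and $x^{-r_n(\cdots)\alpha_r}$ in \eqref{eq:def:typeAaction} combine with the monomial $\x^{\wght(\Gamma)}$ to land on the correct $\wght^{(\la)}$ of the enlarged tuple, and (iii) terms of the form $\x^\beta$ with $\beta_{r-1}=\beta_r+1$ get correctly dropped via Corollary \ref{cor:annihilation}, which is what justifies working modulo $\equiv$ at each stage and keeps the induction closed. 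I expect this to require a somewhat delicate case split on whether $\Gamma_{2,r}$ (the innermost index from the inductive hypothesis) attains its extremal value, since that controls whether the relevant exponent difference seen by $\sigma_r$ is in the ``boxed'' range. The denominator $1-v\x^{n\alpha_r}$ appearing in both $\T_r$ and \eqref{eq:def:typeAaction} should cancel cleanly against $\ttDelta$-type factors, but verifying this cancellation in the presence of the metaplectic twist is where most of the technical labor lies.
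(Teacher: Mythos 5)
Your overall strategy coincides with the paper's: induction on $r$, writing $\T_r\cdots \T_1=\T_r\cdot(\T_{r-1}\cdots \T_1)$, applying the inductive hypothesis to the inner string (which, after factoring out $x_{r+1}^{\la _1}$, is exactly $N_{r-1,r-2}$ for $\mu =(\la _2,\ldots ,\la _{r+1})$), and a rank-one base case. Two points need correction, though. First, the bookkeeping is inverted: $\T_r$ involves only $x_r$ and $x_{r+1}$, and by \eqref{eq:def_stringweight} it is $\Gamma _{11}$ (not $\Gamma _{1r}$) that governs those exponents, so the index created by the outer operator $\T_r$ is $\Gamma _{11}$, with range tied to $\la _1-\la _2$; the condition $\Gamma _{1r}\neq 0$ is inherited from the inductive hypothesis (ultimately from the innermost operator $\T_1$ in the base case), not produced by $\T_r$.

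Second, and more seriously, the step ``applying $\T_r$ introduces a new summation index running over the admissible range'' is exactly where the argument does not close as you describe it. Applying $\T_r$ to $x_r^{\la _2-\Gamma _{12}}x_{r+1}^{\la _1}$ via $N_{r,0}$ produces $\Gamma _{11}$ ranging over $1\leq \Gamma _{11}\leq \la _1-\la _2+\Gamma _{12}+1$ with coefficient $g_{11}^{(\la _1,\la _2-\Gamma _{12})}(\Gamma _{11})$, whereas admissibility of the full tuple requires $\Gamma _{11}\geq \Gamma _{12}$ with coefficient $g_{11}^{(\la )}(\Gamma ')$. The discrepancy, measured by $\delta (\Gamma _{11},\Gamma _{12})$ in \eqref{eq:coeffdiff_1}, does not vanish term by term; it assembles into the leftover polynomials $F_{\mu ,a}$ of \eqref{eq:Fdef}, and the real content of the induction step is the statement ${\mathbf{F_{r}}}$ that each $F_{\mu ,a}$ is annihilated by $\Dem _{w_0^{(r-1)}}$. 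Only when $n\nmid a$ is this handled by the term-by-term Corollary \ref{cor:annihilation} argument you anticipate; when $n\mid a$ one must split $F_{\mu ,a}$ into a piece killed by $\Dem _{w_0^{(r-2)}}$ via the lower-rank statement ${\mathbf{F_{r-1}}}$ (itself supplied by the inductive hypothesis through Lemma \ref{lem:induction_step_same_as_F_annih}) and a rank-one piece $f_{a,\Ga _0}$ killed by $\Dem _{r-1}$ (Lemma \ref{LEMMA:FANNIHILATED}). Your proposal contains no mechanism for this nested induction, which occupies most of Section \ref{chap:technical_induction} and the appendix.
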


Recall that the relevant notation has been introduced in section \ref{subs:new_parametrization}. In this section, we use $v$ for denoting $v=t^n=q^{-1}.$ %\red{Is this remark necessary here?}

Let us abbreviate both sides of the equation \eqref{eq:Nrr-1}: 
\begin{equation}\label{eq:LHS_and_RHS_shorthand}
 \Le _r^{(\la )}(\x):=(\T_r\cdots \T_1)\x ^{w_0(\la )}\hskip .5 cm \text{and}\hskip .5 cm \Ri _r^{\la }(\x):=\sum _{\substack{\Gamma =(\Gamma _{11},\ldots ,\Gamma _{1r})\\ \Gamma \ \la -{\mathrm{admissible}}\\ \Ga _{1,r}\neq 0}} G^{\la }_1(\Gamma )\cdot \x^{\wght ^{(\la )}(\Gamma )}.
\end{equation}

The proof is by induction on $r.$ The base case is fairly straightforward using the definitions and Claim \ref{claim:GaussSumRelationship}. We omit this rank one computation and contend that both sides turn out to be equal to the following expression:
\begin{equation*}\label{eq:N_{1,0}_result}
\Le _1^{(\la )}(\x)=\Ri _1^{\la }(\x)  =\frac{(1-v)\cdot \x^{n\alpha }}{1-\x^{n\alpha }}\left(x_1^{\la _2}x_2^{\la _1}-\x^{-r_{n}\left(\la _1-\la _2\right)\alpha }\cdot x_1^{\la _1}x_2^{\la _2} \right)+v\cdot g_{1+\la _1-\la _2}\cdot x_1^{\la _1+1}x_2^{\la _2-1}.
\end{equation*} 

For the induction step, we assume that $N_{k+1,k}$ holds for $k<r-1.$ The goal is to prove 
\begin{equation}\label{eq:induction_step_goal}
 \Le _r^{(\la )}(\x )\equiv \Ri _r^{(\la )}(x),\text{ i.e. }\Dem _{w_0^{(r-1)}}\left(\Le _{r}^{(\la )}(\x )-\Ri _{r}^{(\la )}(\x )\right)=0.
\end{equation}

\begin{claim}\label{claim:rhs_step_enough}
 It suffices to show that if $N_{k+1,k}$ holds for $k<r-1,$ then
\begin{equation}\label{eq:rhs_step_enough}
 \Ri _r^{(\la )}(x)\equiv \T _r \left(x_{r+1}^{\la _1}\cdot \Ri _{r-1}^{(\mu )}(\y )\right),\text{ i.e. }\Dem _{w_0^{(r-1)}}\left(\T _r \left(x_{r+1}^{\la _1}\cdot \Ri _{r-1}^{(\mu )}(\y )\right)-\Ri _{r}^{(\la )}(\x )\right)=0.
\end{equation}
\end{claim}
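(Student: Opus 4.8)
The plan is to treat Claim~\ref{claim:rhs_step_enough} as a purely formal reduction: assuming the induction hypothesis $N_{r-1,r-2}$ together with the as-yet-unproved identity \eqref{eq:rhs_step_enough}, I will deduce the induction-step goal \eqref{eq:induction_step_goal}. The first move is to isolate the spectator variable $x_{r+1}$. Setting $\mu=(\la_2,\ldots,\la_{r+1})$ — which is dominant because $\la$ is — and computing $w_0^{(r)}(\la)=(\la_{r+1},\ldots,\la_1)$, one sees
\begin{equation*}
  \x^{w_0^{(r)}(\la)} = x_{r+1}^{\la_1}\cdot \y^{w_0^{(r-1)}(\mu)},
\end{equation*}
since $w_0^{(r-1)}(\mu)=(\mu_r,\ldots,\mu_1)=(\la_{r+1},\ldots,\la_2)$. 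This identifies the $\mu$ occurring in \eqref{eq:rhs_step_enough}.

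Next I would peel off the last operator and slide the others past the spectator monomial. Factoring $\T_r\cdots\T_1=\T_r\circ(\T_{r-1}\cdots\T_1)$ and using that $\T_1,\ldots,\T_{r-1}$ commute with multiplication by $x_{r+1}^{\la_1}$ (Remark~\ref{rmk:restriction_on_w}), the definition \eqref{eq:LHS_and_RHS_shorthand} gives
\begin{equation*}
  \Le_r^{(\la)}(\x)=\T_r\bigl(x_{r+1}^{\la_1}\cdot(\T_{r-1}\cdots\T_1)\y^{w_0^{(r-1)}(\mu)}\bigr)=\T_r\bigl(x_{r+1}^{\la_1}\cdot \Le_{r-1}^{(\mu)}(\y)\bigr).
\end{equation*}
The induction hypothesis $N_{r-1,r-2}$ then lets me replace $\Le_{r-1}^{(\mu)}(\y)$ by $\Ri_{r-1}^{(\mu)}(\y)$ at the cost of an error $D:=\Le_{r-1}^{(\mu)}(\y)-\Ri_{r-1}^{(\mu)}(\y)$, a rational function in $\y=(x_1,\ldots,x_r)$ satisfying $\Dem_{w_0^{(r-2)}}(D)=0$. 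Hence
\begin{equation*}
  \Le_r^{(\la)}(\x)-\T_r\bigl(x_{r+1}^{\la_1}\Ri_{r-1}^{(\mu)}(\y)\bigr)=\T_r\bigl(x_{r+1}^{\la_1}D\bigr).
\end{equation*}

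The heart of the reduction — the step I expect to be the main obstacle — is to show this error term is annihilated by $\Dem_{w_0^{(r-1)}}$, i.e.\ is negligible for $\equiv$. Here I would exploit a separation of variables: the reflections $\sigma_1,\ldots,\sigma_{r-2}$ building $\Dem_{w_0^{(r-2)}}$ act only on $x_1,\ldots,x_{r-1}$, whereas $\T_r$ and multiplication by $x_{r+1}^{\la_1}$ involve only $x_r,x_{r+1}$. Since the Chinta--Gunnells action is a genuine $W$-action, $\sigma_i$ commutes with $\sigma_r$ for $i\le r-2$; and since the explicit formula \eqref{eq:def:typeAaction} for $\sigma_i$ ($i\le r-2$) references only $x_i,x_{i+1}$ and the difference $\la_i-\la_{i+1}$, each such $\sigma_i$ commutes with multiplication by monomials in $x_r,x_{r+1}$. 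Thus $\Dem_{w_0^{(r-2)}}$ commutes with both $\T_r$ and $x_{r+1}^{\la_1}$, so that
\begin{equation*}
  \Dem_{w_0^{(r-2)}}\bigl(\T_r(x_{r+1}^{\la_1}D)\bigr)=\T_r\bigl(x_{r+1}^{\la_1}\Dem_{w_0^{(r-2)}}(D)\bigr)=0.
\end{equation*}
Applying Lemma~\ref{lemma:Demazure_annihilation}(ii) inside the Weyl group $\langle\sigma_1,\ldots,\sigma_{r-1}\rangle$, whose long element is $w_0^{(r-1)}$, then upgrades this to $\Dem_{w_0^{(r-1)}}\bigl(\T_r(x_{r+1}^{\la_1}D)\bigr)=0$, as needed.

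Finally I would assemble the pieces. The last two displays show $\Le_r^{(\la)}(\x)\equiv\T_r\bigl(x_{r+1}^{\la_1}\Ri_{r-1}^{(\mu)}(\y)\bigr)$, while the assumed identity \eqref{eq:rhs_step_enough} gives $\T_r\bigl(x_{r+1}^{\la_1}\Ri_{r-1}^{(\mu)}(\y)\bigr)\equiv\Ri_r^{(\la)}(\x)$; chaining the two congruences (both modulo $\ker\Dem_{w_0^{(r-1)}}$) yields $\Le_r^{(\la)}(\x)\equiv\Ri_r^{(\la)}(\x)$, which is exactly \eqref{eq:induction_step_goal}. The only genuinely delicate points are the two commutation facts above — the metaplectic commutation of the lower $\sigma_i$ with the $x_r,x_{r+1}$-monomials, and the promotion of annihilation by $\Dem_{w_0^{(r-2)}}$ to annihilation by $\Dem_{w_0^{(r-1)}}$ via Lemma~\ref{lemma:Demazure_annihilation}(ii).
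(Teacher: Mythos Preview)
Your argument is correct and follows exactly the approach the paper sketches: you use that multiplication by $x_{r+1}^{\la_1}$ commutes with $\T_1,\ldots,\T_{r-1}$, that both $\T_r$ and multiplication by $x_{r+1}^{\la_1}$ commute with $\Dem_{w_0^{(r-2)}}$, and then invoke Lemma~\ref{lemma:Demazure_annihilation}(ii) to promote annihilation by $\Dem_{w_0^{(r-2)}}$ to annihilation by $\Dem_{w_0^{(r-1)}}$. The paper merely lists these three ingredients; you have written out the chain of congruences explicitly.
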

\begin{proof}
The proof is straightforward using the fact that multiplication by $x_{r+1}^{\la _1}$ commutes with the operators $T_1,\ldots ,T_{r-1},$ multiplication by $x_{r+1}^{\la _1}$ and $\T _r$ both commute with $\Dem _{w_0^{(r-2)}},$ and Lemma \ref{lemma:Demazure_annihilation}.
\end{proof}

The remainder of this section will consist of proving \eqref{eq:rhs_step_enough} from the assumption that $N_{k+1,k}$ holds for $k<r-1.$ The argument will proceed as follows. After introducing some convenient notation (in \ref{sect:conventions}), we shall simplify the induction step (in \ref{sect:simplifying_step}). Computing $\T _r \left(x_{r+1}^{\la _1}\cdot \Ri _{r-1}^{(\mu )}(\y )\right)$ directly, and comparing the result with $\Ri _{r}^{(\la )}(\x ),$ we find that there is a polynomial ``left over''. The fact that this polynomial is annihilated by $\Dem _{w_0^{(r-1)}}$ is called ${\mathbf{F_{r}}}$ (Proposition \ref{PROP:F_ANNIH}). In fact, the computation shows that assuming $N_{r-1,r-2},$ the statements  ${\mathbf{F_{r}}}$ and $N_{r,r-1}$ are equivalent (Lemma \ref{lem:induction_step_same_as_F_annih}). Thus it remains to prove Proposition \ref{PROP:F_ANNIH} by showing the statement ${\mathbf{F_{r}}};$ this is done in section \ref{sect:proof:PROP:F_ANNIH}. This will also (partially) be a proof by induction: by Lemma \ref{lem:induction_step_same_as_F_annih}, the assumption of $N_{k+1,k}$ for $k<r-1$ implies in particular that ${\mathbf{F_{j}}}$ holds for $j<r.$

\subsection{Notation and conventions}\label{sect:conventions}

%In addition  to the notation introduced in Definition \ref{def:r-1_lowest_modifications}, 
The following conventions allow us to relate the statements $N_{r,r-1},$ $N_{r-1,r-2}$ and $N_{r-2,r-3}$ with more transparency.
Let weights be denoted by $\la  = (\la _1,\la _2,\la _3,\ldots ,\la _{r+1}),$ $\mu  = (\la _2,\ldots ,\la _{r+1}),$ and $\nu  = (\la _3,\ldots ,\la _{r+1});$ the variables with $\x =(x_1,\ldots ,x_{r-1},x_r,x_{r+1}),$ $\y =(x_1,\ldots ,x_r)$ and $\z =(x_1,\ldots ,x_{r-1}).$ Furthermore, let $\Ga ' =(\Ga _{11},\Ga _{12},\Ga _{13},\ldots ,\Ga _{1r}),$ $\Ga =(\Ga _{12},\Ga _{13},\ldots ,\Ga _{1r})$ and $\Ga _0 =(\Ga _{13},\ldots ,\Ga _{1r}).$
With this notation, we have that %\red{Check that these equations are all referred to by nnumber, if not, maybe remove displaystyle for some.}
\begin{equation}\label{eq:lamu_gammarelate} %referenced later 
\Ga '\text{ is }\la -\text{admissible if and only if }  \left\lbrace \begin{array}{l}
                                                                     \Ga \text{ is }\mu -\text{admissible and } \\
                                                                     \Ga _{12}\leq \Ga _{11}\leq \Ga _{12}+\la _1-\la _2+1;
                                                                    \end{array}\right. 
\end{equation}
and 
\begin{equation}\label{eq:lamu_summandrelate} %referenced later 
G^{(\la )}(\Ga ')  =  g^{(\la )}_{11}(\Ga ')\cdot G^{(\mu )}_1(\Ga ), \text{ and }
\x ^{\wght^{(\la )}(\Ga ')}  =  \y ^{\wght^{(\mu )}(\Ga )}\cdot x_r^{\Ga _{11}}\cdot x_{r+1}^{\la _1-\Ga _{11}}.
\end{equation}
Similarly, 
\begin{equation}\label{eq:munu_gammarelate} %referenced later 
\Ga \text{ is }\mu -\text{admissible if and only if }  \left\lbrace \begin{array}{l}
                                                                     \Ga _0\text{ is }\nu -\text{admissible and } \\
                                                                     \Ga _{13}\leq \Ga _{12}\leq \Ga _{13}+\la _2-\la _3+1;
                                                                    \end{array}\right. 
\end{equation}
and 
\begin{equation}\label{eq:munu_summandrelate}
G^{(\mu )}(\Ga ) = g^{(\mu )}_{12}(\Ga )\cdot G^{(\nu )}_1(\Ga _0), \text{ and }
\y ^{\wght^{(\mu )}(\Ga )}=\z ^{\wght^{(\nu )}(\Ga _0)}\cdot x_{r-1}^{\Ga _{12}}\cdot x_{r}^{\la _2-\Ga _{12}}.
\end{equation}
Notice that in indexing the $\mu $-admissible vector $\Ga ,$ we write $g^{(\mu )}_{12}(\Ga )$ for the Gelfand-Tsetlin coefficient corresponding to the first entry, $\Ga _{12}.$ Equations \eqref{eq:lamu_summandrelate} and \eqref{eq:munu_summandrelate} are a direct consequence of the notation introduced. In particular, the relationship between the monomials is true even if $\Ga '$ (or $\Ga $) is not $\la $-admissible (respectively, $\mu $-admissible). 

We will make repeated use of the following function on pairs of (positive) integers: 
\begin{equation}\label{eq:deltadef}
\delta (A,B)=\left\lbrace\begin{array}{ll}
                                        h^{\flat}(A) &\ifi  A<B\\
                                        h^{\flat}(A)- 1 &\ifi  A=B\\
                                        0 &\ifi  A>B
                                       \end{array}
\right.
\end{equation}

We are now ready to tackle the induction step. 

\subsection{Simplifying the induction step}\label{sect:simplifying_step}

We set out to prove \eqref{eq:rhs_step_enough}. To this end, we first rewrite $\T _r \left(x_{r+1}^{\la _1}\cdot \Ri _{r-1}^{(\mu )}(\y )\right).$ By the conventions \eqref{eq:munu_summandrelate}, and the fact that $\T_r$ commutes with multiplication by $x_1,$ $\ldots ,$ $x_{r-1},$ we have 
\begin{equation}\label{eq:RHSstep_rewrite}
 \T _r \left(x_{r+1}^{\la _1}\cdot \Ri _{r-1}^{(\mu )}(\y )\right)  = \sum _{\substack{\Gamma =(\Gamma _{12},\ldots ,\Gamma _{1,r})\\ \Gamma \ \mu -{\mathrm{admissible}}\\ \Gamma _{1,r}\neq 0}}G^{(\mu )}_1(\Gamma )\cdot \z ^{\wght^{(\nu )}(\Ga _0)}\cdot x_{r-1}^{\Ga _{12}}\cdot \T _r (x_{r}^{\la _2-\Ga _{12}}x_{r+1}^{\la _1}).
\end{equation}
Since $N_{1,0}$ is the base case of the induction, $N_{r,0}$ is true by Lemma \ref{LEM:INDEX-SHIFT}. Thus we have 
\begin{equation}\label{eq:Nr0}
 \T _r (x_{r}^{\la _2-\Ga _{12}}x_{r+1}^{\la _1})=\sum _{\Ga _{11}=1}^{\la _1-(\la _2-\Ga _{12})+1}g^{(\la _1,\la _2-\Gamma_{12})}_{11}(\Gamma _{11})x_r^{\la _2+\Gamma_{11}-\Gamma _{12}}x_{r+1}^{\la _1-\Gamma _{11}}.
\end{equation}
It follows from \eqref{eq:def_string_GTcoeff} and \eqref{eq:deltadef} that 
\begin{equation}\label{eq:coeffdiff_1}
 g^{(\la _1,\la _2-\Gamma_{12})}_{11}(\Gamma _{11})-g^{(\la)}_{11}(\Gamma ')=\delta (\Gamma _{11},\Ga _{12}).
\end{equation}
Now we may substitute \eqref{eq:Nr0} and \eqref{eq:coeffdiff_1} into \eqref{eq:RHSstep_rewrite}, and use the conventions \eqref{eq:lamu_gammarelate}, \eqref{eq:lamu_summandrelate} to conclude that 
$$\T _r \left(x_{r+1}^{\la _1}\cdot \Ri _{r-1}^{(\mu )}(\y )\right) = \Ri _r^{(\la )}(x) + \sum _{1\leq \Ga _{11}} x_{r+1}^{\la _1-\Gamma _{11}}\cdot \sum _{\substack{\Gamma =(\Gamma _{12},\ldots ,\Gamma _{1,r})\\ \Gamma \ \mu -{\mathrm{admissible}}\\ \Gamma _{1,r}\neq 0}}\delta (\Gamma _{11},\Ga _{12})G^{(\mu )}_1(\Gamma )\y ^{\wght^{(\mu )}(\Ga )}x_r^{\Gamma_{11}}.$$
Thus proving \eqref{eq:rhs_step_enough} is equivalent to showing that 
\begin{equation}\label{eq:after_stepping}
 \sum _{1\leq \Ga _{11}} x_{r+1}^{\la _1-\Gamma _{11}}\cdot \sum _{\substack{\Gamma =(\Gamma _{12},\ldots ,\Gamma _{1,r})\\ \Gamma \ \mu -{\mathrm{admissible}}\\ \Gamma _{1,r}\neq 0}}\delta (\Gamma _{11},\Ga _{12})\cdot G^{(\mu )}_1(\Gamma )\cdot \y ^{\wght^{(\mu )}(\Ga )}\cdot x_r^{\Gamma_{11}}\equiv 0,
\end{equation}
i.e. it is annihilated by $\Dem _{w_0^{(r-1)}}.$ Since multiplication by $x_{r+1}$ commutes with $\Dem _{w_0^{(r-1)}},$ \eqref{eq:after_stepping} is equivalent to showing that the part corresponding to a fixed power of $x_{r+1}$ is annihilated by $\Dem _{w_0^{(r-1)}}.$
%\begin{equation}\label{eq:after_stepping_distributed}
%\sum _{\substack{\Gamma =(\Gamma _{12},\ldots ,\Gamma _{1,r})\\ \Gamma \ \mu -{\mathrm{admissible}}\\ \Gamma _{1,r}\neq 0}}\delta (\Gamma _{11},\Ga _{12})\cdot G^{(\mu )}_1(\Gamma )\cdot \y ^{\wght^{(\mu )}(\Ga )}\cdot x_r^{\Gamma_{11}}\equiv 0
%\end{equation}
%holds for every $1\leq \Ga _{11}.$ 
This motivates the following notation. Let $a$ be a positive integer, and $\mu ,$ $\y$ as in Section \ref{sect:conventions}.
 \begin{equation}\label{eq:Fdef}
 F_{\mu,a}(\y )=\sum _{\substack{\Gamma =(\Gamma _{12},\ldots ,\Gamma _{1r})\\ \Gamma \ \mu -{\mathrm{admissible}}\\ \Gamma _{1,r}\neq 0}} \delta (a,\Ga _{12})\cdot G^{(\mu )}_1(\Gamma )\cdot \y ^{\wght^{(\mu )}(\Ga )}\cdot x_r^{a}.
\end{equation}

\begin{prop}\label{PROP:F_ANNIH}
 Let $\mu  = (\la _2,\la _3,\ldots ,\la _{r+1})$ be any dominant weight. Then for any positive integer $a$ we have 
\begin{equation}\label{eq:F_annihilated_prop}
 F_{\mu,a}(\y )\equiv 0,\text{ i.e., }\Dem _{w_0^{(r-1)}} F_{\mu,a}(\y )=0.
\end{equation}
Call this statement (that \eqref{eq:F_annihilated_prop} holds for any dominant weight $\mu $ and positive integer $a$) ${\mathbf{F_{r}}}.$ 
\end{prop}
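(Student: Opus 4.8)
The plan is to prove ${\mathbf{F_r}}$ by induction on $r$, running it simultaneously with the induction for $N_{r,r-1}$: by Lemma \ref{lem:induction_step_same_as_F_annih}, assuming $N_{k+1,k}$ for $k<r-1$ gives us ${\mathbf{F_j}}$ for all $j<r$, so in particular ${\mathbf{F_{r-1}}}$ is available. The core idea is the branching/recursion already used everywhere in the paper: peel off the outermost entry $\Ga_{12}$ of the $\mu$-admissible vector $\Ga$. Using the conventions \eqref{eq:munu_gammarelate} and \eqref{eq:munu_summandrelate}, I would rewrite
$$
F_{\mu,a}(\y)=x_r^a\sum_{\Ga_{12}}\delta(a,\Ga_{12})\cdot g^{(\mu)}_{12}(\Ga)\cdot x_{r-1}^{\Ga_{12}}x_r^{\la_2-\Ga_{12}}\cdot\!\!\sum_{\substack{\Ga_0\ \nu-\mathrm{admissible}\\ \Ga_{1,r}\neq 0}}\!\! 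G^{(\nu)}_1(\Ga_0)\cdot\z^{\wght^{(\nu)}(\Ga_0)},
$$
so that the inner sum over $\Ga_0$ is essentially $\Ri_{r-2}^{(\nu)}(\z)$ (up to the trivial factor $x_{r-1}^{\cdot}$ and the $\Ga_{1,r}\neq 0$ condition which lives entirely in $\Ga_0$). The point of \eqref{eq:deltadef} is that $\delta(a,\Ga_{12})$ and the ``boxing'' coefficient $g^{(\mu)}_{12}$ depend only on how $\Ga_{12}$ compares to its neighbours and to $\la_2-\la_3+1$, so the outer sum over $\Ga_{12}$ is a finite, explicitly describable combination.

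Next I would apply $\Dem_{w_0^{(r-1)}}$. The key structural fact is Lemma \ref{lemma:Demazure_annihilation}(ii): it suffices to produce a \emph{single} simple reflection $\sigma_i$ with $\Dem_i$ killing the relevant piece, or to reduce to ${\mathbf{F_{r-1}}}$. I would split $F_{\mu,a}(\y)$ according to the value of $\Ga_{12}$ relative to $a$: the term $\Ga_{12}=a$ carries coefficient $h^\flat(a)-1$ and the terms $\Ga_{12}<a$ carry $h^\flat(a)$; terms with $\Ga_{12}>a$ vanish. Pairing these up (using that $h^\flat$ depends only on the residue mod $n$, via Claim \ref{claim:GaussSumRelationship}), I expect the sum over $\Ga_{12}$ to telescope into something whose $x_r$-part is a monomial of the form $x_{r-1}^{b+1}x_r^{b+n}$ or a difference of a $\sigma_{r-1}$-symmetric monomial — exactly the situation of Lemma \ref{lem:invariant_monom} and Corollary \ref{cor:annihilation}, making $\Dem_{r-1}$ annihilate it. The leftover piece that does \emph{not} telescope this way should, by the conventions above, be $x_r^{a'}\cdot$ (something of the shape $F_{\nu,a'}(\z)$ times $x_{r-1}$-monomials), which is annihilated by $\Dem_{w_0^{(r-2)}}$ by the inductive hypothesis ${\mathbf{F_{r-1}}}$, hence by $\Dem_{w_0^{(r-1)}}$ again by Lemma \ref{lemma:Demazure_annihilation}(ii). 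The base case $r=2$ (or $r=1$) is a direct rank-one computation of the kind relegated to Appendix \ref{app:rankonecomp}: one checks that $\Dem_1 F_{\mu,a}(\y)=0$ by hand, using $g^\flat\!\cdot g^\flat$ and $h^\flat$ identities \eqref{eq:gflat_identity}, \eqref{eq:h_and_g_flatdef}.

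The main obstacle I anticipate is the bookkeeping in the telescoping step: showing that the ``$\Ga_{12}=a$ versus $\Ga_{12}<a$'' combination, weighted by the boxing coefficients $g^{(\mu)}_{12}$ and the Gauss-sum-valued $h^\flat$, really collapses to a sum of $\sigma_{r-1}$-symmetric monomials plus an honest $F_{\nu,a'}$-term. This is where the precise form of \eqref{eq:def_string_GTcoeff}, the upper bound $\Ga_{12}\le\Ga_{13}+\la_2-\la_3+1$, and the mod-$n$ periodicity of $h^\flat$ all have to be used carefully; the computation is genuinely $n$-dependent and the cases $n\mid a$ versus $n\nmid a$ will have to be handled separately, mirroring Claim \ref{claim:GaussSumRelationship}. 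Everything else — the reduction via \eqref{eq:rhs_step_enough}, Claim \ref{claim:rhs_step_enough}, and the equivalence of ${\mathbf{F_r}}$ with $N_{r,r-1}$ modulo $N_{r-1,r-2}$ — is formal manipulation of the operators and the branching conventions already set up in Sections \ref{chap:equivalent_reformulations} and \ref{sect:conventions}.
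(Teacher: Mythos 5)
Your plan is essentially the paper's proof: induction on $r$ with ${\mathbf{F_{r-1}}}$ supplied by Lemma \ref{lem:induction_step_same_as_F_annih}, a case split on $n\mid a$ versus $n\nmid a$, peeling off $\Ga_{12}$ via \eqref{eq:munu_gammarelate}--\eqref{eq:munu_summandrelate}, and writing $F_{\mu,a}$ as a piece killed by $\Dem_{w_0^{(r-2)}}$ (the $\sum_{\Ga_{12}}\delta_a(\Ga_{12})\,x_r^{\cdot}F_{\nu,\Ga_{12}}(\z)$ correction) plus a piece killed by $\Dem_{r-1}$. The one place your sketch undersells the work is the $\Dem_{r-1}$-annihilation of the leftover single-$\Ga_0$ polynomial $f_{a,\Ga_0}$: it does not collapse to a symmetric monomial in the sense of Lemma \ref{lem:invariant_monom}/Corollary \ref{cor:annihilation}, but is shown $\sigma_{r-1}$-invariant (after multiplying by $x_r^n$) by summing the geometric-type series and matching it against $\sigma(x_r^{nk+u+n-1})$ computed from the explicit Chinta--Gunnells formula together with Lemma \ref{lemma:h_exchange} — the rank-one computation of Appendix \ref{app:rankonecomp}; also, in the non-divisible case the paper dispenses with the recursion entirely, since $\delta(a,\Ga_{12})$ forces $\Ga_{12}=a$ and each surviving term is killed directly by some $\Dem_{r-j+1}$ via Corollary \ref{cor:annihilation}.
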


The argument in the present section amounts to the following lemma. 

\begin{lemma}\label{lem:induction_step_same_as_F_annih}
 If $N_{r-1,r-2}$ holds, then $N_{r,r-1}$ (for $\la ,$ $\x $ as above) is equivalent to the statement 
$$\forall a \hskip .5 cm F_{\mu,a}(\y )\equiv 0,\text{ or, equivalently, }\forall a\hskip .5 cm \Dem _{w_0^{(r-1)}} F_{\mu,a}(\y )=0.$$
\end{lemma}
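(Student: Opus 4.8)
The plan is to prove Lemma~\ref{lem:induction_step_same_as_F_annih} by unwinding the chain of equivalences established just before it. First I would recall from Claim~\ref{claim:rhs_step_enough} that, under the assumption of $N_{k+1,k}$ for $k < r-1$ (in particular $N_{r-1,r-2}$, and hence also $N_{r-1,r-2}$ available as the recursion input), the statement $N_{r,r-1}$ is equivalent to \eqref{eq:rhs_step_enough}, namely
$$\Dem_{w_0^{(r-1)}}\left(\T_r\left(x_{r+1}^{\la_1}\cdot \Ri_{r-1}^{(\mu)}(\y)\right) - \Ri_r^{(\la)}(\x)\right)=0.$$
This reduction is exactly what Claim~\ref{claim:rhs_step_enough} gives, using that multiplication by $x_{r+1}^{\la_1}$ commutes with $\T_1,\ldots,\T_{r-1}$ and with $\Dem_{w_0^{(r-2)}}$, that $\T_r$ commutes with $\Dem_{w_0^{(r-2)}}$, and Lemma~\ref{lemma:Demazure_annihilation}(ii).

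Next I would invoke the explicit computation carried out in Section~\ref{sect:simplifying_step}: expanding $\T_r\left(x_{r+1}^{\la_1}\cdot \Ri_{r-1}^{(\mu)}(\y)\right)$ via the conventions \eqref{eq:munu_summandrelate}, applying $N_{r,0}$ (true by the base case $N_{1,0}$ and Lemma~\ref{LEM:INDEX-SHIFT}) in the form \eqref{eq:Nr0}, and using the coefficient identity \eqref{eq:coeffdiff_1} together with \eqref{eq:lamu_gammarelate}, \eqref{eq:lamu_summandrelate}, one obtains
$$\T_r\left(x_{r+1}^{\la_1}\cdot \Ri_{r-1}^{(\mu)}(\y)\right) = \Ri_r^{(\la)}(\x) + \sum_{1\le \Ga_{11}} x_{r+1}^{\la_1-\Ga_{11}}\cdot \sum_{\substack{\Gamma=(\Gamma_{12},\ldots,\Gamma_{1r})\\ \Gamma\ \mu\text{-admissible}\\ \Gamma_{1,r}\neq 0}}\delta(\Ga_{11},\Ga_{12})\,G^{(\mu)}_1(\Gamma)\,\y^{\wght^{(\mu)}(\Ga)}\,x_r^{\Ga_{11}}.$$
Subtracting $\Ri_r^{(\la)}(\x)$, the difference in \eqref{eq:rhs_step_enough} is precisely $\sum_{a\ge 1} x_{r+1}^{\la_1-a}\cdot F_{\mu,a}(\y)$, where $F_{\mu,a}$ is the quantity defined in \eqref{eq:Fdef}. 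Since multiplication by powers of $x_{r+1}$ commutes with $\Dem_{w_0^{(r-1)}}$ (as $x_{r+1}$ is fixed by all of $\sigma_1,\ldots,\sigma_{r-1}$), the operator $\Dem_{w_0^{(r-1)}}$ annihilates this sum if and only if it annihilates each homogeneous-in-$x_{r+1}$ piece separately, i.e. if and only if $\Dem_{w_0^{(r-1)}}F_{\mu,a}(\y)=0$ for every $a$. Tracing the equivalences back, this says exactly that $N_{r,r-1}$ holds iff $\forall a,\ F_{\mu,a}(\y)\equiv 0$, which is the claim.

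The main (and only real) obstacle here is bookkeeping rather than mathematical depth: one must carefully verify that the sum appearing as the "leftover" after the computation in \eqref{eq:after_stepping} genuinely separates into distinct powers of $x_{r+1}$ — i.e. that the exponent $\la_1 - \Ga_{11}$ on $x_{r+1}$ runs over distinct values as $\Ga_{11}$ varies, with the inner sum over $\Gamma$ not involving $x_{r+1}$ — so that annihilation term-by-term is legitimate. This is immediate from the shape of \eqref{eq:Fdef}, where $x_{r+1}$ does not appear at all in $F_{\mu,a}(\y)$. The remaining content — that $\sum_a x_{r+1}^{\la_1-a} F_{\mu,a}(\y)$ is annihilated by $\Dem_{w_0^{(r-1)}}$ precisely when each $F_{\mu,a}(\y)$ is — then follows because multiplication by a monomial in $x_{r+1}$ is invertible and commutes with $\Dem_{w_0^{(r-1)}}$. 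I would close by noting that this lemma is what makes ${\mathbf{F_r}}$ (Proposition~\ref{PROP:F_ANNIH}) the true remaining goal, and that by the same token the hypothesis $N_{k+1,k}$ for $k<r-1$ supplies ${\mathbf{F_j}}$ for $j<r$, which will feed the induction in Section~\ref{sect:proof:PROP:F_ANNIH}.
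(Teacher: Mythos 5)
Your proposal is correct and follows the paper's own route exactly: the paper proves this lemma by the argument of Section \ref{sect:simplifying_step} (Claim \ref{claim:rhs_step_enough}, the expansion of $\T_r\bigl(x_{r+1}^{\la_1}\Ri_{r-1}^{(\mu)}(\y)\bigr)$ via $N_{r,0}$ and \eqref{eq:coeffdiff_1}, and the separation of the leftover sum into fixed powers of $x_{r+1}$), which is precisely what you reconstruct. The only cosmetic difference is that you spell out explicitly why term-by-term annihilation is an equivalence rather than just an implication, which the paper leaves implicit.
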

\qed

Now to complete the induction step, it remains to prove Proposition \ref{PROP:F_ANNIH}, i.e. that $F_{\mu,a}(\y )$ is annihilated by $\Dem _{w_0^{(r-1)}}.$ This is the content of section \ref{sect:proof:PROP:F_ANNIH}. 

\subsection{Proof of Proposition \ref{PROP:F_ANNIH}}\label{sect:proof:PROP:F_ANNIH}
We distinguish between the cases where $a$ is divisible by $n$ or not. The case when it is {\em{not}} is significantly easier to handle. 

\subsubsection{The non-divisible case}\label{subs:when_a_notdiv_by_n}

The goal is to prove that if $n\nmid a,$ then $\Dem _{w_0^{(r-1)}} F_{\mu,\Ga _{11}}(\y )=0.$ Recall that by Claim \ref{claim:GaussSumRelationship}, $n\nmid a$ implies $h^{\flat }(a)=0.$
By \eqref{eq:deltadef}, this means that $\delta (a,\Ga _{12})=0$ unless $a=\Ga _{12},$ and $\delta (a,\Ga _{12})=-1.$
Thus in this case we have 
\begin{equation}\label{eq:F_when_notdivisible}
 F_{\mu,a}(\y )=-\sum _{\substack{\Gamma =(\Gamma _{12},\ldots ,\Gamma _{1r})\\ \Gamma \ \mu -{\mathrm{admissible}}\\ \Ga_{12}=a\\ \Gamma _{1,r}\neq 0}} G^{(\mu )}_1(\Gamma )\cdot \y ^{\wght^{(\mu )}(\Ga )}\cdot x_r^{a}.
\end{equation}
We will show that each term in the summation is either itself zero, or is annihilated by a Demazure-Lusztig operator corresponding to a simple reflection. Fix a term $\Gamma =(\Gamma _{12},\ldots ,\Gamma _{1r}),$ and take $\Ga _{1,r+1}:=0$ and $\Ga _{11}=a.$ Then $\Ga _{11}=\Ga _{12}$ and $\Ga _{1r}>\Ga _{1,r+1}.$ Let $j$ be the smallest index such that $\Ga _{1j}>\Ga _{1,j+1}$ ($2\leq j\leq r$). In this case $a=\Ga _{11}=\cdots =\Ga _{1,j-1}=\Ga _{1,j},$ so $n\nmid \Ga _{1,j}$ and hence $h^{\flat}(\Gamma _{1,j})=0.$ By \eqref{eq:def_string_GTcoeff}, we have $$g_{1j}^{(\mu )}(\Gamma )=\left\lbrace\begin{array}{ll}
                                        h^{\flat}(\Gamma _{1,j}) & \Gamma _{1,j+1}<\Gamma _{1,j}<\Gamma _{1,j+1}+\la_{j}-\la_{j+1}+1;\\
                                        g^{\flat}(\Gamma _{1,j})  & \Gamma _{1,j+1}<\Gamma _{1,j}=\Gamma _{1,j+1}+\la_{j}-\la_{j+1}+1.
                                       \end{array}
\right.$$
This means that $G^{(\mu )}_1(\Gamma )=0$ unless $\Ga _{1,j}=\Gamma _{1,j+1}+\la_{j}-\la_{j+1}+1.$ We show that in the latter case $\Dem _{r-j+1}$ annihilates the corresponding term. Observe that $\Ga_{1,j-1}=\Ga_{1,j}=\Gamma _{1,j+1}+\la_{j}-\la_{j+1}+1$ implies that $\y ^{\wght^{(\mu )}(\Ga )}\cdot x_r^{\Gamma_{11}}$ has a factor of $x_{r-j+1}^{\la_{j}+1}x_{r-j+2}^{\la_{j}}$ (and no other factors of $x_{r-j+1}$ or $r_{r-j+2}$). By Corollary \ref{cor:annihilation}, $\Dem _{r-j+1}$ indeed kills this term. Since $1\leq r-j+1\leq r-1,$ by Proposition \ref{lemma:Demazure_annihilation} part (ii) $\Dem _{w_0^{(r-1)}}$ annihilates all nonzero terms. This completes the proof of the non-divisible case.

\subsubsection{The divisible case}

From now on we assume that $a$ is divisible by ${\mathbf{n}}$. Since $\delta (a,\Ga _{12})$ will appear repeatedly in the computations below, we introduce the following shorthand. By Claim \ref{claim:GaussSumRelationship}, we have
\begin{equation}\label{eq:neqdeltadef}
 \delta _a(\Ga _{12})=\delta (a,\Ga _{12})=\left\lbrace\begin{array}{ll}
                                        1-v &\ifi  a<\Ga _{12};\\
                                        -v &\ifi  a=\Ga _{12};\\
                                        0 &\ifi  a>\Ga _{12}.
                                       \end{array}
\right.
\end{equation}

The goal is to prove ${\mathbf{F_{r}}}:$
\begin{equation}\label{eq:a_divn_goal}
 \Dem _{w_0^{(r-1)}}F_{\mu,a}(\y )=\Dem _{w_0^{(r-1)}}\sum _{\substack{\Gamma =(\Gamma _{12},\ldots ,\Gamma _{1r})\\ \Gamma \ \mu -{\mathrm{admissible}}\\ \Gamma _{1,r}\neq 0}}  \delta _a(\Ga _{12})\cdot G^{(\mu )}_1(\Gamma )\cdot \y ^{\wght^{(\mu )}(\Ga )}\cdot x_r^{a}=0.
\end{equation}

Lemma \ref{lem:induction_step_same_as_F_annih} has the following convenient implication. Since as part of the inductive hypothesis, we assume that both  $N_{r-1,r-2}$ and $N_{r-2,r-3}$ are true, we have the statement ${\mathbf{F_{r-1}}}:$ 
$\Dem _{w_0^{(r-2)}}F_{\nu,\Ga_{12}}(\z)=0$ for any $\Ga _{12}.$ This is true even when $r=2,$ since in this case, $F_{\nu,\Ga_{12}}(\z)$ is itself zero. 

The strategy to prove \eqref{eq:a_divn_goal} is the following. Using the conventions introduced in Section \ref{sect:conventions}, in particular \eqref{eq:munu_gammarelate} and \eqref{eq:munu_summandrelate}, we will rewrite the sum defining $F_{\mu,a}(\y )$ into smaller pieces according to $\Ga _0.$ Then we write $F_{\mu,a}(\y )$ as a difference of two pieces. One piece is annihilated by $\Dem _{w_0^{(r-2)}}$ as a consequence of ${\mathbf{F_{r-1}}},$ the other is annihilated by $\Dem _{r-1}.$ By Lemma \ref{lemma:Demazure_annihilation}, this implies that $F_{\mu,a}(\y )$ is indeed annihilated by $\Dem _{w_0^{(r-1)}}.$ 

We start by breaking up the sum in $F_{\mu,a}(\y )$ according to $\Ga _0.$ By the conventions introduced above, we have 
\begin{equation}\label{eq:bigcomputation_1}
 \begin{split}
F_{\mu,a}(\y ) = &  \sum _{\substack{\Gamma =(\Gamma _{12},\ldots ,\Gamma _{1r})\\ \Gamma \ \mu -{\mathrm{admissible}}\\ \Gamma _{1,r}\neq 0}} \delta _a(\Ga _{12})\cdot g^{(\mu )}_{12}(\Ga )\cdot G^{(\nu )}_1(\Ga _0)\cdot \z ^{\wght^{(\nu )}(\Ga _0)}\cdot x_{r-1}^{\Ga _{12}}\cdot x_{r}^{\la _2-\Ga _{12}+a} \\
= &  \sum _{\substack{\Gamma _0=(\Gamma _{13},\ldots ,\Gamma _{1r})\\ \Gamma _0\ \nu -{\mathrm{admissible}}\\ \Gamma _{1,r}\neq 0}} G^{(\nu )}_1(\Ga _0)\cdot \z ^{\wght^{(\nu )}(\Ga _0)}\cdot \sum _{\Ga _{12}=\Ga_{13}}^{\Ga _{13}+\la_2-\la_3+1} \delta _a(\Ga _{12})\cdot g^{(\mu )}_{12}(\Ga )\cdot x_{r-1}^{\Ga _{12}}\cdot x_{r}^{\la _2-\Ga _{12}+a}  
 \end{split}
\end{equation}
Recall that $\Ga _0 =(\Ga _{13},\ldots ,\Ga _{1r}),$ where $\Ga _{13}\geq \Ga _{1r}\geq 1,$ and if $\Ga _{12}<\Ga _{13}$ then $g^{(\mu )}_{12}(\Ga )=0.$ Hence we may change the lower bound of the summation over $\Ga _{12}$ to $1.$ (If $r=2,$ then there is no change at all.) 
\begin{equation}\label{eq:bigcomputation_1andhalf}
F_{\mu,a}(\y ) =   \sum _{\substack{\Gamma _0=(\Gamma _{13},\ldots ,\Gamma _{1r})\\ \Gamma _0\ \nu -{\mathrm{admissible}}\\ \Gamma _{1,r}\neq 0}} G^{(\nu )}_1(\Ga _0)\cdot \z ^{\wght^{(\nu )}(\Ga _0)}\cdot \sum _{\Ga _{12}=1}^{\Ga _{13}+\la_2-\la_3+1} \delta _a(\Ga _{12})\cdot g^{(\mu )}_{12}(\Ga )\cdot x_{r-1}^{\Ga _{12}}\cdot x_{r}^{\la _2-\Ga _{12}+a}  
\end{equation}

Next we derive from ${\mathbf{F_{r-1}}}$ a term annihilated by $\Dem _{w_0^{(r-2)}}.$ %that will become the first piece of \eqref{eq:bigcomputation_1andhalf}. 
Recall that ${\mathbf{F_{r-1}}}$ implies that $\Dem _{w_0^{(r-2)}} F_{\nu,\Ga_{12}}(\z ) =0$ holds for any $0<\Ga_{12}.$ The operator $\Dem _{w_0^{(r-2)}}$ is linear and commutes with multiplication by $x_r.$ This implies that for any positive integer $a,$ we have 
\begin{equation}\label{eq:summing_smaller_Fs_formal}
 \Dem _{w_0^{(r-2)}} \sum _{0<\Ga_{12}} \delta _a(\Ga _{12})\cdot x_{r}^{\la _2-\Ga _{12}+a} \cdot  F_{\nu,\Ga_{12}}(\z )=0.
\end{equation}
We take a closer look at this polynomial annihilated in \eqref{eq:summing_smaller_Fs_formal}. It is
\begin{equation}\label{eq:summing_smaller_Fs}
 \sum _{0<\Ga_{12}} \delta _a(\Ga _{12})\cdot x_{r}^{\la _2-\Ga _{12}+a} \cdot \sum _{\substack{\Gamma _0=(\Gamma _{13},\ldots ,\Gamma _{1r})\\ \Gamma _0\ \nu -{\mathrm{admissible}}\\ \Gamma _{1,r}\neq 0}} \delta (\Ga_{12},\Ga _{13})\cdot G^{(\nu )}_1(\Gamma _0)\cdot \z ^{\wght^{(\nu )}(\Ga _0)}\cdot x_{r-1}^{\Ga _{12}}.
\end{equation}
We may change the order of summation to get
\begin{equation}\label{eq:summing_smaller_Fs_modpart}
 \sum _{\substack{\Gamma _0=(\Gamma _{13},\ldots ,\Gamma _{1r})\\ \Gamma _0\ \nu -{\mathrm{admissible}}\\ \Gamma _{1,r}\neq 0}} G^{(\nu )}_1(\Gamma _0)\cdot \z ^{\wght^{(\nu )}(\Ga _0)}\cdot \sum _{0<\Ga_{12}} \delta _a(\Ga _{12})\cdot  \delta (\Ga_{12},\Ga _{13})\cdot x_{r-1}^{\Ga _{12}} \cdot x_{r}^{\la _2-\Ga _{12}+a}  .
\end{equation}
Since $\delta (\Ga_{12},\Ga _{13})=0$ when $\Ga _{12}>\Ga _{13},$ the sum is unchanged if we put the upper bound $\Ga _{13}+\la_2-\la_3+1$ on the second summation:
\begin{equation}\label{eq:summing_smaller_Fs_mod}
 \sum _{\substack{\Gamma _0=(\Gamma _{13},\ldots ,\Gamma _{1r})\\ \Gamma _0\ \nu -{\mathrm{admissible}}\\ \Gamma _{1,r}\neq 0}} G^{(\nu )}_1(\Gamma _0)\cdot \z ^{\wght^{(\nu )}(\Ga _0)}\cdot \sum _{\Ga_{12}=1}^{\Ga _{13}+\la_2-\la_3+1} \delta _a(\Ga _{12})\cdot  \delta (\Ga_{12},\Ga _{13})\cdot x_{r-1}^{\Ga _{12}} \cdot x_{r}^{\la _2-\Ga _{12}+a}  .
\end{equation}
Comparing \eqref{eq:summing_smaller_Fs_mod} to \eqref{eq:bigcomputation_1andhalf}, we see that their shape is very similar. It suffices to show that their sum is annihilated by $\Dem _{r-1}.$ We write the sum explicitly as follows. 
\begin{equation}\label{eq:diff_for_Dem_r-1}
\begin{split}
F_{\mu,a}(\y ) +\sum _{0<\Ga_{12}} \delta _a(\Ga _{12})\cdot x_{r}^{\la _2-\Ga _{12}+a}  \cdot  F_{\nu,\Ga_{12}}(\z ) & = 
 \sum _{\substack{\Gamma _0=(\Gamma _{13},\ldots ,\Gamma _{1r})\\ \Gamma _0\ \nu -{\mathrm{admissible}}\\ 
 \Gamma _{1,r}\neq 0}} G^{(\nu )}_1(\Ga _0)\cdot \z ^{\wght^{(\nu )}(\Ga _0)}\cdot  \\
 \cdot \sum _{\Ga _{12}=1}^{\Ga _{13}+\la_2-\la_3+1} \delta _a(\Ga _{12})\cdot (g^{(\mu )}_{12}(\Ga )+&\delta (\Ga_{12},\Ga _{13}))\cdot x_{r-1}^{\Ga _{12}}\cdot x_{r}^{\la _2-\Ga _{12}+a} \\
& =  \sum _{\substack{\Gamma _0=(\Gamma _{13},\ldots ,\Gamma _{1r})\\ \Gamma _0\ \nu -{\mathrm{admissible}}\\ \Gamma _{1,r}\neq 0}} G^{(\nu )}_1(\Ga _0)\cdot \z ^{\wght^{(\nu )}(\Ga _0)}\cdot x_{r-1}^{\Ga _{13}-\la _3}\cdot \\
\cdot \sum _{\Ga _{12}=1}^{\Ga _{13}+\la_2-\la_3+1} \delta _a(\Ga _{12})\cdot (g^{(\mu )}_{12}(\Ga )+&\delta (\Ga_{12},\Ga _{13}))\cdot x_{r-1}^{\la _3-\Ga _{13}+\Ga _{12}}\cdot x_{r}^{\la _2-\Ga _{12}+a} 
\end{split}
\end{equation}

%\begin{equation}\label{eq:diff_for_Dem_r-1}
%\begin{split}
%& F_{\mu,a}(\y ) +\sum _{0<\Ga_{12}} \delta _a(\Ga _{12})\cdot x_{r}^{\la _2-\Ga _{12}+a}  \cdot  F_{\nu,\Ga_{12}}(\z )  = \\
%& \sum _{\substack{\Gamma _0=(\Gamma _{13},\ldots ,\Gamma _{1r})\\ \Gamma _0\ \nu -{\mathrm{admissible}}\\ 
% \Gamma _{1,r}\neq 0}} G^{(\nu )}_1(\Ga _0)\cdot \z ^{\wght^{(\nu )}(\Ga _0)  
% \cdot \sum _{\Ga _{12}=1}^{\Ga _{13}+\la_2-\la_3+1} \delta _a(\Ga _{12})\cdot (g^{(\mu )}_{12}(\Ga )+\delta (\Ga_{12},\Ga _{13}))\cdot x_{r-1}^{\Ga _{12}}\cdot x_{r}^{\la _2-\Ga _{12}+a} \\
%& =  \sum _{\substack{\Gamma _0=(\Gamma _{13},\ldots ,\Gamma _{1r})\\ \Gamma _0\ \nu -{\mathrm{admissible}}\\ \Gamma _{1,r}\neq 0}} G^{(\nu )}_1(\Ga _0)\cdot \z ^{\wght^{(\nu )}(\Ga _0)}\cdot x_{r-1}^{\Ga _{13}-\la _3}
%\cdot \sum _{\Ga _{12}=1}^{\Ga _{13}+\la_2-\la_3+1} \delta _a(\Ga _{12})\cdot (g^{(\mu )}_{12}(\Ga )+&\delta (\Ga_{12},\Ga _{13}))\cdot x_{r-1}^{\la _3-\Ga _{13}+\Ga _{12}}\cdot x_{r}^{\la _2-\Ga _{12}+a} 
%\end{split}
%\end{equation}

Note that the exponent of $x_{r-1}$ in $\z ^{\wght^{(\nu )}(\Ga _0)}$ is $\la _3-\Ga _{13}.$ This means that $\z ^{\wght^{(\nu )}(\Ga _0)}\cdot x_{r-1}^{\Ga _{13}-\la _3}$ contains no factors of $x_{r-1}$ or $x_r.$ In particular, multiplication by $\z ^{\wght^{(\nu )}(\Ga _0)}\cdot x_{r-1}^{\Ga _{13}-\la _3}$ commutes with $\Dem _{r-1}.$ This implies that the sum in \eqref{eq:diff_for_Dem_r-1} is annihilated by $\Dem _{r-1}$ if and only if the part corresponding to a fixed $\Ga _0$ is. 

Consider
\begin{equation}\label{eq:h_newcoeff_def}
 h^{(\mu, \Ga _0)}(\Ga _{12}):=g^{(\mu )}_{12}(\Ga )+\delta (\Ga _{12},\Ga_{13})=\left\lbrace\begin{array}{ll}
                              h^{\flat}(\Ga _{12}) & \ifi \Ga _{12}<\Ga_{13}+\la_{2}-\la_{3}+1;\\
                              g^{\flat}(\Ga _{12}) & \ifi \Ga _{12}=\Ga_{13}+\la_{2}-\la_{3}+1;\\
                              0 & \ifi \Ga _{12}>\Ga_{13}+\la_{2}-\la_{3}+1.\\
                              \end{array}
\right.
\end{equation}
Let $f_{a,\Ga _0}$ be the polynomial that corresponds to a single $\Ga _0$ in \eqref{eq:diff_for_Dem_r-1}: 
\begin{equation}\label{eq:fdef}
 f_{a,\Ga _0}(x_{r-1},x_r):=\sum _{\Ga _{12}=1}^{\Ga _{13}+\la_2-\la_3+1}\delta _a(\Ga _{12})\cdot h^{(\mu, \Ga _0)}(\Ga _{12})\cdot x_{r-1}^{\la _3+\Ga_{12}-\Ga _{13}}x_r^{\la _2+a-\Gamma _{12}}.
\end{equation}

\begin{lemma}\label{LEMMA:FANNIHILATED}
\begin{equation}\label{eq:f'annihilated}
 \Dem _{r-1}f_{a,\Ga _0}=0.
\end{equation}
\end{lemma}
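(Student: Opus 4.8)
\textbf{Proof proposal for Lemma \ref{LEMMA:FANNIHILATED}.}

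The plan is to show directly that $f_{a,\Ga_0}$, as a polynomial in the two variables $x_{r-1},x_r$, is fixed by $\sigma_{r-1}$ after multiplying by $x_r^n$; by Lemma \ref{lemma:Demazure_annihilation}(i) this is equivalent to $\Dem_{r-1}f_{a,\Ga_0}=0$. Since the action of $\sigma_{r-1}$ only involves the exponents of $x_{r-1}$ and $x_r$, everything reduces to a rank-one computation in the variables $x_{r-1},x_r$, with $a$ divisible by $n$ fixed and $\Ga_0$ (hence $\Ga_{13}$) fixed. First I would simplify the notation: set $b=\Ga_{13}$, $c=\la_2-\la_3+1$, reindex the summation variable, and write $f_{a,\Ga_0}$ as a sum over $j=\Ga_{12}$ from $1$ to $b+c$ of $\delta_a(j)\cdot h^{(\mu,\Ga_0)}(j)\cdot x_{r-1}^{\la_3+j-b}x_r^{\la_2+a-j}$, noting that the exponents of $x_{r-1}$ and $x_r$ together are constant ($=\la_2+\la_3+a-b$), so $f_{a,\Ga_0}$ is (a scalar times a fixed monomial times) a polynomial in $x_{r-1}/x_r$ of bounded degree.

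Next I would apply the type $A$ Chinta-Gunnells formula \eqref{eq:def:typeAaction} for $\sigma_{r-1}$, or more efficiently use Lemma \ref{lemma:h_exchange} and Corollary \ref{cor:annihilation}: the strategy is to exhibit $x_r^n\cdot f_{a,\Ga_0}$ as a $\C$-linear combination of monomials, group consecutive monomials into $\sigma_{r-1}$-invariant pairs of the shape $x_{r-1}^{e+1}x_r^{e+n}$ (which are invariant by Lemma \ref{lem:invariant_monom}), plus possibly a genuinely $\sigma_{r-1}$-invariant "middle" monomial. The coefficients $\delta_a(j)$ and $h^{(\mu,\Ga_0)}(j)$ depend only on the residue of $j$ modulo $n$ (this is where Claim \ref{claim:GaussSumRelationship} and the defining identities \eqref{eq:gflat0}, \eqref{eq:gflat_identity} for $g^\flat$ enter, together with $n\mid a$), so after expanding $\delta_a(j)h^{(\mu,\Ga_0)}(j)$ explicitly into its cases ($j$ vs $a$, and $j$ vs $b+c$), the whole sum telescopes: the $h^\flat$ contributions for $j$ in the relevant range cancel in pairs under multiplication by $(1-x_{r-1}^n/x_r^n)$, the boxed term $g^\flat(b+c)$ at the top of the range and the special term $-v$ at $j=a$ conspire (using $g_a g_{n-a}=v^{-1}$-type relations) to produce exactly an invariant monomial. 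This is, in essence, the rank-one auxiliary computation promised for Appendix \ref{app:rankonecomp}, carried out for this particular $f$.

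The main obstacle I expect is the bookkeeping in the telescoping step: one must carefully track how the three regimes for $h^{(\mu,\Ga_0)}(\Ga_{12})$ in \eqref{eq:h_newcoeff_def} interact with the three regimes for $\delta_a(\Ga_{12})$ in \eqref{eq:neqdeltadef}, and in particular handle the boundary cases where $a$ coincides with $b+c$ (or with $1$), and the degenerate case $r=2$ where $\Ga_0$ is empty and $b=0$. A clean way to organize this is to compute $(1-x_{r-1}^n/x_r^n)^{-1}$-free: multiply through, split the sum at $j=a$ and at $j=b+c$, and observe that on each sub-range the summand is (up to the fixed monomial) $q^{-1}$ times $x_{r-1}^j x_r^{-j}$ minus $x_{r-1}^{j'}x_r^{-j'}$ for a shifted index, so nearly everything cancels and what survives is manifestly symmetric under $x_{r-1}\leftrightarrow x_r$ after the $x_r^n$ twist. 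Once the surviving terms are identified, invariance under $\sigma_{r-1}$ — and hence $\Dem_{r-1}f_{a,\Ga_0}=0$ — follows from Lemma \ref{lem:invariant_monom} and Corollary \ref{cor:annihilation}.
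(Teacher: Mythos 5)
Your opening reduction is exactly the paper's: by Lemma \ref{lemma:Demazure_annihilation}(i) it suffices to show that $x_r^n\cdot f_{a,\Ga_0}$ is fixed by the (metaplectic) action of $\sigma_{r-1}$, and since only the exponents of $x_{r-1},x_r$ matter this is a rank-one computation governed by the residues mod $n$. Your observation that the nonzero terms sit at $\Ga_{12}\equiv a \bmod n$ together with one ``boxed'' term at $\Ga_{12}=\Ga_{13}+\la_2-\la_3+1$, and that the $h^\flat$-part sums as a geometric series in $\x^{n\alpha_{r-1}}$, is also correct and is what the paper does in \eqref{eq:P_rewrite_u_nonzero}.

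The gap is in the mechanism you propose for verifying invariance. Decomposing $x_r^n f_{a,\Ga_0}$ into $\sigma_{r-1}$-invariant \emph{pairs of monomials} (plus a middle monomial killed by Corollary \ref{cor:annihilation}) only works when all the exponents involved lie in a single coset of $n\Z$. A two-monomial combination $c(x_{r-1}^{p}x_r^{q}+x_{r-1}^{p'}x_r^{q'})$ is annihilated by $\Dem_{r-1}$ precisely when it has the form $c(\x^{jn\alpha}+\x^{-jn\alpha})\cdot x_{r-1}^{e+1}x_r^{e}$, which forces $p-p'\equiv 0\bmod n$. Writing $\Ga_{13}+\la_2-\la_3+1-a=nk+u$ with $1\le u\le n$ as in \eqref{eq:k_and_u_def}, the surviving monomials sit at offsets $0,n,2n,\dots,nk$ and at $nk+u$; so for $u\neq n$ the boxed term (with coefficient involving $g^\flat(u)=v\,g_u$) lies in a different class modulo $\La_0$ and cannot be paired with any other term, nor is it individually killed. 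Your claim that it ``conspires'' with the $-v$ term at $\Ga_{12}=a$ via $g_ug_{n-u}=v^{-1}$ is where the argument breaks: that relation is never needed here, and those two terms form an invariant pair only in the special case $u=n$ (where $g_u=g_0=-1$ and the coefficients $-v(1-v)$ and $(1-v)vg_0$ agree). In general one genuinely has to apply the Chinta--Gunnells formula \eqref{eq:def:typeAaction} to a \emph{non}-invariant monomial: the paper writes the whole sum as
$$P_{k,u}(\x)=\frac{1-v\cdot \x ^{-n\alpha }}{\x ^{-n\alpha }-1}\cdot x_r^{nk+u+n-1}+\frac{1-v\cdot \x ^{n\alpha }}{\x ^{n\alpha }-1}\cdot \sigma (x_r^{nk+u+n-1}),$$
which is invariant by Lemma \ref{lemma:h_exchange} and $\sigma^2=1$, and the point of the computation is that the $v\,g_u$ term produced by $\sigma(x_r^{nk+u+n-1})$ matches the boxed coefficient exactly. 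That matching is the substance of the lemma and is absent from your outline; as written, your telescoping argument proves the statement only when $n\mid(\Ga_{13}+\la_2-\la_3+1-a)$.
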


The proof of this lemma is a rank one computation using the definition of the group action. For completeness, it is included in Appendix \ref{app:rankonecomp}. 

To summarize, we have
\begin{equation}\label{eq:parts_together}
\begin{split}
F_{\mu,a}(\y ) = & \sum _{\substack{\Gamma _0=(\Gamma _{13},\ldots ,\Gamma _{1r})\\ \Gamma _0\ \nu -{\mathrm{admissible}}\\ \Gamma _{1,r}\neq 0}} G^{(\nu )}_1(\Ga _0)\cdot \z ^{\wght^{(\nu )}(\Ga _0)}\cdot x_{r-1}^{\Ga _{13}-\la _3}\cdot f_{a,\Ga _0}(x_{r-1},x_r)\\
& - \sum _{0<\Ga_{12}} \delta _a(\Ga _{12})\cdot x_{r}^{\la _2-\Ga _{12}+a} \cdot  F_{\nu,\Ga_{12}}(\z )
\end{split}
\end{equation}
The first term here is annihilated by $\Dem _{r-1};$ the second is annihilated by $\Dem _{w_0^{(r-2)}}.$ Hence $F_{\mu,a}(\y )$ is annihilated by $\Dem _{w_0^{(r-1)}}.$ This completes the proof of the statement ${\mathbf{F_r}}$ (in the divisible case), and hence the proof of Proposition \ref{prop:N_r_r-1}.

\section{Whittaker functions}\label{section:Whittaker}

%\red{
%Things to do here:
%\begin{itemize}
%\item Demazure side: Theorem 16 from joint paper - with Peter McNamara, also Theorem from Chinta-Offen
%\item Crystal side: Peter McNamara - perhaps exact translation in appendix. 
%\item BBL, but also work with Manish in metaplectic case, also maybe Lenart et al's paper?
%\end{itemize}}

We mentioned in the Introduction that Theorem \ref{thm:metaplectic_Tokuyama} establishes a combinatorial link between metaplectic analogues of the Casselman-Shalika formula. Furthermore, the more general Theorem \ref{THM:MAIN} gives a crystal description of certain Iwahori-Whittaker functions. In this section, we make these statements more explicit. 

We recall results of \cite{cgp} to compare the ``Demazure-Lusztig side'' of Theorem \ref{thm:metaplectic_Tokuyama} to constructions in \cite{mcnamara2} and more specifically in type $A$ to \cite{co-cs} (section \ref{section:Dem_Whitt}). We relate the ``crystal side'' of Theorem \ref{thm:metaplectic_Tokuyama} to Whittaker functions via comparison to \cite{mcnamara} (section \ref{section:crystal_Whitt}). In section \ref{section:IwWhit_DL} we compare the Demazure-Lusztig expression from Theorem \ref{THM:MAIN} to constructions of Iwahori-Whittaker functions. We recall a relevant statement in three different contexts. For finite dimensional and affine Kac-Moody groups in the nonmetaplectic setting, through comparison with \cite{bbl-demazure} and \cite{patnaik2014unramified} respectively, and in the metaplectic setting by recalling results of  \cite{patnaik2015casselman}. 

\subsection{Metaplectic Whittaker functions via Demazure-Lusztig operators}\label{section:Dem_Whitt}

Formulae about Demazure operators of the long word, in particular Theorem \ref{THM:LONG_WORD} \cite[Theorem 3.]{cgp} and Theorem \ref{THM:T_SUM} \cite[Theorem 4.]{cgp} allow us to interpret the left-hand side of Theorem \ref{THM:MAIN} as the value of a Whittaker function $\W ,$ constructed as a sum over the Weyl group in terms of the Chinta-Gunnells action. Such a construction can be found in \cite{co-cs} for type $A,$ and in \cite{mcnamara2} in greater generality. The connection with results of \cite{mcnamara} are made explicit in \cite[Section 6]{cgp}. We recall that result here, and give the translation to results of \cite{co-cs} in the type $A$ case. 

\subsubsection{Metaplecic Whittaker functions}

We only sketch the definition of the Whittaker function $\W$ and refer the reader to \cite[Section 6]{cgp} for details and precise conditions in our notation. Let $F$ be a non-archimedean local field containing the $2n$-th roots of unity, $\varpi $ the uniformizer of its ring of integers $\OO$. Let $G$ be a split, connected reductive group over $F$ that arises as a special fibre over a group scheme $\G$ defined over $\Z.$ Let $K=\G (\OO)$ be the maximal compact subgroup of $G$, $T$ a maximal split torus, $B$ a Borel containing $T,$ $B^-$ its opposite, $U$ the unipotent radical of $B$ and $U^{-}$ of the opposite Borel $B^-.$ If $\La $ is the group of cocharacters of $T$ then we may define a sublattice $\La _0$ of $\La $ as in \eqref{eqn:lambda0_intypeA} (for the definition in general type, see \cite[Section 2, (3)]{cgp}). Let $\tG$ be an $n$-fold metaplectic cover of $G.$ This in particular means that there is a short exact sequence
\begin{equation}\label{eq:exact_metapl}
 1\ra \mu _n \ra \tilde{G}\ra G\ra 1
\end{equation}
where $\mu_n$ is the group of $n$-th roots of unity. We think of $\mu _n$ as being identified with a subgroup of $\C^{\ast}$ and let $\tT,$ the metaplectic torus (respectively, $\tB$) be the preimage of $T$ (respectively, of $B$) in $\tG.$ 
We shall give $\W$ as a complex-valued Whittaker function corresponding to an unramified principal series representation of $\tG. $ Let $\chi $ be a character of $\La _0$ and $\psi :U^-\ra \C$ an unramified character. Then $\chi $ determines an extension $\widetilde{\chi}$ to $\tT$ as well as a representation $\iota (\chi )$ of $\tT$ (induced from its centralizer). In turn, $\iota (\chi )$ determines an unramified principal series representation of $\tG.$ Let $\phi_K$ be a spherical vector; then there is a $\xi_{\widetilde{\chi}}\in (\iota (\chi ))^{\ast }$ a complex valued linear functional corresponding to $\chi$ and $\phi _K.$ From these we arrive at the Whittaker function 
\begin{equation}\label{eq:Cvalued_Whittaker}
 \W =\W _{\widetilde{\chi }}:\ g\mapsto \xi _{\tilde{\chi }}\left(\int _{U^-} \phi _K(ug)\psi (u)du\right).
\end{equation}

\subsubsection{Comparison}
It is a consequence of the construction (see \cite[Section 6]{cgp}) that $\W$ satisfies 
$$\W(\zeta ugk)=\zeta \phi (u)\W (g),\hskip .cm \zeta \in \mu _n,\ u\in U,\ g\in \tilde{G},\ k\in K.$$
This fact and the Iwasawa decomposition $G=UTK$ together imply that it suffices to compute $\W$ on $\tilde{T}.$ The identification $\widetilde{\chi}(\varpi^{\la})=\x^{\la }$ interprets the action of the Weyl group on $\La$ as $W$ acting on $\widetilde{\chi}.$ Setting $v=t^n=q^{-1}$ where $q$ is the order of the residue field $\OO/\varpi\OO,$ it makes sense to talk about a value $(\delta ^{-1/2}\W_{\widetilde{\chi}})(\varpi ^{\la })$ in terms of the expressions produced by metaplectic Demazure and Demazure-Lusztig operators acting on monomials. (Here $\delta$ is the modular quasicharacter of $\tB.$) In particular, by \cite[Theorem 15.2]{mcnamara2}, one has
\begin{theorem}\label{thm:Whit_DL_McN}
\cite[Theorem 16.]{cgp} For a dominant coweight $\la $
\begin{equation}\label{eq:Whit_DL_McN}
\begin{split}
(\delta ^{-1/2}\W_{\widetilde{\chi}})(\varpi ^{\la }) = & \prod _{\alpha \in \Phi ^+}(1-q^{-1}x^{m(\alpha )\alpha })\Dem _{w_0} (\x^{w_0\la })\\
= & \left(\sum_{w\in W}\T_w\right) (\x^{w_0\la })
\end{split}
\end{equation}
\end{theorem}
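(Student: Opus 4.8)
\textbf{Proof proposal for Theorem \ref{thm:Whit_DL_McN}.}

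The plan is to derive this as essentially a translation result: the content is already contained in \cite[Theorem 15.2]{mcnamara2} (or its type $A$ specialization in \cite{co-cs}), and what needs to be carried out here is the identification of McNamara's ``sum over the Weyl group with Chinta--Gunnells action'' description of $(\delta^{-1/2}\W_{\widetilde{\chi}})(\varpi^{\la})$ with the operator expression $\prod_{\alpha\in\Phi^+}(1-q^{-1}\x^{m(\alpha)\alpha})\cdot\Dem_{w_0}(\x^{w_0\la})$. Once that first equality is in place, the second equality in \eqref{eq:Whit_DL_McN} is immediate from Theorem \ref{THM:T_SUM}, since by definition $\tDelta = \prod_{\alpha\in\Phi^+}(1-v\cdot\x^{n\alpha})$ and, with $v=q^{-1}$ and $m(\alpha)=n$ for all roots in type $A$ (see Remark \ref{LEMMA:SUBLATTICE} and the discussion after \eqref{eq:Qdef}), this agrees with $\prod_{\alpha\in\Phi^+}(1-q^{-1}\x^{m(\alpha)\alpha})$; Theorem \ref{THM:T_SUM} then gives $\tDelta\cdot\Dem_{w_0}=\sum_{w\in W}\T_w$, so both lines of \eqref{eq:Whit_DL_McN} coincide after applying the operators to $\x^{w_0\la}$.

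First I would recall precisely the shape of McNamara's formula: the value $(\delta^{-1/2}\W_{\widetilde{\chi}})(\varpi^\la)$ is expressed as $\sum_{w\in W} w\text{-term}$, where each summand is built from the Chinta--Gunnells action applied to $\x^{w_0\la}$ (or an equivalent normalization), weighted by the metaplectic analogue of the factors $\prod_{\alpha\in\Phi(w^{-1})}\cdots$ that appear in the Demazure formula. This is exactly the expression that Theorem \ref{THM:LONG_WORD} (\cite[Theorem 3.]{cgp}) identifies with $\Delta\cdot\Dem_{w_0}$: that theorem states
$$\Dem_{w_0}=\frac{1}{\Delta}\sum_{w\in W}\sgn(w)\prod_{\alpha\in\Phi(w^{-1})}\x^{n\alpha}\cdot w,$$
where now $w$ denotes the Chinta--Gunnells action. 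Multiplying through by $\ttDelta$ and using that $\ttDelta/\Delta=\prod_{\alpha\in\Phi^+}(1-v\x^{n\alpha})/\prod_{\alpha\in\Phi^+}(1-\x^{n\alpha})$ reproduces exactly the deformation factors in McNamara's sum. So the core of the argument is a careful side-by-side comparison of normalizations: matching McNamara's parameters (his Gauss sums, his choice of $\psi$, his normalization of the principal series and of $\delta$) with the parameters $v,g_0,\dots,g_{n-1}$ fixed in section \ref{subsubsect:Gauss_sums}, using Claim \ref{claim:GaussSumRelationship} to see that the Gauss sums agree, and checking that $w_0\la$ appears on the correct side (which is why the monomial is $\x^{w_0\la}$ rather than $\x^\la$ — this is the standard ``lowest weight'' bookkeeping already carried out for the nonmetaplectic case in section \ref{subsect:TokuyamaLHS}, equations \eqref{eq:LHS_Tokuyama_rewrite_la+rho} through \eqref{eq:LHS_Tokuyama_rewrite_Demazure}).

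The main obstacle I expect is precisely this normalization matching: \cite{mcnamara2}, \cite{co-cs}, and \cite{cgp} each use slightly different conventions for the Weyl group action on characters (whether one acts on $\widetilde{\chi}$ or on the argument), for the modular character $\delta$, for the direction of the Iwasawa decomposition ($G=UTK$ versus $G=U^-TK$), and for whether Gauss sums appear as $g_i$ or $v^{-1}g_i = q\cdot g^\flat(i)$. None of this is deep, but it requires care, and the cleanest route is to invoke the fact — already established in \cite[Section 6]{cgp}, and stated there as \cite[Theorem 16.]{cgp} — that this reconciliation has been done, so that Theorem \ref{thm:Whit_DL_McN} here is literally a restatement of that theorem specialized to type $A$. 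With that citation the proof is essentially: quote \cite[Theorem 16.]{cgp} for the first equality, then apply Theorem \ref{THM:T_SUM} (with $v=q^{-1}$, $m(\alpha)=n$) for the second. \qed
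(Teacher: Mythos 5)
Your proposal matches the paper's treatment: the first equality is quoted from \cite[Theorem 16.]{cgp} (itself resting on \cite[Theorem 15.2]{mcnamara2}), the second follows from Theorem \ref{THM:T_SUM} with $v=q^{-1}$ and $m(\alpha)=n$, and the paper likewise records the type $A$ cross-check via Chinta--Offen's formula combined with Theorem \ref{THM:LONG_WORD} and the identity $j(w,x)=\sgn(w)\prod_{\alpha\in\Phi(w^{-1})}x^{m(\alpha)\alpha}$. No substantive differences.
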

Note that the equality of the two lines on the right-hand side is a consequence of \cite[Theorem 4.]{cgp} (i.e.~Theorem \ref{THM:T_SUM}). Theorem \ref{eq:Whit_DL_McN} is valid in any type. 

We finish this comparison by arriving at the same statement in type $A$ using results of \cite{co-cs}. As in \cite[Section 9]{co-cs} define 
\begin{equation}\label{eq:jwxdef}
 j(w,x)= \frac{\prod _{\alpha \in \Phi ^+} (1-x^{m(\alpha )\alpha })}{\prod _{\alpha \in w(\Phi ^+)} (1-x^{m(\alpha )\alpha })}.
\end{equation}

We have the following formula of Chinta-Offen:

\begin{theorem}\label{thm:CO}
\cite[Theorem 4]{co-cs} Let $\la $ be a dominant coweight. Then 
 $$(\delta ^{-1/2}\W _{\tilde {\chi }})(\varpi ^{\la })=c_{w_0}(x)\cdot \sum _{w\in W} j(w,x)\cdot w(x^{w_0\la })$$
where $w$ acts on $x^{\la }$ as in Definition \ref{def:action}. 
\end{theorem}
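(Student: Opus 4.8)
\emph{Proof plan.} This is the type~$A$ form of the comparison between the two descriptions of the metaplectic Whittaker function, and I would derive it directly from two results of \cite{cgp} already recalled above. Throughout I use that in type $A_r$ every root satisfies $Q(\alpha)=1$, hence $m(\alpha)=n$ for all $\alpha\in\Phi$ and $m(-\alpha)=m(\alpha)$; in particular the deformed Weyl denominator \eqref{eq:deformeddenom_metaplectic} with $v=q^{-1}$ is $\tDelta=\prod_{\alpha\in\Phi^{+}}(1-q^{-1}\x^{n\alpha})$, and the $W$-action implicit in $\Dem_{w_0}$ is, by construction, the one generated by the $\sigma_i$ of Definition~\ref{def:action}, i.e.\ exactly the action appearing in the statement.

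First I would apply Theorem~\ref{thm:Whit_DL_McN} (whose two right-hand sides agree by Theorem~\ref{THM:T_SUM}) in the form $(\delta^{-1/2}\W_{\tilde\chi})(\varpi^{\la})=\tDelta\cdot\Dem_{w_0}(\x^{w_0\la})$, and then substitute the metaplectic Demazure formula Theorem~\ref{THM:LONG_WORD},
\[
\Dem_{w_0}=\frac{1}{\Delta}\sum_{w\in W}\sgn(w)\cdot\prod_{\alpha\in\Phi(w^{-1})}\x^{n\alpha}\cdot w ,
\]
obtaining $(\delta^{-1/2}\W_{\tilde\chi})(\varpi^{\la})=\frac{\tDelta}{\Delta}\sum_{w\in W}\bigl(\sgn(w)\prod_{\alpha\in\Phi(w^{-1})}\x^{n\alpha}\bigr)\,w(\x^{w_0\la})$. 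I would then set $c_{w_0}(x):=\tDelta/\Delta=\prod_{\alpha\in\Phi^{+}}\frac{1-q^{-1}\x^{n\alpha}}{1-\x^{n\alpha}}$, which does not depend on $w$.

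The only genuine (though routine) remaining step is the root-system identity $j(w,x)=\sgn(w)\prod_{\alpha\in\Phi(w^{-1})}\x^{n\alpha}$ for $j(w,x)$ as in \eqref{eq:jwxdef}. I would prove it using $m(\alpha)=n$ together with the two disjoint decompositions $w(\Phi^{+})=w(\Phi^{+}\setminus\Phi(w))\sqcup(-\Phi(w^{-1}))$ and $\Phi^{+}=w(\Phi^{+}\setminus\Phi(w))\sqcup\Phi(w^{-1})$, which give
\[
\prod_{\alpha\in w(\Phi^{+})}(1-\x^{n\alpha})=\frac{\prod_{\alpha\in\Phi^{+}}(1-\x^{n\alpha})}{\prod_{\alpha\in\Phi(w^{-1})}(1-\x^{n\alpha})}\cdot\prod_{\alpha\in\Phi(w^{-1})}(1-\x^{-n\alpha});
\]
writing $\prod_{\alpha\in\Phi(w^{-1})}(1-\x^{-n\alpha})=(-1)^{\ell(w)}\bigl(\prod_{\alpha\in\Phi(w^{-1})}\x^{-n\alpha}\bigr)\prod_{\alpha\in\Phi(w^{-1})}(1-\x^{n\alpha})$, using $|\Phi(w^{-1})|=\ell(w^{-1})=\ell(w)$ and $(-1)^{\ell(w)}=\sgn(w)$, and dividing into $\prod_{\alpha\in\Phi^{+}}(1-\x^{n\alpha})$ yields the claim; feeding it back into the sum above gives the theorem with $c_{w_0}(x)=\tDelta/\Delta$. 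The main point to watch is this last sign-and-length bookkeeping, together with the pedantic check that the $c_{w_0}(x)$ obtained here is literally the normalization of \cite[Theorem~4]{co-cs}; since everything takes place in the field $\ffield$ of rational functions, no analytic or convergence issues intervene.
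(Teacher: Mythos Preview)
Your derivation is correct and uses exactly the same ingredients the paper does: Theorem~\ref{thm:Whit_DL_McN}, Theorem~\ref{THM:LONG_WORD}, and the identity \eqref{eq:jwxrewrite} for $j(w,x)$ (which you prove in detail, whereas the paper simply asserts it as ``a simple computation''). Note, however, that the paper does not actually \emph{prove} Theorem~\ref{thm:CO}: it is quoted as \cite[Theorem~4]{co-cs}, an independent result of Chinta--Offen, and the paper then runs the computation in the \emph{opposite} direction---starting from Theorem~\ref{thm:CO}, rewriting $j(w,x)$ via \eqref{eq:jwxrewrite}, and combining with Theorem~\ref{THM:LONG_WORD} to recover the type~$A$ case of Theorem~\ref{thm:Whit_DL_McN}. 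So you have inverted the paper's logic: you deduce the Chinta--Offen formula from McNamara's, while the paper uses Chinta--Offen as an independent input and checks consistency. Both directions are the same algebraic identity, and your caveat about verifying that $c_{w_0}(x)=\tDelta/\Delta$ matches the normalization in \cite{co-cs} is exactly the one loose end in either reading.
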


We may rewrite $j(w,x)$ in a more familiar form. A simple computations shows that for any $w\in W$ we have
\begin{equation}\label{eq:jwxrewrite}
 j(w,x)= \sgn (w)\cdot \prod _{\alpha \in \Phi (w^{-1})} x^{m(\alpha )\alpha }. 
\end{equation}
Combining Theorem \ref{thm:CO} with \cite[Theorem 3.]{cgp} (i.e.~Theorem \ref{THM:LONG_WORD}) we arrive at the type $A$ special case of Theorem \ref{thm:Whit_DL_McN} once again.

\subsection{Construction via crystals}\label{section:crystal_Whitt}

McNamara \cite[Section 8]{mcnamara} expresses the value of a Whittaker function $I_{\la }$ as a sum over a the highest weight crystal $\Cr _{\la +\rho }.$ Here $\la $ is a dominant weight and $I_{-w_0\la }$ is the same as $(\delta ^{-1/2}\W _{\tilde {\chi }})(\varpi ^{\la })$ above, up to a relatively trivial constant factor. (It follows from our argument below that the value of this factor is in fact one.) The crystal $\Cr _{\la +\rho }$ is parametrized in \cite{mcnamara} in terms of Lusztig data; to compare results we sketch the translation of Mcnamara's result into the notation of Gelfand-Tsetlin arrays. 

\subsubsection{Lusztig data and McNamara's result}
We recall notation from \cite[Section 8.]{mcnamara}. Note first that the long word chosen in {\em{loc. cit}} agrees with our choice of $w_0$ from \eqref{eq:def_of_favourite_w0}. Let $\la =(\la _1,\ldots ,\la _{r+1})$ and let us use the notation for a root system of type $A_r$ as before (see section \ref{subs:CGDemNotation}). In particular, recall that we have $\Phi ^+=\{\alpha _{i,j}=e_i-e_j\mid 1\leq i<j\leq r+1\}.$ 
\begin{prop}\label{prop:Lusztig_par}
\cite[Proposition 8.3]{mcnamara} Elements of $\Cr_{\la+\rho }$ are in bijection with tuples $\mm =(m_{ij})_{1\leq i<j\leq r+1}$ where $0\leq m_{ij}$ integers and
\begin{equation}\label{eq:mij_upper_bound_old}
 \sum _{k=j}^{r+1} m_{i,k} \leq \la _{i}-\la _{i+1}+1+\sum _{k=j}^{r} m_{i+1,k+1}.
\end{equation}
\end{prop}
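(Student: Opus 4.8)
The plan is to recall the classical bijection between elements of the highest weight crystal $\Cr_{\la+\rho}$ and \emph{string parametrizations} (equivalently, the $BZL$-arrays) associated to the reduced word $w_0 = \sigma_{\Omega_1}\cdots\sigma_{\Omega_N}$, and then simply translate that combinatorial datum into Lusztig (PBW) coordinates. Concretely, by Proposition \ref{prop:crystal_parametrization}, a vertex $v\in\Cr_{\la+\rho}$ is determined by its $BZL$-array $BZL(v) = (b_i)_{1\le i\le\binom{r+1}{2}}$, and under the correspondence of part (iv) this is exactly the $\Ga$-array $\Ga(\IP)$ of the Gelfand-Tsetlin pattern $\IP$ with top row $\la+\rho$. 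So a first step is to write down the obvious reindexing: given the favourite long word \eqref{eq:def_of_favourite_w0}, the positive roots $\alpha_{i,j}$ appearing at successive letters are indexed in a specific order, and I would set $m_{ij}$ equal to the entry of $BZL(v)$ (i.e. the exponent $b$ of the corresponding Kashiwara operator) at the position where the reflection through $\alpha_{i,j}$ occurs. Matching the word $\sigma_1\,\sigma_2\sigma_1\,\cdots\,\sigma_r\cdots\sigma_1$ against the roots, one checks that the $i$-th ``block'' $\sigma_{r}\cdots\sigma_{r-i+1}$ (read in the appropriate direction) records precisely the data $(m_{i,i+1},\dots,m_{i,r+1})$.

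The second step is to identify the admissibility region. The $BZL$-array of an element of $\Cr_{\la+\rho}$ is constrained exactly by the inequalities \eqref{eq:GammaArrayIneq} on the $\Ga$-array — the rows are nonnegative, non-increasing, and each row's ``drop'' is bounded above by the corresponding difference of entries in the row above. Rewriting these in terms of the $m_{ij}$ via partial sums $\sum_{k=j}^{r+1}m_{ik} = \Ga_{ij}$ (up to the obvious shift in which row of the pattern one is on) converts the constraint on $\Ga_{ij}$ into precisely the stated bound \eqref{eq:mij_upper_bound_old}: the quantity $\la_i-\la_{i+1}+1$ is the relevant difference $a_{i-1,i-1}-a_{i-1,i}$ coming from the top row $\la+\rho$ of the pattern (since $(\la+\rho)_i-(\la+\rho)_{i+1} = \la_i-\la_{i+1}+1$), and the term $\sum_{k=j}^{r}m_{i+1,k+1}$ accounts for the nested contribution from the next row down. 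Then bijectivity is immediate: the reindexing $v\leftrightarrow BZL(v)\leftrightarrow\IP(v)\leftrightarrow\mm$ is a composite of bijections already established (Proposition \ref{prop:crystal_parametrization}) with an explicit relabeling of coordinates, and the admissibility inequalities cut out the image.

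The main obstacle — really the only nontrivial point — is getting the bookkeeping exactly right: which letter of the favourite long word corresponds to which root $\alpha_{i,j}$, hence which $b_\ell$ becomes which $m_{ij}$, and confirming that the triangular inequality system \eqref{eq:GammaArrayIneq} transforms cleanly into \eqref{eq:mij_upper_bound_old} under the passage from $\Ga$-entries to the tail-sums $m_{ij}$. I would do this by first treating $r=1$ and $r=2$ explicitly (where one can just compare with Example \ref{example:BZLofelement220}), extracting the pattern, and then writing the general index dictionary. Since the statement is attributed to \cite[Proposition 8.3]{mcnamara} and only needs to be reconciled with the Gelfand-Tsetlin conventions of \cite{bbf-wmdbook} used here, I expect the proof to consist essentially of this translation together with a pointer to Proposition \ref{prop:crystal_parametrization}, with no new crystal-theoretic input required.
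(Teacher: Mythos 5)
The paper gives no proof of this proposition at all: it is quoted verbatim from McNamara, where it is established via the PBW/Iwasawa-cell parametrization of the canonical basis, and the reconciliation with Gelfand--Tsetlin data that you propose as ``the proof'' is carried out separately, later in the same section, in \eqref{eq:bijection_r_Gamma}--\eqref{eq:m_from_Ga}. Your overall strategy --- exhibit a bijection by translating the inequalities \eqref{eq:GammaArrayIneq} on $\Ga$-arrays into the system \eqref{eq:mij_upper_bound_old} --- is viable if all one wants is the bare bijection statement, but the dictionary you write down is not the right one, and this is exactly the ``bookkeeping'' you defer. You set $\sum_{k=j}^{r+1} m_{i,k}=\Ga_{ij}$, i.e.\ a tail sum over the second index with the first index (the row of $\mm$) matched to the row of $\Ga$. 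Already at $r=2$ this fails: with that identification $\Ga_{12}=m_{13}$, so \eqref{eq:GammaArrayIneq} would force $m_{13}\le (\la_2+1)-\la_3$, whereas \eqref{eq:mij_upper_bound_old} with $i=1$, $j=3$ gives $m_{13}\le\la_1-\la_2+1$. The correct correspondence, recorded in \eqref{eq:bijection_r_Gamma}, is $\Ga_{h,k}=r_{r+1-k,\,r+2-h}$ with $r_{i,j}=\sum_{k\le i}m_{k,j}$: a \emph{column} partial sum of $\mm$, with both indices transposed and reversed, so that a row of $\Ga$ collects the $m_{i,j}$ with $j$ fixed and $i$ varying. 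Moreover that identification naturally matches the Lusztig data of $\Cr_{-w_0\la+\rho}$ (inequalities \eqref{eq:mij_upper_bound_new}) with the $\Ga$-arrays of $\Cr_{\la+\rho}$, so a duality $\la\mapsto -w_0\la$ has to be threaded through before you recover \eqref{eq:mij_upper_bound_old} as stated.

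A second, more conceptual caveat: you treat ``Lusztig data'' and the string/$BZL$ parametrization as the same thing. They are different parametrizations of a crystal in general (related by a nontrivial piecewise-linear map), and McNamara's Proposition~8.3 really is about the former. For the literal statement ``elements of $\Cr_{\la+\rho}$ are in bijection with tuples satisfying \eqref{eq:mij_upper_bound_old}'' any bijection suffices, so your route can be repaired; but if the $m_{ij}$ are to carry the meaning needed for Theorem \ref{thm:McN_crystal_Whittaker_Lusztig_notation} (the circling/boxing rules and the weight formula \eqref{eq:wght_v_r}), the specific correspondence matters, and it is the transposed one above, not the row-tail-sum one you wrote.
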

We write $\mm \in \Cr_{\la+\rho }$ for tuples as above. For an $\alpha =\alpha _{i,j}\in \Phi ^+$ we say $m_{\alpha }:=m_{i,j}$ is {\em{circled}} if $m_{i,j}=0,$ and {\em{boxed}} if equality holds in \eqref{eq:mij_upper_bound_old}. 
Furthermore, define 
\begin{equation}\label{eq:rij_def}
 r_{i,j}=r_{\alpha }=\sum _{k\leq i} m_{k,j}.
\end{equation}
Now we may use the functions $h^{\flat }$ and $g^{\flat }$ defined in Section \ref{subsubsect:gflathflatdef} to define a coefficient corresponding to $\mm\in \Cr _{\la +\rho }$. Let
\begin{equation}\label{eq:def_wmalpha}
 w(\mm,\alpha )=\left\lbrace\begin{array}{ll}
                            1 & \text{if $m_{\alpha }$ is circled, but not boxed,}\\
                            h^{\flat }(r_{\alpha }) & \text{if $m_{\alpha }$ is not circled and not boxed,}\\
                            g^{\flat }(r_{\alpha }) & \text{if $m_{\alpha }$ is boxed, but not circled,}\\
                            0 & \text{if $m_{\alpha }$ is both circled and boxed}
                                                                                        \end{array}\right.
\end{equation}
It is straightforward to check that \eqref{eq:def_wmalpha} defines the same weight as \cite[(8.2)]{mcnamara} by comparing section \ref{subsubsect:gflathflatdef} with \cite[(2.1)]{mcnamara} and \cite[Lemma 2.5]{bbf-wmd2}.
%(To check that this agrees with \cite[(8.2)]{mcnamara}, note that $-1\leq s_{\alpha }$ with equality if and only if $m_{\alpha }$ is boxed. If $m_{\alpha}$ is not boxed, then $0\leq s_{\alpha }$ and by \cite[Proposition 2.1 (3)]{mcnamara} $q^{-1}\tg(r_\alpha ,s_\alpha )=h^{\flat }(r_{\alpha }).$ If $m_{\alpha}$ is boxed, then $q^{-1}\tg(r_\alpha ,s_\alpha )=q^{-1}\tg(r_\alpha ,-1)=g^{\flat }(r_{\alpha }).$ To see the last equality, compare section \ref{subsubsect:gflathflatdef} with \cite[(2.1)]{mcnamara} and \cite[Lemma 2.5]{bbf-wmd2}.)

With the notation as above, one has

\begin{theorem}\label{thm:McN_crystal_Whittaker_Lusztig_notation}
\cite[Theorem 8.6]{mcnamara2} The value of the integral $I_{\la}$ which calculates the metaplectic Whittaker function is zero unless $\la $ is dominant; and for dominant $\la $ it is given by 
\begin{equation}\label{eq:Peters_crystal_formula}
 I_{\la}=\sum _{\mm\in \Cr _{\la +\rho }} \prod _{\alpha \in \Phi ^+} w(\mm ,\alpha )\cdot x^{m_{\alpha }\alpha }.
\end{equation}
\end{theorem}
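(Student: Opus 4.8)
The statement is \cite[Theorem 8.6]{mcnamara2}, so the task in the present section is not to reprove it from the ground up but to recall the structure of McNamara's argument and, more importantly, to reconcile \eqref{eq:Peters_crystal_formula} with the other descriptions of the metaplectic Whittaker function given above. The plan is therefore: (1) cite the cell-decomposition computation of the Jacquet integral $I_\lambda$ that underlies McNamara's proof; (2) check that the local coefficient $w(\mm,\alpha)$ of \eqref{eq:def_wmalpha}, which we have rewritten in terms of the $g^\flat,h^\flat$ of Section \ref{subsubsect:gflathflatdef}, agrees with McNamara's coefficient \cite[(8.2)]{mcnamara}; and (3) observe that the resulting identity is consistent with Theorem \ref{thm:metaplectic_Tokuyama}.

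McNamara's proof evaluates the integral $I_\lambda$ over $U^-$ through the Bruhat-type decomposition of $U^-$ adapted to the chosen reduced word for $w_0$ (the same word \eqref{eq:def_of_favourite_w0} we use); each cell contributes a product of rank-one exponential sums which, after the unramifiedness reductions, are the Gauss sums $g(a)$ and $h(a)$ of Section \ref{subsubsect:Gauss_sums}. The combinatorics of which cells are nonempty, and with which boundary behaviour, is recorded by the Lusztig data $\mm=(m_{ij})$ of Proposition \ref{prop:Lusztig_par} (\cite[Proposition 8.3]{mcnamara}): the inequality \eqref{eq:mij_upper_bound_old} is precisely the nonemptiness condition, while "circled" ($m_\alpha=0$) and "boxed" (equality in \eqref{eq:mij_upper_bound_old}) record the degeneracies that determine whether the local factor is $1$, $h^\flat(r_\alpha)$, $g^\flat(r_\alpha)$, or $0$. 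I would not reproduce this part; it is purely a citation to \cite{mcnamara}.

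What does need checking here is step (2): that $w(\mm,\alpha)$ as defined in \eqref{eq:def_wmalpha} coincides with the coefficient appearing in \cite[(8.2)]{mcnamara}. This is a normalization comparison, carried out by matching our conventions for $g^\flat$ and $h^\flat$ against \cite[(2.1)]{mcnamara} and the power-residue identities of \cite[Lemma 2.5]{bbf-wmd2}, using \eqref{eq:h_and_g_flatdef}, \eqref{eq:gflat0}, \eqref{eq:gflat_identity} and Claim \ref{claim:GaussSumRelationship}. For the consistency check (3): combining Theorem \ref{thm:Whit_DL_McN} with Theorem \ref{thm:metaplectic_Tokuyama} rewrites the Whittaker value $(\delta^{-1/2}\W_{\widetilde{\chi}})(\varpi^{\la})$ as $\x^{-w_0\rho}\sum_{v\in\Cr_{\la+\rho}}G^{(n,\la+\rho)}(v)\cdot\x^{\wght(v)}$, which is a sum over a highest weight crystal of Gelfand--Tsetlin coefficients of exactly the shape of the right-hand side of \eqref{eq:Peters_crystal_formula}; matching the two term by term under the bijection of Proposition \ref{prop:crystal_parametrization} (crystal vertices $\leftrightarrow$ Gelfand--Tsetlin patterns $\leftrightarrow$ $BZL$/Lusztig data, with the weight read off via \eqref{eq:weight_of_v_from_BZL}) pins the "relatively trivial constant factor" mentioned before Proposition \ref{prop:Lusztig_par} to be exactly $1$.

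The main obstacle is bookkeeping rather than conceptual: the two sides live in different parametrizations — Lusztig data $\mm$ indexed by the positive roots versus the $\Gamma$-array $(\Gamma_{ij})$ indexed by the triangular pattern — related by an explicit but slightly fiddly change of variables, under which one must verify that the circled/boxed rules for $\mm$ correspond to the decoration rules \eqref{eq:decoration_rules_Gamma}, that the index $r_{i,j}$ of \eqref{eq:rij_def} matches $\Gamma_{ij}$ of \eqref{eq:defn:Gamma_entries}, and that equality in \eqref{eq:mij_upper_bound_old} matches the "boxed" line of \eqref{eq:GTfactorforGammadef_n}. Once that dictionary is fixed, the two expressions for the Whittaker function are visibly the same.
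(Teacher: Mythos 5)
Your proposal matches the paper's treatment: the theorem is taken as a citation to McNamara, and the only work done locally is the normalization check that $w(\mm,\alpha)$ in \eqref{eq:def_wmalpha} agrees with the coefficient of \cite[(8.2)]{mcnamara} (via Section \ref{subsubsect:gflathflatdef} and \cite[Lemma 2.5]{bbf-wmd2}), followed by the dictionary between Lusztig data and $\Gamma$-arrays carried out in the next subsection. Your observation that the ``relatively trivial constant factor'' is pinned to $1$ by consistency with Theorem \ref{thm:metaplectic_Tokuyama} is exactly the remark the paper makes before Proposition \ref{prop:Lusztig_par}.
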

As before, we write $\x$ such that that $\chi (\varpi ^{\la })=\x^{\la }$ for the unramified $\chi $ used to define the principal series representation.

\subsubsection{Translation into Gelfand-Tsetlin language}

It is convenient to compare the Lusztig data of $\Cr_{-w_0\la +\rho }$ to the Gelfand-Tsetlin arrays for $\Cr_{\la +\rho }.$ For $\Cr_{-w_0\la +\rho },$ the condition \eqref{eq:mij_upper_bound_old} is replaced by $\mm\in \Cr_{-w_0\la +\rho }$ if and only if $m_{\alpha}\geq 0$ and
\begin{equation}\label{eq:mij_upper_bound_new}
 \sum _{k=j}^{r+1} m_{i,k} \leq \la _{r+1-i}-\la _{r+2-i}+1+\sum _{k=j}^{r} m_{i+1,k+1};
\end{equation}
and $m_{i,j}$ is boxed if \eqref{eq:mij_upper_bound_new} is satisfied with an equality. 

Consider the following bijection between $\mm\in \Cr_{-w_0\la +\rho }$ and $\Gamma (\IP_v)$ for $v\in \Cr _{\la +\rho }:$
\begin{equation}\label{eq:bijection_r_Gamma}
 \mm \mapsto \Gamma (\IP )\hskip .5 cm \text{if}\hskip .5 cm \Gamma _{h,k}=r_{r+1-k,r+2-h}. 
\end{equation}
Note that $(h,k)$ satisfy $1\leq h\leq k\leq r$ if and only if $i=r+1-k$ and $j=r+2-h$ satisfy $1\leq i<j\leq r+1.$ Further, $ m_{i,j}=r_{i,j}-r_{i-1,j}.$ The bijection may be expressed in terms of the corresponding $\Gamma $-array as 
\begin{equation}\label{eq:m_from_Ga}
 m_{i,j}=\Ga _{h,k}-\Ga _{h,k+1}.
\end{equation}
Thus $m_{i,j}$ is circled if and only if $\Ga _{h,k}=\Ga _{h,k+1},$ i.e. $\Ga _{h,k}$ is circled by Definition \ref{def:array_decorations}. Similarly, one may check that $m_{i,j}$ is boxed if and only if $\sum _{t=j}^{r+1} m_{i,t} = a_{h-1,k-1}-a_{h,k}$ i.e. if and only if $\Ga _{h,k}$ is boxed. Comparing the definition of $w(\mm ,\alpha )$ in \eqref{eq:def_wmalpha} with the definition of the Gelfand-Tsetlin coefficients in \ref{def:GTcoeff}, we find that for any $\mm \in \Cr_{-w_0\la +\rho }$ and corresponding $v\in \Cr_{\la +\rho },$ we have 
\begin{equation}\label{eq:coeff_rewrite}
 \prod _{\alpha \in \Phi ^+} w(\mm ,\alpha )=G^{(n,\la +\rho )}(v).
\end{equation}
An other direct computation shows that 
\begin{equation}\label{eq:wght_v_r}
 \wght(v)-w_0(\la +\rho )= \sum _{\alpha \in \Phi ^+} m _{\alpha }\alpha .
\end{equation}

Then Theorem \ref{thm:McN_crystal_Whittaker_Lusztig_notation}, \eqref{eq:coeff_rewrite} and \eqref{eq:wght_v_r} immediately yields the following. 

\begin{theorem}\label{prop:McN_crystal_Whittaker_our_notation}
 The value of the integral $I_{-w_0\la}$ which calculates the metaplectic Whittaker function is zero unless $\la $ is dominant; and for dominant $\la $ it is given by 
\begin{equation}\label{eq:Peters_crystal_formula_ournotation}
 I_{-w_0\la}=\x^{-w_0(\la +\rho )}\cdot \sum _{v\in \Cr_{\la +\rho }} G^{(n,\la +\rho )}(v)\cdot \x^{\wght(v)}.
\end{equation}
\end{theorem}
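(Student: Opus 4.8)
The plan is to apply Theorem \ref{thm:McN_crystal_Whittaker_Lusztig_notation} with the dominant weight there taken to be $-w_0\la$, and then transport the resulting sum from the Lusztig-data parametrization of $\Cr_{-w_0\la+\rho}$ to the Gelfand--Tsetlin parametrization of $\Cr_{\la+\rho}$ via the bijection \eqref{eq:bijection_r_Gamma}. First I would record that $-w_0$ preserves the dominant cone in type $A_r$: since $w_0\la=(\la_{r+1},\ldots,\la_1)$, the weight $-w_0\la$ is dominant exactly when $\la_1\geq\cdots\geq\la_{r+1}$, i.e. exactly when $\la$ is dominant. Hence the vanishing clause of Theorem \ref{thm:McN_crystal_Whittaker_Lusztig_notation} for $I_{-w_0\la}$ is precisely the vanishing clause of the present statement, and for the rest of the argument I may assume $\la$ dominant.

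For dominant $\la$, Theorem \ref{thm:McN_crystal_Whittaker_Lusztig_notation} applied to $-w_0\la$ gives
\[
 I_{-w_0\la}=\sum_{\mm\in\Cr_{-w_0\la+\rho}}\ \prod_{\alpha\in\Phi^+}w(\mm,\alpha)\cdot x^{m_\alpha\alpha},
\]
the sum running over tuples $\mm=(m_{ij})$ of nonnegative integers satisfying \eqref{eq:mij_upper_bound_new}. I would then invoke the bijection \eqref{eq:bijection_r_Gamma}, under which such an $\mm$ corresponds to the $\Gamma$-array $\Gamma(\IP)$ of a Gelfand--Tsetlin pattern $\IP$ with top row $\la+\rho$, hence, by Proposition \ref{prop:crystal_parametrization}, to a vertex $v\in\Cr_{\la+\rho}$. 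Under this identification the coefficient is rewritten by \eqref{eq:coeff_rewrite} as $\prod_{\alpha\in\Phi^+}w(\mm,\alpha)=G^{(n,\la+\rho)}(v)$, and the exponent by \eqref{eq:wght_v_r} as $\sum_{\alpha\in\Phi^+}m_\alpha\alpha=\wght(v)-w_0(\la+\rho)$, so that $x^{\sum_\alpha m_\alpha\alpha}=\x^{-w_0(\la+\rho)}\cdot\x^{\wght(v)}$. Substituting these and pulling the constant $\x^{-w_0(\la+\rho)}$ out of the sum yields \eqref{eq:Peters_crystal_formula_ournotation}.

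Since the two substitutions \eqref{eq:coeff_rewrite} and \eqref{eq:wght_v_r} have already been established in the preceding paragraphs --- the former via the decoration dictionary \eqref{eq:m_from_Ga} matching circled/boxed $m_{i,j}$ with circled/boxed $\Gamma_{h,k}$, the latter by direct weight bookkeeping --- the proof amounts to a reindexing of the sum, and I do not expect a genuine obstacle. The one point I would still spell out is that \eqref{eq:bijection_r_Gamma} is indeed a bijection of index sets: the admissibility bound \eqref{eq:mij_upper_bound_new} on $\mm$ translates, through $m_{i,j}=\Gamma_{h,k}-\Gamma_{h,k+1}$, into exactly the interleaving conditions making $\IP$ a legitimate Gelfand--Tsetlin pattern with top row $\la+\rho$, and conversely; granting this, no summand is created or destroyed and the two expressions agree term by term.
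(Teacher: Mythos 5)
Your proposal is correct and follows the same route as the paper: apply Theorem \ref{thm:McN_crystal_Whittaker_Lusztig_notation} to the dominant weight $-w_0\la$, then reindex the sum from the Lusztig data of $\Cr_{-w_0\la+\rho}$ to the Gelfand--Tsetlin parametrization of $\Cr_{\la+\rho}$ via \eqref{eq:bijection_r_Gamma}, using \eqref{eq:coeff_rewrite} for the coefficients and \eqref{eq:wght_v_r} for the exponents. The paper states this conclusion as an immediate consequence of those three ingredients; your write-up simply makes the reindexing and the dominance observation explicit.
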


The content of Theorem \ref{prop:McN_crystal_Whittaker_our_notation} identifies the ``crystal side'' of Theorem \ref{thm:metaplectic_Tokuyama} as a metaplectic Whittaker function. 

\subsection{Constructions of Iwahori-Whittaker functions}\label{section:IwWhit_DL}

The operators $\T_u$ lend themselves to the study of Whittaker functions not only through expressions of the long word, as seen in section \ref{section:Dem_Whitt} but also through their relationship to Iwahori-Whittaker functions. This was mentioned in \ref{subsubsection:BBLextend}; in this section, we elaborate on this connection by recalling results of \cite{bbl-demazure}, \cite{patnaik2014unramified} and \cite{patnaik2015casselman}. These sources express Iwahori-Whittaker functions in terms reminiscent of the left-hand side of Theorem \ref{THM:MAIN}:
\begin{equation}\label{eq:Tsum_op_whchap}
\sum_{u\leq w} \T_u
\end{equation}
in the nonmetaplectic finite-dimensional, loop group, and metaplectic finite-dimensional setting, respectively. 

\subsubsection{The Whittaker functional and Iwahori fixed vectors}\label{subsubect:BBL}
Brubaker, Bump and Licata \cite{bbl-demazure} consider the values of a Whittaker functional on Casselman's basis of functions in a principal series representation fixed by the Iwahori subgroup. They work with the classical, nonmetaplectic Demazure-Lusztig operators $\IP _w$ ($w\in W$). (Their definition \cite[(2), (3)]{bbl-demazure} essentially agrees with our Definition \ref{def:dl} when $n=1$.)

We recall some additional notation; see \cite{bbl-demazure} for the precise definitions of the objects involved. 
Let $\hat{T}$ be a split maximal torus of the Langlands dual $\hat{\bG};$ as explained in \cite[Section 2]{bbl-demazure}, $\z \in \hat{T}(\C )$ corresponds to an unramified character $\tau_z$ of $T.$ An element $\la \in \La $ corresponds to a coset $T(F)/T(\OO);$ let $a_{\la }$ be a coset representative. Consider the principal series representation $\pi =\Ind _B^G(\tau).$ Let $\Omega _{\tau }:\Ind _B^G(\tau)\ra \C$ denote the Whittaker functional. Let $J$ be the Iwahori subgroup (i.e. the preimage of $B^-(\OO/\varpi \OO)$ in $K$),  the space $\Ind _B^G(\tau)^J$ of Iwahori fixed vectors has a standard basis $\{\Phi _w^{\z}\}_{w\in W}$, whose elements are supported on Iwahori double-cosets. 

The Iwahori Whittaker functions $\W _{\la ,y}(\z)$ are (a convenient normalization of) the values of a Whittaker functional on standard basis elements:
\begin{equation}\label{eq:Whi_object_first}
 \W _{\la ,w}(\z)=\delta ^{-1/2} (a_{\la })\Omega _{\z ^{-1}}(\pi (a_{-\la })\Phi _w ^{z^{-1}}).
\end{equation}
We may also consider \cite[Section 5]{bbl-demazure} the modification $\tilde {\W} _{\la ,w}(\z)=\sum _{y\leq w} \W _{\la ,y}(\z).$ %(This corresponds to computing the values of the Whittaker functional on the alternate basis $(\tilde{\Phi}_w)_{w\in W}$ of $\Ind _B^G(\tau)^J$ given by $\tilde{\Phi}_w=\sum _{y\leq w}\Phi _w.$)
The connection between Iwahori Whittaker functions and Demazure-Lusztig operators is expressed by the following theorem. 

\begin{theorem}\label{thm:BBL_1}
\cite[Theorem 1.]{bbl-demazure} For any dominant weight $\la ,$ we have $\W _{\la ,1}(\z)=\z ^{\la }.$
Furthermore, if $w\in W$ and $\sigma _i$ is a simple reflection such that $\sigma _iw>w$ by the Bruhat order, then 
$$\W _{\la ,\sigma _iw}=\IP _i \W _{\la,w} (\z ).$$
\end{theorem}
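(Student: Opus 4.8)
\textbf{Proof proposal for Theorem \ref{thm:BBL_1}.}

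The plan is to establish the statement by a two-part argument: first the evaluation $\W_{\la,1}(\z)=\z^\la$ at the identity element of $W$, and then the recursion $\W_{\la,\sigma_i w}=\IP_i\W_{\la,w}$ along a Bruhat edge. Since this is quoted as Theorem 1 of \cite{bbl-demazure}, the cleanest route is to recall the structure of that proof rather than reprove everything from scratch. For the base case, I would unwind the definition \eqref{eq:Whi_object_first}: the standard Iwahori-fixed vector $\Phi_1^{\z^{-1}}$ is supported on the ``big cell'' Iwahori double coset, and the integral defining the Whittaker functional $\Omega_{\z^{-1}}$ over $U^-$ localizes onto that support. A direct computation of this integral — essentially the $p$-adic evaluation of a Jacquet-type integral on a single coset — together with the normalizing factor $\delta^{-1/2}(a_\la)$ and the action of $\pi(a_{-\la})$ (which multiplies by $\z^\la$ up to the modular character) yields exactly $\z^\la$. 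The only subtlety is bookkeeping of the normalizations, which is why the statement is phrased with the convenient normalization chosen in \cite[Section 5]{bbl-demazure}.

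The heart of the matter is the recursion. Here I would use the intertwining operator $\mathcal{A}_i:\Ind_B^G(\tau)\to\Ind_B^G(\sigma_i\tau)$ attached to the simple reflection $\sigma_i$, and its effect on Casselman's basis $\{\Phi_w^{\z}\}$. The key algebraic input is the standard formula expressing how $\mathcal{A}_i$ permutes (up to scalars depending on $\z$) the basis elements: when $\sigma_i w>w$, the image of $\Phi_w$ involves $\Phi_{\sigma_i w}$ with a coefficient built from the local $L$-factor $(1-\z^{\alpha_i})^{\pm1}$-type expressions. Applying the Whittaker functional $\Omega_{\z^{-1}}$ and using the functional equation relating $\Omega_{\z^{-1}}\circ\mathcal{A}_i$ to $\Omega_{(\sigma_i\z)^{-1}}$ (up to an explicit rational function of $\z$ — the ``$c$-function'' of the local intertwiner) converts the statement into an identity of rational functions. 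Unwinding the definition of $\IP_i$ from \eqref{def:dl} with $n=1$, one checks that the resulting operator acting on $\W_{\la,w}(\z)$ is precisely the Demazure--Lusztig operator: the term $(1-v\x^{\alpha_i})\cdot\frac{1-\x^{\alpha_i}\sigma_i}{1-\x^{\alpha_i}}-1$ matches the combination of the ``swap'' coming from $\mathcal{A}_i$ and the correction terms coming from the $c$-function and the fixed-vector expansion.

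The main obstacle I expect is the precise matching of normalization constants between the intertwiner coefficients appearing in Casselman's basis expansion and the coefficients $1-v\x^{\alpha_i}$, $(1-\x^{\alpha_i})^{-1}$ appearing in the definition of $\IP_i$; getting every factor of $\z^{\alpha_i}$, every sign, and every power of $q=v^{-1}$ in the right place is where the real work lies, and it is exactly the content of \cite[Section 4--5]{bbl-demazure}. A secondary point to be careful about is the Bruhat-order hypothesis $\sigma_i w>w$: this is what guarantees that the relevant intertwiner does not ``fold back'' onto a shorter element, so that the expansion of $\mathcal{A}_i\Phi_w$ genuinely produces $\Phi_{\sigma_i w}$ with an invertible leading coefficient. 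Once the recursion is in hand, the statement for $\tilde\W_{\la,w}=\sum_{y\leq w}\W_{\la,y}$ and hence the connection to $\sum_{u\leq w}\IP_u$ follows by summing over Bruhat intervals, though that refinement is stated separately in \cite{bbl-demazure} and is not part of the claim here.
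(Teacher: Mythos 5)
The paper does not prove this statement at all: Theorem \ref{thm:BBL_1} is recalled verbatim from \cite[Theorem 1]{bbl-demazure} as background in Section \ref{section:IwWhit_DL}, so there is no in-paper proof to compare against. Your outline is a faithful reconstruction of the strategy actually used in the cited source --- the base case by direct evaluation of the Jacquet-type integral on the big cell with the $\delta^{-1/2}$ normalization, and the recursion via the intertwining operator $\mathcal{A}_i$, its action on Casselman's basis, and the functional equation of the Whittaker functional against the $c$-function --- and you correctly identify the Bruhat condition $\sigma_i w>w$ as what keeps the intertwiner from folding back onto shorter elements.

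That said, as a proof the proposal is a roadmap rather than an argument: the one step that carries all the content, namely that the combination of the basis-expansion coefficients and the $c$-function correction is \emph{exactly} the operator $\left(1-v\,\x^{\alpha_i}\right)\dfrac{1-\x^{\alpha_i}\sigma_i}{1-\x^{\alpha_i}}-1$, is asserted (``one checks'') rather than carried out, and you explicitly defer it to \cite{bbl-demazure}. For a result that this paper itself only quotes, that is an acceptable level of detail; but be aware that nothing in your write-up would catch a sign or $q$-power error in that matching, which is precisely where such arguments usually go wrong. One small additional caution: the normalization in \eqref{eq:Whi_object_first} involves both $\z^{-1}$ in the functional and $a_{-\la}$ in the translation, so the base case is not quite ``the integral localizes and gives $\z^\la$'' --- the two inversions have to cancel, and that bookkeeping is part of why the statement holds with $\z^\la$ rather than $\z^{-\la}$.
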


The following straightforward corollary illustrates the relevance of operators \eqref{eq:Tsum_op_whchap}.
\begin{corollary}\label{cor:BBL_2}
 For any dominant weight $\la $ and $w\in W,$ we have 
$$\tilde {\W} _{\la ,w}(\z)=\left(\sum _{y\leq w} \IP _y \right) \z ^{\la }.$$
\qed
\end{corollary}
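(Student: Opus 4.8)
The statement to prove is Corollary~\ref{cor:BBL_2}, which asserts that
$$\tilde{\W}_{\la,w}(\z) = \left(\sum_{y\leq w} \IP_y\right)\z^{\la}$$
for any dominant weight $\la$ and any $w\in W$, given Theorem~\ref{thm:BBL_1}. The plan is an induction on the length $\ell(w)$, using the characterization of the Bruhat order in terms of adding a simple reflection on the left.

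\textbf{Base case.} When $w=1$, the left-hand side is $\tilde{\W}_{\la,1}(\z) = \sum_{y\leq 1}\W_{\la,y}(\z) = \W_{\la,1}(\z)$, which equals $\z^{\la}$ by the first part of Theorem~\ref{thm:BBL_1}. The right-hand side is $\IP_1\z^{\la}$ where $\IP_1$ here denotes the identity operator $\IP_e$ (the empty product), so both sides equal $\z^{\la}$.

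\textbf{Inductive step.} Suppose the formula holds for all Weyl group elements of length $\ell$, and let $w$ have length $\ell+1$. Choose a simple reflection $\sigma_i$ with $\sigma_i w' > w'$ and $w = \sigma_i w'$ for some $w'$ with $\ell(w') = \ell$. The key combinatorial input is the ``lifting property'' / $Z$-property of the Bruhat order: the set $\{y : y\leq \sigma_i w'\}$ decomposes as $\{y : y\leq w'\} \cup \{\sigma_i y : y\leq w',\ \sigma_i y > y\}$, and in fact one has the cleaner operator identity that, summing the Demazure--Lusztig operators, $\sum_{y\leq \sigma_i w'}\IP_y = \IP_i\bigl(\sum_{y\leq w'}\IP_y\bigr)$. (This is precisely the identity underpinning the recursion in \cite{bbl-demazure}; it follows from the quadratic relation satisfied by $\IP_i$ together with the decomposition above, and is the nonmetaplectic shadow of Lemma~\ref{lem:Bruhat_shorterwords}.) Granting this, we compute
$$\left(\sum_{y\leq w}\IP_y\right)\z^{\la} = \IP_i\left(\sum_{y\leq w'}\IP_y\right)\z^{\la} = \IP_i\,\tilde{\W}_{\la,w'}(\z),$$
using the inductive hypothesis in the last step. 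On the other side, $\tilde{\W}_{\la,w}(\z) = \sum_{y\leq w}\W_{\la,y}(\z)$; applying the recursion $\W_{\la,\sigma_i y} = \IP_i\W_{\la,y}$ from Theorem~\ref{thm:BBL_1} termwise over the decomposition of $\{y\leq w\}$ (together with the fact that $\IP_i$ applied to $\W_{\la,y}$ for $y$ with $\sigma_i y < y$ reproduces $\W_{\la,y} + \W_{\la,\sigma_i y}$, again via the quadratic relation) shows $\tilde{\W}_{\la,w}(\z) = \IP_i\,\tilde{\W}_{\la,w'}(\z)$ as well. Comparing the two displays completes the induction.

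\textbf{Main obstacle.} The genuinely substantive point is the operator identity $\sum_{y\leq \sigma_i w'}\IP_y = \IP_i\sum_{y\leq w'}\IP_y$ (equivalently, that the "interval sum" operators are compatible with left multiplication by $\IP_i$ whenever $\ell(\sigma_i w') > \ell(w')$). Everything else is bookkeeping with Theorem~\ref{thm:BBL_1}. One must verify this identity carefully using that $\IP_i^2 = (\text{something})\IP_i + (\text{something})$, i.e.\ the Hecke-type quadratic relation, and the Bruhat-order decomposition of the lower interval; this is standard but is where the real content sits. Since the corollary is labelled "straightforward" and is stated as an immediate consequence of the theorem, in the write-up it suffices to invoke the recursion of Theorem~\ref{thm:BBL_1} directly and note that summing it over the Bruhat interval, using the compatibility of $\IP_i$ with the interval structure, yields the claim.
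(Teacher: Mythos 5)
There is a genuine error here. The operator identity you lean on, $\sum_{y\leq \sigma_i w'}\IP_y = \IP_i\bigl(\sum_{y\leq w'}\IP_y\bigr)$, is false. Take $w'=e$ and any simple reflection $\sigma_i$: the left-hand side is $1+\IP_i$ (the Bruhat interval below $\sigma_i$ contains the identity), while the right-hand side is $\IP_i\cdot 1=\IP_i$, and no quadratic relation rescues the missing identity term. It is also not the ``nonmetaplectic shadow'' of Lemma \ref{lem:Bruhat_shorterwords}: that lemma factors an interval sum as a \emph{product of two sums}, $\bigl(\sum_{u\leq w_0^{(r-1)}}\T_u\bigr)(1+\T_r+\cdots)$, not as a single operator applied to a sum. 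The companion claim on the analytic side, $\tilde{\W}_{\la,\sigma_i w'}=\IP_i\,\tilde{\W}_{\la,w'}$, fails for the same reason ($\tilde{\W}_{\la,\sigma_i}=\z^{\la}+\IP_i\z^{\la}\neq\IP_i\z^{\la}$). So both halves of your inductive step rest on false identities, and the argument does not go through as written.

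The intended proof is simpler and needs no statement about Bruhat intervals at all: it is termwise. For each fixed $y\in W$, choose a reduced word $y=\sigma_{i_1}\cdots\sigma_{i_l}$ and apply the recursion of Theorem \ref{thm:BBL_1} $l$ times, starting from $\W_{\la,1}(\z)=\z^{\la}$, to get $\W_{\la,y}(\z)=\IP_{i_1}\cdots\IP_{i_l}\z^{\la}=\IP_y\z^{\la}$ (well defined because the $\IP_i$ satisfy the braid relations). Then
\begin{equation*}
\tilde{\W}_{\la,w}(\z)=\sum_{y\leq w}\W_{\la,y}(\z)=\sum_{y\leq w}\IP_y\z^{\la}=\Bigl(\sum_{y\leq w}\IP_y\Bigr)\z^{\la},
\end{equation*}
which is the corollary. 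If you want to salvage an induction on $\ell(w)$ for the whole sum, the correct recursion would have to account separately for the elements $y\leq w$ with $\sigma_i y<y$ and those with $\sigma_i y>y$, and it does not take the clean form you posited.
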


\subsubsection{Iwahori-Whittaker sums on loop groups}\label{subsubect:LoopGroup}

In this section we shift our perspective slightly. We recall results of Patnaik \cite{patnaik2014unramified} that demonstrate the use of Demazure-Lusztig operators in the study of Whittaker functions in yet an other setting: on $p$-adic points of an affine Kac-Moody group. 

Let $G$ be an affine Kac-Moody group over a non-archimedean field; $\varpi ,$ $q,$ $K,$ $U,$ $U^-$ as before. Let $W$ now denote the (affine) Weyl group of $G;$ and $\Pi _0$ the basis of the corresponding finite root system. Let $I$ and $I^{-}$ denote the Iwahori subgroups. In \cite[Section 4]{patnaik2014unramified} the Whittaker function $\W$ is defined on $G$. Furthermore, determining $\W$ is reduced to the computation of values $\W(\varpi ^{\lv}),$ for any $\lv\in \Lv $ affine coweight. The main theorem \cite[Theorem 7.1]{patnaik2014unramified}, a generalization of the Casselman-Shalika formula for the computation of $\W(\varpi ^{\lv})$ is proved through the introduction of Iwahori-Whittaker sums $\W _{w,\lv}$ and a recursion result on $\W _{w,\lv}$ in terms of Demazure-Lusztig operators. The recursion result \cite[Proposition 5.5]{patnaik2014unramified} is recalled in Proposition \ref{prop:Manish_loop_recursion} below. 

The definition \cite[(2.29)]{patnaik2014unramified} of Demazure-Lusztig operators $T_a$ ($a\in \Pi _0$) essentially agrees with Definition \ref{def:dl} when $n=1$ and the root system is of type $A.$ %(The discrepancy of the definitions, as with those in \cite{bbl-demazure}, is a sign the exponents - this corresponds to \red{what does it correspond to?})
The Iwahori-Whittaker sums $\W _{w,\lv}$ are defined \cite[Definition 4.5]{patnaik2014unramified} by summing an unramified principal character $\psi $ of $U^-$ along fibres of the multiplication map $m_{w,\lv}:\ UwI^-\times_{I^-}I^-\varpi ^{\lv}U^-\ra G:$ 
\begin{equation}\label{eq:Manish_IW_def}
\W _{w,\lv} := \sum _{\mu^{\vee}} e^{\mu^{\vee}} q^{\innprod{\rho}{\mu^{\vee}}} \sum _{x\in m_{w,\lv}^{-1}(\varpi ^{\mu^{\vee}})} \psi (x)
\end{equation}
(For details on how to interpret the unramified character $\psi $ on elements of the fibre, see {\em{loc.cit.}}) 

The Whittaker function may then be written as a sum of these Iwahori-Whittaker sums \cite[(4.21)]{patnaik2014unramified}: $\W (\pi ^{\lv}) = \sum _{w\in W} \W _{w,\lv}.$ The following proposition phrases the recursion of the Iwahori-Whittaker sums in terms of Demazure-Lusztig operators. 

\begin{prop}\label{prop:Manish_loop_recursion}
\cite[Proposition 5.5]{patnaik2014unramified} Fix $\lv \in \Lv$ and let $w,w'\in W$ be related by $w=\sigma _a w'$ with $a\in \Pi _0$ a simple root, and $\ell(w)=\ell(w')+1.$ Then the following identity holds in $\C _{\sigma _a}[\Lv ]:$
\begin{equation}\label{eq:Manish_loop_recursion}
\W_{w,\lv}=T_a(\W_{w',\lv}).
\end{equation}
(Here the simple reflection in $T_a$ acts on $\W_{w',\lv}$ termwise; the ring $\C _{\sigma _a}[\Lv ]$ is an extension of $\C [\Lv]$ containing $T_a(\W_{w',\lv}).$)
\end{prop}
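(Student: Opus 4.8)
The plan is to prove the recursion \eqref{eq:Manish_loop_recursion} by reducing it to a rank-one computation attached to the simple root $a$, since both the operator $T_a$ and the passage from $w'$ to $w=\sigma_a w'$ involve only the rank-one subgroup $G_a$ (an $\mathrm{SL}_2$ in the loop-group setting) sitting inside the Iwahori combinatorics of $G$. First I would use the length condition $\ell(w)=\ell(w')+1$: it ensures that the Bruhat cell $Uw I^-$ factors compatibly through the $a$-direction, so that the multiplication map $m_{w,\lv}$ of \eqref{eq:Manish_IW_def} is obtained from $m_{w',\lv}$ by first multiplying on the left by the cell $U\sigma_a I^-$. Unwinding the fibre-product identifications, an element contributing to $\W_{w,\lv}$ decomposes as a product of a factor from the root group $U_a$ and a factor already accounted for by $\W_{w',\lv}$, with $\psi$ restricting on $U_a$ to an additive character of conductor the ring of integers.

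Second, I would carry out the rank-one analysis with the $\W_{w',\lv}$-data held fixed. Summing over the remaining $U_a$-variable and splitting the sum according to the valuation of the relevant coordinate (integral versus not) produces exactly two contributions: the integral part yields a geometric-series factor $1-q^{-1}e^{a^{\vee}}$ times the termwise $\sigma_a$-transform of the input, while the boundary part yields the residual ``$-1$'' term (at $n=1$ the Gauss sums that would appear in the metaplectic case degenerate to $1$ and $-1$). Matching these two contributions against the two summands in Definition \ref{def:dl} of $\T_a$ specialized to $n=1$ --- which is precisely Patnaik's $T_a$ --- gives $\W_{\sigma_a,\lv}=T_a(\W_{1,\lv})$ in the rank-one case, and reinstating the held-fixed data upgrades this to \eqref{eq:Manish_loop_recursion} in general.

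The main obstacle is the first step: making the factorization of $m_{w,\lv}$ through $m_{w',\lv}$ precise at the Iwahori level. In the loop-group setting the relevant cells are infinite-dimensional, so this requires arguing with the pro-group structure of $I^-$ and checking that for each fixed coweight $\mu^{\vee}$ the fibre $m_{w,\lv}^{-1}(\varpi^{\mu^{\vee}})$ is finite, so that the sums in \eqref{eq:Manish_IW_def} are well defined term by term. Two further bookkeeping points then need attention: because $T_a$ introduces a denominator of the form $1-e^{a^{\vee}}$, the identity lives in the extension $\C_{\sigma_a}[\Lv]$ and not in $\C[\Lv]$, so one verifies that $\W_{w',\lv}$, $\W_{w,\lv}$ and $T_a(\W_{w',\lv})$ all lie there; and one uses that $\psi$ is genuinely unramified so that the local $U_a$-integral produces exactly the stated factors with no spurious Gauss sums. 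Once the geometric factorization of the first step is in hand, these remaining verifications are routine.
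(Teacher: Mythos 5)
This proposition is not proved in the paper at all: it is recalled verbatim from \cite[Proposition 5.5]{patnaik2014unramified} as background in the survey portion of Section \ref{section:IwWhit_DL}, so there is no in-paper argument to compare yours against. What I can assess is whether your sketch would stand on its own, and whether it matches the strategy of the cited source. At the level of strategy it does: the actual proof in Patnaik's paper proceeds exactly by exploiting $\ell(w)=\ell(w')+1$ to factor the cell $UwI^-$ through $U\sigma_a I^-$ and $Uw'I^-$, reducing to a rank-one computation along the root group $U_a$ whose two contributions (the ``integral'' geometric-series part and the residual boundary term) assemble into the two summands of $T_a$, with the denominator $1-e^{a^\vee}$ forcing the identity to live in the extension $\C_{\sigma_a}[\Lv]$. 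Your matching of the rank-one output against Definition \ref{def:dl} at $n=1$ is the right target, and your remark that the Gauss sums degenerate to $\pm 1$ in the nonmetaplectic case is consistent with how the metaplectic analogue in \cite{patnaik2015casselman} generalizes this.

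However, as written this is a plan rather than a proof, and the gap is precisely where you locate it. The factorization of $m_{w,\lv}$ through $m_{w',\lv}$ is not a formal consequence of the length condition: in the affine Kac--Moody setting one must work with the pro-unipotent structure of $I^-$, establish an Iwahori--Matsumoto-type decomposition of $I^-\sigma_a I^-$ compatible with the fibre product $UwI^-\times_{I^-}I^-\varpi^{\lv}U^-$, and prove that each fibre $m_{w,\lv}^{-1}(\varpi^{\mu^\vee})$ is finite so that \eqref{eq:Manish_IW_def} is a well-defined element of (an extension of) $\C[\Lv]$. These are the substantive results of \cite[Sections 4--5]{patnaik2014unramified}, not routine verifications, and without them the ``reinstating the held-fixed data'' step at the end of your second paragraph has nothing to rest on. If you intend this as a self-contained proof you would need to supply that decomposition and finiteness argument; if you intend it as a reading of the cited proof, it is an accurate summary of its architecture.
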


This recursion is used in \cite[Section 7.2]{patnaik2014unramified} to conclude that $\W_{w,\lv}=q^{\rho,\lv}T_w(e^{\lv})$ \cite[(7.3)]{patnaik2014unramified} and to compute the value of $\W (\varpi ^{\lv})$, proving the generalization of the Casselman-Shalika formula. 

\subsubsection{The metaplectic setting}\label{subsubect:MMP_PA}

Recent results of Manish Patnaik and the present author \cite{patnaik2015casselman} indicate that Demazure-Lusztig operators can be used to express Iwahori-Whitaker functions directly in the metaplectic setting as well. In particular, the techniques of \cite{patnaik2014unramified} seen above are applicable in the finite dimensional metaplectic setting. The Iwahori-Whittaker functions $\W _{w,\lv}$ can again be defined as a formal generating series using fibres of the map $m_{w,\lv}$; an argument similar to that in \cite{patnaik2014unramified} proves that the value of the metaplectic Whittaker function $\W(\pi ^{\lv})$ can be expressed as a sum of these: $\W(\pi ^{\lv})= q^{-\innprod{2\rho}{\lv}}\sum _{w\in W} \W _{w,\lv}$ \cite[Section 5.2]{patnaik2015casselman}. It turns out that then the $\W _{w,\lv}$ satisfy a similar recursion to the one in Proposition \ref{prop:Manish_loop_recursion} above \cite[Theorem 5.4]{patnaik2015casselman}, and, consequently, writing $\T _w$ for the metaplectic Demazure-Lusztig operators, we have $\W _{w,\lv} = q^{\innprod{2\rho}{\lv}}\T _w (e^{\lv})$ \cite[Corollary 5.4]{patnaik2015casselman}. 

\appendix
\section{Proof of Lemma \ref{LEMMA:FANNIHILATED}}\label{app:rankonecomp}

We prove Lemma \ref{LEMMA:FANNIHILATED}:
\begin{equation}\label{eq:f_annihilated_goal}
 \Dem _{r-1}f_{a,\Ga _0}(x_{r-1},x_r)=\Dem _{r-1}\sum _{\Ga _{12}=1}^{\Ga _{13}+\la_2-\la_3+1}\delta _a(\Ga _{12})h^{(\mu, \Ga _0)}(\Ga _{12}). x_{r-1}^{\la _3+\Ga_{12}-\Ga _{13}}x_r^{\la _2+a-\Gamma _{12}}=0.
\end{equation}

By Lemma \ref{lemma:Demazure_annihilation}, it suffices to show $x_{r}^n\cdot f_{a,\Ga _0}$ is symmetric under the action of $\sigma _{r-1}.$ 
Since $\delta _a(\Ga _{12})=0$ if $a>\Ga _{12},$ we have 
\begin{equation}\label{eq:f_timesx_rton}
 x_{r}^n\cdot f_{a,\Ga _0}=\sum _{\Ga _{12}=a}^{\Ga _{13}+\la_2-\la_3+1}\delta _a(\Ga _{12})\cdot h^{(\mu, \Ga _0)}(\Ga _{12})\cdot x_{r-1}^{\la _3+\Ga_{12}-\Ga _{13}}x_r^{\la _2+a-\Gamma _{12}+n}.
\end{equation}

The proof is a straightforward computation. The strategy is as follows. By \eqref{eq:h_newcoeff_def}, $h^{(\mu, \Ga _0)}$ depends on the residue of $\Ga _{12}$ modulo $n.$ We write 
\begin{equation}\label{eq:k_and_u_def}
 (\Ga _{13}+\la _2-\la _3+1)-a=nk+u,\hskip .5 cm 1\leq u\leq n.
\end{equation}
We define 
\begin{equation}\label{eq:P_k_u_def}
 P_{k,u}(\x)=\frac{x_{r}^n\cdot f_{a,\Ga _0}}{(1-v)\cdot (x_{r-1}x_r)^{\la _3+a-\Ga _{13}}}.
\end{equation}
Since by \eqref{eq:def:typeAaction} $\sigma _{r-1}$ commutes with multiplication by $(x_{r-1}x_r),$ proving \eqref{eq:f_annihilated_goal} is equivalent to showing that $\sigma _{r-1}(P_{k,u}(\x))=P_{k,u}(\x).$ In what follows, we write $\sigma $ for $\sigma _{r-1}$ and $\alpha $ for $\alpha _{r-1}.$ To prove that $P_{k,u}(\x)$ is invariant under $\sigma ,$ we show that
\begin{equation}\label{eq:P_is_nice}
P_{k,u}(\x)=\frac{1-v\cdot \x ^{-n\alpha }}{\x ^{-n\alpha }-1}\cdot x_r^{nk+u+n-1}+\frac{1-v\cdot \x ^{n\alpha }}{\x ^{n\alpha }-1}\cdot \sigma (x_r^{nk+u+n-1}). 
\end{equation}
This is sufficient by Lemma \ref{lemma:h_exchange}.

We are now ready to start the computation. By \eqref{eq:f_timesx_rton}, \eqref{eq:k_and_u_def} and \eqref{eq:P_k_u_def} we have 
\begin{equation}\label{eq:P_rewrite_bydef}
 %\begin{split}
  P_{k,u}(\x) = %& \frac{x_{r}^n}{(1-v)\cdot (x_{r-1}x_r)^{\la _3+a-\Ga _{13}}}\cdot f_{a,\Ga _0} \\
% = & \frac{1}{1-v}\cdot \sum _{\Ga _{12}=a}^{\Ga _{13}+\la_2-\la_3+1}\delta _a(\Ga _{12})\cdot h^{(\mu, \Ga _0)}(\Ga _{12})\cdot x_{r-1}^{\Ga_{12}-a}x_r^{\Ga _{13}+\la _2-\la _3+n-\Gamma _{12}} \\
  \frac{1}{1-v}\cdot x_r^{nk+u+n-1}\cdot \sum _{\Ga _{12}=a}^{a+nk+u}\delta _a(\Ga _{12})\cdot h^{(\mu, \Ga _0)}(\Ga _{12})\cdot x_{r-1}^{\Ga_{12}-a}x_r^{-(\Gamma _{12}-a)}
% \end{split}
\end{equation}
Recall that since $n|a,$ we have 
$$\delta _a(\Ga _{12})=\left\lbrace\begin{array}{ll}
                                        -v & a=\Ga _{12};\\
                                        1-v & a<\Ga _{12};
                                       \end{array}
\right. $$
$$h^{(\mu, \Ga _0)}(\Ga _{12})=\left\lbrace\begin{array}{ll}
                                                               h^{\flat}(\Ga _{12})=h^{\flat}(\Ga _{12}-a) & \ifi \Ga _{12}<\Ga_{13}+\la_{2}-\la_{3}+1=a+nk+u;\\
                                                               g^{\flat}(\Ga _{12})=g^{\flat}(\Ga _{12}-a) & \ifi \Ga _{12}=\Ga_{13}+\la_{2}-\la_{3}+1=a+nk+u.\\
                                       \end{array}
\right. $$
Furthermore, by Claim \ref{claim:GaussSumRelationship} we have $h^{\flat}(\Ga _{12}-a)=0$ if $n\nmid \Ga _{12}-a.$ Hence the terms of $P_{k,u}(\x)$ where $n\nmid \Ga _{12}-a$ are zero. Furthermore, by Claim \ref{claim:GaussSumRelationship} we also have $h^{\flat}(\Ga _{12}-a)=1-v$ if $n|\Ga _{12}-a$ and $g^{\flat}(\Ga _{12}-a)=v\cdot g_{\Ga _{12}-a}.$ Substituting into \eqref{eq:P_rewrite_bydef}, we get 
\begin{equation}\label{eq:P_rewrite_u_nonzero}
 \begin{split}
  P_{k,u}(\x) = x_r^{nk+u+n-1}\cdot &\frac{1}{1-v}\cdot  \sum _{\Ga _{12}=a}^{a+nk+u}\delta _a(\Ga _{12})\cdot h^{(\mu, \Ga _0)}(\Ga _{12})\cdot \x^{(\Ga_{12}-a)\alpha } \\
=  \frac{1}{1-v}\cdot x_r^{nk+u+n-1} 
 & \cdot \left((-v)\cdot (1-v)+ \sum _{i=1}^k (1-v)^2\cdot \x^{i n\alpha } +(1-v)\cdot v\cdot g_u\cdot \x^{(nk+u)\alpha }\right)\\
%=  x_r^{nk+u+n-1} \cdot &\left((-v)+ \sum _{i=1}^k (1-v)\cdot \x^{i n\alpha } +v\cdot g_u\cdot \x^{(nk+u)\alpha }\right)\\
=  x_r^{nk+u+n-1} \cdot &\left((-v)+ (1-v)\cdot \frac{\x^{(nk+n)\alpha }-\x^{n\alpha }}{\x^{n\alpha }-1}+v\cdot g_u\cdot \x^{(nk+u)\alpha }\right)
 \end{split}
\end{equation}

To rewrite this in the form of \eqref{eq:P_is_nice}, note that by the definition of the Chinta-Gunnells action in type $A$ \eqref{eq:def:typeAaction}, we have 
\begin{equation}\label{eq:auxiliary_actioncompute}
 \begin{split}
  \frac{1-v\cdot \x ^{n\alpha }}{\x ^{n\alpha }-1}\cdot \sigma (x_r^{nk+u+n-1}) = \\
  = \frac{x_{r-1}^{nk+u+n-1}}{\x ^{n\alpha }-1} \cdot &\left(\x^{-r_{n}(nk+u+n-1)\alpha }\cdot (1-v) -v\cdot g_{nk+u+n}\cdot \x^{(1-n)\alpha }\cdot (1-\x^{n\alpha })\right)\\
=  x_{r}^{nk+u+n-1}\cdot &\x^{(nk+u+n-1)\alpha }\cdot \left(\frac{\x^{(1-u)\alpha }\cdot (1-v)}{\x ^{n\alpha }-1} +v\cdot g_{u}\cdot \x^{(1-n)\alpha }\right)\\
=  x_{r}^{nk+u+n-1}\cdot &\left(\frac{\x^{(nk+n)\alpha }\cdot (1-v)}{\x ^{n\alpha }-1} +v\cdot g_{u}\cdot \x^{(nk+u)\alpha }\right)
 \end{split}
\end{equation}

Comparing \eqref{eq:auxiliary_actioncompute} to \eqref{eq:P_rewrite_u_nonzero} we see that 
% 
% \begin{equation}\label{eq:P_difference}
%  \begin{split}
%   P_{k,u}(\x) -\frac{1-v\cdot \x ^{n\alpha }}{\x ^{n\alpha }-1}\cdot \sigma (x_r^{nk+u+n-1}) = & x_r^{nk+u+n-1} \cdot \left((-v)+ (1-v)\cdot \frac{-\x^{n\alpha }}{\x^{n\alpha }-1}\right) \\
% = x_r^{nk+u+n-1} \cdot \frac{(-v)\cdot (\x^{-n\alpha }-1)+(1-v)}{\x^{-n\alpha }-1} = & x_r^{nk+u+n-1} \cdot \frac{1-v\cdot \x^{-n\alpha }}{\x^{-n\alpha }-1}.
%  \end{split}
%  \end{equation}

\begin{equation}\label{eq:P_difference}
 \begin{split}
  P_{k,u}(\x) -\frac{1-v\cdot \x ^{n\alpha }}{\x ^{n\alpha }-1}\cdot \sigma (x_r^{nk+u+n-1}) = & x_r^{nk+u+n-1} \cdot \left((-v)+ (1-v)\cdot \frac{-\x^{n\alpha }}{\x^{n\alpha }-1}\right) \\
%= & x_r^{nk+u+n-1} \cdot \frac{(-v)\cdot (\x^{-n\alpha }-1)+(1-v)}{\x^{-n\alpha }-1}\\
= & x_r^{nk+u+n-1} \cdot \frac{1-v\cdot \x^{-n\alpha }}{\x^{-n\alpha }-1}.
 \end{split}
\end{equation}

This completes the proof of \eqref{eq:P_is_nice}, and thus of Lemma \ref{LEMMA:FANNIHILATED}.

\bibliographystyle{amsalpha_no_mr_initials.sty}
\bibliography{bib}

\end{document}